\documentclass[a4paper, 11pt]{article}

\usepackage[utf8]{inputenc}
\usepackage{amsmath}
\usepackage{amsthm}
\usepackage{amsfonts}
\usepackage{amssymb}
\usepackage{amstext}
\usepackage{mathscinet}
\usepackage{dsfont}
\usepackage{graphicx}
\usepackage{color}
\usepackage[colorlinks]{hyperref}
\usepackage{epigraph}
\usepackage[left=3.5cm, right=3cm,nohead,nofoot]{geometry}
\usepackage{tikz}
\usepackage{tkz-graph}
\usepackage{tkz-berge}
\usetikzlibrary{arrows,shapes,matrix}

\graphicspath{{Figures/}{.}}%
\makeatletter%
\makeatother

\title{ Littelmann path model for geometric crystals }
\author{Reda \textsc{Chhaibi}        \footnote{Universit\"at Z\"urich. Email: \texttt{reda.chhaibi@math.uzh.ch}} } 
\date{}

\allowdisplaybreaks[4]

\DeclareMathOperator{\Ad}{Ad}
\DeclareMathOperator{\ad}{ad}
\DeclareMathOperator{\hw}{hw}
\DeclareMathOperator{\Inv}{Inv}
\DeclareMathOperator{\lw}{lw}

\DeclareMathOperator{\sh}{sh}

\def\half{\frac{1}{2}}

\def\A{{\mathbb A}}
\def\N{{\mathbb N}}
\def\Z{{\mathbb Z}}
\def\Q{{\mathbb Q}}
\def\R{{\mathbb R}}
\def\C{{\mathbb C}}



\def\Bc{{\mathcal B}}
\def\Cc{{\mathcal C}}

\def\Ec{{\mathcal E}}
\def\Fc{{\mathcal F}}

\def\Pc{{\mathcal P}}

\def\Tc{{\mathcal T}}
\def\Uc{{\mathcal U}}
\def\Zc{{\mathcal Z}}

\def\Bfrak{{\mathfrak B}}
\def\afrak{{\mathfrak a}}
\def\bfrak{{\mathfrak b}}
\def\gfrak{{\mathfrak g}}
\def\hfrak{{\mathfrak h}}

\def\nfrak{{\mathfrak n}}

\def\ufrak{{\mathfrak u}}

\setlength{\footskip}{2cm}

\numberwithin{equation}{section}
\numberwithin{figure}{section}

\newtheorem{thm}[equation]{Theorem}
\newtheorem{proposition}[equation]{Proposition}
\newtheorem{corollary}[equation]{Corollary}

\newtheorem{definition}[equation]{Definition}
\newtheorem{example}[equation]{Example}
\newtheorem{lemma}[equation]{Lemma}
\newtheorem{properties}[equation]{Properties}

\newtheorem{rmk}[equation]{Remark}

\begin{document}
\maketitle

\begin{abstract}
We construct a path model for geometric crystals in the sense of Berenstein and Kazhdan. Our model is in every way similar to Littelmann's and tropicalizes to his path model. This paper lays the foundational material for the subsequent work \cite{bib:chh14c} where we examine the measure induced on geometric crystals by Brownian motion .

If we call Berenstein and Kazhdan's realization of geometric crystals the group picture, we prove that the path model projects onto the group picture thanks to a morphism of crystals that restricts to an isomorphism on connected components. This projection is in fact the geometric analogue of the Robinson-Schensted correspondence and involves solving a left-invariant differential equation on the Borel subgroup.

Moreover, we identify the geometric Pitman transform $\Tc_{w_0}$ introduced by Biane, Bougerol and O'Connell as the transform giving the path with highest weight, in the geometric crystal path model. This allows to prove a geometric version of Littelmann's independence theorem. The geometric Robinson-Schensted correspondence is detailed in a special section, because of its importance.

Finally, we exhibit the Kashiwara and Schützenberger involutions in both the group picture and the path model. 

In an appendix, we explain how the left-invariant flow is related to the image of the Casimir element in Kostant's Whittaker model.
\end{abstract}
{\bf MSC 2010 subject classifications:} 05E10 ; 06B15 ; 17B10 ; 22E46.\\
{\bf Keywords:} Littelmann path model, geometric crystals in the sense of Berenstein and Kazhdan, Lusztig's and Kashiwara's parametrization of crystal bases, geometric lifting, geometric Robinson-Schensted correspondence, total positivity.

\clearpage
\tableofcontents
\clearpage

\section{Introduction}
Let $G$ be a simply-connected complex semi-simple group with Lie algebra $\gfrak$. Let $\hfrak$ be a Cartan subalgebra and $\afrak$ is the subspace of $\hfrak$ where simple roots are real-valued. Equivalently $\afrak$ is the Cartan subalgebra of the split real form of $G$.

The theory of crystal bases was initiated after Lusztig introduced the canonical basis for the quantum groups $\Uc_q\left( \gfrak \right)$ (\cite{bib:Lusztig93}). At $q=0$, one has Kashiwara's crystal ``basis'' which is well behaved in many aspects (see \cite{bib:Kashiwara95}). Littelmann's work ( \cite{bib:Littelmann95}, \cite{bib:Littelmann95a}, \cite{bib:Littelmann95b}, \cite{bib:Littelmann96}) realises crystals as sets of paths in $\afrak^*$, in the more general context of Kac-Moody groups. Consequently, Berenstein and Kazhdan (\cite{bib:BK00}, \cite{bib:BK04}) defined geometric crystals as algebro-geometric objects that degenerate to Kashiwara crystals after tropicalization. Their construction fundamentally relies on the fact that totally positive varieties in $G$ are the geometric liftings of combinatorial $G^\vee$-crystals, $G^\vee$ being the Langlands dual of $G$.

In this paper, we construct a path model for geometric crystals, in the same spirit as Littelmann. The weight function is the endpoint of a path and tensor product of crystals is given by path concatenation. However root operators are defined as integral transforms, which are the geometric lifting of Littelmann's piece-wise reflection scheme. In fact, the path model we describe is naturally related to geometric crystals for the Langlads dual $G^\vee$. As such, crystal elements are paths in the real Cartan subalgebra $\afrak$, instead of $\afrak^*$. 

We start in section \ref{section:abstract_geom_crystals} by defining a notion of abstract geometric crystal in a less restrictive sense than the original definition of Berenstein and Kazhdan. Whereas they defined geometric crystals as affine varieties over $\C$, we will simply consider them as topological sets with structural maps. This will allow us later to consider the geometric crystal of continuous paths valued in the real Cartan subalgebra $\afrak$.

In section \ref{section:geom_lifting}, we consider the totally positive group elements $\Bc$ inside the Borel subgroup $B$, which form the typical positive geometric crystal in the sense of Berenstein and Kazhdan. This is what we call the group picture. We fix a certain number of coordinate charts for $\Bc$ (figure \ref{fig:geom_parametrizations}) and detail its geometric crystal structure. If the geometric liftings of Lusztig's and Kashiwara's parametrizations of crystal bases are well known (\cite{bib:BZ01}), our contribution consists in fitting them together into geometric crystals. Notice that, instead of defining a positive structure separately (as in \cite{bib:BK06} section 3) we will be directly working at the level of the totally non-negative elements in $G$. Finally, we will favor using the additive group $\R$ instead of the multiplicative $\R_{>0}$ hence the presence of numerous logarithms.

Fix once and for all a time horizon $T>0$. Let $\Cc_0\left( [0, T], \afrak \right)$ be the space of continuous functions taking values in $\afrak$ and starting from zero. In section \ref{section:path_model}, we construct a geometric crystal structure on $\Cc_0\left( [0, T], \afrak \right)$. By rescaling paths using a real parameter $h>0$, we have an entire family of equivalent geometric path models. Taking $h \rightarrow 0$, the geometric crystal structure tropicalizes to the continuous version of Littelmann's path model (\cite{bib:BBO}, \cite{bib:BBO2}). Here, an interesting feature is that the tropical expression found for root operators $e^{c}_\alpha$ is the same regardless of the sign of $c \in \R$. In order to see that the result matches with previous work, the necessary verifications are made in subsection \ref{subsection:classical_littemann_limit}.

A projection $p: \Cc\left( [0, T], \afrak \right) \rightarrow \Bc$ of a path $\pi$ onto its group picture is obtained by solving a left-invariant differential equation on $B$ driven by $\pi$ (Section \ref{section:path_transforms}). If the solution to this equation, first considered in \cite{bib:BBO}, is written $\left( B_t(\pi); 0 \leq t \leq T \right)$, then:
$$ p: \pi \mapsto B_T(\pi)$$
is a morphism of crystals (Theorem \ref{thm:p_is_morphism}).

Isomorphism classes of geometric crystals in the group picture are indexed by a single vector $\lambda$ that is interpreted as a highest weight. By simply transporting the structure to the path model, we see that in order to obtain the isomorphism class of a crystal generated by $\pi$, a remarkable transform on paths $\Tc_{w_0}$ has to be applied. $\Tc_{w_0} \pi$ plays the role of a dominant path in the geometric path model. The highest weight $\lambda \in \afrak$ is the endpoint of $\Tc_{w_0} \pi$. This transform was introduced in \cite{bib:BBO2} as a geometric lifting of the Pitman operator $\Pc_{w_0}$ and will be referred to as the geometric Pitman transform. A key difference with the 'crystallized' case obtained in \cite{bib:BBO, bib:BBO2} is that the path $\Tc_{w_0} \pi$ does not belong to the crystal $\langle \pi \rangle$ generated by $\pi$. For instance, it is not defined at zero. It is however invariant under crystal actions. This difference would explain why the naturality of $\Tc_{w_0}$ has been 
elusive, so far. In a sense to be made precise in section \ref{section:paths_on_the_edge}, it is a path on the boundary of our geometric crystal.

In that section, we detail the properties of geometric Pitman transforms and examine the behavior of paths as we move to the edges of a crystal. Such an analysis is crucial in parametrizing geometric path crystals (section \ref{section:parametrizations_in_path_model}). There, we establish a commutative diagram (figure \ref{fig:parametrizations_diagram}) that shows that parametrizations in the path model and the group picture are parallel. Only then, our choices in parametrizing the group picture appear as natural.

Thanks to the parametrizations of geometric path crystals, we are able to exhibit a certain number of isomorphism results in section \ref{section:isom_results}. Theorem \ref{thm:static_geometric_rs} tells us that, in fact, the projection $p$ restricts to an isomorphism on connected components. Also, we have the geometric version of Littelmann's independence theorem: the crystal structure of a crystal generated by a path $\pi$ depends only on the end point of the dominant path $\Tc_{w_0} \pi$ (Theorem \ref{thm:geometric_littelmann_independence}). Finally, the group picture is ``minimal'' in the sense that there are no automorphisms in the group picture aside from the identity (Theorem \ref{thm:minimality}). 

In the end, for every $T>0$, we obtain a map which is a bijection onto its image (Theorem \ref{thm:dynamic_rs_correspondence}):
$$ \begin{array}{cccc}
  \Cc_0\left( [0, T], \afrak \right) & \longrightarrow & \left( \Bc   , \Cc^{high}_{w_0}\left( (0, T], \afrak \right) \right) \\
  \pi                                & \mapsto	        & \left( p(\pi), \Tc_{w_0} \pi \right)\\
  \end{array}
$$
Due to the importance of this map, we explain in section \ref{section:rs_correspondences} why this bijection is the geometric counterpart of the Robinson-Schensted correspondence (see \cite{bib:Fulton97}). There, we detail the classical Robinson-Schensted correspondence, its generalization thanks to combinatorial crystals and the continuous tropical correspondence. It is only then that we can state the geometric Robinson-Schensted correspondence in a form similar to its predecessors. We feel that this review is necessary as it is not explicit in the litterature. The path $\pi$ plays the role of a word which is inserted into a geometric crystal element by solving the fore mentioned differential equation. The recording tableau keeping track of the shape is the highest weight path $\Tc_{w_0} \pi$. 

Finally, in section \ref{section:involutions}, we justify why the antimorphisms we introduced as the Kashiwara and Schützenberger involutions are indeed as claimed.

In appendix \ref{appendix:whittaker_model}, when $\pi$ is a Brownian motion, we explain why the previous left-invariant differential equation driven by $\pi$ gives a Markov process whose infinitesimal generator is closely related to the Casimir element in Kostant's Whittaker model. In a way, the present work is the algebraic part of a project we continue in the paper \cite{bib:chh14c}. There, we perform the geometric Robinson-Schensted correspondence with a Brownian path as input and describe the output. Decorations of geometric crystals will naturally appear.

\section{Preliminaries}
\label{section:preliminaries}

Let us define the (mostly standard) objects we will be using. The reader familiar with representation theory, including the Lusztig's canonical bases, can skip this section.

\subsection{Classical Lie theory}

\paragraph{On the structure of Lie algebras (\cite{bib:Humphreys72}):}
Let $\left( \gfrak, \left[ \cdot, \cdot \right] \right)$ be a complex semi-simple Lie algebra of rank $r$. The Cartan subalgebra is a maximal abelian subalgebra denoted by $\hfrak \approx \C^r$. The Lie bracket defines the adjoint action $ad: \gfrak \longrightarrow End(\gfrak)$. For $x, y \in \gfrak$, $\ad(x)(y) = [x,y]$. A fundamental idea in the classification of semi-simple Lie algebras is that the adjoint action is codiagonalizable, once restricted to $\hfrak$. Hence the root-space decomposition:
$$\gfrak = \bigoplus_{\beta \in \{0\} \sqcup \Phi} \gfrak_{\beta} = \gfrak_0 \oplus \bigoplus_{\beta \in \Phi^+} \left( \gfrak_{\beta} \oplus \gfrak_{-\beta} \right)$$
where $\gfrak_0 = \hfrak$ and $\Phi \subset \hfrak^*$ is the set of roots. Root spaces are one dimensional:
$$\gfrak_\beta = \left\{ x \in \gfrak \ | \ \forall h \in \hfrak, \ad(h)(x) = \beta(h)x \right\}$$

The set of roots can be written as a disjoint union of positive and negative roots $\Phi = \Phi^+ \bigsqcup \Phi^-$, with $\Phi^- := -\Phi^+$. This choice uniquely determines $\Delta = (\alpha_i)_{i \in I} \subset \Phi^+$ a simple system such that every positive root is a sum with positive integer coefficients of simple roots - and reciprocally, a simple system uniquely determines a positive system. Moreover, the simple system $\Delta$ forms a basis of $\hfrak^*$. When convenient, we will index simple roots by $I = \left\{ 1, 2, \dots, r \right\}$. The choice of a simple system $\Delta$ fixes an open Weyl chamber:
$$C := \left\{ x \in \afrak | \forall \alpha \in \Delta, \alpha(x) > 0 \right\}$$

The Cartan subalgebra has a decomposition $\hfrak = \afrak + i \afrak$ with $\afrak$ chosen to be the real subspace of $\hfrak$ where roots are real valued. By Cartan's criterion, since $\gfrak$ is semi-simple, the Killing form is non-degenerate. Its restriction to $\hfrak$ is in fact a scalar product written $\left( \cdot, \cdot \right)$. In the identification of $\hfrak$ and $\hfrak^*$ thanks to the Killing form, it is customary to write the coroot $\beta^\vee$ as $\beta^\vee = \frac{2 \beta}{\left( \beta, \beta \right)}$ for $\beta \in \Phi$.

For each positive root $\beta \in \Phi^+$, we can choose an $\mathfrak{sl}_2$-triplet $(e_\beta, f_\beta, h_\beta = \beta^\vee) \in \gfrak_\beta \times \gfrak_{-\beta} \times \hfrak$ such that $[e_\beta, f_\beta] = h_\beta$. $(e_\alpha, f_\alpha, h_\alpha)_{\alpha \in \Delta}$ is the set of simple $\mathfrak{sl}_2$-triplets, also known as Chevalley generators. Such a choice gives rise to Lie algebra homomorphisms $\phi_\beta: \mathfrak{sl}_2 \longrightarrow \gfrak$ such that:
$$\left\{ \begin{array}{ll}
\phi_\beta\left( x \right) = e_\beta\\
\phi_\beta\left( y \right) = f_\beta\\
\phi_\beta\left( h \right) = h_\beta\\
\end{array} \right.$$
where $x = \left( \begin{array}{cc} 0 & 1 \\ 0 &  0 \end{array} \right) $, $y = \left( \begin{array}{cc} 0 & 0 \\ 1 &  0 \end{array} \right)$, $ h = \left( \begin{array}{cc} 1 & 0 \\ 0 & -1 \end{array} \right) $.

Relations between Chevalley generators are entirely encoded by the Cartan matrix $A = \left( a_{i,j} = \alpha_i\left( \alpha_j^\vee \right) \right)_{1 \leq i,j \leq r} \in M_n(\Z)$, and therefore characterizes the Lie algebra (theorem 2.111 in \cite{bib:Knapp02}). The dual Lie algebra $\gfrak^\vee$ is the semi-simple algebra with Cartan matrix the transpose of $A$.

The fundamental weights $\left( \omega_\alpha \right)_{\alpha \in \Delta}$ form the dual basis of simple coroots. They form a $\Z$-basis of the weight lattice:
$$ P := \left\{ x \in \hfrak^* \ | \ \forall \alpha \in \Delta, x(h_\alpha) \in \Z \right\}
      = \bigoplus_{\alpha \in \Delta} \Z \omega_{\alpha}$$
The dominant weights are $ P^+ := \bigoplus_{\alpha \in \Delta} \N \omega_{\alpha}$. Similarly, define the fundamental coweights $P^{\vee} := \left( \omega_\alpha^\vee \right)_{\alpha \in \Delta} \subset \afrak$ as the dual basis of simple roots.

The Lie algebra $\gfrak$ has a triangular decomposition $\gfrak = \nfrak \oplus \hfrak \oplus \ufrak$ where $\ufrak$ ( resp. $\nfrak$) is the algebra generated by the $(e_\beta)_{\beta \in \Phi^+}$ (resp. $(f_\beta)_{\beta \in \Phi^+}$). Moreover, define the pair of opposite Borel subalgebras $\left( \mathfrak{b}, \mathfrak{b^+} \right)$ by $\mathfrak{b} := \nfrak \oplus \hfrak $ and $\mathfrak{b^+} := \hfrak \oplus \ufrak $.

\paragraph{On the structure of Lie groups (\cite{bib:Springer09}):}
Let $G$ be a simply-connected complex semi-simple Lie group with Lie algebra $\gfrak$.  The Langlands dual $G^\vee$ is an adjoint semi-simple complex Lie group with Lie algebra $\gfrak^\vee$. The subgroups $H$, $N$, $U$, $B$ and $B^+$ are the subgroups of $G$ with Lie subalgebras $\hfrak$, $\nfrak$, $\ufrak$, $\bfrak$ and $\bfrak^+$. $H$ is a maximal torus. $N$ and $U$ are referred to as the lower and upper unipotent subgroups, while $B$ and $B^+$ are the lower and upper Borel subgroups. The exponential map is denoted $\exp: \gfrak \rightarrow G$ and the adjoint representation of the Lie group $G$ is written $\Ad: G \longrightarrow GL(\gfrak)$.

The exponential map $\exp: \gfrak \rightarrow G$ lifts the homomorphisms $\left( \phi_\beta \right)_{\beta \in \Phi^+}$ from the Lie algebra $\gfrak$ to the group $G$: each $\phi_\beta$ gives rise at the group level to a Lie group homomorphism that embed $SL_2$ in $G$ and that will be denoted in the same way. The following notations are common for $t \in \C$:
\begin{align*}
 t^{h_\beta} & = e^{ \log(t) h_\beta} = \phi_\beta\left(  \left( \begin{array}{cc} t & 0 \\ 0 & t^{-1} \end{array}\right) \right), t \neq 0 \\
 x_\beta(t) & = e^{ t e_\beta} = \phi_\beta\left(  \left( \begin{array}{cc} 1 & t \\ 0 & 1 \end{array}\right) \right)\\
 y_\beta(t) & = e^{ t f_\beta} = \phi_\beta\left(  \left( \begin{array}{cc} 1 & 0 \\ t & 0 \end{array}\right) \right)\\
 x_{-\beta}(t) & = y_\beta(t) t^{-h_\beta} = \phi_\beta\left(  \left( \begin{array}{cc} t^{-1} & 0 \\ 1 & t \end{array}\right) \right)\\
 y_{-\beta}(t) & = t^{-h_\beta} x_\beta(t) = \phi_\beta\left(  \left( \begin{array}{cc} t^{-1} & 1 \\ 0 & t \end{array}\right) \right)
\end{align*}
Also, if $\gamma \in P$ is a weight, it lifts to a character of the torus $H$ whose value on $a \in H$ is denoted $a^\gamma$.

The Bruhat decomposition states that $G$ is the disjoint union of cells:
$$ G = \bigsqcup_{\omega \in W} B^+ \omega B^+  = \bigsqcup_{\tau \in W} B \tau B^+ $$
In the largest opposite Bruhat cell $B B^+ = N H U$, every element $g$ admits a unique Gauss decomposition in the form $g = n a u$ with $ n \in N$, $a \in H$, $u \in U$. In the sequel, we will write $g = [g]_- [g]_0 [g]_+$, $[g]_- \in N$, $[g]_0 \in H$ and $[g]_+ \in U$ for the Gauss decomposition. Also $[g]_{-0} := [g]_- [g]_0$ and $[g]_{0+} := [g]_0 [g]_+$. Useful identities that can be proven writing the full Gauss decomposition in two forms then identifying terms, when they exist:
\begin{eqnarray}
  \label{lbl:gauss1}
  \forall (g_1, g_2) \in N H U \times N H U, [g_1 g_2]_{0+} & = & [ [g_1]_{0+} g_2]_{0+}
\end{eqnarray}
\begin{eqnarray}
  \label{lbl:gauss2}
  \forall (g, a) \in N H U \times H, [g a]_{+} & = & a^{-1} [g]_{+} a
\end{eqnarray}

\paragraph{On the representation theory and highest weight modules:}
We are only concerned by complex finite dimensional modules. Since every homomorphism of Lie algebras lifts to a homomorphism of the corresponding simply connected Lie groups, every $\gfrak$-module $V$ lifts to a unique Lie group representation $G \rightarrow GL(V)$. 

Every module $V$ has a weight space decomposition $ V = \bigoplus_{\mu \in P} V_\mu$ where $V_\mu = \left\{ v \in V \ | \forall h \in \hfrak, h v = \mu(h) v \right\}$. The non-zero $V_\mu$ are called weight spaces, and their vectors weight vectors of weight $\mu$. A highest weight vector in $V$ is a non-zero weight vector $v$ such that $\ufrak \cdot v = \{0\}$. It is well known that simple modules are highest weight modules indexed by dominant weights $\lambda \in P^+$. To every $\lambda \in P^+$ corresponds a unique highest weight module $V(\lambda)$. The highest weight vector $v_\lambda \in V(\lambda)$ is unique up to scalar multiplication. 

\subsection{Reflection groups and root systems}

\paragraph{Weyl group:} 
The Weyl group of $G$ is defined as $W := \textrm{Norm}_G( H )/ H $, $\textrm{Norm}_G( H )$ being the normalizer of $H$ in $G$. It acts on the torus $H$ by conjugation and hence on $\hfrak$. To every linear form $\beta \in \hfrak^*$, define the associated reflection $s_\beta$ on $\hfrak$ with:
$$\forall \lambda \in \hfrak,  s_\beta \lambda := \lambda - \beta\left( \lambda \right) \beta^\vee $$
The reflections $(s_\alpha)_{\alpha \in \Delta}$ are called simple reflections and they generate a finite Coxeter group isomorphic to $W$. Define $m_{s,s'}$ as the order of the element $s s'$. For $w \in W$, a reduced expression is given by writing $w$ as product of simple reflections with minimal length:
$$ w = s_{i_1} s_{i_2} \dots s_{i_{\ell}}$$
A reduced word is such a tuple ${\bf i} = \left( i_1, \dots, i_{\ell} \right)$ and the set of reduced words for $w \in W$ is denoted by $R(w)$. Since all reduced expressions have necessarily the same length, it defines unambiguously the length function $\ell: W \rightarrow \N$. The unique longest element is denoted by $w_0$ and we set $m=\ell(w_0)$.

If $\left( s, s' \right) \in W \times W$ are simple reflections, a braid relationship in $W$ is the equality between $d=m_{s,s'}$ terms:
$$ s s' s \dots = s' s s' \dots \ $$
with $m_{s,s'}$ being the order of $ss'$. A braid move or a $d$-move occurs when substituting $ s s' s \dots$ for $s' s s' \dots $ within a reduced word. An important theorem is the following:
\begin{lemma}[Matsumoto \cite{bib:Matsumoto64} and Tits \cite{bib:Tits69}]
\label{lemma:tits_lemma}
Two reduced expressions of the same $w \in W$ can be derived from each other using braid moves. 
\end{lemma}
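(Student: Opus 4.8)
The plan is to argue by induction on $\ell:=\ell(w)$, the cases $\ell\le 1$ being immediate since then $w$ has at most one reduced expression. Fix $w$ with $\ell\ge 2$ and two reduced words $\mathbf i=(i_1,\dots,i_\ell)$, $\mathbf j=(j_1,\dots,j_\ell)$ of $w$; put $s=s_{i_1}$, $t=s_{j_1}$, and write $\mathbf a\leadsto\mathbf b$ to mean that two reduced words are related by a sequence of braid moves. If $s=t$ then in fact $i_1=j_1$ (the assignment $i\mapsto s_{\alpha_i}$ is injective, the simple roots being linearly independent), and $(i_2,\dots,i_\ell)$, $(j_2,\dots,j_\ell)$ are both reduced words of the element $sw$ of length $\ell-1$; the inductive hypothesis gives $(i_2,\dots,i_\ell)\leadsto(j_2,\dots,j_\ell)$, and prepending $i_1$ gives $\mathbf i\leadsto\mathbf j$, since a braid move performed inside a subword is a braid move on the whole word. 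Hence everything reduces to the case $s\neq t$.

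Assume $s\neq t$; let $m=m_{s,t}$ be the order of $st$ and let $\Delta_{s,t}=\underbrace{sts\cdots}_{m}=\underbrace{tst\cdots}_{m}$ be the longest element of the dihedral subgroup $\langle s,t\rangle$ — the two alternating words of length $m$ represent this same element and are exchanged by a single $m$-move. The crux is the claim that \emph{$w$ admits a reduced expression $w=\Delta_{s,t}\,y$ with $\ell(w)=m+\ell(y)$}. To prove it, choose $k$ maximal such that $w$ has some reduced word whose first $k$ letters spell an alternating word over $\{s,t\}$; then $1\le k\le m$, because $\mathbf i$ forces $k\ge 1$, while a prefix of a reduced word is reduced and an alternating word of length $>m$ is already non-reduced inside $\langle s,t\rangle$. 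Suppose $k<m$, say $w=a_1\cdots a_k\,x$ is reduced with $(a_1,\dots,a_k)$ alternating, and let $b\in\{s,t\}$ be the letter for which $(b,a_1,\dots,a_k)$ is again alternating (so $b\neq a_1$). Since $w$ has reduced words beginning with each of $s$ and $t$, we have $\ell(bw)<\ell(w)$, so the Exchange Condition applied to the reduced word $a_1\cdots a_k\,x$ yields an index $q$ with $w=b\,(a_1\cdots\widehat{a_q}\cdots a_k)\,x$ when $q\le k$, or $w=b\,a_1\cdots a_k\,x'$, with $x'$ obtained from $x$ by deleting one letter, when $q>k$. In the latter case $w$ has a reduced word with alternating prefix $(b,a_1,\dots,a_k)$ of length $k+1$, contradicting maximality of $k$. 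In the former, $b\,(a_1\cdots\widehat{a_q}\cdots a_k)$ is a word of length $k$ over $\{s,t\}$ representing the same element of $\langle s,t\rangle$ as $a_1\cdots a_k$; but for $k<m$ that dihedral element has a \emph{unique} reduced word, namely the alternating one $(a_1,\dots,a_k)$, whose first letter $a_1$ differs from $b$ — a contradiction. Therefore $k=m$, and since the length-$m$ alternating prefix equals $\Delta_{s,t}$, the claim follows.

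It remains to assemble the pieces. Fix a reduced word $\mathbf k$ of $y$. From the reduced factorization $w=\Delta_{s,t}\,y=s\cdot\underbrace{ts\cdots}_{m-1}\cdot y$ we see that $(i_2,\dots,i_\ell)$ and $(\underbrace{t,s,\dots}_{m-1},\mathbf k)$ are both reduced words of $sw$, of length $\ell-1$; by the inductive hypothesis they are connected by braid moves, hence $\mathbf i\leadsto(\underbrace{s,t,s,\dots}_{m},\mathbf k)$. Symmetrically, using $\Delta_{s,t}=t\cdot\underbrace{st\cdots}_{m-1}$ one gets $\mathbf j\leadsto(\underbrace{t,s,t,\dots}_{m},\mathbf k)$. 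The two words so obtained differ by a single $m$-move on their first $m$ letters, and braid moves are reversible, so $\mathbf i\leadsto\mathbf j$, finishing the induction.

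The one input used above but not recorded in the preliminaries is the Exchange Condition — if $r_1\cdots r_n$ is a reduced word of $w$ and $\ell(bw)<\ell(w)$ for a simple reflection $b$, then $bw=r_1\cdots\widehat{r_q}\cdots r_n$ for some $q$ — and this, together with the uniqueness of reduced words of short dihedral elements, is where the real work sits; the rest is bookkeeping. I would obtain the Exchange Condition from the realization of $W$ as the reflection group on $\hfrak$ attached to $\Phi$, via the length formula $\ell(w)=\#\{\beta\in\Phi^+:w\beta\in\Phi^-\}$ and the equivalence $\ell(ws_\beta)<\ell(w)\iff w\beta\in\Phi^-$.
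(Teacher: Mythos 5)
Your argument is correct. The paper does not prove this lemma at all --- it is stated as a classical result with citations to Matsumoto and Tits --- so there is no in-paper proof to compare against; what you have written is the standard textbook proof (essentially the one in Bourbaki or in Björner--Brenti). The two genuine inputs are exactly where you locate them: the (left) Exchange Condition, and the reduced factorization $w=\Delta_{s,t}\,y$ with $\ell(w)=m_{s,t}+\ell(y)$ whenever $\ell(sw)<\ell(w)$ and $\ell(tw)<\ell(w)$. Your maximal-$k$ argument for the latter is sound: the case $q>k$ contradicts maximality, and the case $q\le k$ contradicts the uniqueness of reduced words for dihedral elements of length $k<m$, which you correctly justify (any length-$k$ word with no consecutive repeats is alternating, and the two alternating words of length $k<m$ represent distinct elements). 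The assembly step --- pushing both $\mathbf i$ and $\mathbf j$ to the two alternating spellings of $\Delta_{s,t}$ followed by a common $\mathbf k$, then applying one $m$-move --- is also fine, since braid moves on a suffix lift to braid moves on the whole word. The only thing I would flag is cosmetic: since the paper works with Weyl groups realized on $\hfrak$, your proposed derivation of the Exchange Condition from the inversion-set length formula is the natural one here and closes the argument completely.
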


\paragraph{Representatives:}
A common set of representatives in $G$ for the generating reflections $(s_i)_{i \in I} \subset W$ is:
$$ \bar{s}_i := \phi_i\left(  \left( \begin{array}{cc} 0 & -1 \\ 1 & 0 \end{array}\right) \right) = e^{-e_i} e^{f_i} e^{-e_i} = e^{f_i} e^{-e_i} e^{f_i}$$
Another common choice is:
$$ \bar{\bar{s}}_i := \bar{s}_i^{-1} = \phi_i\left(  \left( \begin{array}{cc} 0 & 1 \\ -1 & 0 \end{array}\right) \right) = e^{e_i} e^{-f_i} e^{e_i} = e^{-f_i} e^{e_i} e^{-f_i}$$

\begin{lemma}[ \cite{bib:KacPeterson} Lemma 2.3 ]
The Weyl group representatives $\bar{s}_i$ (resp. $\bar{\bar{s}}_i$) satisfy the braid relationships:
$$ \bar{s}_i \bar{s}_j \bar{s}_i \dots = \bar{s}_j \bar{s}_i \bar{s}_j \dots $$
It allows us to define unambiguously $\bar{w} = \bar{u} \bar{v}$ if $w = uv$ and $\ell(w) = \ell(u) + \ell(v)$.
\end{lemma}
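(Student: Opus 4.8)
The plan is to define, for every $w\in W$ and every reduced word $\mathbf{i}=(i_1,\dots,i_\ell)\in R(w)$, the element $\bar{w}_{\mathbf{i}}:=\bar{s}_{i_1}\cdots\bar{s}_{i_\ell}\in G$ and to prove that it does not depend on $\mathbf{i}$; unambiguity of $\bar{w}$ and the relation $\bar{w}=\bar{u}\bar{v}$ will then be immediate. By Lemma~\ref{lemma:tits_lemma}, any two reduced words for $w$ are connected by braid moves, each of which replaces an alternating block $s_is_js_i\cdots$ of length $m_{ij}:=m_{s_i,s_j}$ by $s_js_is_j\cdots$; hence the whole statement reduces to the braid relations
$$\bar{s}_i\bar{s}_j\bar{s}_i\cdots=\bar{s}_j\bar{s}_i\bar{s}_j\cdots\qquad(m_{ij}\ \text{factors on each side}),$$
for all $i\neq j$ in $I$, with $m_{ij}\in\{2,3,4,6\}$ because $W$ is finite. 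Each such relation is an identity between products of elements lying in the images of $\phi_i$ and $\phi_j$, hence inside the subgroup of $G$ with Lie algebra $\gfrak_{ij}:=\langle e_i,f_i,e_j,f_j\rangle$, a rank-two semisimple Lie algebra with Cartan matrix $\left(\begin{smallmatrix}2&a_{ij}\\ a_{ji}&2\end{smallmatrix}\right)$; since the rank-two simply connected group with this Lie algebra maps to $G$ compatibly with $\phi_i,\phi_j$ — and therefore with $\bar{s}_i,\bar{s}_j$ — it suffices to verify the braid relation in the four standard rank-two groups.

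These are $SL_2\times SL_2$ (type $A_1\times A_1$, $m_{ij}=2$), $SL_3$ (type $A_2$, $m_{ij}=3$), $Sp_4$ (type $B_2$, $m_{ij}=4$) and the simply connected group of type $G_2$ ($m_{ij}=6$). For $A_1\times A_1$ the off-diagonal Cartan entries vanish, so $e_i,f_i$ commute with $e_j,f_j$ and thus $\bar{s}_i\bar{s}_j=\bar{s}_j\bar{s}_i$. In the remaining three cases one can simply multiply out the relevant $3\times3$, $4\times4$ and $7\times7$ matrices; more conceptually, both sides of the claimed identity are representatives of the longest element $w_0^{ij}$ of the rank-two Weyl group, so they differ by a torus element $t\in H$, and since $\Ad(\bar{s}_k)$ maps $\gfrak_\beta$ onto $\gfrak_{s_k\beta}$ and (by the $SL_2$-computation $\Ad(\bar{s}_k)e_k=-f_k$) is explicitly known on the simple root vectors, one can track the scalars it produces along both alternating words and conclude that $\Ad(t)$ fixes $e_{\alpha_i}$ and $e_{\alpha_j}$; as every root of $\gfrak_{ij}$ is an integral combination of $\alpha_i$ and $\alpha_j$, this forces $t=e$.

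With the braid relations in hand, $\bar{w}_{\mathbf i}$ is independent of $\mathbf i$, so we may write $\bar{w}$, and if $\ell(w)=\ell(u)+\ell(v)$ then a reduced word of $u$ followed by one of $v$ is a reduced word of $w$, whence $\bar{w}=\bar{u}\bar{v}$. The assertions for $\bar{\bar{s}}_i=\bar{s}_i^{-1}$ follow formally: applying the anti-automorphism $g\mapsto g^{-1}$ to a braid relation among the $\bar{s}$'s and reversing the order of the factors yields the braid relation among the $\bar{\bar{s}}$'s (equivalently, apply the automorphism of $G$ integrating the Lie algebra automorphism $e_i\mapsto-e_i$, $f_i\mapsto-f_i$, $h\mapsto h$, which carries $\bar{s}_i$ to $\bar{\bar{s}}_i$). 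The one genuinely computational point — and hence the main obstacle — is the rank-two verification, in particular the $G_2$ case; everything else is bookkeeping built on the reduction to rank two and on the observation that two lifts of a fixed element of $W$ differ by a torus element whose adjoint action is already detected on the simple root vectors.
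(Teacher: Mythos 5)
The paper offers no proof of this lemma --- it is quoted from Kac--Peterson --- so there is no argument of the paper's to compare yours against; your reduction (via Lemma \ref{lemma:tits_lemma}, Tits' theorem, to the braid relation itself, and then to the four rank-two cases $A_1\times A_1$, $A_2$, $B_2$, $G_2$ inside the simply connected rank-two subgroup attached to $\alpha_i,\alpha_j$) is the standard and correct skeleton, and the passage from the braid relations to the well-definedness of $\bar w$ and to $\bar w=\bar u\bar v$, as well as the transfer to $\bar{\bar s}_i=\bar s_i^{-1}$ by the anti-automorphism $g\mapsto g^{-1}$, are all fine.

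The problem is that the one step you yourself identify as the crux is not actually established. The brute-force route (``multiply out the $3\times3$, $4\times4$ and $7\times7$ matrices'') is only asserted, and your ``more conceptual'' substitute has a genuine hole: knowing that the two sides $n$ and $n'$ are lifts of the same element $w_0^{ij}$ gives $n=n't$ with $t$ in the rank-two torus, and showing that $\Ad(t)$ fixes $e_{\alpha_i}$ and $e_{\alpha_j}$ only shows that $t^{\beta}=1$ for every root $\beta$ of $\gfrak_{ij}$, i.e.\ that $t$ lies in the \emph{center} of the simply connected rank-two group --- it does not force $t=e$. This is not a vacuous worry: the center is $\mu_3$ for $A_2$ and $\mu_2$ for $B_2$ (compare $\bar s_i^2=\phi_i(-\mathrm{id})\neq\mathrm{id}$ in $SL_2$, where $\Ad(-\mathrm{id})$ is nevertheless trivial). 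To close the argument you need one more piece of information that sees the center, e.g.\ checking that $n$ and $n'$ act identically on a highest weight vector of a fundamental representation of $G_{ij}$ (in $SL_3$, both $\bar s_1\bar s_2\bar s_1$ and $\bar s_2\bar s_1\bar s_2$ send the first standard basis vector to the third, which combined with the $\Ad$-computation does give $t=e$), or simply carrying out the matrix products you deferred. As written, the proof is incomplete at exactly the point where all the content lies.
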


However they do not form a presentation of the Weyl group, since for example $ (\bar{s}_i)^2 = \phi_i(-id) \neq id $. The representative $\bar{w}_0$ of the longest element $w_0$ has an important property we will later use:
\begin{proposition}[\cite{bib:BBBR92} Lemma 4.9]
\label{proposition:w_0_action_ad}
Via the $\Ad$ action, $\bar{w}_0$ acts on the Chevalley generators as:
$$ \forall \alpha \in \Delta, \Ad\left( \bar{w}_0 \right)\left( e_\alpha \right) = - f_\alpha$$
\end{proposition}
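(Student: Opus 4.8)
The plan is to separate the claim into its two pieces — the target root space and the scalar — and to pin the scalar down by a rank-one computation. Since $\bar{w}_0$ lies in $\mathrm{Norm}_G(H)$ and represents $w_0$, the operator $\Ad(\bar{w}_0)$ preserves $\hfrak$, acts there as $w_0$, and carries each root space $\gfrak_\beta$ onto $\gfrak_{w_0\beta}$. As $w_0$ sends the simple root $\alpha$ to the negative of a simple root, $\Ad(\bar{w}_0)(e_\alpha)$ automatically lands in a negative simple root space, i.e. it is a scalar multiple of the corresponding $f$, written $f_\alpha$ in the notation of the statement. Hence the only real content is that this scalar equals $-1$, uniformly in $\alpha$, and this is what the proof must establish.

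To compute the scalar I would reduce to rank one. Because $w_0$ is the longest element, $\ell(w_0 s_\alpha)=\ell(w_0)-1$, so there is a length-additive factorisation $w_0=w_2 s_\alpha$ with $\ell(w_0)=\ell(w_2)+1$. The braid-compatible definition of the canonical lifts (the preceding lemma) then gives $\bar{w}_0=\bar{w}_2\,\bar{s}_\alpha$, whence
\[
\Ad(\bar{w}_0)(e_\alpha)=\Ad(\bar{w}_2)\bigl(\Ad(\bar{s}_\alpha)(e_\alpha)\bigr).
\]
The inner factor is a computation inside the single $\mathfrak{sl}_2$-subalgebra $\phi_\alpha(\mathfrak{sl}_2)$: with $\bar{s}_\alpha=\phi_\alpha\!\left(\begin{smallmatrix} 0 & -1\\ 1 & 0\end{smallmatrix}\right)$ and $e_\alpha=\phi_\alpha\!\left(\begin{smallmatrix} 0 & 1\\ 0 & 0\end{smallmatrix}\right)$, the $2\times 2$ matrix conjugation yields $\Ad(\bar{s}_\alpha)(e_\alpha)=-f_\alpha$. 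This single step is the source of both the minus sign and the passage from $e$ to $f$.

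It then remains to show that the outer operator $\Ad(\bar{w}_2)$ introduces no further sign, i.e. that it carries $f_\alpha$ to the negative simple root vector indexed by $w_0\alpha$ with coefficient exactly $+1$. Conceptually I would repackage the whole computation through the Chevalley involution $\omega$ of $\gfrak$, the involutive automorphism with $\omega(e_\beta)=-f_\beta$, $\omega(f_\beta)=-e_\beta$ and $\omega|_{\hfrak}=-\mathrm{id}$. The rank-one identity $\Ad(\bar{s}_\alpha)(e_\alpha)=-f_\alpha=\omega(e_\alpha)$ shows that $\omega$ and $\Ad(\bar{s}_\alpha)$ agree on $e_\alpha$, which is the compatibility needed to start the propagation. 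The claim is then equivalent to showing that $\omega\circ\Ad(\bar{w}_0)$ is an automorphism of $\gfrak$ stabilising $\hfrak$ and sending every $e_\alpha$ to a \emph{positive} simple root vector with coefficient $+1$, i.e. that it is the signless permutation of Chevalley generators induced by $-w_0$ on the simple system. Granting this, applying $\omega$ once more and using $\omega^2=\mathrm{id}$ yields $\Ad(\bar{w}_0)(e_\alpha)=-f_\alpha$.

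The main obstacle is exactly this last sign bookkeeping: showing that the coefficient is $-1$ uniformly in $\alpha$ and independently of the chosen reduced word. The rank-one step fixes the sign of one reflection unambiguously, but controlling the accumulated signs of $\Ad(\bar{w}_2)$ requires either an induction on $\ell(w_2)$ that moves $f_\alpha$ across one $\bar{s}_i$ at a time using the $SL_2$-relations for $x_{\pm\beta},y_{\pm\beta}$ recorded in the preliminaries, or — more cleanly — the braid invariance of the lift $\bar{w}_0$ (the preceding lemma) together with the observation that $\omega\circ\Ad(\bar{w}_0)$ is a genuine automorphism of the Dynkin data, hence carries Chevalley generators to Chevalley generators with no freedom left in the sign. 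Everything else is immediate from the root-space decomposition and the fact that $\bar{w}_0$ represents $w_0$.
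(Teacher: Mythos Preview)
The paper does not give its own proof; the proposition is quoted from \cite{bib:BBBR92}. Note also that the displayed formula should really read $-f_{\alpha^*}$ with $\alpha^*=-w_0\alpha$, which is how the paper itself invokes the result later (proof of Properties~\ref{properties:involution_S}). You have correctly absorbed this into ``the corresponding $f$''.

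Your overall plan is the standard one and is sound: factor $\bar w_0=\bar w_2\,\bar s_\alpha$ length-additively, compute $\Ad(\bar s_\alpha)(e_\alpha)=-f_\alpha$ inside $\phi_\alpha(SL_2)$, and reduce to $\Ad(\bar w_2)(f_\alpha)=f_{\alpha^*}$ with coefficient $+1$. Of your two proposed ways to finish, the first---inducting on $\ell(w_2)$ by pushing $f_\alpha$ across one $\bar s_i$ at a time---works and is precisely the content of the lemma in the cited reference (a form of Tits' lemma: if $w$ sends a simple root $\alpha_i$ to a simple root $\alpha_j$, then $\Ad(\bar w)$ sends $e_i\mapsto e_j$ and $f_i\mapsto f_j$ with coefficient exactly $+1$).

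Your second route, via the Chevalley involution $\omega$, has a genuine gap as written. The assertion that an automorphism of $\gfrak$ acting on $\hfrak$ as the diagram permutation $-w_0$ must carry $e_\alpha$ to $e_{\alpha^*}$ ``with no freedom left in the sign'' is false: composing the honest diagram automorphism with conjugation by any torus element still acts on $\hfrak$ in the same way but rescales each $e_\alpha$ arbitrarily. Even after observing that $\omega$ fixes every $\bar s_i$ (since $\bar s_i=e^{-e_i}e^{f_i}e^{-e_i}=e^{f_i}e^{-e_i}e^{f_i}$) and therefore commutes with $\Ad(\bar w_0)$, one only deduces that the unknown scalar squares to $1$. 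So this shortcut does not bypass the inductive sign computation; you must fall back on your first method.
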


\paragraph{Positive roots enumerations:} For $w \in W$, the inversion set of $w$ is defined as $\Inv\left(w\right) := \left\{ \beta \in \Phi^+, w\beta \in \Phi^{-} \right\} $. The length function has a characterization as the cardinal of the inversion set i.e $\ell(w) = \left|\Inv(w)\right|$ for every $w \in W$. Moreover:
\begin{lemma}[see \cite{bib:Humphreys90}]
\label{lemma:positive_roots_enumeration}
Let $(i_1, \dots, i_k)$ be a reduced expression of $w \in W$. Then for $j=1, 2, \dots, k$:
$$\beta_{{\bf i}, j} := s_{i_1} \dots s_{i_{j-1}} \alpha_{i_j}$$
produces all the positive roots in $\Inv(w)$. For $w = w_0$, it produces all positive roots.
\end{lemma}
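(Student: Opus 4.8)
The plan is to reduce everything to one external ingredient, the geometric length criterion: for $v\in W$ and a node $i$, one has $\ell(vs_i)=\ell(v)+1$ iff $v\alpha_i\in\Phi^+$, and symmetrically $\ell(s_iv)=\ell(v)+1$ iff $v^{-1}\alpha_i\in\Phi^+$ (the second form follows from the first applied to $v^{-1}$). I would only quote this, as in \cite{bib:Humphreys90}; its proof is a short induction on $\ell(v)$ from the elementary fact that $s_i$ permutes $\Phi^+\setminus\{\alpha_i\}$. The other, entirely trivial, remark used throughout is that any contiguous subword of a reduced word is itself reduced, forced by the additivity $\ell(w)=\ell(w_{\mathrm{pref}})+\ell(w_{\mathrm{mid}})+\ell(w_{\mathrm{suf}})$ whenever $w=w_{\mathrm{pref}}w_{\mathrm{mid}}w_{\mathrm{suf}}$ has reduced total expression.

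With these in hand the proof splits into three short verifications. \emph{(a) Each $\beta_{{\bf i},j}$ is a positive root:} the prefix $(i_1,\dots,i_j)$ is reduced, so $\ell\!\left(s_{i_1}\cdots s_{i_{j-1}}s_{i_j}\right)=\ell\!\left(s_{i_1}\cdots s_{i_{j-1}}\right)+1$, and the criterion gives $\beta_{{\bf i},j}=s_{i_1}\cdots s_{i_{j-1}}\alpha_{i_j}\in\Phi^+$. \emph{(b) Each $\beta_{{\bf i},j}$ lies in $\Inv(w)$:} a telescoping cancellation yields
$$w^{-1}\beta_{{\bf i},j}=\left(s_{i_k}\cdots s_{i_1}\right)\left(s_{i_1}\cdots s_{i_{j-1}}\right)\alpha_{i_j}=s_{i_k}\cdots s_{i_j}\,\alpha_{i_j}=-\,s_{i_k}\cdots s_{i_{j+1}}\,\alpha_{i_j},$$
and since the suffix $(i_j,i_{j+1},\dots,i_k)$ is reduced, the second form of the criterion makes $s_{i_k}\cdots s_{i_{j+1}}\alpha_{i_j}\in\Phi^+$, hence $w^{-1}\beta_{{\bf i},j}\in\Phi^-$. \emph{(c) The $\beta_{{\bf i},j}$ are pairwise distinct:} if $\beta_{{\bf i},a}=\beta_{{\bf i},b}$ with $a<b$, cancelling the common prefix gives $\alpha_{i_a}=s_{i_a}\cdots s_{i_{b-1}}\alpha_{i_b}$, hence $-\alpha_{i_a}=s_{i_{a+1}}\cdots s_{i_{b-1}}\alpha_{i_b}$; but $(i_{a+1},\dots,i_{b-1})$ is reduced, so the right-hand side is a positive root, a contradiction.

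Combining (a)--(c), the $k$ roots $\beta_{{\bf i},1},\dots,\beta_{{\bf i},k}$ are pairwise distinct elements of $\Inv(w)$, and since $\left|\Inv(w)\right|=\ell(w)=k$ by the characterization already recalled, they exhaust $\Inv(w)$. For $w=w_0$ the longest element sends every positive root to a negative one, so $\Inv(w_0)=\Phi^+$ and the enumeration runs over all positive roots (equivalently, combine $\ell(w_0)=\left|\Phi^+\right|$ with the first part). The one genuinely substantive step is the length criterion — the only place where the reflection-group/root-system interplay really enters; everything else is formal cancellation of reduced subwords, so in the write-up I would simply state and cite the criterion and record the three verifications above. (A minor caveat: I read $\Inv$ with the convention under which $\ell(w)=\left|\Inv(w)\right|$ is manifestly compatible with the displayed enumeration; for the literal reading $\{\beta\in\Phi^+\mid w\beta\in\Phi^-\}$ one replaces $w$ by $w^{-1}$, with identical length and combinatorics.)
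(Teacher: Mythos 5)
Your proof is correct and is essentially the standard argument from the cited reference (Humphreys), which the paper itself does not reproduce: positivity of each $\beta_{{\bf i},j}$, membership in the inversion set, and pairwise distinctness all follow from the length criterion applied to reduced prefixes and suffixes, and the count $\left|\Inv(w)\right|=\ell(w)=k$ finishes the argument. Your closing caveat is also right, and worth keeping: with the paper's literal definition $\Inv(w)=\{\beta\in\Phi^+ \mid w\beta\in\Phi^-\}$, the displayed enumeration $\beta_{{\bf i},j}=s_{i_1}\cdots s_{i_{j-1}}\alpha_{i_j}$ actually exhausts $\Inv(w^{-1})$ (e.g.\ for $w=s_1s_2$ in type $A_2$ it yields $\{\alpha_1,\alpha_1+\alpha_2\}=\Inv(w^{-1})\neq\Inv(w)$); this is a harmless convention clash since the two sets have the same cardinality and coincide for $w=w_0$, which is all the paper uses.
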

When the chosen reduced expression is obvious from context, we will drop the subscript ${\bf i}$.  There is a very simple yet very useful identity that we will use several times. 
\begin{lemma}[Corollary 1.3.22 \cite{bib:Kumar02}]
\label{lbl:kumar} 
Let $\lambda \in \mathfrak{a}$ and $w = s_{i_1} \dots s_{i_\ell}$ a reduced expression for a Weyl group element $w$ of length $\ell$. If $\left( \beta_1, \dots, \beta_\ell \right)$ is the associated positive roots enumeration, then we have:
\begin{eqnarray}
\lambda - w \lambda & = & \sum_{k=1}^\ell \alpha_{i_k}(\lambda) \beta_k^\vee\\
\lambda - w^{-1} \lambda & = & \sum_{k=1}^\ell \beta_{k}(\lambda) \alpha_{i_k}^\vee
\end{eqnarray}
\end{lemma}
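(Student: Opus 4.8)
The plan is to prove both identities by induction on the length $\ell$ of the reduced expression, using the telescoping structure provided by the positive roots enumeration of Lemma~\ref{lemma:positive_roots_enumeration}. First I would record the key point: if $w = s_{i_1}\dots s_{i_\ell}$ is reduced and $\beta_k = s_{i_1}\dots s_{i_{k-1}}\alpha_{i_k}$, then setting $w_{k} := s_{i_1}\dots s_{i_{k}}$ (so $w_0' = \mathrm{id}$, $w_\ell = w$, in a local notation to avoid clash with $w_0$) one has $w_{k-1}\,s_{i_k}\,w_{k-1}^{-1} = s_{\beta_k}$, the reflection in the root $\beta_k$. Consequently, for any $\lambda \in \afrak$,
\begin{equation}
w_{k-1}\lambda - w_{k}\lambda
 = w_{k-1}\lambda - w_{k-1} s_{i_k}\lambda
 = w_{k-1}\bigl(\alpha_{i_k}(\lambda)\,\alpha_{i_k}^\vee\bigr)
 = \alpha_{i_k}(\lambda)\, w_{k-1}\alpha_{i_k}^\vee .
\end{equation}

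For the first identity, I would sum this telescoping relation over $k = 1,\dots,\ell$:
\begin{equation}
\lambda - w\lambda = \sum_{k=1}^{\ell}\bigl(w_{k-1}\lambda - w_{k}\lambda\bigr)
 = \sum_{k=1}^{\ell}\alpha_{i_k}(\lambda)\, w_{k-1}\alpha_{i_k}^\vee .
\end{equation}
It then remains to identify $w_{k-1}\alpha_{i_k}^\vee$ with $\beta_k^\vee$. This is exactly the coroot version of Lemma~\ref{lemma:positive_roots_enumeration}: since $\beta_k = w_{k-1}\alpha_{i_k}$ and $w_{k-1}$ acts by an isometry of the Killing form (it lies in $W$), we get $\beta_k^\vee = \tfrac{2\beta_k}{(\beta_k,\beta_k)} = w_{k-1}\tfrac{2\alpha_{i_k}}{(\alpha_{i_k},\alpha_{i_k})} = w_{k-1}\alpha_{i_k}^\vee$, using $(\beta_k,\beta_k) = (\alpha_{i_k},\alpha_{i_k})$. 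Substituting gives the first displayed formula. The second identity follows by the same argument applied to $w^{-1}$, whose reduced expression is $s_{i_\ell}\dots s_{i_1}$: one obtains $\lambda - w^{-1}\lambda = \sum_{k=1}^{\ell}\alpha_{i_k}(\lambda')\,\gamma_k^\vee$ where, reading the word $(i_\ell,\dots,i_1)$, the associated positive-root enumeration is precisely $\gamma$; reindexing and matching $\gamma$'s with the $\beta$'s of the original word — here $w^{-1}$ sends $\beta_k \mapsto$ a negative root and the relevant enumeration in reverse order produces $\alpha_{i_k}^\vee$ with coefficient $\beta_k(\lambda)$ — yields the stated expression. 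Alternatively, and perhaps more cleanly, one can derive the second formula from the first by applying $w^{-1}$ to both sides of $\lambda - w\lambda = \sum_k \alpha_{i_k}(\lambda)\beta_k^\vee$ and using $w^{-1}\beta_k^\vee = (w_{k}\dots w_\ell)^{-1}$-type bookkeeping; I would present whichever of the two is shorter once the indices are written out.

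The only genuinely delicate point is the bookkeeping in the second identity: keeping track of which reduced word one is enumerating roots for, and verifying that the reverse word $(i_\ell,\dots,i_1)$ produces, up to reindexing, the coroots $\alpha_{i_k}^\vee$ paired with the scalars $\beta_k(\lambda)$ rather than some other combination. Everything else — the telescoping, the reflection identity $s_\beta\mu = \mu - \beta(\mu)\beta^\vee$ from the definition of $s_\beta$, and the $W$-invariance of the Killing form — is routine. I do not expect to need induction explicitly if the telescoping sum is set up carefully, though an inductive phrasing (peeling off $s_{i_\ell}$ and using $w\lambda = w_{\ell-1}s_{i_\ell}\lambda$) is an equally valid and fully rigorous alternative.
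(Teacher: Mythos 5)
The paper gives no proof of this lemma (it is quoted from Kumar's book), so there is nothing in-paper to compare against; I am judging your argument on its own. Your proof of the first identity is correct and complete: the telescoping $\lambda - w\lambda = \sum_{k}(w_{k-1}\lambda - w_k\lambda)$, the reflection formula $s_{i_k}\lambda = \lambda - \alpha_{i_k}(\lambda)\alpha_{i_k}^\vee$, and the identification $w_{k-1}\alpha_{i_k}^\vee = \beta_k^\vee$ via $W$-invariance of the Killing form are all sound.

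The gap is in the second identity, precisely at the point you yourself flagged as delicate. Applying your first identity to $w^{-1}$ with the reversed word $(i_\ell,\dots,i_1)$ produces coefficients $\alpha_{i_k}(\lambda)$ attached to the coroots of the enumeration $\gamma_p$ of that word, and a short computation gives $\gamma_{\ell-k+1} = s_{i_\ell}\cdots s_{i_{k+1}}\alpha_{i_k} = w^{-1}w_k\alpha_{i_k} = -w^{-1}\beta_k$. What you obtain is therefore $\lambda - w^{-1}\lambda = -\sum_k \alpha_{i_k}(\lambda)\, w^{-1}\beta_k^\vee$, which is just $-w^{-1}$ applied to the first identity — a circular restatement, not the claimed $\sum_k \beta_k(\lambda)\,\alpha_{i_k}^\vee$; along this route the coefficients never become $\beta_k(\lambda)$. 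The same objection applies to your alternative of applying $w^{-1}$ to both sides of the first identity. Two short repairs are available. (i) Telescope differently: set $q_k := s_{i_k}\cdots s_{i_1}$, so $q_\ell = w^{-1}$ and $q_{k-1}\lambda - q_k\lambda = q_{k-1}\lambda - s_{i_k}(q_{k-1}\lambda) = \alpha_{i_k}(q_{k-1}\lambda)\,\alpha_{i_k}^\vee$; since $\alpha(u\mu) = (u^{-1}\alpha)(\mu)$ for $u\in W$, one has $\alpha_{i_k}(q_{k-1}\lambda) = (s_{i_1}\cdots s_{i_{k-1}}\alpha_{i_k})(\lambda) = \beta_k(\lambda)$, and summing gives the second identity at once. (ii) Alternatively, observe that the second identity is the Killing-form adjoint of the first: $W$ acts orthogonally so the adjoint of $1-w$ is $1-w^{-1}$, and the adjoint of the rank-one map $\lambda\mapsto \alpha_{i_k}(\lambda)\beta_k^\vee$ is $\lambda\mapsto \beta_k(\lambda)\alpha_{i_k}^\vee$ because $(\beta_k,\beta_k)=(\alpha_{i_k},\alpha_{i_k})$. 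Without one of these (or an equivalent), the second identity is not established by your proposal.
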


\subsection{Involutions}
Since $w_0 \in W$ transforms all simple positive roots to simple negative roots, there is an involution on $\Delta$ (or equivalently the index set $I$) denoted by $*$ such that:
$$ \forall \alpha \in \Delta, \beta^* = -w_0 \alpha$$

We define the following group antimorphisms of $G$ by their actions on a torus element $a \in H$ and the one-parameters subgroups generated by the Chevalley generators. For convenience, we also give their action at the level of the Lie algebra $\gfrak$.
\begin{itemize}
 \item The transpose:
       $$ \begin{array}{ccc}
           a^T = a          & x_i(t)^T     = y_i(t) & y_i(t)^T     = x_i(t)
          \end{array}
       $$
       $$ \forall \alpha \in \Delta, 
          \begin{array}{ccc}
           h_\alpha^T = h_\alpha &  e_\alpha^T = f_\alpha & f_\alpha^T = e_\alpha
          \end{array}
       $$
 \item The positive inverse or Kashiwara involution ( \cite{bib:Kashiwara91} (1.3) ):
       $$ \begin{array}{ccc}
           a^\iota = a^{-1} & x_i(t)^\iota = x_i(t) & y_i(t)^\iota = y_i(t)
          \end{array}
       $$
       $$ \forall \alpha \in \Delta, 
          \begin{array}{ccc}
           h_\alpha^\iota = -h_\alpha &  e_\alpha^\iota = e_\alpha & f_\alpha^\iota = f_\alpha
          \end{array}
       $$
 \item Sch\"utzenberger involution:
       $$S(x) = \bar{w}_0 \left( x^{-1} \right)^{\iota T} \bar{w}_0^{-1} = \bar{w}_0^{-1} \left( x^{-1} \right)^{\iota T} \bar{w}_0$$
       It acts as ( relation 6.4 in \cite{bib:BZ01} or using proposition \ref{proposition:w_0_action_ad}):
       $$ S\left( x_{i_1}(t_1) \dots x_{i_q}(t_q) \right) = x_{i_q^*}(t_q) \dots x_{i_1^*}(t_1)$$
       $$ \forall \alpha \in \Delta, 
          \begin{array}{ccc}
           S(h_\alpha) = h_{\alpha^*} &  S(e_\alpha) = e_{\alpha*} & S(f_\alpha) = f_{\alpha*}
          \end{array}
       $$
       Notice that $S = \iota \circ S \circ \iota$
\end{itemize}
More is said in section \ref{section:involutions} about involutions, detailing their effect on crystals and justifying the terminology.

\subsection{On Lusztig's canonical basis}
\label{subsection:Lusztig_canonical_basis}
In the sequel, we will never use Lusztig's canonical basis in itself. However, we will be extensively interested in its parametrizations. As such it is important to review this mathematical object. We will remain elusive concerning its precise definition, though. Consider the quantum group $\Uc_q\left( \gfrak \right)$. In \cite{bib:Lusztig90a} and \cite{bib:Lusztig90b}, Lusztig introduced a basis $\Bfrak\left( \infty \right)$ of the subalgebra $\Uc_q\left( \nfrak \right)$ called the canonical basis.

Later, in an independent work, Kashiwara introduced crystal bases for integrable modules whose combinatorics are particularly simple at $q=0$. Thanks to the work of Grojnowski and Lusztig \cite{bib:GL92}, Lusztig's canonical basis and Kashiwara's global crystal basis are in fact the same.

\paragraph{Parametrizations:}
There are two common parametrizations of the canonical basis. Both depend on a choice of reduced word for the longest Weyl group element $w_0$. Let ${\bf i} \in R(w_0)$ and recall that $m = \ell(w_0)$.

The Lusztig parametrization is a bijection (Proposition 42.1.5 in \cite{bib:Lusztig93} for instance):
$$
 \begin{array}{llll}
\left[x_{\bf i}\right]: & \N^m              & \rightarrow & \Bfrak\left( \infty \right)\\
                        & (t_1, \dots, t_m) & \mapsto     & \left[x_{\bf i}\right](t_1, \dots, t_m)
 \end{array}
$$
The string (or Kashiwara) parametrization uses the integer points of a convex polyhedral cone $\Cc_{\bf i} \subset \R_+^m$ (proposition 1.5 in \cite{bib:Littelmann98}) called the string cone. It is given by a bijection:
$$
 \begin{array}{llll}
\left[x_{\bf-i}\right]: & \Cc_{\bf i} \cap \N^m & \rightarrow & \Bfrak\left( \infty \right)\\
                        & (c_1, \dots, c_m)     & \mapsto      & \left[x_{\bf-i}\right](c_1, \dots, c_m)
 \end{array}
$$

\paragraph{Kashiwara operators (see \cite{bib:HongKang}):}
These are linear operators on $\Uc_q(\nfrak)$. Their action on the canonical basis $\Bfrak\left( \infty \right)$ can be independently defined but we only give a description in the parametrizations associated to ${\bf i} \in R(w_0)$. Let $\alpha = \alpha_{i_1}$ be first simple root in ${\bf i}$ and ${\bf 0}$ be the zero element in $\Uc_q(\nfrak)$. The Kashiwara operators $\tilde{e}_\alpha, \tilde{f}_\alpha: \Bfrak\left( \infty \right) \rightarrow \Bfrak\left( \infty \right) \bigsqcup \{ \bf 0 \}$ satisfy:

\begin{align}
\label{eqn:kashiwara_operator_f_1}
\tilde{f}_\alpha\left( [x_{\bf i}](t_1, \dots, t_m) \right) & = [x_{\bf i}](t_1 + 1, \dots, t_m)
\end{align}

\begin{align}
\label{eqn:kashiwara_operator_f_2}
\tilde{f}_\alpha\left( [x_{\bf-i}](c_1, \dots, c_m) \right) & = [x_{\bf-i}](c_1 + 1, \dots, c_m)
\end{align}

\begin{align}
\label{eqn:kashiwara_operator_e_1}
\tilde{e}_\alpha\left( [x_{\bf i}](t_1, \dots, t_m) \right) & = [x_{\bf i}](t_1 - 1, \dots, t_m) \textrm{ or } {\bf 0} \textrm{ if } t_1 = 0
\end{align}

\begin{align}
\label{eqn:kashiwara_operator_e_2}
\tilde{e}_\alpha\left( [x_{\bf-i}](c_1, \dots, c_m) \right) & = [x_{\bf-i}](c_1 - 1, \dots, c_m) \textrm{ or } {\bf 0} \textrm{ if } c_1 = 0
\end{align}

\begin{rmk}
 The Kashiwara operators are quasi-inverses of each other, in the sense that:
$$ \forall b \in \Bfrak\left( \infty \right), \tilde{e}_\alpha \circ \tilde{f}_\alpha(b) = b$$
$$ \forall b \in \Bfrak\left( \infty \right), \tilde{e}_\alpha(b) \neq {\bf 0} \Rightarrow \tilde{f}_\alpha \circ \tilde{e}_\alpha(b) = b$$
\end{rmk}

\paragraph{Compatibility properties:}
The desirable properties of the canonical basis are compatibility properties regarding highest weight modules. Fix $\lambda \in P^+$ and consider the highest weight module $V(\lambda)$ - for the quantum group. Recall that $v_\lambda$ is a highest weight vector. It is well known that the linear map:
$$
 \begin{array}{llll}
\pi_\lambda: & \Uc_q(\nfrak) & \rightarrow & V(\lambda)\\
             & n             & \mapsto     & n v_\lambda
 \end{array}
$$
has the kernel:
$$ \ker\left( \pi_\lambda \right) = \sum_{\alpha \in \Delta} \Uc_q(\nfrak) f_\alpha^{\lambda(\alpha^\vee)+1}$$
Therefore, the image of $\Bfrak\left( \infty \right)$ by $\pi_\lambda$ is:
$$ \Bfrak\left( \lambda \right) := \Bfrak\left( \infty \right) - \Bfrak\left( \infty \right) \cap \ker \pi_\lambda$$
Define $\Bfrak$ as the disjoint union $\Bfrak := \bigsqcup_{\lambda \in P^+} \Bfrak\left( \lambda \right)$. If $\mu \leq \lambda$ in the convex ordering given by the Weyl chamber, we clearly have $\ker\left( \pi_\lambda \right) \subset \ker\left( \pi_\mu \right)$, hence a natural injection $\Bfrak\left( \mu \right) \hookrightarrow \Bfrak\left( \lambda \right)$. We see that 
the canonical basis is a direct limit $\Bfrak\left( \infty \right) = \varinjlim \Bfrak\left( \lambda \right)$, explaining the notation $\Bfrak\left( \infty \right)$. We have:
\begin{thm}[ \cite{bib:Lusztig93} ]
 $\Bfrak\left( \lambda \right) v_\lambda$ is a basis for $V(\lambda)$ made of weight vectors.
\end{thm}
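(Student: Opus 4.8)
\emph{Proof sketch.} The plan is to reduce everything to two facts: the surjectivity of the structure map $\pi_\lambda$, and the compatibility of Lusztig's canonical basis with the kernel of $\pi_\lambda$. First I would recall that $V(\lambda)$ is generated by $v_\lambda$ as a module over $\Uc_q(\nfrak)$ — immediate from the triangular decomposition of $\Uc_q(\gfrak)$ together with $\ufrak\cdot v_\lambda=\{0\}$ and the fact that $v_\lambda$ is a weight vector. Hence $\pi_\lambda$ is onto and induces an isomorphism $\Uc_q(\nfrak)/\ker\pi_\lambda \xrightarrow{\ \sim\ } V(\lambda)$, so it suffices to show that $\Bfrak\left(\infty\right)$ descends to a basis of this quotient, i.e. that $\Bfrak\left(\infty\right)\cap\ker\pi_\lambda$ is itself a basis of $\ker\pi_\lambda$. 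Granting this, the complementary set $\Bfrak\left(\lambda\right)=\Bfrak\left(\infty\right)-\Bfrak\left(\infty\right)\cap\ker\pi_\lambda$ is carried by $\pi_\lambda$ onto a basis of $V(\lambda)$, namely $\Bfrak\left(\lambda\right)v_\lambda$. Spanning of $V(\lambda)$ is clear since $\Bfrak\left(\infty\right)$ spans $\Uc_q(\nfrak)$ and $\pi_\lambda$ is surjective; for linear independence, if $\sum_i c_i b_i v_\lambda=0$ with the $b_i\in\Bfrak\left(\lambda\right)$ distinct, then $\sum_i c_i b_i\in\ker\pi_\lambda$, which by the spanning half of the claim is a linear combination of elements of $\Bfrak\left(\infty\right)\cap\ker\pi_\lambda$; these being disjoint from the $b_i$ and $\Bfrak\left(\infty\right)$ being a basis of $\Uc_q(\nfrak)$, all $c_i$ vanish.

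The substantive input — and the main obstacle — is precisely the statement that $\Bfrak\left(\infty\right)\cap\ker\pi_\lambda$ spans $\ker\pi_\lambda=\sum_{\alpha\in\Delta}\Uc_q(\nfrak)f_\alpha^{\lambda(\alpha^\vee)+1}$, equivalently that $\pi_\lambda$ maps $\Bfrak\left(\infty\right)$ into $\Bfrak\left(\lambda\right)\sqcup\{\mathbf{0}\}$ with $\Bfrak\left(\lambda\right)$ linearly independent. This is the compatibility of the canonical basis with the distinguished left ideals, one of the main theorems of \cite{bib:Lusztig93}; since we do not reproduce the construction of $\Bfrak\left(\infty\right)$ here, I would simply invoke it. (Its proof rests on the bar-involution, the integral form ${}_{\Z[q,q^{-1}]}\Uc_q(\nfrak)$, and the lattice characterization of the canonical basis, machinery we deliberately do not develop.)

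Finally, for the assertion that these are weight vectors, each $b\in\Bfrak\left(\infty\right)$ is homogeneous for the grading of $\Uc_q(\nfrak)$ by $-Q^+$, the negative of the monoid of sums of positive roots: say $b$ lies in the graded piece of degree $-\nu$. Since $v_\lambda$ has weight $\lambda$ and multiplication by that graded piece shifts weights by $-\nu$, the vector $bv_\lambda$ lies in the weight space $V(\lambda)_{\lambda-\nu}$ and is therefore a weight vector whenever it is nonzero. Hence every element of $\Bfrak\left(\lambda\right)v_\lambda$ is a weight vector, which completes the argument modulo the cited compatibility theorem.
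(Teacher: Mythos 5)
The paper does not prove this statement at all: it is quoted verbatim from Lusztig's book, and the only ``proof'' in the text is the citation \cite{bib:Lusztig93} together with the surrounding discussion of $\ker\pi_\lambda$ and the definition $\Bfrak(\lambda)=\Bfrak(\infty)-\Bfrak(\infty)\cap\ker\pi_\lambda$. So there is no argument in the paper to compare yours against; what you have written is a correct derivation of the cited theorem from the one genuinely deep input, namely the compatibility of the canonical basis with the left ideals $\sum_{\alpha}\Uc_q(\nfrak)f_\alpha^{\lambda(\alpha^\vee)+1}$ --- which is exactly the content of Lusztig's theorem that the paper is black-boxing. Your linear-algebra reduction (a basis $B$ of $V$ with $B\cap W$ a basis of $W$ induces a basis of $V/W$), the surjectivity of $\pi_\lambda$ from the triangular decomposition, and the weight-vector claim via homogeneity of $\Bfrak(\infty)$ for the $Q^+$-grading are all sound. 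The one point worth tightening is your parenthetical ``equivalently'': as phrased, ``$\pi_\lambda$ maps $\Bfrak(\infty)$ into $\Bfrak(\lambda)\sqcup\{\mathbf 0\}$'' is nearly circular given that $\Bfrak(\lambda)$ is \emph{defined} as a complement inside $\Bfrak(\infty)$; the non-circular statement is simply that $\Bfrak(\infty)\cap\ker\pi_\lambda$ spans $\ker\pi_\lambda$ (linear independence being automatic for a subset of a basis). Modulo that cosmetic issue, the proposal is correct and appropriately localizes the entire difficulty in the invoked compatibility theorem, which is the same division of labour the paper makes implicitly by citing Lusztig.
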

Therefore, the subsets $\left( \Bfrak\left( \lambda \right), \lambda \in P^+ \right)$ of the canonical basis $\Bfrak\left( \infty \right)$, once identified with $\Bfrak\left( \lambda \right) v_\lambda$, form compatible bases of highest weight modules.

For $b \in \Bfrak\left( \lambda \right)$, denote by $\gamma(b)$ its weight in the representation $V(\lambda)$. Let us record the expressions for the weight in terms of parametrizations. If $b = \left[ x_{\bf i} \right](t_1, \dots, t_m)$ then, using the positive roots enumeration $(\beta_{{\bf i}, 1}, \dots, \beta_{{\bf i}, m})$ associated to ${\bf i}$:
\begin{align}
\label{eqn:weight_lusztig}
\gamma(b) & = \lambda - \sum_{j=1}^m t_j \beta_{{\bf i}, j}
\end{align}
If $b = \left[x_{\bf-i}\right](c_1, \dots, c_m)$ then:
\begin{align}
\label{eqn:weight_string}
\gamma(b) & = \lambda - \sum_{j=1}^m c_j \alpha_{i_j}
\end{align}

Thanks to the weight map $\gamma$ and the Kashiwara operators, every set $\Bfrak\left( \lambda \right)$ is given the structure of a combinatorial crystal (Definition 4.5.1 in \cite{bib:HongKang}). We now introduce the abstract geometric analogues.

\section{Abstract geometric crystals}
\label{section:abstract_geom_crystals}

\begin{definition}
\label{def:crystal}
An abstract geometric crystal is a topological space $L$ equipped with the following continuous structural maps:
\begin{itemize}
 \item a weight map $\gamma: L \rightarrow \afrak$.
 \item $\varepsilon_\alpha, \varphi_\alpha: L \rightarrow \R$ defined for every $\alpha \in \Delta$
 \item $e^c_\alpha: L \rightarrow L$, $c \in \R$, $\alpha \in \Delta$
\end{itemize}
and satisfying the following properties for $\pi \in L$:
\begin{itemize}
 \item[(C1)] $\varphi_\alpha(\pi) = \varepsilon_\alpha(\pi) + \alpha\left( \gamma(\pi) \right)$
 \item[(C2)] $\gamma\left( e^c_\alpha \cdot \pi \right) = \gamma\left( \pi \right) + c \alpha^\vee$
 \item[(C3)] $\varepsilon_\alpha\left( e^c_\alpha \cdot \pi \right) = \varepsilon_\alpha\left( \pi \right) - c$
 \item[(C3')]$\varphi_\alpha\left( e^c_\alpha \cdot \pi \right) = \varphi_\alpha\left( \pi \right) + c$
 \item[(C4)] $e^._\alpha$ are actions: $e^0 = id$ and $e^{c+c'}_\alpha = e^c_\alpha \cdot e^{c'}_\alpha$
\end{itemize}
 Clearly, (C3) and (C3') are equivalent once (C1) and (C2) are assumed.
\end{definition}

Here, unlike the standard object defined by Kashiwara, there is no ghost element ${ \bf 0 }$, the crystal has free actions and coefficients are real. One could use the term 'free continuous crystal'. Later on, we will also require a certain type of commutation relations between the actions $(e^._\alpha)_{\alpha \in \Delta}$, identified as Verma relations. Berenstein and Kazhdan refer to such structure as a 'pre-crystal' if Verma relations are not available.

\paragraph{Generated crystals:} Given a subset $S$ of a crystal $L$, define $\langle S \rangle$ as the smallest subcrystal of $L$ containing $S$, endowed with the subspace topology. Since intersections of crystals are crystals, we can define it as:
\begin{align*}
\langle S \rangle & := \bigcap_{ \substack{S \subset \Cc \\ \textrm{ subcrystal of L } } } \Cc\\
                  & = \left\{ e^{c_1}_{\alpha_{i_1}} \cdot e^{c_2}_{\alpha_{i_2}} \cdot \dots e^{c_l}_{\alpha_{i_k}} \cdot x | x \in S, k \in \N, \left( c_1, \dots, c_k\right) \in \R^k , \left( i_1, i_2, \dots, i_k\right) \in I^k  \right\}
\end{align*}

\paragraph{Crystal connected components:} This notion is not to be confused with the topological notion of connectedness. A crystal is connected if given two elements $x$ and $y$, there is $k \in \N$, $\left( c_1, c_2, \dots, c_k\right) \in \R^k$ and $\left( i_1, i_2, \dots, i_k\right) \in I^k$ such that:
$$ y = e^{c_1}_{\alpha_{i_1}} \cdot e^{c_2}_{\alpha_{i_2}} \cdot \dots e^{c_k}_{\alpha_{i_k}} \cdot x  $$
Any crystal is the disjoint union of its connected components, since 'being connected' is an equivalence relation. Also a connected component is generated by any of its elements, and connected components are subcrystals.

\paragraph{Morphism of crystals:} A morphism of crystals is a map $\psi$ that preserves the structure. It is an isomorphism if invertible, and the inverse map is a morphism.

\paragraph{h-Tensor product of crystals:} In the sequel, the crystal structure itself will depend on a parameter $h \geq 0$. For $h\geq0$, we define the $h$-tensor product of two crystals $B_1$ and $B_2$ as the set $ B_1 \otimes_h B_2 = B_1 \times B_2$ endowed with the product topology and the following structural maps. For $h>0$, they are given by:
\begin{itemize}
 \item $\gamma\left( b_1 \otimes_h b_2 \right) = \gamma\left( b_1 \right) + \gamma\left( b_2 \right)$
 \item $\varepsilon_\alpha\left( b_1 \otimes_h b_2 \right) = \varepsilon_\alpha\left( b_1 \right) + 
         h \log\left( 1 + e^{\frac{ \varepsilon_\alpha(b_2) - \varphi_\alpha(b_1) }{h}}\right)$
 \item $\varphi_\alpha\left( b_1 \otimes_h b_2 \right) = \varphi_\alpha\left( b_2 \right) + 
         h \log\left( 1 + e^{\frac{ \varphi_\alpha(b_1) - \varepsilon_\alpha(b_2) }{h}}\right)$
 \item The actions are defined as 
       $e^c_\alpha\left( b_1 \otimes_h b_2 \right) = \left( e^{c_1}_\alpha \cdot b_1 \right) \otimes_h \left( e^{c_2}_\alpha \cdot b_2 \right)$\\
       where
       \begin{align*}
        c_1 & = h \log\left( \frac{ e^{\frac{c + \varphi_\alpha(b_1) }{h}} + e^{\frac{   \varepsilon_\alpha(b_2) }{h}} }
                                  { e^{\frac{    \varphi_\alpha(b_1) }{h}} + e^{\frac{   \varepsilon_\alpha(b_2) }{h}} } \right)\\
            & = h \log\left( e^{\frac{c}{h}} + e^{\frac{\varepsilon_\alpha(b_2) - \varphi_\alpha(b_1)}{h}} \right) - 
                h \log\left( 1 + e^{\frac{\varepsilon_\alpha(b_2) - \varphi_\alpha(b_1)}{h}} \right)\\
        c_2 & = h \log\left( \frac{ e^{\frac{    \varphi_\alpha(b_1) }{h}} + e^{\frac{   \varepsilon_\alpha(b_2) }{h}} }
                                  { e^{\frac{    \varphi_\alpha(b_1) }{h}} + e^{\frac{-c+\varepsilon_\alpha(b_2) }{h}} } \right)\\
            & = -h \log\left( e^{\frac{-c}{h}} + e^{\frac{\varphi_\alpha(b_1) - \varepsilon_\alpha(b_2)}{h}} \right) +
                h \log\left( 1 + e^{\frac{\varphi_\alpha(b_1) - \varepsilon_\alpha(b_2)}{h}} \right)\\
       \end{align*}
\end{itemize}

\begin{rmk}
 $$ c_1 + c_2 = c$$
\end{rmk}

\begin{rmk}
The structural maps in the $h$-tensor product are also continuous in $h$. By letting the parameter $h \rightarrow 0$, one recovers the same crystalline axioms for tensor product as in the original work of Kashiwara (Definition 4.5.3 in \cite{bib:HongKang}):
\begin{itemize}
 \item $\gamma\left( b_1 \otimes b_2 \right) = \gamma\left( b_1 \right) + \gamma\left( b_2 \right)$
 \item $\varepsilon_\alpha\left( b_1 \otimes b_2 \right) = \varepsilon_\alpha\left( b_1 \right) + 
         \left( \varepsilon_\alpha(b_2) - \varphi_\alpha(b_1) \right)^+$
 \item $\varphi_\alpha\left( b_1 \otimes b_2 \right) = \varphi_\alpha\left( b_2 \right) + 
         \left( \varphi_\alpha(b_1) - \varepsilon_\alpha(b_2)\right)^+$
 \item $e^c_\alpha\left( b_1 \otimes b_2 \right) = \left( e^{c_1}_\alpha \cdot b_1 \right) \otimes \left( e^{c_2}_\alpha \cdot b_2 \right)$\\
       where
       \begin{align*}
        c_1 & = \max\left( c, \varepsilon_\alpha(b_2) - \varphi_\alpha(b_1) \right) - 
                \left( \varepsilon_\alpha(b_2) - \varphi_\alpha(b_1) \right)^+\\
        c_2 & = \min\left( c, \varepsilon_\alpha(b_2) - \varphi_\alpha(b_1) \right) +
                \left( \varphi_\alpha(b_1) - \varepsilon_\alpha(b_2) \right)^+\\
       \end{align*}
\end{itemize}
\end{rmk}

It is easy to check the following:
\begin{proposition}
\label{proposition:tensor_product_is_crystal}
For all $h\geq 0$, $B_1 \otimes_h B_2$ is a crystal.
\end{proposition}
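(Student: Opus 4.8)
The plan is to verify the five crystal axioms (C1)--(C4) directly for the structural maps of $B_1 \otimes_h B_2$, handling the generic case $h>0$ and the limit case $h=0$ in parallel (the latter follows either from continuity in $h$ or by the same computation with $h\log(e^{a/h}+e^{b/h})$ replaced by $\max(a,b)$). Since all maps are built from $\gamma,\varepsilon_\alpha,\varphi_\alpha$ on the factors together with the explicit combinatorial formulas, every verification reduces to an elementary identity for the function $L_h(a,b):=h\log(e^{a/h}+e^{b/h})$, most notably $L_h(a,b)-L_h(a',b')$ telescoping nicely and $L_h(a+t,b+t)=L_h(a,b)+t$.

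First I would check (C1): expanding $\varphi_\alpha(b_1\otimes_h b_2)-\varepsilon_\alpha(b_1\otimes_h b_2)$ using the definitions gives
\[
\varphi_\alpha(b_2)-\varepsilon_\alpha(b_1)+h\log\!\Big(1+e^{\frac{\varphi_\alpha(b_1)-\varepsilon_\alpha(b_2)}{h}}\Big)-h\log\!\Big(1+e^{\frac{\varepsilon_\alpha(b_2)-\varphi_\alpha(b_1)}{h}}\Big),
\]
and the two logarithmic terms combine to $\varphi_\alpha(b_1)-\varepsilon_\alpha(b_2)$ by the identity $h\log(1+e^{x/h})-h\log(1+e^{-x/h})=x$; what remains is $(\varphi_\alpha(b_2)+\varphi_\alpha(b_1))-(\varepsilon_\alpha(b_1)+\varepsilon_\alpha(b_2))$, which equals $\alpha(\gamma(b_1))+\alpha(\gamma(b_2))=\alpha(\gamma(b_1\otimes_h b_2))$ by (C1) applied to each factor and additivity of $\gamma$. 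Next, (C2) is immediate: by the remark $c_1+c_2=c$, and by (C2) on each factor together with additivity of $\gamma$, $\gamma(e^c_\alpha(b_1\otimes_h b_2))=\gamma(b_1)+c_1\alpha^\vee+\gamma(b_2)+c_2\alpha^\vee=\gamma(b_1\otimes_h b_2)+c\alpha^\vee$. For (C3), I would compute $\varepsilon_\alpha(e^c_\alpha(b_1\otimes_h b_2))$ from the definition with $b_i$ replaced by $e^{c_i}_\alpha b_i$, using (C3)/(C3') on the factors to rewrite $\varepsilon_\alpha(e^{c_1}_\alpha b_1),\varphi_\alpha(e^{c_1}_\alpha b_1),\varepsilon_\alpha(e^{c_2}_\alpha b_2)$, then substitute the explicit closed forms for $c_1,c_2$; the logarithmic terms telescope against those coming from $c_1$ and the result collapses to $\varepsilon_\alpha(b_1\otimes_h b_2)-c$. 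Given (C1) and (C2), axiom (C3') follows from (C3).

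The genuinely structural point is (C4), the action property. That $e^0_\alpha=\mathrm{id}$ is clear since $c=0$ forces $c_1=c_2=0$. For $e^{c+c'}_\alpha=e^c_\alpha\circ e^{c'}_\alpha$ one must show that the splitting is coherent: if $(c_1',c_2')$ is the splitting of $c'$ relative to $(b_1,b_2)$, and $(c_1'',c_2'')$ is the splitting of $c$ relative to $(e^{c_1'}_\alpha b_1,e^{c_2'}_\alpha b_2)$, then $c_1'+c_1''$ and $c_2'+c_2''$ coincide with the splitting of $c+c'$ relative to $(b_1,b_2)$. Here one uses (C3)/(C3') to express $\varphi_\alpha(e^{c_1'}_\alpha b_1)=\varphi_\alpha(b_1)+c_1'$ and $\varepsilon_\alpha(e^{c_2'}_\alpha b_2)=\varepsilon_\alpha(b_2)-c_2'$, plugs these into the closed form for $c_1''$, and after simplification (again via the telescoping property of $L_h$, and $c_1'+c_2'=c'$) recovers exactly the formula for the first component of the $(c+c')$-splitting; the second component is symmetric, and then (C4) on each factor finishes the argument.

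I expect axiom (C4) to be the main obstacle, since it is the only one that is not a one-line manipulation: it requires carefully substituting the perturbed values of $\varepsilon_\alpha,\varphi_\alpha$ into the nested logarithmic expressions and checking that everything cancels, which is where a sign slip or a misplaced $h$ would be easy to make. A clean way to organize this computation — and one I would adopt — is to set $u=e^{\varphi_\alpha(b_1)/h}$, $v=e^{\varepsilon_\alpha(b_2)/h}$, and $\xi=e^{c/h}$, rewrite $c_1=h\log\frac{\xi u+v}{u+v}$ and $c_2=h\log\frac{u+v}{u+\xi^{-1}v}$ as purely rational expressions in $u,v,\xi$, verify $c_1+c_2=c$ and the action identity as multiplicative identities among these rational functions (where the composition of two splittings becomes a transparent cancellation of a common factor $u+v$), and only at the end take logarithms. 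The $h=0$ case of every axiom then follows by continuity in $h$ as noted in the preceding remark, or verbatim with $L_0=\max$.
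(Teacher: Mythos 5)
Your proposal is correct and follows essentially the same route as the paper: direct verification of (C1)--(C4), with (C1) and (C3) reduced to telescoping logarithmic identities and (C4) handled by substituting the shifted values $\varphi_\alpha(e^{c_1'}_\alpha b_1)=\varphi_\alpha(b_1)+c_1'$, $\varepsilon_\alpha(e^{c_2'}_\alpha b_2)=\varepsilon_\alpha(b_2)-c_2'$ into the splitting formula and checking that the composed splitting equals the $(c+c')$-splitting, the $h=0$ case following by a limit. Your suggestion to work multiplicatively in $u=e^{\varphi_\alpha(b_1)/h}$, $v=e^{\varepsilon_\alpha(b_2)/h}$, $\xi=e^{c/h}$ is only a cosmetic repackaging of the same cancellation the paper carries out in logarithmic form.
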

\begin{proof}
Let us verify axioms for crystals from $(C_1)$ to $(C4)$, in the case $h>0$, then $h=0$ will follow by a limit argument.

$(C1)$ 
\begin{align*}
&  \varphi_\alpha\left( b_1 \otimes_h b_2 \right) - \varepsilon_\alpha\left( b_1 \otimes_h b_2 \right)\\
& = \varphi_\alpha\left( b_2 \right) + h \log\left( 1 + e^{\frac{ \varphi_\alpha(b_1) - \varepsilon_\alpha(b_2) }{h}}\right)\\
&  - \varepsilon_\alpha\left( b_1 \right) - h \log\left( 1 + e^{\frac{ \varepsilon_\alpha(b_2) - \varphi_\alpha(b_1) }{h}}\right)\\
& = \varphi_\alpha\left( b_2 \right) - \varepsilon_\alpha\left( b_1 \right)  + \varphi_\alpha\left(b_1\right) - \varepsilon_\alpha\left(b_2\right)\\
& = \alpha\left( \gamma\left(b_1\right) \right) + \alpha\left( \gamma\left(b_2\right) \right)\\
& = \alpha\left( \gamma\left(b_1 \otimes_h b_2 \right) \right)
\end{align*}

$(C2)$
\begin{align*}
& \gamma\left( e^c_\alpha \cdot (b_1 \otimes_h b_2) \right)\\
& = \gamma\left( e^{c_1}_\alpha \cdot b_1 \right) + \gamma\left( e^{c_2}_\alpha \cdot b_2 \right)\\
& = \gamma\left( b_1 \otimes_h b_2 \right) + \left( c_1 + c_2 \right)\alpha^\vee
\end{align*}
using the remark that $c=c_1+c_2$, the second axiom is checked.

$(C3)$ We will only check:
\begin{align*}
& \varepsilon_\alpha\left( e^c_\alpha \cdot (b_1 \otimes_h b_2) \right)\\
& = \varepsilon_\alpha\left( e^{c_1}_\alpha \cdot b_1 \right) + 
    h \log\left( 1 + e^{\frac{ \varepsilon_\alpha( e^{c_2}_\alpha \cdot b_2) - \varphi_\alpha( e^{c_1}_\alpha \cdot b_1) }{h}}\right)\\
& = -c_1 + \varepsilon_\alpha\left( b_1 \right) + 
    h \log\left( 1 + e^{\frac{ -c + \varepsilon_\alpha(b_2) - \varphi_\alpha(b_1) }{h}}\right)\\
& = -c_1 + \varepsilon_\alpha\left( b_1 \right) - c +
    h \log\left( e^c + e^{\frac{ \varepsilon_\alpha(b_2) - \varphi_\alpha(b_1) }{h}}\right)\\
& = \varepsilon_\alpha\left( b_1 \right) - c +
    h \log\left( 1 + e^{\frac{ \varepsilon_\alpha(b_2) - \varphi_\alpha(b_1) }{h}}\right)\\
& = -c + \varepsilon_\alpha\left( b_1 \otimes_h b_2 \right) 
\end{align*}

$(C4)$ We know that
$$ e^c_\alpha \cdot e^{c'}_\alpha \cdot \left( b_1 \otimes_h b_2 \right) = e^{c_1 + c_1'} \cdot b_1 \otimes_h e^{c_2 + c_2'} \cdot b_2 $$
{ \centering where }
\begin{align*}
c_1' & = h \log\left( \frac{ e^{\frac{c' + \varphi_\alpha(b_1) }{h}} + e^{\frac{   \varepsilon_\alpha(b_2) }{h}} }
                           { e^{\frac{    \varphi_\alpha(b_1) }{h}} + e^{\frac{   \varepsilon_\alpha(b_2) }{h}} } \right)\\
c_1  & = h \log\left( \frac{ e^{\frac{c  + \varphi_\alpha(e^{c_1'}_\alpha \cdot b_1) }{h}} + e^{\frac{   \varepsilon_\alpha(e^{c_2'}_\alpha \cdot b_2) }{h}} }
                           { e^{\frac{    \varphi_\alpha(e^{c_1'}_\alpha \cdot b_1) }{h}} + e^{\frac{   \varepsilon_\alpha(e^{c_2'}_\alpha \cdot b_2) }{h}} } \right)\\
c_2' & = c - c_1'\\
c_2  & = c - c_1
\end{align*}
We simplify:
\begin{align*}
c_1 & = h \log\left( \frac{ e^{\frac{c  + \varphi_\alpha(b_1) + c_1'}{h}} + e^{\frac{   \varepsilon_\alpha(b_2) - c_2'}{h}} }
                           { e^{\frac{    \varphi_\alpha(b_1) + c_1'}{h}} + e^{\frac{   \varepsilon_\alpha(b_2) - c_2'}{h}} } \right)\\
& = h \log\left( \frac{ e^{\frac{c + c' + \varphi_\alpha(b_1)}{h}} + e^{\frac{   \varepsilon_\alpha(b_2)}{h}} }
                           { e^{\frac{c'+ \varphi_\alpha(b_1)}{h}} + e^{\frac{   \varepsilon_\alpha(b_2)}{h}} } \right)
\end{align*}
Hence:
$$ c_1 + c_1' = h \log\left( \frac{ e^{\frac{c + c' + \varphi_\alpha(b_1)}{h}} + e^{\frac{   \varepsilon_\alpha(b_2)}{h}} }
                                  { e^{\frac{         \varphi_\alpha(b_1)}{h}} + e^{\frac{   \varepsilon_\alpha(b_2)}{h}} } \right)$$
and
$$ c_2 + c_2' = c + c' - \left( c_1 + c_1' \right)
              = h \log\left( \frac{ e^{\frac{ \varphi_\alpha(b_1)}{h}} + e^{\frac{        \varepsilon_\alpha(b_2)}{h}} }
                                  { e^{\frac{ \varphi_\alpha(b_1)}{h}} + e^{\frac{ -c-c'+ \varepsilon_\alpha(b_2)}{h}} } \right)$$
In the end:
$$ e^c_\alpha \cdot e^{c'}_\alpha \cdot \left( b_1 \otimes_h b_2 \right)
 = e^{c_1 + c_1'} \cdot b_1 \otimes_h e^{c_2 + c_2'} \cdot b_2
 = e^{c   + c   } \cdot \left( b_1 \otimes_h b_2 \right)$$

\end{proof}

\section{Geometric crystals: the group picture of Berenstein and Kazhdan}
\label{section:geom_lifting}

In this section, we introduce totally positive elements in $G$. Then we define the group picture $\Bc \subset B$ of geometric crystals, which is made of totally positive elements in $B$. For the purposes of the next sections, our aim is to specify the coordinate charts of figure \ref{fig:geom_parametrizations}, which are the geometric liftings of Lusztig's and Kashiwara's parametrizations of the canonical basis. 

It is important to have in mind that many choices are possible. Figure \ref{fig:geom_parametrizations} fits with parametrizations in the path model (figure \ref{fig:parametrizations_diagram}), which indicates that we have indeed made the right choices. Moreover, in these charts, we give explicit expressions of the weight map in subsection \ref{subsection:geom_weight_map}.

Finally, we state that $\Bc$ is an abstract geometric crystal in theorem \ref{thm:geom_crystal_is_crystal}, due to Berenstein and Kazhdan. Additional structure is also specified.

\subsection{On total positivity}

For a survey, see \cite{bib:Lusztig08}. Lusztig got interested in the subject after Kostant pointed out that the combinatorics of the canonical basis should be related with the combinatorics of total positivity. Later, he understood that totally positive varieties in $G$ have the same parametrizations as the canonical basis of the Langlands dual $G^\vee$, after a tropicalization procedure (see \cite{bib:BFZ96}, or paragraph 42.2 in \cite{bib:Lusztig93}).

There are two equivalent definitions for totally non-negative matrices in the classical case of $GL_n\left( \C \right)$.
\begin{thm}[Whitney \cite{bib:Whitney52}, Loewner \cite{bib:Lo55} ]
\label{thm:total_positivity_gln}
An invertible $n \times n$ matrix is said to be totally non-negative if all its minors are $\geq 0$, or equivalently, if it has a decomposition
$$ y_{i_1}(t_1) \dots y_{i_m}(t_m) h x_{i_1}(t_1') \dots x_{i_m}(t_m')$$
where $h$ is diagonal with positive entries, $y_i(t) = e^{t E_{i+1,i}} = I_n + t E_{i+1,i}$, $x_i(t) = e^{t E_{i,i+1}} = I_n + t E_{i,i+1}$ (Chevalley matrices) for $t \geq 0$. Moreover, the space of totally non-negative matrices can be characterized as the semi-group generated by such elements.
\end{thm}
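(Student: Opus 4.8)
The plan is to establish the two implications separately and then read off the semi-group statement. The easy direction --- a matrix of the displayed factorized form has all minors $\ge 0$ --- follows from the Cauchy--Binet formula $\det\big((AB)_{R,C}\big)=\sum_{|S|=|R|}\det(A_{R,S})\,\det(B_{S,C})$ once one checks that each generator has non-negative minors: for a positive diagonal matrix this is immediate, and for $x_i(t)=I_n+tE_{i,i+1}$ or $y_i(t)=I_n+tE_{i+1,i}$ with $t\ge 0$ a direct inspection shows that every minor is $0$, $1$, or $t$ (subtract $t$ times one selected row from the other when both of rows $i,i+1$ occur). Cauchy--Binet then propagates non-negativity of minors through arbitrary products. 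The same observation shows that the invertible totally non-negative matrices form a semi-group containing all the generators, hence containing the semi-group they generate; the real content of the theorem is the reverse inclusion.

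For the converse I would use Neville (bidiagonal) elimination, first for \emph{totally positive} matrices --- those with all minors strictly positive --- by induction on $n$. Given such a $g$, all entries of its first column are positive, so left-multiplying by a product $y_1(-c_1)\cdots y_{n-1}(-c_{n-1})$ with suitable pivot ratios clears that column below its first entry, and a symmetric right-multiplication by $x_1(-c_1')\cdots x_{n-1}(-c_{n-1}')$ clears the first row to the right of its first entry. The crux is the classical lemma --- provable from Desnanot--Jacobi / Sylvester-type quadratic minor identities --- that at each step the pivot ratio is $\ge 0$ and that strict positivity of the minors avoiding the first row and column is preserved, so that the deflated bottom-right $(n-1)\times(n-1)$ block is again totally positive. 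Applying the induction hypothesis to that block and moving the inverse factors $y_i(c_i)$, $x_i(c_i')$ and the diagonal of pivots to the outside produces a factorization of $g$ of the required shape, with all parameters positive, relative to the fixed reduced word of $w_0$ carried by the chosen elimination order.

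To descend from totally positive to merely totally non-negative invertible matrices I would pass to a limit. Fix totally positive matrices $K_\varepsilon\to I_n$ --- for instance $K_\varepsilon=\prod_j y_{i_j}(\varepsilon)\cdot\prod_j x_{i_j}(\varepsilon)$ for a reduced word ${\bf i}$ of $w_0$ in $S_n$, whose total positivity is a direct planar-network check. If $g$ is invertible and totally non-negative, then $gK_\varepsilon$ is totally positive: by Cauchy--Binet each of its minors is a sum $\sum_S\det(g_{R,S})\det((K_\varepsilon)_{S,C})$ of non-negative terms, strictly positive because the rows of $g$ indexed by $R$ are linearly independent, so some $\det(g_{R,S})>0$. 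Applying the totally positive case to $gK_\varepsilon$ gives positive factorization parameters (for the fixed word ${\bf i}$) depending on $\varepsilon$; these stay bounded by properness of the parametrization, while $\det(gK_\varepsilon)=\det(g)\det(K_\varepsilon)$ stays bounded away from $0$, so along a subsequence the parameters converge to non-negative values with a positive limiting diagonal, and continuity of matrix multiplication turns the limit into a factorization of $g$ of the desired form.

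The main obstacle is the Neville-elimination lemma in the totally positive case: checking that the elimination always proceeds with non-negative pivot ratios and that total positivity descends to the deflated block, which is precisely where the quadratic minor identities enter. A secondary point is the boundedness and convergence of the factorization parameters in the passage to the non-negative case; one may alternatively bypass the limit by running the elimination directly on a totally non-negative matrix and using the staircase shape of its zero pattern to deal with vanishing pivots, at the cost of a separate lemma on that zero pattern. Combining the two inclusions then yields simultaneously the equivalence of the two definitions of total non-negativity and the description of the invertible totally non-negative matrices as exactly the semi-group generated by the Chevalley matrices with non-negative parameters together with the positive diagonal matrices.
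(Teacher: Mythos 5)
The paper does not prove this statement; it is quoted as a classical result of Whitney and Loewner, so there is no internal proof to compare against. Your outline is the standard textbook argument for it (Cauchy--Binet for the easy inclusion, Whitney's reduction via Neville/bidiagonal elimination for totally positive matrices, then a perturbation to reach the totally non-negative case), and the overall architecture is sound. You have also correctly located where all the actual content sits: the Neville-elimination lemma (non-negative pivot ratios and preservation of total positivity of the deflated block), which does follow from the Desnanot--Jacobi/Sylvester determinantal identities as you say, and the passage to the limit.

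Two points in the last step deserve more care than "properness of the parametrization". First, to keep the diagonal factor $h$ of $gK_\varepsilon$ away from the boundary you need the leading principal minors of the invertible totally non-negative matrix $g$ to be strictly positive (they are the limits of the pivots); this is true but is itself a nontrivial classical fact (a Koteljanskii-type inequality, or Cryer's theorem), so it should be cited or proved rather than absorbed into "boundedness". Second, once $h$ is controlled, boundedness of the unipotent parameters does follow, e.g.\ because each parameter $t_\ell$ appears as one non-negative summand (with coefficient a product of diagonal entries) in an explicit matrix entry of the product, so bounded entries force bounded parameters; it is worth saying this explicitly, since in general a limit of totally positive factorizations can degenerate. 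Your proposed alternative --- running the elimination directly on the totally non-negative matrix and controlling the staircase zero pattern --- is in fact closer to Whitney's original reduction theorem and avoids both issues at the cost of the zero-pattern lemma. Either route closes the argument; as written, the proposal is a correct and well-organized sketch rather than a complete proof.
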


In \cite{bib:Lusztig94}, Lusztig generalized this definition to arbitrary complex reductive groups, by taking the semi-group property as a starting point. The following sets are called totally non-negative parts of $G$.
\begin{itemize}
 \item The semi-group generated by $a \in H$ such that $a^\gamma>0$ for every weight $\gamma$: $ H_{>0} = A = exp\left( \afrak \right) = H \cap G_{\geq 0}$
 \item The semi-group generated by the $x_\alpha(t), t>0, \alpha \in \Delta$: $U_{\geq 0}$
 \item The semi-group generated by the $y_\alpha(t), t>0, \alpha \in \Delta$: $N_{\geq 0}$
 \item Finally, the totally non-negative part of G is denoted $G_{\geq 0}$ and is formed by the semi-group generated by all of them.
\end{itemize}

Lusztig proved that totally non-negative elements admit a Gauss decomposition made of totally non-negative elements, and exhibited parametrizations as products of Chevalley elements.
\begin{thm}[\cite{bib:Lusztig94} lemma 2.3]
\label{thm:totally_positive_gauss_decomposition}
Any element $g \in G_{\geq 0}$ has a unique Gauss decomposition $g = n a u$ with $n \in N_{\geq 0}$, $a \in A$ and $u \in U_{\geq 0}$.
\end{thm}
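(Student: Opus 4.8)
The statement is Lusztig's (lemma~2.3 of \cite{bib:Lusztig94}); I sketch how I would reprove it. First note that uniqueness is automatic: the multiplication map $N \times H \times U \to G$ is injective onto the big cell $NHU$ (from $n_1a_1u_1 = n_2a_2u_2$ one gets $n_2^{-1}n_1 \in N \cap B^+ = \{1\}$, hence $n_1 = n_2$, then $a_1 = a_2$ and $u_1 = u_2$), so it suffices to produce for each $g \in G_{\geq 0}$ \emph{one} Gauss decomposition whose factors lie in $N_{\geq 0}$, $A$, $U_{\geq 0}$. The plan is to prove this by induction on the length $m$ of a presentation $g = s_1\cdots s_m$ with each $s_k$ a semigroup generator --- $x_\alpha(t)$ or $y_\alpha(t)$ with $\alpha \in \Delta$, $t > 0$, or an element of $A$ --- such presentations existing by the definition of $G_{\geq 0}$; the cases $m \leq 1$ are immediate.

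For the inductive step I would write $g = g's$ with $g' = n'a'u'$ an already-built non-negative Gauss decomposition. If $s = x_\alpha(t)$, then $g = n'\,a'\,(u'x_\alpha(t))$ with $u'x_\alpha(t) \in U_{\geq 0}$ since $U_{\geq 0}$ is a semigroup. If $s = a \in A$, then $g = n'\,(a'a)\,(a^{-1}u'a)$, and conjugation by $A$ sends each generator $x_\beta(r)$ to $x_\beta(a^{-\beta}r)$ with $a^{-\beta} > 0$ (and likewise for the $y$'s), so it preserves both $U_{\geq 0}$ and $N_{\geq 0}$. The only genuine case is $s = y_\alpha(t)$: there it is enough to show that $u'\,y_\alpha(t)$ again admits a non-negative Gauss decomposition $\tilde n\,\tilde a\,\tilde u$, for then $g = n'a'\tilde n\tilde a\tilde u = \bigl(n'\cdot a'\tilde n\,(a')^{-1}\bigr)(a'\tilde a)(\tilde u)$ has the desired form.

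Everything therefore reduces to the claim: \emph{for $u \in U_{\geq 0}$, $\alpha \in \Delta$, $t > 0$, the element $u\,y_\alpha(t)$ lies in $NHU$ with non-negative Gauss factors}. I would prove this by a nested induction. Writing $u = x_{i_1}(s_1)u''$ and using the inductive hypothesis on $u''y_\alpha(t) = \tilde n\tilde a\tilde u$, one is reduced to showing $x_\beta(s)\cdot n$ has a non-negative Gauss decomposition for $s > 0$, $\beta \in \Delta$, $n \in N_{\geq 0}$, which one then proves by induction on the number of $y$-factors of $n$. The base cases are rank $\leq 2$: if $\beta \neq \gamma$ are simple, then $[e_\beta,f_\gamma] \in \gfrak_{\beta-\gamma} = \{0\}$ (as $\beta - \gamma$ is not a root), so $x_\beta(s)$ and $y_\gamma(t)$ commute in $G$ and $x_\beta(s)y_\gamma(t) = y_\gamma(t)\cdot 1\cdot x_\beta(s)$; and for $\beta = \gamma$, applying $\phi_\beta$ to the $SL_2$ identity gives, with $c := 1 + st > 0$,
\[ x_\beta(s)\,y_\beta(t) \;=\; y_\beta\bigl(\tfrac{t}{c}\bigr)\; c^{\,h_\beta}\; x_\beta\bigl(\tfrac{s}{c}\bigr), \]
whose three factors lie in $N_{\geq 0}$, $A$, $U_{\geq 0}$. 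In the inductive step $x_\beta(s)$ travels rightward through $n$ one $y$-factor at a time, commuting freely past any $y_\gamma$ with $\gamma \neq \beta$ and, on meeting a $y_\beta$, spawning a torus factor $c^{h_\beta}$ that is then pushed back outward (conjugation by $A$ again preserving $N_{\geq 0}$ and $U_{\geq 0}$) while the emerging $x_\beta$ continues; each move strictly lowers the number of $y$-factors to the right of the traveler, so the process terminates and the pieces collect into the desired decomposition.

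The hard part will be precisely this bookkeeping in the nested induction --- keeping track of which torus factors get created when a lower and an upper generator collide, and verifying that positivity survives every conjugation past a unipotent; the rest is formal. An alternative I would keep in reserve uses generalized minors: show that every $g \in G_{\geq 0}$ has all generalized minors $\geq 0$ and all principal minors $\langle v_{\omega_i}^*, g\,v_{\omega_i}\rangle > 0$ --- the latter because $\langle v_{\omega_i}^*, g_1g_2\,v_{\omega_i}\rangle \geq \langle v_{\omega_i}^*, g_1\,v_{\omega_i}\rangle\,\langle v_{\omega_i}^*, g_2\,v_{\omega_i}\rangle$ --- so that $g \in NHU$, and then identify the Gauss factors as totally non-negative; but establishing non-negativity of all minors is itself a comparable induction, so I would favour the first approach.
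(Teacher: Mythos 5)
The paper does not prove this statement; it is imported verbatim as Lemma~2.3 of Lusztig's \emph{Total positivity in reductive groups}, so there is no in-paper argument to compare yours against. Your sketch is, however, a correct reconstruction of the standard (essentially Lusztig's) proof. The two facts that carry the whole argument are exactly the ones you isolate: the $SL_2$ identity $x_\beta(s)y_\beta(t) = y_\beta(t/c)\,c^{h_\beta}\,x_\beta(s/c)$ with $c = 1+st>0$ (which transports to $G$ along $\phi_\beta$ and manifestly preserves positivity of all parameters), and the vanishing $[\gfrak_\beta,\gfrak_{-\gamma}] \subseteq \gfrak_{\beta-\gamma} = 0$ for distinct simple $\beta,\gamma$, valid because the difference of two distinct simple roots has coefficients of both signs in the simple-root basis and hence is not a root — this is what lets $x_\beta(s)$ commute freely past every $y_\gamma$ it is not forced to interact with, and it is special to \emph{simple} generators, which is all that occurs here since $U_{\geq 0}$ and $N_{\geq 0}$ are generated by simple one-parameter subgroups. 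The inner induction terminates because each collision with a $y_\beta$ strictly decreases the number of $y$-factors to the right of the traveling $x_\beta$, and every torus factor spawned or pushed outward acts on the Chevalley generators by the strictly positive scalars $a^{\pm\beta}$, so non-negativity of all parameters survives. One cosmetic point: the induction on word length is legitimate precisely because the paper (following Lusztig) defines $G_{\geq 0}$ as the semigroup generated by the listed elements, i.e.\ as finite products; if one instead took a closure, a separate limiting argument would be needed. Your fallback via generalized minors is also viable (it is closer in spirit to the Berenstein--Fomin--Zelevinsky treatment the paper cites elsewhere), but as you note it front-loads a comparable induction, so the first route is the right one to commit to.
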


\begin{thm}[\cite{bib:Lusztig94} proposition 2.7, \cite{bib:BZ97}, Proposition 1.1]
\label{thm:lusztig_variety_params}
For any $w \in W$ with $k=\ell(w)$, every reduced word $\mathbf{i} = (i_1, \dots, i_k)$ in $R(w)$ gives rise to a parametrization of $U^{w}_{>0} := U_{\geq 0} \cap B w B$ by:
$$
 \begin{array}{cccc}
x_{\mathbf{i}}: & \R_{>0}^k & \rightarrow & U^{w}_{>0}\\
                & (t_1, \dots, t_k) & \mapsto     & x_{i_1}(t_1) \dots x_{i_k}(t_k)
 \end{array}
$$
Moreover, for ${\bf i}, {\bf i'} \in R(w)$, the maps $R_{\bf i,i'} := x_{\mathbf{i'}}^{-1} \circ x_{\mathbf{i}}: \R_{>0}^m \rightarrow \R_{>0}^m$ are rational and substraction free.
\end{thm}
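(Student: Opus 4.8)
\emph{Proof proposal.} The plan is to proceed in four steps: first check that $x_{\mathbf i}$ is well defined with image inside $U^{w}_{>0}$; then obtain injectivity from the classical description of the product map over $\C$; next derive that the transition maps $R_{\mathbf i,\mathbf i'}$ are subtraction-free by reducing to rank two via Matsumoto--Tits; and finally deduce surjectivity onto $U^{w}_{>0}$ from the semigroup definition of $U_{\geq 0}$ together with those subtraction-free braid relations. For the first step, each $x_{i_j}(t_j)$ is one of the generators of the semigroup $U_{\geq 0}$, so $x_{\mathbf i}(t_1,\dots,t_k)\in U_{\geq 0}$ for all $t_j\geq 0$; and for $t_j>0$ a one-line computation inside the copy $\phi_{i_j}(SL_2)$ gives $x_{i_j}(t_j)\in B\bar s_{i_j}B$. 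Since $\mathbf i$ is reduced, iterating the Bruhat product rule $B\bar s B\cdot B\tau B=Bs\tau B$, valid whenever $\ell(s\tau)>\ell(\tau)$, yields $x_{\mathbf i}(t_1,\dots,t_k)\in B\bar s_{i_1}\cdots\bar s_{i_k}B=BwB$. Hence $x_{\mathbf i}$ maps $\R_{>0}^{k}$ into $U_{\geq 0}\cap BwB=U^{w}_{>0}$.

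Over $\C$ the product morphism $(t_1,\dots,t_k)\mapsto x_{i_1}(t_1)\cdots x_{i_k}(t_k)$ is classical: restricted to $(\C^{*})^{k}$ it is a biregular isomorphism onto the locally closed subvariety $U\cap BwB$ of $U$, and this target is independent of the chosen reduced word (standard structure theory of unipotent subgroups; see \cite{bib:Springer09}, or \cite{bib:Lusztig94}~§2 and \cite{bib:BZ97}). In particular $x_{\mathbf i}$ is injective on $\R_{>0}^{k}\subset(\C^{*})^{k}$, and for $\mathbf i,\mathbf i'\in R(w)$ the composite $R_{\mathbf i,\mathbf i'}=x_{\mathbf i'}^{-1}\circ x_{\mathbf i}$ is a well-defined rational self-map of $\R^{k}$. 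To see it is subtraction-free, apply Lemma~\ref{lemma:tits_lemma}: $\mathbf i$ and $\mathbf i'$ are joined by a chain of braid moves, and each braid move only rewrites a consecutive subword that is a reduced word of a rank-two standard parabolic $\langle s,s'\rangle$, leaving the remaining parameters untouched. It thus suffices to check the claim when $w$ is the longest element of a dihedral group and $\mathbf i,\mathbf i'$ are its two reduced words, i.e.\ in the finitely many cases $m_{s,s'}\in\{2,3,4,6\}$. Each is an explicit computation inside $\phi_{s}(SL_2)\cdot\phi_{s'}(SL_2)$; for instance in the $A_2$ case
\[ x_i(a)\,x_j(b)\,x_i(c)=x_j\!\big(\tfrac{bc}{a+c}\big)\,x_i(a+c)\,x_j\!\big(\tfrac{ab}{a+c}\big), \]
and inspection of the four tables shows the new parameters are built from the old ones using only addition, multiplication and division --- never subtraction. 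Since every denominator that occurs is a sum of positive quantities, these maps restrict to bijections $\R_{>0}^{k}\to\R_{>0}^{k}$, and composing them shows $R_{\mathbf i,\mathbf i'}$ is subtraction-free.

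For surjectivity, let $u\in U^{w}_{>0}=U_{\geq 0}\cap BwB$. By definition of $U_{\geq 0}$ we may write $u=x_{j_1}(s_1)\cdots x_{j_N}(s_N)$ with all $s_\ell>0$. If $(j_1,\dots,j_N)$ is not a reduced word, then by the word property of Coxeter groups a sequence of braid moves brings it to a word containing two equal adjacent letters; performing those braid moves on the product by the subtraction-free rank-two formulas above (legitimate precisely because all parameters stay positive, so no denominator vanishes) and then merging the adjacent pair via $x_j(a)x_j(b)=x_j(a+b)$ strictly decreases $N$ while keeping every parameter positive. Iterating, one reaches $u=x_{\mathbf i'}(t_1,\dots,t_{k'})$ with $\mathbf i'$ a reduced word of $w':=s_{i_1'}\cdots s_{i_{k'}'}$ and all $t_\ell>0$. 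By the first step $u\in Bw'B$; since also $u\in BwB$ and Bruhat cells are disjoint, $w'=w$ and $\mathbf i'\in R(w)$, and then the transition maps let us pass from $\mathbf i'$ to the prescribed $\mathbf i$, so $u=x_{\mathbf i}\big(R_{\mathbf i',\mathbf i}(t_1,\dots,t_k)\big)$ with positive coordinates. Hence $x_{\mathbf i}$ is onto $U^{w}_{>0}$, and together with injectivity it is a bijection; the subtraction-freeness of $R_{\mathbf i,\mathbf i'}$ was established above.

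The main obstacle is the surjectivity step: one must run the reduction of an arbitrary positive product down to the canonical reduced form while keeping every intermediate parameter strictly positive, and this is exactly what makes the subtraction-free form of the braid relations indispensable (and is why those must be obtained first). An alternative is an induction on $\ell(w)$ that peels off the leftmost factor --- showing there is a unique $t_1>0$ with $x_{i_1}(-t_1)\,u\in U_{\geq 0}\cap Bs_{i_1}wB$, using the Gauss decomposition of totally non-negative elements (Theorem~\ref{thm:totally_positive_gauss_decomposition}) and the behaviour of total non-negativity under the $\phi_{i_1}(SL_2)$-action; this is the route of \cite{bib:Lusztig94}, whose rank-one extraction lemma is then the corresponding crux.
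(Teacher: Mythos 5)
The paper does not prove this statement at all: it is quoted as background, with the proof delegated to \cite{bib:Lusztig94} and \cite{bib:BZ97}. So there is nothing internal to compare against; judged on its own, your argument is essentially a correct reconstruction of the standard proof, and it follows the Berenstein--Zelevinsky-style route (establish the subtraction-free rank-two braid identities first, then use the semigroup definition of $U_{\geq 0}$ plus Tits' word property to collapse an arbitrary positive product onto a reduced one) rather than Lusztig's induction on $\ell(w)$ that peels off one factor at a time -- which you correctly identify as the alternative. The one inaccuracy worth flagging is your claim that over $\C$ the map $x_{\mathbf i}$ restricted to $(\C^*)^k$ is a biregular isomorphism onto $U\cap BwB$ with image independent of $\mathbf i$. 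That is false: already for $SL_3$ and $w=w_0$, the image of $x_{121}$ on $(\C^*)^3$ is cut out by $u_{23}\neq 0$, $u_{13}\neq 0$, $u_{12}u_{23}-u_{13}\neq 0$, while the image of $x_{212}$ is cut out by $u_{12}\neq 0$, $u_{13}\neq 0$, $u_{12}u_{23}-u_{13}\neq 0$; each is only a dense open subset of $U\cap Bw_0B$ and the two differ. Fortunately your argument uses only what is true -- injectivity on $(\C^*)^k$ and the fact that both images are dense open in the same irreducible variety, so that $R_{\mathbf i,\mathbf i'}$ is a well-defined rational map -- and the coincidence of the \emph{positive} images is then supplied by your subtraction-free braid moves, so the proof stands once that sentence is corrected. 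The rest (the Bruhat product rule for step one, the verified $A_2$ identity, the positivity-preserving reduction of a non-reduced positive word, and the disjointness of Bruhat cells to pin down $w'=w$) is sound.
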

Hence the name of totally non-negative varieties for the sets $U^{w}_{>0}, w \in W$. The Bruhat decomposition tells us then that the previous maps have disjoint images and cover the entire non-negative part $U_{\geq 0} = \bigsqcup_{w \in W} U^{w}_{>0}$. Of course, after transpose, one has analogous parametrizations for $N^{w}_{>0} := N_{\geq 0} \cap B^+ w B^+$.

Afterwards, Berenstein, Fomin and Zelevinsky in a series of papers (\cite{bib:BZ97, bib:FZ99, bib:BZ01}) completed the picture by defining generalized minors on semi-simple groups, allowing to define the totally positive varieties as the locus where appropriate minors are positive. This tool will be defined when needed.

\subsection{Coordinates in totally positive varieties}

\paragraph{Geometric lifting:}
Consider a rational expression in $k$ variables $a \in \Q\left( x_1, \dots, x_k \right)$ that has no minus sign. Tropicalizing $a$ to $\left[ a \right]_{trop}$ is tantamount to replacing the algebraic operations $\left( +, \times, / \right)$ by $\left( \min, +, - \right)$. A rational expression in the operations $\left( \min, +, - \right)$ is now commonly referred to as a tropical expression. Formally, if $a$ and $b$ are rational subtraction-free functions, then:
$$ \left[ a + b \right]_{trop} = \min( a, b) $$
$$ \left[ a   b \right]_{trop} = a + b $$
$$ \left[ a / b \right]_{trop} = a - b $$
$$ \left[ a \circ b \right]_{trop} = \left[ a \right]_{trop} \circ \left[ a \circ b \right]_{trop}$$

For example:
$$ \left[ \frac{t_2 t_3}{t_1 + t_3} \right]_{trop} = t_2 + t_3 - \min(t_1, t_3)$$

Geometric lifting is the general idea that computations in the tropical world using the semi-field $(\R, \min, +)$ have analogues in the  geometric world using $(\R_{>0}, +, .)$. Here, the reader will only need to have in mind that rational and substraction free maps preserve positivity and can tropicalized. Lusztig's interest in total positivity lies in the fact that totally positive varieties are geometric liftings (of parametrizations) of canonical bases.

\begin{definition}[Lusztig variety]
\label{def:geom_lusztig_variety}
Define the geometric Lusztig variety as:
$$U_{>0}^{w_0} := U \cap B w_0 B \cap G_{\geq 0}$$ 
\end{definition}

Such a name is legitimate because changes of parametrization in the $G^\vee$-canonical basis are given by tropicalizing $R_{\bf i,i'}$ (\cite{bib:BZ01}, theorem 5.2). In other words, using the notations from subsection \ref{subsection:Lusztig_canonical_basis}:
$$ \left[ R_{\bf i,i'} \right]_{trop} = \left( \left[x_{\bf i'}\right]^\vee \right)^{-1} \circ \left[x_{\bf i}\right]^\vee: \N^m \rightarrow \N^m$$
where the superscript $\vee$ indicates that the corresponding map has to be considered for the dual.

A similar construction holds if we are interested in the string parametrization of the $G^\vee$-canonical basis:
\begin{definition}[Kashiwara (or string) variety]
\label{def:geom_kashiwara_variety}
Define the geometric string variety $C_{>0}^{w_0}$ as:
$$ C_{>0}^{w_0} := U \bar{w_0} U \cap B \cap G_{\geq 0}$$ 
\end{definition}
Of course, this definition depends on $\bar{w}_0$, our choice of representative for the longest element $w_0$ in the Weyl group. The Kashiwara variety also has parametrizations indexed by reduced words:

\begin{thm}[\cite{bib:BZ01}]
\label{thm:kashiwara_variety_params}
Every reduced word ${\bf i} \in R(w_0)$ gives rise to a bijection:
$$
 \begin{array}{cccc}
x_{-\mathbf{i}}: & \R_{>0}^m & \rightarrow & C_{>0}^{w_0}\\
                 & (c_1, \dots, c_m) & \mapsto     & x_{-i_1}(c_1) \dots x_{-i_m}(c_m)
 \end{array}
$$
Moreover, for ${\bf i}, {\bf i'} \in R(w_0)$, the maps $R_{\bf -i,-i'} := x_{-\mathbf{i'}}^{-1} \circ x_{-\mathbf{i}}: \R_{>0}^m \rightarrow \R_{>0}^m$ are rational and substraction free.
\end{thm}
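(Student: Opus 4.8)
The plan is to transport Theorem~\ref{thm:lusztig_variety_params}, applied to $w=w_0$ and transposed to the lower unipotent $N$, along an explicit monomial change of variables, using the uniqueness of the totally positive Gauss decomposition (Theorem~\ref{thm:totally_positive_gauss_decomposition}). First I unwind the torus factors: from $x_{-i}(t)=y_i(t)\,t^{-h_i}$ and $a\,y_j(t)=y_j\!\left(a^{-\alpha_j}t\right)a$ for $a\in H$, pushing every torus factor to the right in $x_{-i_1}(c_1)\cdots x_{-i_m}(c_m)$ yields
$$ x_{-\mathbf i}(c_1,\dots,c_m)=y_{i_1}(\tilde c_1)\,y_{i_2}(\tilde c_2)\cdots y_{i_m}(\tilde c_m)\cdot a(c),\qquad a(c):=\prod_{k=1}^m c_k^{-h_{i_k}}\in A, $$
with $\tilde c_k=c_k\prod_{j<k}c_j^{\alpha_{i_k}(h_{i_j})}$. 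The map $\zeta_{\mathbf i}\colon c\mapsto(\tilde c_1,\dots,\tilde c_m)$ is a triangular Laurent--monomial transformation of $\R_{>0}^m$ (each $\tilde c_k$ depends only on $c_1,\dots,c_k$ and has exponent $1$ in $c_k$), hence a subtraction-free rational automorphism with subtraction-free rational inverse. Since $\mathbf i\in R(w_0)$, the transpose of Theorem~\ref{thm:lusztig_variety_params} makes $\tilde c\mapsto y_{i_1}(\tilde c_1)\cdots y_{i_m}(\tilde c_m)$ a bijection from $\R_{>0}^m$ onto $N^{w_0}_{>0}=N_{\geq 0}\cap B^+w_0B^+$. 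Thus $x_{-\mathbf i}(c)\in N_{\geq 0}\cdot A\subset B\cap G_{\geq 0}$, its Gauss decomposition is $[x_{-\mathbf i}(c)]_-=y_{i_1}(\tilde c_1)\cdots y_{i_m}(\tilde c_m)$, $[x_{-\mathbf i}(c)]_0=a(c)$, $[x_{-\mathbf i}(c)]_+=e$, and since this decomposition is unique (Theorem~\ref{thm:totally_positive_gauss_decomposition}) and $\zeta_{\mathbf i}$ is injective, $x_{-\mathbf i}$ is injective.

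The point requiring real work --- which I expect to be the main obstacle --- is that $x_{-\mathbf i}(c)\in U\bar w_0 U$. For this I would use the other one-parameter identity $x_{-i}(t)=x_i(t^{-1})\,\bar s_i\,x_i(t)$ (immediate in $SL_2$, then transported by $\phi_i$) and induct on $m$ via the single-step claim: \emph{if $u\bar w u'\in U\bar w U$ and $s_iw>w$, then $x_{-i}(s)\,u\bar w u'\in U\,\overline{s_iw}\,U$}. Indeed $x_{-i}(s)\,u\bar w u'=x_i(s^{-1})\,\bar s_i\,\bigl(x_i(s)u\bigr)\,\bar w u'$; factoring $x_i(s)u=u''\,x_i(r)$ with $u''$ in the $s_i$-stable subgroup $U\cap\bar s_iU\bar s_i^{-1}=\prod_{\beta>0,\,\beta\neq\alpha_i}U_\beta$, conjugating $u''$ across $\bar s_i$, and using that $\bar w^{-1}x_i(r)\bar w$ lies in $U_{w^{-1}\alpha_i}\subset U$ (because $w^{-1}\alpha_i\in\Phi^+$ when $s_iw>w$) together with $\bar s_i\bar w=\overline{s_iw}$, one collects the unipotent pieces and obtains the claim. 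Applying it with $w=s_{i_2}\cdots s_{i_m}$ and $i=i_1$ shows $x_{-\mathbf i}(c)\in U\bar w_0U$. For surjectivity, take $g\in C^{w_0}_{>0}$: from $g\in U\bar w_0U\subset B^+w_0B^+$ and $[g]_0\in H\subset B^+$ one gets $[g]_-\in N_{\geq 0}\cap B^+w_0B^+=N^{w_0}_{>0}$, so there is a unique $\tilde c$ with $[g]_-=y_{i_1}(\tilde c_1)\cdots y_{i_m}(\tilde c_m)$; put $c=\zeta_{\mathbf i}^{-1}(\tilde c)$. Writing $[g]_-=u_1\bar w_0h_0u_2$ inside $B^+w_0B^+=U\bar w_0HU$ shows $[g]_-h\in U\bar w_0U$ for exactly one torus element $h$, namely $h=h_0^{-1}$; since $g$ and $x_{-\mathbf i}(c)$ both lie in $U\bar w_0U$ with identical $N$-part $[g]_-$, their torus parts coincide, whence $g=x_{-\mathbf i}(c)$.

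For the transition maps: if $x_{-\mathbf i}(c)=x_{-\mathbf i'}(c')$, comparing $N$-parts gives $y_{i_1}(\tilde c_1)\cdots y_{i_m}(\tilde c_m)=y_{i'_1}(\tilde c'_1)\cdots y_{i'_m}(\tilde c'_m)$, so $\tilde c'$ is obtained from $\tilde c$ by the subtraction-free rational change-of-parametrization map for $N^{w_0}_{>0}$ provided by the transpose of Theorem~\ref{thm:lusztig_variety_params}; hence $R_{-\mathbf i,-\mathbf i'}=\zeta_{\mathbf i'}^{-1}\circ(\text{that map})\circ\zeta_{\mathbf i}$ is a composition of subtraction-free rational maps, and therefore subtraction-free and rational. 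Apart from the $U\bar w_0 U$ membership, which is the genuine group-theoretic input (essentially the computation in \cite{bib:BZ01}), everything is transport of structure through monomial coordinate changes and uniqueness of the totally positive Gauss decomposition.
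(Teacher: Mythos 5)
Your proof is correct. Note first that the paper offers no proof of this statement: it is imported wholesale from \cite{bib:BZ01}, so there is nothing internal to compare against, and what you have written is a genuine self-contained derivation from the other ingredients the paper does state. The skeleton is sound: the identity $x_{-i}(t)=y_i(t)\,t^{-h_i}$ and the commutation $a\,y_j(t)\,a^{-1}=y_j(a^{-\alpha_j}t)$ reduce everything to the transposed Lusztig parametrization of $N^{w_0}_{>0}$ (Theorem \ref{thm:lusztig_variety_params}) through the triangular monomial map $\zeta_{\bf i}$ --- which is exactly the change of variables (i)$\Leftrightarrow$(ii) of the paper's Lemma \ref{lemma:change_of_coordinates_UC} after transpose, so you are in effect re-deriving a computation the paper performs elsewhere. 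You correctly isolated the one step that is not transport of structure, namely membership in $U\bar w_0 U$ with the \emph{specific} representative $\bar w_0$, and your single-step lemma holds: $s_iw>w$ gives $w^{-1}\alpha_i\in\Phi^+$, hence $\bar w^{-1}x_i(r)\bar w\in U_{w^{-1}\alpha_i}\subset U$, the factor $u''$ lives in the normal subgroup $\prod_{\beta\in\Phi^+\setminus\{\alpha_i\}}U_\beta$ so it conjugates across $\bar s_i$ into $U$, and $\bar s_i\bar w=\overline{s_iw}$ is exactly what the Kac--Peterson lemma quoted in the preliminaries guarantees when lengths add. The surjectivity argument also holds together: since $U\cap w_0Nw_0^{-1}=U$, the refined Bruhat factorization $B^+w_0B^+=U\bar w_0HU$ has a unique torus component, which pins down $[g]_0=a(c)$ once the $N$-parts agree. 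The only cosmetic caveat is that your injectivity sentence should first invoke bijectivity of the $y$-parametrization of $N^{w_0}_{>0}$ (to get $\tilde c=\tilde c'$ from equal $N$-parts) before appealing to injectivity of $\zeta_{\bf i}$; that is clearly what you intend.
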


In the same fashion, changes of parametrizations in string coordinates for the $G^\vee$-canonical basis are the tropicalization of $R_{\bf -i,-i'}$ (\cite{bib:BZ01}, theorem 5.2), i.e:
$$ \left[ R_{\bf -i,-i'} \right]_{trop} = \left( \left[x_{\bf -i'}\right]^{-1} \right)^\vee \circ \left[x_{\bf -i}\right]^\vee: \Cc_{\bf i} \cap \N^m \rightarrow \Cc_{\bf i'} \cap \N^m$$

A useful relationship between the maps $x_{\bf i}$ and $x_{\bf -i}$ is the following:
\begin{lemma}[ \cite{bib:BZ01} Lemma 6.1 and Remark 6.2]
\label{lemma:change_of_coordinates_UC}
Let ${\bf i } = \left( i_1, \dots, i_j \right) \in R(w)$ a reduced expression and $\left( \beta^\vee_1, \dots, \beta^\vee_j \right)$ an associated positive coroots enumeration. Then the following statements are equivalent:
$$(i)   \left( x_{-i_1}(c_1) \dots x_{-i_j}(c_j) \right)^T = c_1^{-\alpha_{i_1}^\vee} \dots c_j^{-\alpha_{i_j}^\vee} x_{i_j}(t_j) \dots x_{i_1}(t_1) $$
$$(ii)  \forall 1 \leq k \leq j, t_k = c_k \prod_{l<k} c_l^{ \alpha_{i_k}(\alpha_{i_l}^\vee) }$$
$$(iii) \forall 1 \leq k \leq j, c_k = t_k \prod_{l<k} t_l^{ \beta_{k}(\beta_{l}^\vee) }$$
Moreover:
$$ \prod_{k=1}^j c_k^{\alpha_{i_k}^\vee} = \prod_{k=1}^j t_k^{-w^{-1} \beta_k^\vee}$$
\end{lemma}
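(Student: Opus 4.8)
The plan is to prove the equivalence $(i) \Leftrightarrow (ii) \Leftrightarrow (iii)$ by an induction on the length $j$ of the reduced word, using the rank-one identity $y_\beta(t) = t^{h_\beta} x_{-\beta}(t^{-1})\cdots$ — more precisely the relation $x_{-\beta}(c)^T = y_\beta(c)^T c^{-h_\beta} \cdots$ read off from the $2\times 2$ matrix identities in the Preliminaries. First I would record the base case $j=1$: from the displayed $\phi_i$-formulas, $x_{-i}(c)^T = \left(\phi_i\begin{pmatrix} c^{-1} & 0 \\ 1 & c\end{pmatrix}\right)^T = \phi_i\begin{pmatrix} c^{-1} & 1 \\ 0 & c\end{pmatrix} = c^{-h_i} x_i(c) = c^{-\alpha_i^\vee} x_i(t_1)$ with $t_1 = c_1$, which is exactly $(i)$, $(ii)$, $(iii)$ for $j=1$ (the empty products being $1$).

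For the inductive step I would write $x_{-i_1}(c_1)\cdots x_{-i_j}(c_j) = x_{-i_1}(c_1)\, g$ with $g = x_{-i_2}(c_2)\cdots x_{-i_j}(c_j)$, apply the transpose antimorphism so that $(x_{-i_1}(c_1)\, g)^T = g^T\, x_{-i_1}(c_1)^T = g^T\, c_1^{-h_{i_1}} x_{i_1}(c_1)$, then feed in the inductive hypothesis for $g^T$ (noting that $(i_2,\dots,i_j)$ is a reduced word for $s_{i_1} w$ with shifted positive-root enumeration $s_{i_1}\beta_2,\dots,s_{i_1}\beta_j$, or equivalently just run the induction on the \emph{suffix}). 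The only real computation is the commutation of the torus element $c_1^{-h_{i_1}}$ past the product $x_{i_j}(\tilde t_j)\cdots x_{i_2}(\tilde t_2)$ coming from the hypothesis: by identity \eqref{lbl:gauss2}-style conjugation $a\, x_\beta(t)\, a^{-1} = x_\beta(a^\beta t)$ one gets $c_1^{-h_{i_1}} x_{i_k}(\tilde t_k) c_1^{h_{i_1}} = x_{i_k}\!\left(c_1^{-\alpha_{i_k}(\alpha_{i_1}^\vee)}\tilde t_k\right)$, and collecting the resulting torus factors and re-indexing yields the prefactor $c_1^{-\alpha_{i_1}^\vee}\cdots c_j^{-\alpha_{i_j}^\vee}$ together with the recursion $t_k = c_k\prod_{l<k} c_l^{\alpha_{i_k}(\alpha_{i_l}^\vee)}$, which is $(ii)$. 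The equivalence $(ii)\Leftrightarrow(iii)$ is then a purely formal inversion of a unipotent-triangular change of variables: one checks that substituting $(ii)$ into the right side of $(iii)$ telescopes using $\beta_k(\beta_l^\vee)$ expressed via the Cartan integers and the fact that $\beta_k = s_{i_1}\cdots s_{i_{k-1}}\alpha_{i_k}$, or more cheaply one observes both $(ii)$ and $(iii)$ are the unique solution to $(i)$ and hence coincide.

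For the final identity $\prod_{k=1}^j c_k^{\alpha_{i_k}^\vee} = \prod_{k=1}^j t_k^{-w^{-1}\beta_k^\vee}$, I would take the $[\cdot]_0$ (torus) component of the Gauss-type decomposition in $(i)$: the left-hand product of $x_{-i_k}$'s has torus part $\prod c_k^{-\alpha_{i_k}^\vee}$ (each $x_{-\beta}(c)$ contributing $c^{-h_\beta}$ on the nose), while the transposed right-hand side $\prod_k c_k^{-\alpha_{i_k}^\vee}\cdot x_{i_j}(t_j)\cdots x_{i_1}(t_1)$ has torus part $\prod_k c_k^{-\alpha_{i_k}^\vee}$ — so instead I would compare via Lemma \ref{lbl:kumar}: apply a weight $\gamma \in P$ to $\log$ of both sides, using $\lambda - w^{-1}\lambda = \sum_k \beta_k(\lambda)\alpha_{i_k}^\vee$ to rewrite $\sum_k \alpha_{i_k}^\vee \log c_k$ against $\sum_k (-w^{-1}\beta_k^\vee)\log t_k$ after substituting $(iii)$, the cross-terms matching precisely because $\beta_k(\beta_l^\vee)$ is the exponent appearing in $(iii)$. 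The main obstacle I anticipate is bookkeeping: keeping the shifted root enumeration, the transpose reversal of order, and the torus conjugations consistent through the induction — the algebra is elementary ($\mathfrak{sl}_2$ computations plus character arithmetic) but error-prone, so I would be careful to fix once and for all whether the induction runs on the prefix or the suffix of $\mathbf{i}$ and to double-check the base case against the $2\times 2$ matrices.
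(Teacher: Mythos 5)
Your plan follows essentially the same route as the paper: the equivalence $(i)\Leftrightarrow(ii)$ by pushing torus factors through the reversed product of Chevalley elements, the equivalence with $(iii)$ by a telescoping substitution controlled by Lemma \ref{lbl:kumar}, and the final identity by substituting $(iii)$ into $\prod_k c_k^{\alpha_{i_k}^\vee}$ and invoking Lemma \ref{lbl:kumar} again (the paper does the $(ii)\Leftrightarrow(iii)$ step by induction on $j$, applying the Kumar identity with $\lambda=\beta_k^\vee$).

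One caveat: your ``more cheaply'' shortcut for $(ii)\Leftrightarrow(iii)$ --- that both are ``the unique solution to $(i)$ and hence coincide'' --- does not work as stated. Formula $(ii)$ expresses $t$ in terms of $c$ and formula $(iii)$ expresses $c$ in terms of $t$; uniqueness of the inverse map tells you only that \emph{some} triangular-monomial formula gives $c$ in terms of $t$, not that it is the specific one in $(iii)$. Verifying that the exponents $\beta_k(\beta_l^\vee)$ are the right ones is exactly the content of the step, and it requires the computation you sketch in your first alternative (the rearrangement of the double product plus the identity $\alpha_{i_k}^\vee+\sum_{l=k}^{j-1}\beta_l(\beta_k^\vee)\alpha_{i_l}^\vee=-(s_{i_1}\cdots s_{i_{j-1}})^{-1}\beta_k^\vee$ from Lemma \ref{lbl:kumar}). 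Keep that route and drop the shortcut.
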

\begin{proof}
The equivalence between the two first statements is immediate using commutation relations:
\begin{align*}
  & \left( x_{-i_1}(c_1) \dots x_{-i_j}(c_j) \right)^T\\
= &  c_j^{-\alpha_{i_j}^\vee} x_{i_j}(c_j) \dots c_1^{-\alpha_{i_1}^\vee} x_{i_1}(c_1)\\
= &  c_1^{-\alpha_{i_1}^\vee} \dots c_j^{-\alpha_{i_j}^\vee} \prod_{k=0}^{j-1} x_{i_{j-k}}(c_{j-k} \prod_{l<j-k} c_l^{ \alpha_{i_k}(\alpha_{i_l}^\vee) } )
\end{align*}
The equivalence between  the two last statements can be proved by induction over $j$. For $j=1$, it is immediate. Then, for $j \geq 1$, by induction hypothesis:
$$c_j = t_j \prod_{k=1}^{j-1} c_k^{-\alpha_{i_j}(\alpha_{i_k}^\vee)}
      = t_j \prod_{k=1}^{j-1} \left( t_k \prod_{l=1}^{k-1} t_l^{\beta_k(\beta_l^\vee)} \right)^{-\alpha_{i_j}(\alpha_{i_k}^\vee)} $$
Rearranging the double product gives:
\begin{align*}
c_j = & t_j \left( \prod_{k=1}^{j-1} t_k^{-\alpha_{i_j}(\alpha_{i_k}^\vee) } \right) \prod_{l=1}^{j-1} t_l^{-\sum_{k=l}^{j-1} \beta_k(\beta_l^\vee) \alpha_{i_j}(\alpha_{i_k}^\vee) }\\
= & t_j \prod_{k=1}^{j-1} t_k^{ -\alpha_{i_j}\left( \alpha_{i_k}^\vee + \sum_{l=k}^{j-1} \beta_l(\beta_k^\vee) \alpha_{i_l}^\vee \right) }
\end{align*}
Then using the second identity in lemma \ref{lbl:kumar} with $\lambda = \beta^\vee_k$:
\begin{align*}
  & \alpha_{i_k}^\vee + \sum_{l=k}^{j-1} \beta_l(\beta_k^\vee) \alpha_{i_l}^\vee\\
= & \alpha_{i_k}^\vee + \left( s_{i_1} \dots s_{i_k} \right)^{-1} \beta_k^\vee - \left( s_{i_1} \dots s_{i_{j-1}} \right)^{-1} \beta_k^\vee\\
= & - \left( s_{i_1} \dots s_{i_{j-1}} \right)^{-1} \beta_k^\vee
\end{align*}
In the end, as announced:
$$ c_j = t_j \prod_{k=1}^{j-1} t_k^{ \alpha_{i_j}\left( \left( s_{i_1} \dots s_{i_{j-1}} \right)^{-1} \beta_k^\vee \right) } 
       = t_j \prod_{k=1}^{j-1} t_k^{ \beta_j( \beta_k^\vee ) } $$
The last equality is a straightforward calculation:
$$
\prod_{k=1}^j c_k^{\alpha_{i_k}^\vee} 
= \prod_{k=1}^j \left( t_k^{\alpha_{i_k}^\vee} \prod_{l < k} t_l^{\beta_k(\beta_l) \alpha_{i_k}^\vee}\right)
= \prod_{k=1}^j t_k^{\alpha_{i_k}^\vee + \sum_{l=k+1}^j \beta_l(\beta_k) \alpha_{i_l}^\vee}
$$
Using again the lemma \ref{lbl:kumar}, we have:
$$\alpha_{i_k}^\vee + \sum_{l=k+1}^j \beta_l(\beta_k) \alpha_{i_l}^\vee = -w^{-1} \beta_k^\vee$$
\end{proof}

\paragraph{From the Lusztig variety to the Kashiwara variety:}
One can map $U_{>0}^{w_0}$ to $C_{>0}^{w_0}$ in many ways. But only the following one is the correct one. Define 
\begin{align}
\label{eqn:geom_lusztig_2_kashiwara}
\forall u \in U \cap B w_0 B, \eta^{e, w_0}\left( u \right) & := [\bar{w}_0^{-1} u^T]_{-0}^{-1} = [\bar{w}_0^{-1} u^T]_{+} \bar{w}_0 S\left( u \right)^\iota
\end{align}
\begin{align}
\label{eqn:geom_kashiwara_2_lusztig}
\forall v \in B \cap U \bar{w}_0 U, \eta^{w_0, e}\left( v \right)& := [\left( \bar{w}_0 v^T\right)^{-1}]_+ 
\end{align}

\begin{thm}[ \cite{bib:BZ01}, corollary 5.6 ]
\label{thm:geom_from_lusztig_to_kashiwara}
The map $\eta^{e, w_0}$ is a bijection from $U \cap B w_0 B$ to $B \cap U \bar{w_0} U$ and restricts to a bijection from $U_{>0}^{w_0}$ to $C_{>0}^{w_0}$. The inverse map is $\eta^{w_0, e}$.
\end{thm}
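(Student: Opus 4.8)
The statement to prove is Theorem \ref{thm:geom_from_lusztig_to_kashiwara}: that $\eta^{e,w_0}$ maps $U\cap Bw_0B$ bijectively onto $B\cap U\bar w_0 U$, restricts to a bijection $U_{>0}^{w_0}\to C_{>0}^{w_0}$, and has inverse $\eta^{w_0,e}$. The natural strategy is to work entirely with the two formulas \eqref{eqn:geom_lusztig_2_kashiwara} and \eqref{eqn:geom_kashiwara_2_lusztig} and check the three assertions in turn, reducing everything to the Gauss decomposition identities \eqref{lbl:gauss1}, \eqref{lbl:gauss2}, the behaviour of the antimorphisms $T$, $\iota$, $S$, and Proposition \ref{proposition:w_0_action_ad}.

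First I would verify that $\eta^{e,w_0}$ is well-defined and lands in $B\cap U\bar w_0 U$. Since $u\in U\cap Bw_0B$, we have $u^T\in N\cap B^+w_0B^+$, so $\bar w_0^{-1}u^T$ lies in the big opposite cell $NHU$ (this is the standard fact that $\bar w_0^{-1}(B^+w_0B^+)\subset NHU$), making $[\bar w_0^{-1}u^T]_{-0}$ and the expression $[\bar w_0^{-1}u^T]_{-0}^{-1}$ meaningful; it manifestly lies in $(NH)^{-1}=HN\subset B$. For the second equality in \eqref{eqn:geom_lusztig_2_kashiwara} — the identity $[\bar w_0^{-1}u^T]_{-0}^{-1}=[\bar w_0^{-1}u^T]_{+}\,\bar w_0\,S(u)^\iota$ — I would write $\bar w_0^{-1}u^T = [\bar w_0^{-1}u^T]_{-0}[\bar w_0^{-1}u^T]_{+}$, solve for $[\bar w_0^{-1}u^T]_{-0}^{-1}$ as $[\bar w_0^{-1}u^T]_{+}(\bar w_0^{-1}u^T)^{-1}=[\bar w_0^{-1}u^T]_{+}(u^T)^{-1}\bar w_0$, and then identify $(u^T)^{-1}\bar w_0=\bar w_0 S(u)^\iota$ using the definition $S(x)=\bar w_0(x^{-1})^{\iota T}\bar w_0^{-1}$ applied to $x=u$ (so $S(u)^\iota=\bar w_0^{-1}(u^{-1})^T\bar w_0$, whence $\bar w_0 S(u)^\iota=(u^T)^{-1}\bar w_0$, using $(u^{-1})^T=(u^T)^{-1}$). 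That this element lies in $U\bar w_0 U$ is then visible from the right-hand form $[\bar w_0^{-1}u^T]_{+}\bar w_0 S(u)^\iota$ together with the fact that $S(u)^\iota\in U$ (the transpose-then-$\iota$ of a lower unipotent, conjugated appropriately, stays unipotent of the right type).

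Next, the inverse claim. Given $v\in B\cap U\bar w_0 U$ I would compute $\eta^{e,w_0}(\eta^{w_0,e}(v))$ and $\eta^{w_0,e}(\eta^{e,w_0}(u))$ directly. For $\eta^{w_0,e}(v)=[(\bar w_0 v^T)^{-1}]_+$: since $v\in U\bar w_0 U$, $v^T\in U\bar w_0 U$ as well (transpose fixes $\bar w_0$ up to the known sign issue, which I would track carefully — here one uses $\bar w_0^T$ versus $\bar w_0$), so $(\bar w_0 v^T)^{-1}$ lies in the big cell and the $U$-part is well-defined; one checks it lies in $U\cap Bw_0B$ by a Bruhat-cell argument. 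The composition identities are then a matter of substituting one formula into the other and collapsing using \eqref{lbl:gauss1} (which controls how $[\cdot]_{0+}$ interacts with products) and \eqref{lbl:gauss2}; the algebra is of the same flavour as the computation already displayed in the proof of Lemma \ref{lemma:change_of_coordinates_UC}. Finally, for the positivity statement, I would invoke that $\eta^{e,w_0}$ restricted to $U_{>0}^{w_0}$ is given in each reduced-word chart $x_{\bf i}$ by a composition of the chart maps with subtraction-free rational maps (Theorems \ref{thm:lusztig_variety_params}, \ref{thm:kashiwara_variety_params}, Lemma \ref{lemma:change_of_coordinates_UC}) — indeed Lemma \ref{lemma:change_of_coordinates_UC}(i) exhibits exactly the relation between $x_{-\bf i}(c)$ and $x_{\bf i}(t)^T$ that underlies $\eta^{e,w_0}$ — so positivity is preserved, and the same for the inverse; combined with the already-established bijectivity on the full cells, this gives the restricted bijection $U_{>0}^{w_0}\to C_{>0}^{w_0}$.

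The main obstacle I anticipate is bookkeeping around the representative $\bar w_0$: it is \emph{not} an involution ($\bar s_i^2=\phi_i(-\mathrm{id})$), and its transpose $\bar w_0^T$ need not equal $\bar w_0$, so every step that moves a transpose past a $\bar w_0$ or inverts $\bar w_0$ must be done with the precise relations $\bar s_i^{-1}=\bar{\bar s}_i$ and Proposition \ref{proposition:w_0_action_ad} ($\Ad(\bar w_0)e_\alpha=-f_\alpha$) in hand; getting the torus factors and signs to cancel correctly in the two composition checks is where the real work lies. A secondary subtlety is verifying the Bruhat-cell memberships ($\bar w_0^{-1}u^T\in NHU$, and the outputs landing in the correct cells $Bw_0B$ resp. $U\bar w_0 U$), which I would handle via the standard dimension/Bruhat-order facts about $w_0$ rather than by explicit coordinates. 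Since Theorem \ref{thm:geom_from_lusztig_to_kashiwara} is quoted from \cite{bib:BZ01}, Corollary 5.6, I would also be content to cite that source for the cell-level bijection and only supply the positivity refinement and the identification of the formulas in our normalization.
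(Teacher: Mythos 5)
The paper gives no proof of this theorem: it is imported verbatim from \cite{bib:BZ01}, Corollary 5.6, and your closing remark that you would ultimately cite that source for the cell-level bijection is exactly what the paper does. Your sketched verification is nonetheless sound — in particular the identity $[\bar w_0^{-1}u^T]_{-0}^{-1}=[\bar w_0^{-1}u^T]_{+}(u^T)^{-1}\bar w_0$ and the computation $S(u)^\iota=\bar w_0^{-1}(u^{-1})^T\bar w_0$ (using $\bar w_0^\iota=\bar w_0$ and $\Ad(\bar w_0^{-1})N\bar w_0\subset U$) both check out, and the positivity refinement via subtraction-free charts is the same mechanism the paper invokes elsewhere (Theorem \ref{thm:charts_positivity}). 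So there is no gap to report, only the observation that you have supplied more detail than the paper itself offers.
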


The tropicalization of this correspondence is a very interesting map:
\begin{thm}[ \cite{bib:BZ01}, theorem 5.7 ]
\label{thm:tropical_from_lusztig_to_kashiwara}
Changes of parametrization for the $G^\vee$ canonical basis are obtained by tropicalizing the following rational subtraction-free expressions. Going from ${\bf i}$-Lusztig parameters to ${\bf i'}$-string parameters is achieved by tropicalizing:
$$ x_{\bf-i'} \circ \eta^{e, w_0} \circ x_{\bf i}$$
Conversely, in order to obtain ${\bf i}$-string parameters from ${\bf i'}$-Lusztig parameters, tropicalize:
$$ x_{\bf i'} \circ \eta^{w_0, e} \circ x_{\bf-i}$$
\end{thm}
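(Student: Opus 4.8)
The plan is to reduce the statement to a single, conveniently chosen pair of reduced words and then to verify it there by an explicit computation. For ${\bf i}, {\bf i'} \in R(w_0)$ write $\psi_{{\bf i}, {\bf i'}} := x_{\bf-i'}^{-1} \circ \eta^{e, w_0} \circ x_{\bf i} : \R_{>0}^m \rightarrow \R_{>0}^m$; this is the map the first displayed formula asks us to tropicalise (the chart $x_{\bf-i'}$ on the left being read as its inverse), and since the $x_{\bf-i'}$ are fixed subtraction-free data by Theorem~\ref{thm:kashiwara_variety_params}, everything is governed by $\psi_{{\bf i}, {\bf i'}}$. For any second pair ${\bf j}, {\bf j'} \in R(w_0)$ one has the factorisation
$$\psi_{{\bf i}, {\bf i'}} = R_{-{\bf j'}, -{\bf i'}} \circ \psi_{{\bf j}, {\bf j'}} \circ R_{{\bf i}, {\bf j}},$$
where $R_{{\bf i}, {\bf j}}$ and $R_{-{\bf j'}, -{\bf i'}}$ are transition maps between positive parametrisations, hence rational and subtraction-free together with their inverses by Theorems~\ref{thm:lusztig_variety_params} and \ref{thm:kashiwara_variety_params}. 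So $\psi_{{\bf i}, {\bf i'}}$ is subtraction-free for all pairs once it is for one. For the tropical content: tropicalisation is functorial on subtraction-free composites, and by the $G^\vee$-version of \cite{bib:BZ01}, Theorem~5.2 recalled above, $[R_{{\bf i}, {\bf j}}]_{trop}$ is exactly the change of $G^\vee$-canonical-basis parameters from ${\bf i}$-Lusztig to ${\bf j}$-Lusztig, while $[R_{-{\bf j'}, -{\bf i'}}]_{trop}$ is the change from ${\bf j'}$-string to ${\bf i'}$-string. Since changes of parameters compose, once $[\psi_{{\bf j}, {\bf j'}}]_{trop}$ is identified with the change from ${\bf j}$-Lusztig to ${\bf j'}$-string for one pair, the same holds for every pair.

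\textbf{The chosen pair.} I would take ${\bf j'} = {\bf j}^* := (j_1^*, \dots, j_m^*)$, which again lies in $R(w_0)$ since conjugation by $w_0$ realises $s_{j^*} = w_0 s_j w_0^{-1}$. For $u = x_{\bf j}(\vec t) = x_{j_1}(t_1) \cdots x_{j_m}(t_m) \in U_{>0}^{w_0}$, the antimorphism identities of the preliminaries give $S(u)^\iota = x_{j_1^*}(t_1) \cdots x_{j_m^*}(t_m) = x_{{\bf j}^*}(\vec t)$, whence
$$\eta^{e, w_0}(u) = [\bar{w}_0^{-1} u^T]_{+} \, \bar{w}_0 \, x_{{\bf j}^*}(\vec t).$$
The task is then to bring this element into the form $x_{-j_1^*}(c_1) \cdots x_{-j_m^*}(c_m)$: one computes $[\bar{w}_0^{-1} u^T]_{+}$ using the Gauss identities \eqref{lbl:gauss1}--\eqref{lbl:gauss2}, converts between the $x_{-i}(\cdot)$-products and $x_i(\cdot)$-products that appear by means of the explicit subtraction-free relations (ii)--(iii) of Lemma~\ref{lemma:change_of_coordinates_UC}, and uses Theorem~\ref{thm:totally_positive_gauss_decomposition} (together with Lusztig's subtraction-free formulas for the Gauss components of a totally positive element, underlying Theorems~\ref{thm:lusztig_variety_params} and \ref{thm:kashiwara_variety_params}) to see that the resulting $\vec c = \vec c(\vec t)$ is subtraction-free and not merely rational. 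Tropicalising the identity $\psi_{{\bf j}, {\bf j}^*}(\vec t) = \vec c(\vec t)$ and comparing with the weight expressions \eqref{eqn:weight_lusztig}--\eqref{eqn:weight_string} and the Kashiwara-operator descriptions \eqref{eqn:kashiwara_operator_f_1}--\eqref{eqn:kashiwara_operator_e_2} — which jointly characterise the ${\bf j}$-Lusztig and ${\bf j}^*$-string parametrisations of $\Bfrak(\infty)$ for $G^\vee$ — identifies $[\psi_{{\bf j}, {\bf j}^*}]_{trop}$ with the change of canonical-basis parameters from ${\bf j}$-Lusztig to ${\bf j}^*$-string, which by the reduction step proves the first assertion. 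The second assertion follows by the symmetric argument run with $\eta^{w_0, e}$ and relation (iii) of Lemma~\ref{lemma:change_of_coordinates_UC}, or more economically from the first: since $\eta^{w_0, e} = (\eta^{e, w_0})^{-1}$ (Theorem~\ref{thm:geom_from_lusztig_to_kashiwara}), the two composites in play are mutually inverse, and Lemma~\ref{lemma:change_of_coordinates_UC} already supplies subtraction-free formulas in both directions.

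\textbf{Main obstacle.} The delicate point is the explicit computation for the chosen pair: one must genuinely carry out the Gauss decomposition of $\bar{w}_0^{-1} (x_{\bf j}(\vec t))^T$ and track every minus sign, to be sure that $\psi_{{\bf j}, {\bf j}^*}$ is subtraction-free rather than merely a positive rational map (positivity on $\R_{>0}^m$ alone does not force subtraction-freeness). The combinatorial bookkeeping — which reduced word (here ${\bf j}^*$, reflecting the twist built into $\bar{w}_0$ and $S$) indexes the image, and in which order the factors appear — is the part most likely to go wrong. Once the subtraction-free formula is secured, checking that its tropicalisation is the \emph{identity} on the abstract canonical basis, rather than some other bijection intertwining its two parametrisations, is a formality given the characterisation of each parametrisation by weights and Kashiwara operators.
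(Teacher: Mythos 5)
The paper offers no proof of this statement: it is imported directly from Berenstein--Zelevinsky (the header cites \cite{bib:BZ01}, Theorem 5.7), so your proposal can only be judged on its own internal soundness. Your reduction step is fine: the factorisation $\psi_{{\bf i},{\bf i'}} = R_{-{\bf j'},-{\bf i'}} \circ \psi_{{\bf j},{\bf j'}} \circ R_{{\bf i},{\bf j}}$ is correct, and together with Theorems \ref{thm:lusztig_variety_params} and \ref{thm:kashiwara_variety_params} and the known tropical identification of the $R$-maps it legitimately reduces both assertions to a single pair of reduced words. The identity $\eta^{e,w_0}\left(x_{\bf j}(\vec t)\right) = [\bar w_0^{-1} x_{\bf j}(\vec t)^T]_+ \, \bar w_0 \, x_{{\bf j}^*}(\vec t)$ and the observation that $\eta^{w_0,e}=(\eta^{e,w_0})^{-1}$ disposes of the converse direction are also correct.

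The gap is in the last step, which you dismiss as ``a formality.'' First, the explicit subtraction-free formula for $\psi_{{\bf j},{\bf j}^*}$ is asserted rather than produced; you flag this honestly, but without it there is nothing to tropicalise, and subtraction-freeness cannot be inferred from positivity of values alone, as you yourself note. Second, and more seriously, even granting the formula, identifying its tropicalisation with the canonical-basis transition map does not follow from the data you invoke. The weight formulas \eqref{eqn:weight_lusztig}--\eqref{eqn:weight_string} only constrain the map fibrewise over the weight lattice, and the Kashiwara-operator descriptions \eqref{eqn:kashiwara_operator_f_1}--\eqref{eqn:kashiwara_operator_e_2} describe only the operator attached to the \emph{first} letter of each reduced word; with your choice ${\bf j'}={\bf j}^*$ these are $\tilde f_{\alpha_{j_1}}$ on the Lusztig side and $\tilde f_{\alpha_{j_1^*}}$ on the string side --- in general different operators --- so the two descriptions do not even refer to a common piece of structure that your candidate bijection could be checked against. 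Pinning down the bijection requires compatibility with the whole family of crystal operators (accessed through all reduced words), together with the statement that the geometric crystal actions on $U^{w_0}_{>0}$ and $C^{w_0}_{>0}$ tropicalise to the Kashiwara operators and that $\eta^{e,w_0}$ intertwines them. That is precisely the content of Theorems 5.2 and 5.7 of \cite{bib:BZ01}: it is the heart of the theorem, not a verification to be appended at the end.
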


\subsection{Geometric crystal elements}

We now define geometric crystals with a notation similar to Kashiwara crystals.

\begin{definition}[Geometric crystals]
\label{def:geom_crystal}
Define the geometric crystal of highest weight $\lambda \in \afrak$ as the set:
$$\Bc\left(\lambda\right) := C_{>0}^{w_0} e^{\lambda}$$
The union of all highest weight crystals will be denoted by $\Bc$, which is nothing but the set of totally positive elements in $B$:
$$ \Bc := \bigsqcup_{ \lambda \in \afrak} \Bc(\lambda) = B_{\geq 0}$$
\end{definition}
Later, we will see that $\Bc$ is an abstract crystal in the sense of definition \ref{def:crystal}, thanks to the work of Berenstein and Kazhdan. Now, the goal of this subsection consists in exhibiting the correct way of embedding the Lusztig and Kashiwara varieties in $\Bc$ (figure \ref{fig:geom_parametrizations}).

The highest weight can easily be recovered from any element $x \in \Bc$ using the highest weight map:
\begin{definition}[Highest and lowest weight, \cite{bib:BK06} relation 1.6]
\label{def:highest_lowest_weight}
Define the highest weight map $\hw: \Bc \rightarrow \mathfrak{a}$ by:
$$ \forall x \in \Bc, \hw(x) := \log[ \bar{w}_0^{-1} x ]_0$$
The lowest weight is given by:
$$ \forall x \in \Bc, \lw(x) := \log[ \bar{w}_0^{-1} x^\iota ]_0^\iota = w_0 \hw(x)$$
\end{definition}

Highest weight crystals are disjoint in $\Bc$ and $\hw^{-1}\left( \{ \lambda \} \right) = \Bc\left( \lambda \right)$. Moreover:
\begin{properties}
\phantomsection
\label{properties:hw}
\begin{itemize}
 \item[(i)] $\hw$ can be extended to $B^+ w_0 B^+$ as:
$$\forall \left( z,u \right) \in U \times U, t \in H, \hw\left( z \bar{w}_0 t u \right) = \log(t) $$
 \item[(ii)] $\hw$ is an $U \times U$-invariant function.
 \item[(iii)] $$\forall \left(x, y \right) \in \mathfrak{a}^2, \forall g \in B^+ w_0 B^+, \hw\left( e^x g e^y \right) = w_0 x + \hw\left(g\right) + y$$ 
\end{itemize}
\end{properties}
\begin{proof}
\begin{itemize}
 \item[(i)] If $g = z \bar{w}_0 t u \in B^+ w_0 B^+$, then:
$$ \hw\left( g \right) = \log [\bar{w}^{-1}_0  g]_0
= \log [\bar{w}^{-1}_0  z \bar{w}_0 t u]_0\\
= \log(t)$$
 \item[(ii)] Immediate from (i)
 \item[(iii)] 
\begin{align*}
\hw\left( e^x g e^y \right) & = \log [\bar{w}^{-1}_0  e^x g e^y]_0\\
= & \log [\bar{w}^{-1}_0  e^x \bar{w}_0 \bar{w}^{-1}_0 g e^y]_0\\
= & \log [e^{w_0 x} \bar{w}^{-1}_0 g e^y]_0\\
= & w_0 x + \hw\left(g\right) + y
\end{align*}
\end{itemize}
\end{proof}

Every element $x \in \Bc\left(\lambda\right)$ can be written using a certain associated parameter. The letters $u$ and $z$ will usually refer to an element in $U_{>0}^{w_0}$ and the letter $v$ will usually refer to an element in $C_{>0}^{w_0}$. An expression we will often use is:
\begin{thm}
For $x \in \Bc\left(\lambda\right)$, one can write uniquely:
$$ x = z \bar{w_0} e^{\lambda} u, z \in U^{w_0}_{>0}, u \in U^{w_0}_{>0}$$
Mapping $x$ to $z$ (resp. $u$) is a bijection from $\Bc(\lambda)$ to $U^{w_0}_{>0}$. The former will be refered to as the Lusztig parameter associated to $x$, and the latter the twisted Lusztig parameter.
\end{thm}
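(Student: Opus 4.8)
The plan is to start from the two known structural results: the geometric string variety $C_{>0}^{w_0}$ parametrizes (via Theorem \ref{thm:kashiwara_variety_params}) the crystal $\Bc(\lambda) = C_{>0}^{w_0} e^\lambda$, and Theorem \ref{thm:geom_from_lusztig_to_kashiwara} gives a bijection $\eta^{w_0,e}: C_{>0}^{w_0} \to U_{>0}^{w_0}$ with inverse $\eta^{e,w_0}$. So given $x \in \Bc(\lambda)$, write $x = v e^\lambda$ with $v \in C_{>0}^{w_0}$ uniquely, i.e. $v \in U\bar w_0 U \cap B \cap G_{\geq 0}$. The goal is to recast the factor $v$ as $z \bar w_0$ modulo a unipotent on the right, with $z \in U_{>0}^{w_0}$; the passage $v \leftrightarrow z$ should be exactly (a twisted form of) the map $\eta^{w_0,e}$, which is already known to be a bijection onto $U_{>0}^{w_0}$.

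First I would make the decomposition explicit. Since $v \in U \bar w_0 U$, write $v = z' \bar w_0 u''$ with $z', u'' \in U$; I would then argue that on the totally positive part one in fact gets $z' \in U_{>0}^{w_0}$ (this is the content of, or a direct consequence of, Theorem \ref{thm:geom_from_lusztig_to_kashiwara} applied via $\eta^{w_0,e}$, possibly after a transpose, since $\eta^{w_0,e}(v) = [(\bar w_0 v^T)^{-1}]_+$ extracts precisely such a unipotent factor). Then $x = v e^\lambda = z' \bar w_0 u'' e^\lambda = z' \bar w_0 e^\lambda (e^{-\lambda} u'' e^\lambda)$, and setting $z := z'$ and $u := e^{-\lambda} u'' e^\lambda$ — note $e^{-\lambda} U_{>0}^{w_0} e^\lambda = U_{>0}^{w_0}$ since conjugation by a torus element permutes the $x_i(t)$-parameters by positive scalars and preserves the Bruhat cell — gives the asserted form $x = z \bar w_0 e^\lambda u$. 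For uniqueness, suppose $z_1 \bar w_0 e^\lambda u_1 = z_2 \bar w_0 e^\lambda u_2$; multiplying out, $z_2^{-1} z_1 = \bar w_0 e^\lambda (u_2 u_1^{-1}) e^{-\lambda} \bar w_0^{-1}$. The left side lies in $N^+ = U$ while, using $\Ad(\bar w_0)$ sends positive root spaces to negative ones (Proposition \ref{proposition:w_0_action_ad}), the right side lies in $N$ unless $u_1 = u_2$; hence $u_1 = u_2$ and then $z_1 = z_2$. One must be slightly careful that these elements lie in the big cell so that the Gauss/triangular comparison is valid, which holds because everything is totally positive and Theorem \ref{thm:totally_positive_gauss_decomposition} applies.

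For the bijectivity claims: the map $x \mapsto v = x e^{-\lambda}$ is a bijection $\Bc(\lambda) \to C_{>0}^{w_0}$ by definition of $\Bc(\lambda)$; the map $v \mapsto z$ is the (transpose-twisted) restriction of $\eta^{w_0,e}$, a bijection $C_{>0}^{w_0} \to U_{>0}^{w_0}$ by Theorem \ref{thm:geom_from_lusztig_to_kashiwara}; composing gives that $x \mapsto z$ is a bijection $\Bc(\lambda) \to U_{>0}^{w_0}$. The statement about $x \mapsto u$ follows either symmetrically — applying the transpose antiautomorphism, or the Schützenberger involution $S$, which on $\Bc(\lambda)$ swaps the roles of the left and right unipotent factors — or directly by noting $u = e^{-\lambda} u'' e^\lambda$ and that $v \mapsto u''$ is likewise governed by extracting the other unipotent part in the decomposition $v = z \bar w_0 u''$, again bijective onto $U_{>0}^{w_0}$ (one can invoke $\eta^{e,w_0}$ here, or Lemma \ref{lemma:change_of_coordinates_UC}).

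The main obstacle I anticipate is the bookkeeping around transposes and choices of representative $\bar w_0$: the maps $\eta^{e,w_0}, \eta^{w_0,e}$ as written in \eqref{eqn:geom_lusztig_2_kashiwara}–\eqref{eqn:geom_kashiwara_2_lusztig} involve $u^T$ and $\bar w_0^{-1}$, so matching the ``bare'' decomposition $v = z\bar w_0 u''$ to those formulas requires a careful transpose computation (using $\bar w_0^T = \bar w_0^{-1}$ up to the $\phi_i(-\mathrm{id})$ ambiguity, and $(U_{>0}^{w_0})^T = N_{>0}^{w_0}$). Once the correspondence with $\eta^{w_0,e}$ is pinned down, total positivity of $z$ and $u$ is automatic and uniqueness is the short triangular-decomposition argument above; the rest is formal.
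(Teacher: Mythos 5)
Your proposal is correct and follows essentially the same route as the paper: the paper's own proof of this theorem is a one-line deferral (existence from the definition of $\Bc(\lambda)$, uniqueness from exhibiting inverse maps that preserve total positivity), with the real content --- that $z=[\bar w_0^{-1}x^\iota]_+^\iota$ and $u=[\bar w_0^{-1}x]_+$ land in $U^{w_0}_{>0}$ --- carried by Proposition \ref{proposition:crystal_param_maps} and Theorem \ref{thm:charts_positivity}, which reduce everything to the Berenstein--Zelevinsky maps $\eta^{w_0,e},\eta^{e,w_0}$ and the twist map, exactly as you propose, and your $U\cap N=\{e\}$ uniqueness argument is if anything more direct than the paper's. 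One detail to pin down in your acknowledged transpose bookkeeping: writing $xe^{-\lambda}=z\bar w_0 u''$, a direct computation gives $\eta^{w_0,e}(xe^{-\lambda})=\bar w_0\left((u'')^{-1}\right)^T\bar w_0^{-1}$, so $\eta^{w_0,e}$ extracts (a positivity-preserving twist of) the \emph{right} factor $u''$ rather than $z$; to reach $z$ you must pre-compose with $\iota$ (which fixes $\bar w_0$ and swaps the two unipotent factors), which is precisely why the paper's Lusztig parameter is $[\bar w_0^{-1}x^\iota]_+^\iota$ and why Theorem \ref{thm:charts_positivity} uses the identity $\iota\circ\varrho^L\circ\iota=\varrho^T$.
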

\begin{proof}
The existence is a consequence of the definition of $\Bc(\lambda)$. The uniqueness comes, as we will see, from exhibiting inverse maps that preserve total positivity.
\end{proof}

There is also the possibility of using a parameter $v \in C_{>0}^{w_0}$ that we will call the string or Kashiwara parameter associated to $x$. Such names are justified by the fact that these choices give a geometric lifting of the parametrizations for crystal bases. In all the following formulas, the group elements considered belong to the double Bruhat cell $B \cap B^+ w_0 B^+$ and thus, every Gauss decomposition that we use is allowed.

\begin{definition}[Parameters associated to a crystal element]
\label{def:crystal_parameter}
Define the following maps on $\Bc$:
$$
\begin{array}{cccc}
\varrho^L : &  \Bc                           & \longrightarrow & U^{w_0}_{>0}\\
	    &  x = z \bar{w}_0 e^{\lambda} u &   \mapsto       & z = [\bar{w}_0^{-1} x^\iota]_+^\iota
\end{array}$$

$$
\begin{array}{cccc}
\varrho^K : &  \Bc                           & \longrightarrow & C^{w_0}_{>0}\\
	    &  x                             &   \mapsto       & v = [\bar{w}_0^{-1} [x]_-]_{0+}^T
\end{array}$$

$$
\begin{array}{cccc}
\varrho^T : &  \Bc                           & \longrightarrow & U^{w_0}_{>0}\\
	    &  x = z \bar{w}_0 e^{\lambda} u &   \mapsto       & u = [\bar{w}_0^{-1} x]_+
\end{array}$$

For $x \in \Bc$, the group elements $z = \varrho^L(x)$, $v = \varrho^K(x)$ and $z = \varrho^T(x)$ will be referred to as the Lusztig, Kashiwara and twisted Lusztig parameters associated to $x$.
\end{definition}

The following property shows that all highest weight crystals share the same parametrizations, hinting to the compatibility properties of the canonical basis. Recall that $\eta^{w_0, e}$ is given in equation \eqref{eqn:geom_kashiwara_2_lusztig}.

\begin{proposition}
\label{proposition:crystal_param_maps}
Once restricted to $\Bc(\lambda)$ the maps $\varrho^L$, $\varrho^K$ and $\varrho^T$ are invertible with inverses:
$$
\begin{array}{cccc}
b_\lambda^L: &  U^{w_0}_{>0} & \longrightarrow & \Bc(\lambda)\\
	     &  z            &   \mapsto       & x = [z \bar{w}_0]_{-0} e^\lambda = z \bar{w}_0 e^\lambda \left( e^{-\lambda} [z \bar{w}_0]_+^{-1} e^{\lambda}\right)
\end{array}$$

$$
\begin{array}{cccc}
b_\lambda^K: &  C^{w_0}_{>0} & \longrightarrow & \Bc(\lambda)\\
	     &  v            &   \mapsto       & x = [\eta^{w_0, e}\left( v \right) \bar{w}_0]_{-0} e^\lambda = [(\bar{w}_0 v^T)^{-1} ]_+ \bar{w}_0 v^T [v^T]_0^{-1}e^\lambda
\end{array}$$

$$
\begin{array}{cccc}
b_\lambda^T: &  U^{w_0}_{>0} & \longrightarrow & \Bc(\lambda)\\
	     &  u            &   \mapsto       & x = S \circ \iota \left( e^{-\lambda} [ \bar{w}_0^{-1} u^T ]_{+} e^{\lambda} \right) \bar{w}_0 e^\lambda u 
\end{array}$$
\end{proposition}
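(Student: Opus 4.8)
The plan is to verify directly that the three proposed maps $b_\lambda^L$, $b_\lambda^K$, $b_\lambda^T$ are two-sided inverses of $\varrho^L$, $\varrho^K$, $\varrho^T$ respectively, element by element. Each verification splits into two parts: (i) that the formula lands in $\Bc(\lambda)$, i.e.\ produces a totally positive element of $B$ with highest weight $\lambda$, and (ii) that composing with the corresponding $\varrho$ map in either order gives the identity. For well-definedness, the key observation is that all the group elements in play sit in the double Bruhat cell $B \cap B^+ w_0 B^+$, so every Gauss decomposition $[\cdot]_{-0}$, $[\cdot]_+$, $[\cdot]_{0+}$ appearing in the formulas is legitimate; total positivity is inherited from Lusztig's Theorem~\ref{thm:totally_positive_gauss_decomposition} together with the fact that $\eta^{w_0,e}$ preserves positivity (Theorem~\ref{thm:geom_from_lusztig_to_kashiwara}).

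First I would treat $b_\lambda^L$. Given $x = z\bar w_0 e^\lambda u \in \Bc(\lambda)$ with $z,u \in U^{w_0}_{>0}$, one checks $\varrho^L(x) = z$ using the defining equality $\varrho^L(x) = [\bar w_0^{-1} x^\iota]_+^\iota$: apply $\iota$ to $x$ (recalling $e^{\lambda\,\iota} = e^{-\lambda}$ and that $\iota$ fixes $U$), multiply by $\bar w_0^{-1}$, and read off the unipotent part — the point being that $\bar w_0^{-1} z \bar w_0$ lies in $U$ only after absorbing it correctly, so one uses the Gauss decomposition of $z\bar w_0 = [z\bar w_0]_{-0}[z\bar w_0]_+$. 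Conversely, starting from $z \in U^{w_0}_{>0}$, the identity $[z\bar w_0]_{-0} e^\lambda = z\bar w_0 e^\lambda(e^{-\lambda}[z\bar w_0]_+^{-1} e^\lambda)$ is just the rearrangement $z\bar w_0 = [z\bar w_0]_{-0}[z\bar w_0]_+$ with $e^\lambda$ conjugated through, and identity~\eqref{lbl:gauss2} identifies the twisted Lusztig parameter $u = e^{-\lambda}[z\bar w_0]_+^{-1} e^\lambda$; positivity of this $u$ follows because $[z\bar w_0]_+ \in U_{\geq 0}$ and conjugation by $e^\lambda$ preserves $U_{\geq 0}$. The highest weight is $\lambda$ by Properties~\ref{properties:hw}(i). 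The map $b_\lambda^T$ is entirely parallel: from the same decomposition $x = z\bar w_0 e^\lambda u$ one gets $\varrho^T(x) = [\bar w_0^{-1} x]_+ = u$ immediately, and the formula for $b_\lambda^T(u)$ is obtained by expressing $z$ back in terms of $u$ via the Schützenberger involution $S$ and $\iota$, using the relation $S = \iota\circ S\circ\iota$ and Proposition~\ref{proposition:w_0_action_ad} to match $z$ with $S\circ\iota(e^{-\lambda}[\bar w_0^{-1} u^T]_+ e^\lambda)$.

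The case of $b_\lambda^K$ is the main obstacle, since it involves the Kashiwara variety and the transfer map $\eta^{w_0,e}$ rather than a direct Gauss-decomposition bookkeeping. Here I would start from $\varrho^K(x) = [\bar w_0^{-1}[x]_-]_{0+}^T$ and, writing $x = z\bar w_0 e^\lambda u$ with $[x]_- = z\,[\bar w_0 e^\lambda u]_-$, reduce to understanding $[\bar w_0^{-1} z \cdot(\text{lower part})]_{0+}$; the transpose then lands it in $C^{w_0}_{>0} = U\bar w_0 U \cap B \cap G_{\geq 0}$. For the converse, given $v \in C^{w_0}_{>0}$, I would unwind $\eta^{w_0,e}(v) = [(\bar w_0 v^T)^{-1}]_+$ (equation~\eqref{eqn:geom_kashiwara_2_lusztig}), plug into $b_\lambda^L$, and show the resulting element's Kashiwara parameter is $v$; the second displayed form $[(\bar w_0 v^T)^{-1}]_+\bar w_0 v^T [v^T]_0^{-1} e^\lambda$ should come out by applying identity~\eqref{lbl:gauss1} to the product $(\bar w_0 v^T)^{-1}\cdot(\bar w_0 v^T)$ suitably grouped, and noting $[v^T]_0 = [v]_0$ since transpose fixes $H$. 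The crucial input making both directions close up is Theorem~\ref{thm:geom_from_lusztig_to_kashiwara}, that $\eta^{e,w_0}$ and $\eta^{w_0,e}$ are mutually inverse bijections between $U\cap Bw_0B$ and $B\cap U\bar w_0 U$ restricting to bijections $U^{w_0}_{>0}\leftrightarrow C^{w_0}_{>0}$: this lets me factor $\varrho^K$ (up to the $e^\lambda$ twist and a transpose) through $\varrho^L\circ(\text{something})$ and deduce invertibility of $\varrho^K$ from that of $\varrho^L$, rather than re-proving it from scratch. I expect the bookkeeping of transposes, $\iota$, and the $e^{\pm\lambda}$ conjugations in this third case — keeping track of which Borel and which unipotent each factor lands in — to be the only genuinely delicate point; everything else is a direct application of the Gauss-decomposition identities \eqref{lbl:gauss1}–\eqref{lbl:gauss2} and the positivity theorems quoted above.
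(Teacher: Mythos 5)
Your overall strategy --- direct verification via the Gauss-decomposition identities \eqref{lbl:gauss1}--\eqref{lbl:gauss2}, with the Lusztig and twisted-Lusztig parameters read off after applying $\iota$ and conjugating by $\bar w_0^{-1}$, and the Kashiwara case reduced to the Lusztig case through the mutually inverse transfer maps $\eta^{e,w_0}$ and $\eta^{w_0,e}$ of Theorem \ref{thm:geom_from_lusztig_to_kashiwara} --- is essentially the route the paper takes. (One small slip in the sketch: the factor that gets absorbed into $B$ when computing $[\bar w_0^{-1}x^\iota]_+$ is $\bar w_0^{-1}u^\iota e^{-\lambda}\bar w_0\in NH$, not anything involving $\bar w_0^{-1}z\bar w_0$; the element $z^\iota$ then survives as the $[\cdot]_+$ part.)

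The one place where your justification does not go through as written is total positivity of the images. You claim this is inherited from Theorem \ref{thm:totally_positive_gauss_decomposition}, but that theorem applies only to elements of $G_{\geq 0}$, and $z\bar w_0$ is \emph{not} totally non-negative (the representative $\bar w_0$ is built from one-parameter subgroups with negative parameters), so you cannot conclude $[z\bar w_0]_{-0}\in G_{\geq 0}$ or $[z\bar w_0]_+\in U_{\geq 0}$ this way --- and even if the latter held, its \emph{inverse} $[z\bar w_0]_+^{-1}$ would generally leave $U_{\geq 0}$. The paper deliberately isolates exactly this point as Theorem \ref{thm:charts_positivity}, proved by showing that $\varrho^L$, $\varrho^K$, $\varrho^T$ and their inverses are rational and subtraction-free in coordinates, using the monomial change of variables of Lemma \ref{lemma:change_of_coordinates_UC} and the twist map $\eta_{w_0}$ of \cite{bib:BZ97}. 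Everything else in your outline is sound, but for well-definedness you need to invoke (or reproduce) that coordinate argument rather than the Gauss decomposition of totally non-negative elements.
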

\begin{rmk}
It is easy to see that with such definitions, $\eta^{e, w_0}\left( z \right) = v$.
\end{rmk}

In the sequel, we will try to stick to the letters $z$, $v$ and $u$ when dealing with each choice of parameter. The figure \ref{fig:geom_parametrizations} shows the different charts for $\Bc(\lambda)$, together with the inverse maps $b^L_\lambda$, $b^K_\lambda$ and $b^T_\lambda$. 

\setcounter{figure}{ \value{equation} }
\addtocounter{equation}{1}
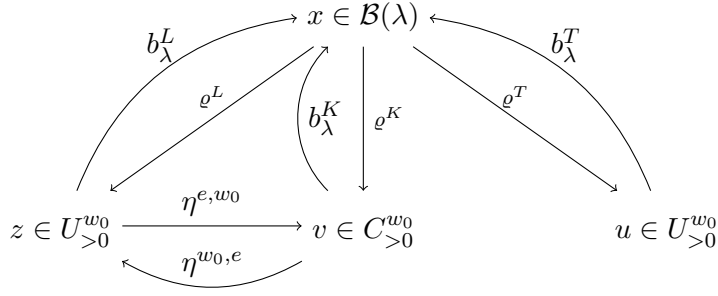
\begin{figure}[htp!]
\centering
\begin{tikzpicture}[baseline=(current bounding box.center)]
\matrix(m)[matrix of math nodes, row sep=5em, column sep=6em, text height=3ex, text depth=1ex, scale=2]
{
                    & x \in \Bc(\lambda) &                    \\
 z \in U^{w_0}_{>0} & v \in C^{w_0}_{>0} & u \in U^{w_0}_{>0} \\
};
\path[->, font=\scriptsize] (m-1-2) edge node[above]{$\varrho^L$} (m-2-1);
\path[->, font=\scriptsize] (m-1-2) edge node[right]{$\varrho^K$} (m-2-2);
\path[->, font=\scriptsize] (m-1-2) edge node[above]{$\varrho^T$} (m-2-3);

\draw [->] (m-2-1) to [bend left=30]  node[above, swap]{$b^L_\lambda$} (m-1-2);
\draw [->] (m-2-2) to [bend left=45]  node[auto, swap]{$b^K_\lambda$} (m-1-2);
\draw [->] (m-2-3) to [bend right=30] node[above, swap]{$b^T_\lambda$} (m-1-2);

\draw [->] (m-2-1) to [bend left=0 ]  node[above, swap]{$\eta^{e, w_0}$} (m-2-2);
\draw [->] (m-2-2) to [bend left=30]  node[auto , swap]{$\eta^{w_0, e}$} (m-2-1);
\end{tikzpicture}
\caption{Charts for the highest weight geometric crystal $\Bc(\lambda)$}
\label{fig:geom_parametrizations}
\end{figure}

\begin{proof}
The fact that these charts preserve total positivity is dealt with later in theorem \ref{thm:charts_positivity}. Let us start by writing:
$$ x = z \bar{w}_0 e^{\lambda} u $$

Let us deal with the Lusztig parameter $z$. It is easy to see that since $x \in B$, we have $x = [x]_{-0} = [z \bar{w}_0]_{-0} e^{\lambda}$. The second expression is obtained directly by the identity $[z \bar{w}_0]_{-0} = z \bar{w}_0 [z \bar{w}_0]_+^{-1}$.  In order to obtain $z$ from $x$, making use of the anti-automorphism $\iota$ gives $ x^\iota = u^\iota e^{-\lambda} \bar{w}_0 z^\iota$. Hence:
$$ \left[ \bar{w}_0^{-1} x^\iota \right]_+^\iota 
 = \left[ \bar{w}_0^{-1} u^\iota e^{-\lambda} \bar{w}_0 z^\iota \right]_+^\iota
 = \left( z^\iota \right)^\iota
 = z $$

The twisted Lusztig parameter $u$ is treated in a similar way. Write:
\begin{align*}
x = & [x^\iota]_{-0}^\iota\\
= & [ u^\iota e^{-\lambda} \bar{w}_0 ]_{-0}^\iota\\
= & \left([ u^\iota e^{-\lambda} \bar{w}_0 ]_{+}^\iota\right)^{-1} \bar{w}_0 e^\lambda u \\
= & S \circ \iota (y) \bar{w}_0 e^\lambda u
\end{align*}
where we have used the involutive automorphism $S \circ \iota$ and:
\begin{align*}
y & = S \circ \iota \left( \left([ u^\iota e^{-\lambda} \bar{w}_0 ]_{+}^\iota\right)^{-1} \right)\\
& = \bar{w}_0^{-1} [ \bar{w}_0^{-1} e^{-\lambda} \left(u^\iota\right)^T ]_{-}^\iota \bar{w}_0\\
& = [ \bar{w}_0^{-1} e^{-\lambda} u^T e^{\lambda} ]_{+}\\
& = e^{-\lambda} [ \bar{w}_0^{-1} u^T ]_{+} e^{\lambda}
\end{align*}
And in order to obtain $u$ from $x$, write $\left[\bar{w}_0^{-1} x\right]_+ = \left[\bar{w}_0^{-1} z \bar{w}_0 e^\lambda u \right]_+ = u$.

Finally, for the Kashiwara parameter, if $x \in \Bc(\lambda)$ and $\varrho^K(x) = v$, then:
\begin{align*}
\eta^{w_0, e}(v) = & [ \left( \bar{w}_0 v^T \right)^{-1} ]_+\\
= & [ [\bar{w}_0^{-1} [x]_- ]_{0+}^{-1} \bar{w}_0^{-1} ]_+\\
= & [ [x]_-^{-1} \bar{w}_0 [\bar{w}_0^{-1} [x]_- ]_{-} \bar{w}_0^{-1} ]_+\\
= & \bar{w}_0 [\bar{w}_0^{-1} [x]_- ]_{-} \bar{w}_0^{-1}
\end{align*}
Therefore:
\begin{align*}
x = & [ \bar{w}_0 [ \bar{w}_0^{-1} x ]_{-0} ]_{-0}\\
= & [ \bar{w}_0 [ \bar{w}_0^{-1} [x]_- ]_{-} ]_{-0} e^{\lambda}\\
= & [ \eta^{w_0, e}(v) \bar{w}_0 ]_{-0} e^\lambda
\end{align*}
Another possible expression is indeed:
\begin{align*}
x = & \eta^{w_0, e}(v) \bar{w}_0 [ \eta^{w_0, e}(v) \bar{w}_0 ]_{+}^{-1} e^\lambda\\
= & \eta^{w_0, e}(v) \bar{w}_0 [ \left( \bar{w}_0 v^T \right)^{-1} \bar{w}_0 ]_{+}^{-1} e^\lambda\\
= & \eta^{w_0, e}(v) \bar{w}_0 v^T [v^T]_0^{-1} e^\lambda
\end{align*}
\end{proof}

And as announced, the following theorem shows that the maps we considered preserve total positivity:
\begin{thm}
\label{thm:charts_positivity}
All maps $\varrho^L$, $\varrho^{K}$ and $\varrho^T$ (and their inverses) are rational and subtraction-free once written in coordinates.
\end{thm}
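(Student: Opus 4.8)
The plan is to reduce every arrow of Figure~\ref{fig:geom_parametrizations} to a short composition of transformations already known to be rational and subtraction-free in coordinates, and then to invoke that this class of maps is closed under composition. First I would fix one reduced word ${\bf i}\in R(w_0)$, the Chevalley-product charts $x_{\bf i}$ on $U^{w_0}_{>0}$ and $x_{-\bf i}$ on $C_{>0}^{w_0}$ of Theorems~\ref{thm:lusztig_variety_params} and~\ref{thm:kashiwara_variety_params}, and the Lusztig chart $b_\lambda^L\circ x_{\bf i}\colon \R^m_{>0}\to\Bc(\lambda)$ as a reference chart on $\Bc(\lambda)$. Since changes of reduced word inside $U^{w_0}_{>0}$ (resp. inside $C_{>0}^{w_0}$) are rational and subtraction-free by those same theorems, and since the transpose acts on a Chevalley product merely by reversing the order of the factors, landing in the analogous parametrization of $N^{w_0}_{>0}$, it is enough to establish the claim with these fixed charts; the general case follows by pre- and post-composition.

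With this normalization all but two of the maps are immediate. Indeed $\varrho^L\circ b_\lambda^L=\mathrm{id}$, and, by the remark following Proposition~\ref{proposition:crystal_param_maps}, $\varrho^K\circ b_\lambda^L=\eta^{e,w_0}$ with inverse $\varrho^L\circ b_\lambda^K=\eta^{w_0,e}$; these are rational and subtraction-free in coordinates, which is precisely Theorems~\ref{thm:geom_from_lusztig_to_kashiwara} and~\ref{thm:tropical_from_lusztig_to_kashiwara}, the latter asserting exactly that $x_{-\bf i'}\circ\eta^{e,w_0}\circ x_{\bf i}$ and $x_{\bf i'}\circ\eta^{w_0,e}\circ x_{-\bf i}$ admit tropicalizations. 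It then remains only to treat the ``twist'' linking the Lusztig and twisted-Lusztig charts, i.e. the two mutually inverse maps read off from Definition~\ref{def:crystal_parameter} and Proposition~\ref{proposition:crystal_param_maps},
\[
\Theta:=\varrho^T\circ b_\lambda^L:\ z\longmapsto e^{-\lambda}\,[\,z\bar{w}_0\,]_+^{-1}\,e^{\lambda},
\qquad
\Theta^{-1}=\varrho^L\circ b_\lambda^T:\ u\longmapsto S\circ\iota\bigl(e^{-\lambda}\,[\,\bar{w}_0^{-1}u^T\,]_+\,e^{\lambda}\bigr).
\]
Conjugation by the fixed positive torus element $e^{\pm\lambda}$ acts monomially on Chevalley parameters, hence is subtraction-free, and $S\circ\iota$, like the transpose, acts on a Chevalley product by reversing the order of the factors and relabelling indices by $*$, i.e. by a coordinate permutation composed with a change of reduced word. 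So $\Theta$ and $\Theta^{-1}$ are subtraction-free in coordinates as soon as the maps $z\mapsto[z\bar{w}_0]_+$ and $u\mapsto[\bar{w}_0^{-1}u^T]_+$ are. Both are instances of one phenomenon: inside a totally positive double Bruhat cell, passing from a factorization in Chevalley generators and Weyl representatives to the Gauss decomposition. That this is rational and subtraction-free in coordinates follows from Lusztig's Theorem~\ref{thm:totally_positive_gauss_decomposition} (which keeps the Gauss factors totally positive) together with the Chamber Ansatz of~\cite{bib:BZ97,bib:BZ01} expressing the factorization parameters as subtraction-free ratios of generalized minors; alternatively one verifies it directly on a reduced word by induction, using the $SL_2$-identities of the Preliminaries in the spirit of Lemma~\ref{lemma:change_of_coordinates_UC}.

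Finally, every remaining arrow of Figure~\ref{fig:geom_parametrizations}, read in the reference coordinates, is a composition of these building blocks: $\varrho^T$ is $\Theta$, the inverse $b_\lambda^T$ is $\Theta^{-1}$, $b_\lambda^K$ is $\eta^{w_0,e}$, $\varrho^K\circ b_\lambda^T=\eta^{e,w_0}\circ\Theta^{-1}$, and so on; rationality throughout is clear since Gauss decomposition, multiplication by $\bar{w}_0^{\pm1}$, the involutions $\iota,T,S$ and the charts $x_{\pm\bf i}$ are all rational. As the class of maps rational and subtraction-free in coordinates is stable under composition and contains every building block above, the theorem follows. The step I expect to be the genuine obstacle — the only one that is not formal bookkeeping — is the last: establishing cleanly (or citing with the correct hypotheses) that the twist $z\mapsto[z\bar{w}_0]_+$ and its companion are subtraction-free in coordinates. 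This is where one must lean on the structure theory of totally positive double Bruhat cells rather than manipulate formulas, and it is also the precise point at which the choice of representative $\bar{w}_0$ and of the charts in Figure~\ref{fig:geom_parametrizations} matters.
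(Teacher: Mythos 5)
Your proposal is correct in substance and leans on exactly the same external inputs as the paper: the Berenstein--Zelevinsky maps $\eta^{e,w_0}$, $\eta^{w_0,e}$ (Theorems \ref{thm:geom_from_lusztig_to_kashiwara}--\ref{thm:tropical_from_lusztig_to_kashiwara}) for the Lusztig/Kashiwara transition, and the twist map $\eta_{w_0}(z)=[\bar w_0^{-1}z^T]_+$ of \cite{bib:BZ97} for the remaining non-formal step, together with the observations that torus conjugation is monomial and that $\iota$, $T$, $S$ act by permuting and relabelling Chevalley coordinates. The difference is organizational. The paper never writes down the twisted-Lusztig transition explicitly: it uses the identity $\varrho^T=\iota\circ\varrho^L\circ\iota$ to fold $\varrho^T$ into $\varrho^L$, and then treats $\varrho^L$ in the Kashiwara chart, where the monomial change of variables of Lemma \ref{lemma:change_of_coordinates_UC} converts the map directly into $x_{\bf i}^{-1}\circ\eta_{w_0}^{-1}\circ x_{\bf i^{op}}$. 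You instead confront the twisted-Lusztig chart head-on via the explicit formula $\Theta(z)=e^{-\lambda}[z\bar w_0]_+^{-1}e^{\lambda}$. Both routes are legitimate; the paper's $\iota$-conjugation trick buys a shorter argument, while yours makes visible where $\bar w_0$ and the torus factor $e^{\lambda}$ enter.

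One step of yours needs tightening, and it is the one you yourself flag. You reduce $\Theta$ to the claim that $z\mapsto[z\bar w_0]_+$ is subtraction-free in coordinates, but that element is not totally positive (its \emph{inverse} is, being $e^{\lambda}ue^{-\lambda}$ with $u\in U^{w_0}_{>0}$), so ``written in coordinates'' is not well defined for it; and even granting some parametrization, inversion is not a subtraction-free operation, so subtraction-freeness of $[z\bar w_0]_+$ would not transfer to $[z\bar w_0]_+^{-1}$. The repair is immediate: work only with $\Theta^{-1}(u)=S\circ\iota\bigl(e^{-\lambda}\,\eta_{w_0}(u)\,e^{\lambda}\bigr)$, which is manifestly a composition of $\eta_{w_0}$ with monomial and relabelling maps, and then obtain $\Theta=z\mapsto\eta_{w_0}^{-1}\bigl(e^{\lambda}\,S\circ\iota(z)\,e^{-\lambda}\bigr)$ using that $\eta_{w_0}^{-1}=\iota\circ\eta_{w_0}\circ\iota$ is itself rational and subtraction-free. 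With that adjustment your argument is complete.
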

\begin{proof}
Notice that:
$$ \eta^{e, w_0} \circ \varrho^L = \varrho^K $$
$$ \iota \circ \varrho^L \circ \iota = \varrho^T$$
We already know that $\eta^{e, w_0}$ and its inverse are rational subtraction-free once written in the appropriate charts (theorems \ref{thm:geom_from_lusztig_to_kashiwara} and \ref{thm:tropical_from_lusztig_to_kashiwara}). The same goes for $\iota$ as $x_{\bf i^{op}}^{-1} \circ \iota \circ x_{\bf i} = id$ and $x_{\bf i'}^{-1} \circ \iota \circ x_{\bf i}$. Therefore, the theorem will be proved by dealing only with the mappings $\varrho^L$ and $b_\lambda^L$.

In order to further reduce the problem, introduce the twist map studied in \cite{bib:BZ97}:
$$
\begin{array}{cccc}
\eta_{w_0} : & \left( U \cap B w_0 B \right) & \longrightarrow & \left( U \cap B w_0 B \right)\\
	    &  z                             &   \mapsto       & [ \bar{w}_0^{-1} z^T]_+
\end{array}$$
It is easy to see that $\eta_{w_0}^{-1} = \iota \circ \eta_{w_0} \circ \iota$. Moreover, one can show that $x_{\bf i'}^{-1} \circ \eta_{w_0} \circ x_{\bf i}$ is rational and subtraction-free, for every reduced words ${\bf i'}$ and ${\bf i}$.

Technically, in proposition \ref{proposition:crystal_param_maps}, we only proved that the following correspondence for the Lusztig parametrization is bijective:
$$
\begin{array}{ccc}
  \left( B \cap U \bar{w_0} U \right) e^\lambda & \longrightarrow & U \cap B w_0 B\\
  x = [z \bar{w}_0]_{-0} e^{\lambda}            &   \mapsto       & z = [\bar{w}_0^{-1} x^\iota]_+^\iota
\end{array}$$
Therefore, after getting rid of the dependence in $\lambda$, we will consider:
$$
\begin{array}{cccc}
\varphi: & B \cap U \bar{w_0} U   & \longrightarrow & U \cap B w_0 B\\
         & x = [z \bar{w}_0]_{-0} &   \mapsto       & z = [\bar{w}_0^{-1} x^\iota]_+^\iota
\end{array}$$
and prove that $x_{\bf i}^{-1} \circ \varphi \circ x_{\bf -i}(c_1, \dots, c_m)$ is rational and subtraction-free in the variables $(c_1, \dots, c_m)$, hence preserving total positivity. Applying the monomial change of variable in lemma \ref{lemma:change_of_coordinates_UC}:
$$   \left( x_{\bf-i}(c_1, \dots, c_m) \right)^T = c_1^{-\alpha_{i_1}^\vee} \dots c_m^{-\alpha_{i_m}^\vee} x_{\bf i^{op}}(t_m, \dots, t_1) $$
we obtain $x_{\bf i}^{-1} \circ \eta_{w_0}^{-1} \circ x_{\bf i^{op}}(t_m, \dots, t_1)$, which we know is rational and subtraction-free, as well as its inverse. All intermediate rearrangements were also rational and subtraction-free, hence the result.
\end{proof}

\subsection{The weight map}
\label{subsection:geom_weight_map}

\begin{definition}
\label{def:geom_weight_map}
Define the weight map $\gamma: \Bc\left( \lambda \right) \rightarrow \afrak$ by:
$$ e^{\gamma(x)} = [x]_0$$ 
\end{definition}

This weight map is the geometric analogue of the classical weight map for crystal bases. The similarity is particularly obvious when comparing the following result with equations \eqref{eqn:weight_lusztig} and \eqref{eqn:weight_string}. It uses the dual root system as the geometric crystal on $G$ encodes the combinatorics of the  canonical basis for $G^\vee$.

\begin{thm}
\label{thm:geom_weight_map}
Let $x \in \Bc(\lambda)$, $z = \varrho^L(x)$, $v = \varrho^{K}(x)$ and $u = \varrho^T(x)$. Write for ${\bf i} \in R(w_0)$
$$ z = x_{ \bf i}\left( t_1, \dots, t_m \right)$$
$$ v = x_{-\bf i}\left( c_1, \dots, c_m \right)$$
$$ u = x_{ \bf i}\left( t_1', \dots, t_m' \right)$$
And $e^{-\tilde{t}_k} = t_k$, $e^{-\tilde{t}_k'} = t_k'$ and $e^{-\tilde{c}_k} = c_k$. Then, in terms of coordinates, the weight map is given by:
\begin{align*}
\gamma(x) & = \lambda - \sum_{k=1}^m \tilde{t}_k \beta^\vee_k\\
& = w_0 \left( \lambda - \sum_{k=1}^m \tilde{t}'_k \beta_{k}^\vee \right)\\
& = \lambda - \sum_{k=1}^m \tilde{c}_k \alpha^\vee_{i_k}
\end{align*}
\end{thm}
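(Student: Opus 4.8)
# Proof Proposal

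The plan is to compute $\gamma(x) = \log[x]_0$ directly in each of the three parametrizations, using the explicit expressions for the inverse maps $b_\lambda^L$, $b_\lambda^K$, $b_\lambda^T$ from Proposition \ref{proposition:crystal_param_maps} together with the Gauss-decomposition identities \eqref{lbl:gauss1} and \eqref{lbl:gauss2}, and then convert the resulting torus elements into weight language via Lemma \ref{lbl:kumar}. The key observation is that $[x]_0$ is a torus element, so $\gamma(x)$ is a sum of contributions $\tilde t_k \beta_k^\vee$ (or the analogues) coming from pulling each elementary factor $x_{i_k}(t_k)$ (or $x_{-i_k}(c_k)$) through $\bar w_0$ and through the remaining factors. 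I expect the argument to split into three largely parallel computations, plus a verification that they agree.

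First I would handle the Lusztig parameter. Starting from $x = z\bar w_0 e^\lambda u$ with $z = x_{\bf i}(t_1,\dots,t_m)$, note $[x]_0 = [z\bar w_0 e^\lambda]_0$ since $u\in U$ contributes trivially to the torus part; by \eqref{lbl:gauss2} this equals $[z\bar w_0]_0 \cdot e^\lambda$. So the whole problem reduces to computing $[z\bar w_0]_0$ for $z = x_{i_1}(t_1)\cdots x_{i_m}(t_m)$. Writing $z\bar w_0 = x_{i_1}(t_1)\cdots x_{i_m}(t_m)\bar w_0$ and recalling the $SL_2$-identity $x_i(t) = y_{-i}(t)\,t^{h_i}\,(\text{something in }U)$ — more precisely using $x_{-i}(t) = y_i(t)t^{-h_i}$ and its transpose — one extracts a torus factor $t_k^{-\beta_k^\vee}$ at each step, where $\beta_k = s_{i_1}\cdots s_{i_{k-1}}\alpha_{i_k}$ is the positive root enumeration of Lemma \ref{lemma:positive_roots_enumeration}; the reflections accumulate because after passing $x_{i_k}(t_k)$ to the right one must conjugate by $\bar s_{i_1}\cdots\bar s_{i_{k-1}}$. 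This is essentially the geometric lift of Lemma \ref{lbl:kumar}; indeed the cleanest route is to observe that $[z\bar w_0]_0 = \prod_k t_k^{-\beta_k^\vee}$ and hence $\gamma(x) = \lambda - \sum_k \tilde t_k \beta_k^\vee$ with $e^{-\tilde t_k}=t_k$, which is precisely \eqref{eqn:weight_lusztig} read through the $G^\vee$ dictionary.

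Next, the twisted Lusztig parameter: from $b_\lambda^T$ we have $x = S\circ\iota\!\left(e^{-\lambda}[\bar w_0^{-1}u^T]_+ e^\lambda\right)\bar w_0 e^\lambda u$. Since $S\circ\iota$ is a group automorphism fixing the torus up to the $*$-involution and sends $U$ into $N$, its image lies in $N$, so again $[x]_0 = [\,(\text{that }N\text{-factor})\,\bar w_0 e^\lambda u\,]_0$; but now the $N$-factor sits on the left, so more care is needed. The symmetry to exploit is $\iota\circ\varrho^L\circ\iota = \varrho^T$ (used in the proof of Theorem \ref{thm:charts_positivity}) together with $\gamma\circ\iota = -\gamma$ on each $\Bc(\lambda)$-type object — wait, rather: applying $\iota$ and the already-established Lusztig formula, and using $\hw(x^\iota)$-type relations from Properties \ref{properties:hw}, one gets $\gamma(x) = w_0\bigl(\lambda - \sum_k \tilde t_k' \beta_k^\vee\bigr)$. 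The $w_0$ appears because $\iota$ intertwines highest and lowest weight (cf. $\lw = w_0\,\hw$ in Definition \ref{def:highest_lowest_weight}). For the Kashiwara parameter, I would use $b_\lambda^K$: $x = \eta^{w_0,e}(v)\bar w_0 v^T [v^T]_0^{-1}e^\lambda$ with $\eta^{w_0,e}(v)\in U$ contributing no torus part and $v = x_{-\bf i}(c_1,\dots,c_m)$, so $[x]_0 = [\,\bar w_0 v^T\,]_0\cdot[v^T]_0^{-1}\cdot e^\lambda$. Since $v^T = x_{-i_1}(c_1)^T\cdots$, each $x_{-i}(c)^T$ has torus part $c^{-\alpha_i^\vee}$ (from $x_{-i}(t)=y_i(t)t^{-h_i}$, transposed), giving $[v^T]_0 = \prod_k c_k^{-\alpha_{i_k}^\vee}$; and pushing $v^T$ left through $\bar w_0$ — using $\Ad(\bar w_0)$ and that $\bar w_0 x_{-i}(c)^T\bar w_0^{-1}$ lands in $N$ up to the same torus factor — shows $[\bar w_0 v^T]_0 = \prod_k c_k^{-\alpha_{i_k}^\vee}$ as well but with the reflections now cancelling against those in $\eta^{w_0,e}(v)$, net giving $\gamma(x) = \lambda - \sum_k \tilde c_k\alpha_{i_k}^\vee$, matching \eqref{eqn:weight_string}.

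The main obstacle I anticipate is the bookkeeping of reflections in the Kashiwara case: unlike the Lusztig case where the torus factors come out in a clean telescoping product, here one must track how the accumulated Weyl reflections interact with the $\eta^{w_0,e}(v)$ correction term to see that the $\beta_k^\vee$'s collapse to the simple coroots $\alpha_{i_k}^\vee$. The safest way to dodge this is to derive the Kashiwara formula from the Lusztig one via the change of coordinates in Lemma \ref{lemma:change_of_coordinates_UC}(ii)-(iii): substituting $\tilde t_k = \tilde c_k - \sum_{l<k}\alpha_{i_k}(\alpha_{i_l}^\vee)\tilde c_l$ (the additive/log form of (ii)) into $\lambda - \sum_k \tilde t_k\beta_k^\vee$ and reorganizing the double sum, one should recover $\lambda - \sum_k\tilde c_k\alpha_{i_k}^\vee$ after invoking the identity $\alpha_{i_k}^\vee + \sum_{l=k}^{j-1}\beta_l(\beta_k^\vee)\alpha_{i_l}^\vee = -(s_{i_1}\cdots s_{i_{j-1}})^{-1}\beta_k^\vee$ already proved inside Lemma \ref{lemma:change_of_coordinates_UC}. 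This reduces the third formula to a purely combinatorial consequence of the first, and similarly the identity $\prod_k c_k^{\alpha_{i_k}^\vee} = \prod_k t_k^{-w_0^{-1}\beta_k^\vee}$ from that same lemma gives the bridge to the twisted case, so in the end only one genuine group-theoretic computation — the Lusztig one — is required, with the other two following formally.
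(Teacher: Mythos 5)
Your treatment of the Lusztig and twisted Lusztig parameters is essentially the paper's argument: extract the torus part of $z\bar{w}_0$ through the positive roots enumeration (the paper does this by applying Lemma \ref{lemma:change_of_coordinates_UC} to the opposite word to write $z=\bigl(\prod_j t_j^{\beta_j^\vee}\bigr)b$ with $b\bar{w}_0\in NU$), then transfer to the twisted parameter via $\iota$, which sends $\lambda$ to $-w_0\lambda$ and $\gamma$ to $-\gamma$. One slip there: your intermediate claim $[z\bar{w}_0]_0=\prod_k t_k^{-\beta_k^\vee}$ has the wrong sign — with $e^{-\tilde{t}_k}=t_k$ it would yield $\gamma(x)=\lambda+\sum_k\tilde{t}_k\beta_k^\vee$; the correct torus part is $\prod_k t_k^{+\beta_k^\vee}$.

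The Kashiwara case is where the proposal genuinely breaks. Your ``safe'' route substitutes the monomial change of variables of Lemma \ref{lemma:change_of_coordinates_UC}(ii) into the Lusztig formula, but that lemma does \emph{not} relate the Lusztig parameter $z=\varrho^L(x)$ to the Kashiwara parameter $v=\varrho^K(x)$ of the same crystal element; that relation is $v=\eta^{e,w_0}(z)$, a different (and in general non-monomial) map. The paper's own $A_2$ example exhibits the discrepancy: there $\eta^{e,w_0}$ gives $(t_1,t_2,t_3)=(c_1,c_3,c_2c_3^{-1})$, whereas Lemma \ref{lemma:change_of_coordinates_UC}(ii) for ${\bf i}=(1,2,1)$ gives $t_2=c_2c_1^{-1}$. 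Moreover, even taken formally, the lemma's concluding identity reads $\prod_k c_k^{\alpha_{i_k}^\vee}=\prod_k t_k^{-w_0\beta_k^\vee}$, not $\prod_k t_k^{\beta_k^\vee}$, so the substitution would produce $\lambda+w_0\sum_k\tilde{c}_k\alpha_{i_k}^\vee$ rather than the claimed $\lambda-\sum_k\tilde{c}_k\alpha_{i_k}^\vee$. Your fallback direct computation also assumes that the left factor $\eta^{w_0,e}(v)\in U$ ``contributes no torus part'', but left multiplication by $U$ does change $[\,\cdot\,]_0$ (only right multiplication by $U$, or left multiplication by $N$, is harmless). The clean argument — the one in the paper — starts from the definition $v=[\bar{w}_0^{-1}[x]_-]_{0+}^T$ and computes in two lines $\prod_j c_j^{-\alpha_{i_j}^\vee}=[v]_0=[\bar{w}_0^{-1}x]_0[x]_0^{-1}=e^{\lambda}[x]_0^{-1}$, which rearranges to the stated formula.
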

\begin{proof}
For $x = [z \bar{w}_0]_{-0} e^\lambda$ with $z = x_{\bf i}\left( t_1, \dots, t_m\right)$. We apply lemma \ref{lemma:change_of_coordinates_UC} to the opposite reduced word ${\bf i^{op}} = \left( i_m, \dots, i_1 \right)$. As such, the positive roots enumeration is reversed as well as the order of the parameters $t_1, \dots, t_m$. There is a $b = x_{-\bf i^{op}}\left(c_1, \dots, c_m \right)^T \in B^+ \cap N \bar{w}_0^{-1} N$ such that:
$$z = \left( \prod_{j=1}^m t_j^{-w_0 \beta_{{\bf i^{op}},m-j+1}^\vee} \right) b$$
The exponents can be simplified as:
\begin{align*}
  & -w_0 \beta_{ {\bf i^{op}} ,m-j+1 }^\vee\\
= & -w_0 s_{i_m} \dots s_{i_{j+1}} \alpha_{i_j}^\vee\\
= & s_{i_1} \dots s_{i_{j-1}} \alpha_{i_j}^\vee\\
= & \beta_j^\vee
\end{align*}
Hence:
$$u = \left( \prod_{j=1}^m t_j^{\beta_j^\vee} \right) b$$
Because $b \bar{w}_0 \in N U$, we have:
$$ [x]_0
= [u \bar{w}_0]_0 e^{\lambda }
= \left[\left( \prod_{j=1}^m t_j^{\beta_j^\vee} \right) b \bar{w}_0\right]_0 e^{\lambda}
= e^{\lambda} \left( \prod_{j=1}^m t_j^{\beta_j^\vee} \right) 
$$

We can deduce the weight map expression in terms of $\left( t'_1, t_2', \dots, t_m'\right)$ easily from the above proof. Notice that applying $\iota$ to $x$, changes $u$ to $u'^\iota$, $\lambda$ to $-w_0 \lambda$ and $\gamma(x)$ to $-\gamma(x)$. As such, using the expression found for the weight map in Lusztig coordinates, while considering the opposite word ${\bf i^{op}}$:
$$ -\gamma(x) = -w_0 \lambda - \sum_{k=1}^m \tilde{t}'_{m-k+1} \beta_{{\bf i^{op}},k}^\vee $$
Hence:
\begin{align*}
 \gamma(x) & = w_0 \lambda + \sum_{k=1}^m \tilde{t}'_{m-k+1} \beta_{{\bf i^op},k}^\vee\\
& = w_0 \lambda + \sum_{k=1}^m \tilde{t}'_{m-k+1} (-w_0 s_{i_1} \dots s_{i_{m-k-1}}) \alpha_{i_{m-k}}^\vee \\
& = w_0 \left( \lambda - \sum_{k=1}^m \tilde{t}'_k \beta_{k}^\vee \right)
\end{align*}

In the string parametrization $v = \varrho^S(x) = x_{-\bf i}\left( c_1, \dots, c_m\right)$. By definition \ref{def:crystal_parameter}:
$$ v = [ \bar{w}_0^{-1} [x]_- ]_{0+}^T$$
Hence:
$$ \prod_{j=1}^m c_j^{-\alpha_{i_j}^\vee} 
= [v]_0 
= \left[\bar{w}_0^{-1} [x]_- \right]_0
= [\bar{w}_0^{-1} x]_0 [x]_0^{-1}
= e^{\lambda} [x]_0^{-1}
$$
Rearranging the equation yields the result.
\end{proof}

\subsection{Examples}

We illustrate the previous coordinate systems and maps by a few examples for different semi-simple groups. We will take $x \in \Bc$ and write in coordinates:
$$ z = \varrho^L(x) \in U^{w_0}_{>0}$$
$$ v = \varrho^K(x) \in C^{w_0}_{>0}$$

\paragraph{$A_1$-type:}
$$G = SL_2 = \left\{ x  = \begin{pmatrix} a & c  \\ b & d \end{pmatrix} \ | ad - bc = 1 \right\}$$
$$\gfrak = \mathfrak{sl}_2 = \left\{ x \in M_2(\C) \ | tr(x) = 0 \right\}$$
$$H = \left\{ x  = \begin{pmatrix} a & 0  \\ 0 & a^{-1} \end{pmatrix}, a \in \C^* \right\}$$
$$\hfrak = \C \alpha^\vee $$
where $\alpha^\vee = \begin{pmatrix} 1 & 0  \\ 0 & -1 \end{pmatrix}$.

The disjoint union of all highest weight crystals is $\Bc$:
$$\Bc = \left\{ \begin{pmatrix} a     & 0  \\ b & a^{-1} \end{pmatrix} \ | \ a>0, b>0 \right\}$$
For $x = \begin{pmatrix} a     & 0  \\ b & a^{-1} \end{pmatrix} \in \Bc$, if:
$$ \lambda = \hw( x ) $$
$$ z = \begin{pmatrix} 1     & t  \\ 0 & 1      \end{pmatrix}$$
$$ v = \begin{pmatrix} c^{-1} & 0 \\ 1 & c     \end{pmatrix}$$
then, in terms of the matrix $x$, we have:
$$ \lambda = \log(b) \alpha^\vee$$
$$ t = c = \frac{a}{b}$$

\paragraph{$A_2$-type:}
$$G = SL_3(\C) $$
$$\gfrak = \mathfrak{sl}_2 = \left\{ x \in M_3(\C) \ | tr(x) = 0 \right\}$$
$$H = \left\{ x  = \begin{pmatrix} a & 0 & 0 \\ 0 & b & 0 \\ 0 & 0 & c \end{pmatrix}, abc = 1, (a,b,c) \in (\C^*)^3 \right\}$$
$$\hfrak = \C \alpha_1^\vee \oplus \C \alpha_2^\vee$$
where $\alpha_1^\vee = \begin{pmatrix} 1 & 0 & 0 \\ 0 & -1 & 0 \\ 0 & 0 &  0 \end{pmatrix}$ and $\alpha_2^\vee = \begin{pmatrix} 0 & 0 & 0 \\ 0 &  1 & 0 \\ 0 & 0 & -1  \end{pmatrix}$.

The disjoint union of all highest weight crystals is given by lower triangular totally positive matrices:
$$\Bc = \left\{ \begin{pmatrix} a  & 0 & 0 \\ b & c & 0 \\ d & e & f \end{pmatrix} \ | \  acf=1; a,b,c,d,e,f>0; be-dc>0  \right\}$$
For a crystal element $x = \begin{pmatrix} a  & 0 & 0 \\ b & c & 0 \\ d & e & f \end{pmatrix} \in \Bc$, if:
$$ \lambda = \hw(x)$$
$$ z = x_{\bf 121}(t_1, t_2, t_3)
     = \begin{pmatrix} 1  & t_1 + t_3 & t_1 t_2 \\ 0 & 1 & t_2 \\ 0 & 0 & 1 \end{pmatrix}$$
$$ v = x_{\bf-121}(c_1, c_2, c_3)
     = \begin{pmatrix} \frac{1}{c_1 c_3}  & 0 & 0 \\ c_3^{-1}+\frac{c_1}{c_2} & \frac{c_1 c_3}{c_2} & 0 \\ 1 & c_3 & c_2 \end{pmatrix}$$
then the correspondence $\eta^{e, w_0}\left( z \right) = v$ gives:
$$ \left( t_1, t_2, t_3 \right) = \left( c_1, c_3, c_2 c_3^{-1}\right)$$
Moreover, we have:
$$ e^\lambda = \begin{pmatrix} d & 0 & 0 \\ 0 & \frac{be-dc}{d} & 0 \\ 0 & 0 & \frac{1}{be-dc} \end{pmatrix} $$
\begin{align*}
x & = \begin{pmatrix} t_1 t_2  & 0 & 0 \\ t_2 & t_3 t_1^{-1} & 0 \\ 1 & \frac{t_1+t_3}{t_1 t_2} & \frac{1}{t_2 t_3} \end{pmatrix} e^\lambda\\ 
& = \begin{pmatrix} c_1 c_3 & 0 & 0 \\ c_3 & c_1^{-1} c_2 c_3^{-1} & 0 \\ 1 & \frac{c_1 c_3+c_2}{c_1 c_3^2} & \frac{1}{c_2} \end{pmatrix} e^\lambda
\end{align*}

\subsection{Geometric crystals in the sense of Berenstein and Kazhdan}

Now, we will explain why $\Bc$ is a positive geometric crystal in the sense of Berenstein and Kazhdan (\cite{bib:BK00}, \cite{bib:BK06}) using their framework. Their construction starts with the notion of unipotent bicrystal. In our case, the unipotent bicrystal is simply the cell $B \cap B^+ w_0 B^+$. Then it can be decorated with structural maps and endowed with a positive structure. This tantamounts to restricting the structural maps to $\Bc$, the totally positive part. The structural maps we inherit satisfy the axioms in definition \ref{def:crystal} and more.

Define the fundamental additive $N$-character $e^{\chi_\alpha^-}: N \rightarrow \C$ by:
$$ \forall (\alpha,\beta) \in \Delta^2, \forall t \in \R, \chi_\alpha^-(e^{t f_\beta})) := t \delta_{\alpha, \beta}$$
where $\delta_{.,.}$ is the Kronecker delta. It is naturally extended to $B$ by setting $\forall x \in B, \chi_\alpha^-(x) = \chi_\alpha^-([x]_-)$

\begin{thm}
\label{thm:geom_crystal_is_crystal}
The set $\Bc$ is an abstract geometric crystal once endowed with the structural maps:
\begin{itemize}
 \item  $\begin{array}{cccc}
         \gamma : &  \Bc  & \rightarrow & \mathfrak{a}     \\
		  &  g    & \mapsto     & \log\left([g]_0\right)
        \end{array}$
 \item For $x \in \Bc$:
       \begin{align*}
          \varepsilon_\alpha(x) & := \chi_\alpha^-(x)\\
          \varphi_\alpha(x) & := \chi_\alpha^- \left( x^\iota \right) = \alpha\left( \gamma(x) \right) + \varepsilon_\alpha(x)
       \end{align*}
 \item $e_\alpha^c \cdot x = \left[x_\alpha\left( \frac{e^c-1}{e^{\varepsilon_\alpha(x)}} \right) x \right]_{-0}
                      = x_\alpha\left( \frac{e^c-1}{e^{\varepsilon_\alpha(x)}} \right) x \ x_\alpha\left( \frac{e^{-c}-1}{e^{\varphi_\alpha(x)}} \right)$
\end{itemize}
\end{thm}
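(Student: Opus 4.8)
The plan is to check that the structural maps are well defined on $\Bc$ and then to verify the axioms (C1)--(C4) of Definition~\ref{def:crystal} one at a time; the ``and more'' in the statement (the Verma-type commutation relations between the $e^{\cdot}_\alpha$) belongs to a later section and is not needed here. For well-definedness I would first note that every $x\in\Bc=B_{\geq0}$ lies in the big cell $B\cap B^+w_0B^+$ --- indeed $\Bc(\lambda)=C^{w_0}_{>0}e^\lambda$ with $C^{w_0}_{>0}\subset U\bar{w}_0U$ --- so that all the Gauss decompositions written in the definitions make sense; moreover $\gamma$ takes values in $\afrak$ since $[x]_0\in A=\exp\afrak$ by Theorem~\ref{thm:totally_positive_gauss_decomposition}, and $\varepsilon_\alpha,\varphi_\alpha$ are continuous, being rational in the positive coordinates of Figure~\ref{fig:geom_parametrizations}. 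Axiom (C1) is literally the second equality defining $\varphi_\alpha$, and once (C1), (C2), (C3) are in hand, (C3') is automatic as remarked after Definition~\ref{def:crystal}.

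The heart of the matter is (C2) and (C3), and the key point is that, $\alpha$ being simple, left multiplication by $x_\alpha(a)=\exp(ae_\alpha)$ only perturbs $x$ in the $\alpha$-direction. Concretely: in each fundamental module $V(\omega_\gamma)$ the vector $v_{\omega_\gamma}$ is fixed by $x_\alpha(a)$, and the only weight of the form $\omega_\gamma-k\alpha$ with $k\geq1$ that occurs is $\omega_\alpha-\alpha$ (for $\gamma=\alpha$, $k=1$), because $V(\omega_\gamma)_{\omega_\gamma-k\alpha}=\C f_\alpha^k v_{\omega_\gamma}$ and $\omega_\gamma(\alpha^\vee)=\delta_{\gamma\alpha}$. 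Writing $a=\frac{e^c-1}{e^{\varepsilon_\alpha(x)}}$ and using that $e^{\varepsilon_\alpha(x)}$ is the $f_\alpha$-coordinate of $[x]_-$, this forces, for the principal minors and the shifted minor $\Delta_{s_\alpha\omega_\alpha,\omega_\alpha}$ of $V(\omega_\alpha)$, the identities $[x_\alpha(a)x]_0^{\omega_\gamma}=(1+a\,e^{\varepsilon_\alpha(x)})^{\delta_{\gamma\alpha}}[x]_0^{\omega_\gamma}$ and $\Delta_{s_\alpha\omega_\alpha,\omega_\alpha}(x_\alpha(a)x)=\Delta_{s_\alpha\omega_\alpha,\omega_\alpha}(x)$. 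Since $e^c_\alpha\cdot x=[x_\alpha(a)x]_{-0}$ has the same torus and lower-unipotent parts as $x_\alpha(a)x$, one deduces $[e^c_\alpha\cdot x]_0=(1+a\,e^{\varepsilon_\alpha(x)})^{\alpha^\vee}[x]_0$ and $e^{\varepsilon_\alpha(e^c_\alpha\cdot x)}=\frac{e^{\varepsilon_\alpha(x)}}{1+a\,e^{\varepsilon_\alpha(x)}}$; the substitution $1+a\,e^{\varepsilon_\alpha(x)}=e^c$ then turns these into $\gamma(e^c_\alpha\cdot x)=\gamma(x)+c\alpha^\vee$ and $\varepsilon_\alpha(e^c_\alpha\cdot x)=\varepsilon_\alpha(x)-c$, i.e.\ (C2) and (C3). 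An alternative is to push everything through the homomorphism $\phi_\alpha\colon SL_2\to G$ and carry out a $2\times2$ computation; the same identity also shows that the two expressions for $e^c_\alpha\cdot x$ in the statement coincide, namely $[x_\alpha(a)x]_+=x_\alpha\!\big(\!-\tfrac{e^{-c}-1}{e^{\varphi_\alpha(x)}}\big)$.

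It then remains to treat (C4) and to see that the action stays inside $\Bc$. For $c=0$ the parameter $a$ vanishes, so $e^0_\alpha=\mathrm{id}$. For the group law I would compose the explicit formula with itself: $e^c_\alpha\cdot(e^{c'}_\alpha\cdot x)=x_\alpha(a_2)\,\big(x_\alpha(a_1)\,x\,x_\alpha(a_1')\big)\,x_\alpha(a_2')$, and, evaluating $\varepsilon_\alpha$ and $\varphi_\alpha$ at $e^{c'}_\alpha\cdot x$ by (C2)--(C3'), one finds $a_1+a_2=\frac{e^{c+c'}-1}{e^{\varepsilon_\alpha(x)}}$ and $a_1'+a_2'=\frac{e^{-c-c'}-1}{e^{\varphi_\alpha(x)}}$; since the $x_\alpha(\cdot)$ form a one-parameter subgroup this collapses to $e^{c+c'}_\alpha\cdot x$. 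For positivity: when $c\geq0$ we have $a\geq0$, hence $x_\alpha(a)x\in G_{\geq0}$, and Theorem~\ref{thm:totally_positive_gauss_decomposition} gives $e^c_\alpha\cdot x=[x_\alpha(a)x]_{-0}\in N_{\geq0}A\subset\Bc$; when $c<0$ one checks from the formulas that $(e^c_\alpha\cdot x)^\iota=e^{-c}_\alpha\cdot(x^\iota)$ (using $\varepsilon_\alpha(x^\iota)=\varphi_\alpha(x)$), and since the Kashiwara involution $\iota$ preserves $B_{\geq0}=\Bc$, this reduces to the case already done.

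The main obstacle is the second step --- pinning down precisely which minors of $x_\alpha(a)x$ change relative to those of $x$, and by what factor --- since that single computation underlies both (C2)/(C3) and the positivity of the Gauss decomposition there; the rest is bookkeeping with the explicit formulas. One may also sidestep it entirely by appealing to Berenstein--Kazhdan: $\Bc=B_{\geq0}$ is exactly the positive geometric crystal attached to the unipotent bicrystal on $B\cap B^+w_0B^+$ in \cite{bib:BK06}, so that (C1)--(C4) follow from their framework once their structural maps (ratios of the generalized minors above) are matched with ours.
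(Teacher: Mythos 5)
Your proposal is correct, but it takes a genuinely different route from the paper. The paper does not verify the axioms (C1)--(C4) directly: it assembles the Berenstein--Kazhdan machinery --- the unipotent bicrystal $({\bf X},p)$ with ${\bf X}=B^+w_0B^+$, the unipotent crystal $X^-=B\cap B^+w_0B^+$, and the positive structure $\Theta_{\bf X}$ given by the charts $b^L_\lambda\circ x_{\bf i}$ --- and then quotes Proposition 2.25 and Lemma 3.30 of \cite{bib:BK06}; this is exactly the ``sidestep'' you offer in your last sentence. Your main line is instead a self-contained verification by generalized minor calculus, and it is sound: the observation $V(\omega_\gamma)_{\omega_\gamma-k\alpha}=\C f_\alpha^k v_{\omega_\gamma}$ together with $\omega_\gamma(\alpha^\vee)=\delta_{\gamma\alpha}$ does yield $\Delta^{\omega_\gamma}(x_\alpha(a)x)=e^{c\delta_{\gamma\alpha}}\Delta^{\omega_\gamma}(x)$ and $\Delta_{s_\alpha\omega_\alpha,\omega_\alpha}(x_\alpha(a)x)=\Delta_{s_\alpha\omega_\alpha,\omega_\alpha}(x)$, which are precisely (C2) and (C3) once $e^{\varepsilon_\alpha(x)}$ is read as the ratio $\Delta_{s_\alpha\omega_\alpha,\omega_\alpha}(x)/\Delta^{\omega_\alpha}(x)$, i.e.\ the $f_\alpha$-coefficient of $[x]_-$ (this is the right reading of $\chi^-_\alpha$, as the proof of Theorem \ref{thm:p_is_morphism}(iii) confirms); your telescoping $a_1+a_2=\frac{e^{c+c'}-1}{e^{\varepsilon_\alpha(x)}}$, $a_1'+a_2'=\frac{e^{-c-c'}-1}{e^{\varphi_\alpha(x)}}$ then gives (C4). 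What your approach buys is transparency: the reader sees why the normalization $\frac{e^c-1}{e^{\varepsilon_\alpha(x)}}$ is forced and why the two expressions for $e^c_\alpha\cdot x$ agree. The paper's citation is shorter but leaves the reader to unwind the BK formalism and to match conventions (the roles of $\varepsilon_\alpha$ and $\varphi_\alpha$ are swapped relative to \cite{bib:BK06}, and everything here is written additively via logarithms).

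One step to tighten: for $c\geq0$ your positivity argument lands in $N_{\geq0}A$, whereas membership in $\bigsqcup_\lambda\Bc(\lambda)$ with $\Bc(\lambda)=C^{w_0}_{>0}e^\lambda$ requires the $N$-part to lie in the open stratum $N^{w_0}_{>0}$, not merely in $N_{\geq0}$. This is easily repaired: $e^c_\alpha\cdot x=x_\alpha(a)\,x\,x_\alpha(a')$ is a $U\times U$-translate of $x$, hence remains in the cell $B^+w_0B^+$, and $N^{w_0}_{>0}=N_{\geq0}\cap B^+w_0B^+$ by definition (Theorem \ref{thm:lusztig_variety_params} after transpose). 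With that remark, and the identity $(e^c_\alpha\cdot x)^\iota=e^{-c}_\alpha\cdot(x^\iota)$ you use to reduce $c<0$ to $c>0$, the stability of $\Bc$ under the actions is complete.
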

An important fact to keep in mind is that the previous group product uses a $U \times U$ action, and the right action is there to exactly balance the left action. As such, the resulting group element is still in $\Bc \subset B$.

So far, we made the choice of working directly with the totally positive elements. Only in this subsection, we will work outside of the totally positive varieties, in order to present Berenstein and Kazhdan's construction, from which theorem \ref{thm:geom_crystal_is_crystal} follows immediately.

\paragraph{ The unipotent bicrystal $({ \bf X }, p)$: }
A unipotent bicrystal is a couple $({ \bf X }, p)$ such that ${ \bf X }$ is a $U \times U$ variety, meaning a set with a right and left action of $U$, and $p: { \bf X } \rightarrow G$ a $U \times U$-equivariant application, meaning it is an application such that the action of $U \times U$ on ${ \bf X }$ and $G$ commute.

Here pick ${\bf X} := B^+ w_0 B^+$ with the natural left and right group action of $U$. And $p: B^+ w_0 B^+ \hookrightarrow G$ is the inclusion map.

\paragraph{ The unipotent crystal $X^-$:} Following \cite{bib:BK06} section 2, we define $X^-$ to be the unipotent crystal associated to $({ \bf X }, p)$ by:
$$ X^- := p^{-1}(B) = B \cap B^+ w_0 B^+$$
It is nothing but the largest double Bruhat cell inside of the Borel subgroup $B$. Here we are dealing with a unipotent bicrystal of type $w_0$ (\cite{bib:BK06}, claim 2.6).

\paragraph{ The positive structure $\Theta_{\bf X}$: } Now fix $\lambda \in \afrak$. For every ${\bf i}$ consider the charts:
$$
\begin{array}{cccc}
b_\lambda^L \circ x_{\bf i}: &  \R_{>0}^m & \rightarrow & \Bc(\lambda)\\
b_\lambda^K \circ x_{\bf i}: &  \R_{>0}^m & \rightarrow & \Bc(\lambda)\\
b_\lambda^T \circ x_{\bf i}: &  \R_{>0}^m & \rightarrow & \Bc(\lambda)\\
\end{array}$$
In the language of \cite{bib:BK06}, they are the restrictions to the positive octant $\R_{>0}^m$ of toric charts from $\C^m$ to $\left( B \cap U \bar{w}_0 U \right) e^{\lambda}$. Moreover, because of theorem \ref{thm:charts_positivity}, these toric charts are positively equivalent, defining the same positive structure $\Theta_{\bf X}$ on $\left( B \cap U \bar{w}_0 U \right) e^{\lambda}$. When looking only at the image of $\R_{>0}^m$ through those charts, one is dealing only with $\Bc(\lambda)$.

\paragraph{ The positive geometric crystals $\Fc( { \bf X }, p, \Theta_{\bf X} )$: }

By proposition 2.25 in \cite{bib:BK06}, the unipotent bicrystal $\left( { \bf X }, p\right)$ gives rise to a geometric crystal:
$$\Fc( { \bf X }, p) = \left( { \bf X }^-, \gamma, \varphi_\alpha, \varepsilon_\alpha, e_\alpha^. | \alpha \in \Delta \right) .$$
By lemma 3.30 in \cite{bib:BK06}, one gets a positive geometric crystal $\Fc( { \bf X }, p, \Theta_{\bf X} )$, meaning that these structural maps respect the positive structure. Therefore, we can  restrict them to $\Bc$, which proves theorem \ref{thm:geom_crystal_is_crystal}. Notice that the notation for $\varepsilon_\alpha$ and $\varphi_\alpha$ are reversed compared to \cite{bib:BK06}.

\paragraph{ Tensor product of geometric crystals:} Given two geometric crystals $X$ and $Y$, each one endowed with maps $\left( \gamma, \varphi_\alpha, \varepsilon_\alpha, e_\alpha^. | \alpha \in \Delta \right)$, Berenstein and Kazhdan define the tensor product $X \otimes Y$ as the set $X \times Y$ endowed with the following maps:
\begin{align*}
\gamma( x \otimes y ) & = \gamma(x) + \gamma(y)\\
\varepsilon_\alpha( x \otimes y ) & =  \varepsilon_\alpha(x) + \log\left( 1 + e^{ \varepsilon_\alpha(y) - \varphi_\alpha(x) } \right)\\
\varphi_\alpha( x \otimes y ) & = \varphi_\alpha(y) + \log\left( 1 + e^{\varphi_\alpha(x) - \varepsilon_\alpha(y) } \right)\\
e^c_\alpha \cdot \left(x \otimes y \right) & = e^{c_1}_\alpha \cdot x \otimes e^{c_2}_\alpha \cdot y\\
 & \textrm{ where } \\
 & c_1 = \log\left(e^c + e^{ \varepsilon_\alpha(y) - \varphi_\alpha(x) } \right) - \log\left( 1 + e^{ \varepsilon_\alpha(y) - \varphi_\alpha(x) }\right)\\
 & c_2 = -\log\left(e^{-c} + e^{ \varepsilon_\alpha(y) - \varphi_\alpha(x) } \right) + \log\left( 1 + e^{ \varepsilon_\alpha(y) - \varphi_\alpha(x) }\right)
\end{align*}
Claim 2.16 in \cite{bib:BK06} asserts that $X \otimes Y$ is a geometric crystal. Notice that this definition is the same as our $h$-tensor product of crystals when $h=1$, and in proposition \ref{proposition:tensor_product_is_crystal}, we checked that claim.

\subsection{Additional structure}
\label{subsection:additional_structure}

\paragraph{Invariant under crystal action:}
At this level, it is easy to see that the highest weight is invariant under the crystal actions $e^._\alpha$:

\begin{lemma}
\label{lemma:hw_is_invariant}
$$ \forall x \in \Bc, \forall \alpha \in \Delta, \forall c \in \R, \hw\left( e^c_\alpha \cdot x \right) = \hw(x)$$
\end{lemma}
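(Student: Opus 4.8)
The plan is to reduce everything to the already-established invariance properties of $\hw$. The key observation is that, by Theorem~\ref{thm:geom_crystal_is_crystal}, the crystal action $e^c_\alpha$ is realised at the group level as a multiplication of $x$ on the left and on the right by elements of the upper unipotent group $U$:
$$ e^c_\alpha \cdot x = x_\alpha\!\left( \tfrac{e^c-1}{e^{\varepsilon_\alpha(x)}} \right) \, x \, x_\alpha\!\left( \tfrac{e^{-c}-1}{e^{\varphi_\alpha(x)}} \right) = u_1 \, x \, u_2, \qquad u_1, u_2 \in U. $$
So the first step is simply to record this formula and note that $x$ and $e^c_\alpha \cdot x$ differ only by such a $U \times U$-translation.

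Next, I would observe that $x \in \Bc$ forces $x \in B \cap B^+ w_0 B^+$, and that $e^c_\alpha \cdot x \in \Bc \subset B^+ w_0 B^+$ as well --- the latter being clear since the Bruhat cell $B^+ w_0 B^+$ is stable under left and right multiplication by $U \subset B^+$. Hence both elements lie in the domain on which $\hw$ was extended in Properties~\ref{properties:hw}(i). It then remains only to invoke the $U \times U$-invariance of $\hw$ from Properties~\ref{properties:hw}(ii), which gives at once $\hw\!\left( e^c_\alpha \cdot x \right) = \hw( u_1 \, x \, u_2 ) = \hw(x)$.

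There is essentially no obstacle: once the group-level formula for $e^c_\alpha$ is in hand, the statement is an immediate consequence of the invariance of $\hw$ under $U \times U$. If one wished to avoid citing Properties~\ref{properties:hw}(ii), the only mild verification would be to argue directly from the definition $\hw(x) = \log[\bar{w}_0^{-1} x]_0$: conjugating the left factor by $\bar{w}_0^{-1}$ sends $x_\alpha(\cdot)$ into $N$ (because $w_0$ carries $\alpha$ to a negative root; cf.\ Proposition~\ref{proposition:w_0_action_ad}), and left multiplication by $N$ together with right multiplication by $U$ leave the $H$-component $[\,\cdot\,]_0$ of a Gauss decomposition unchanged. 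This last point is routine and I would only sketch it.
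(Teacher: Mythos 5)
Your proof is correct and follows essentially the same route as the paper: both arguments boil down to the observation that $e^c_\alpha$ acts by left and right multiplication by elements of $U$, together with the $U\times U$-invariance of $\hw$ on $B^+ w_0 B^+$ (the paper phrases this via the decomposition $\Bc = (B\cap U\bar{w}_0 U)_{\geq 0}\cdot A$ with the $A$-factor equal to $e^{\hw(\cdot)}$, which is the content of Properties~\ref{properties:hw}(i)--(ii) that you cite). Nothing is missing.
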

\begin{proof}
Notice that $\Bc = \left( B \cap B^+ w_0 B^+ \right)_{\geq 0} = \left( B \cap U \bar{w}_0 U \right)_{\geq 0} \cdot A$. Also, the crystal actions $e^._\alpha, \alpha \in \Delta$ are given by an action of $U \times U$, leaving the $A$ factor invariant. This factor is nothing but $e^{\hw(.)}$, hence the result.
\end{proof}

\paragraph{Verma relations:}
Following (\cite{bib:BK00}), for an abstract crystal $L$ and any word ${\bf i} = \left(i_1, \dots, i_k \right) \in I^k$ (not necessarily reduced), define the map:
$$ \begin{array}{cccc}
e_{\bf i}^.:  & \afrak \times L    & \rightarrow & L\\
              & \left(t, x \right) & \mapsto     & e_{\bf i}^t = e_{\alpha_{i_1}}^{ \beta^{(1)}(t) } \cdot e_{\alpha_{i_2}}^{ \beta^{(2)}(t) } \dots e_{\alpha_{i_k}}^{ \beta^{(k)}(t) } \cdot x
   \end{array}
$$
where $\beta^{(j)} = s_{i_k} \dots s_{i_{j+1}}(\alpha_{i_j})$.

The relations appearing in the next lemma are called Verma relations. If they hold, one can define unambiguously $e_w = e_{\bf i}$ for ${\bf i} \in I^k$ if $w = s_{i_1} \dots s_{i_k}$.
\begin{lemma}[lemma 2.1 \cite{bib:BK00}]
The following proposition are equivalent:
\begin{itemize}
 \item[(i)] For any ${\bf i} \in I^k$ and ${\bf i'} \in I^{k'}$, if:
$$ w = s_{i_1} \dots s_{i_k} = s_{i_1'} \dots s_{i_{k'}'}$$
Then $e_{\bf i} = e_{\bf i'}$.
 \item[(ii)] The following relations hold for every $c_1, c_2 \in \R$:
$$ e^{c_1}_\alpha \cdot e^{c_2}_\beta = e^{c_2}_\beta \cdot e^{c_1}_\alpha$$
if $\alpha(\beta) = \beta(\alpha) = 0$;
$$ e^{c_1 }_\alpha \cdot e^{2c_1 + c_2 }_\beta \cdot e^{c_1 + c_2 }_\alpha \cdot e^{c_2 }_\beta 
 = e^{c_2 }_\beta \cdot e^{c_1 + c_2 }_\alpha \cdot e^{2c_1 + c_2 }_\beta \cdot e^{c_1 }_\alpha $$
if $\alpha(\beta^\vee) = -1$, $\beta(\alpha^\vee) = -2$;
$$ e^{c_1 }_\alpha \cdot e^{3c_1 + c_2 }_\beta \cdot e^{2c_1 + c_2 }_\alpha \cdot e^{3c_1 + 2c_2 }_\beta \cdot e^{c_1 + c_2 }_\alpha \cdot e^{c_2 }_\beta 
 = e^{c_2 }_\beta \cdot e^{c_1 + c_2 }_\alpha \cdot e^{3c_1 + 2c_2 }_\beta \cdot e^{2c_1 + c_2 }_\alpha \cdot e^{3c_1 + c_2 }_\beta \cdot e^{c_1 }_\alpha $$
if $\alpha(\beta^\vee) = -1$, $\beta(\alpha^\vee) = -3$.
\end{itemize}
\end{lemma}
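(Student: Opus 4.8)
The plan is to prove the two implications separately: (i) $\Rightarrow$ (ii) is essentially a definition chase, whereas (ii) $\Rightarrow$ (i) is where the actual work lies.

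For (i) $\Rightarrow$ (ii) I would observe that each displayed relation is the instance of (i) attached to a rank-two braid move. Fix distinct simple roots $\alpha = \alpha_i$, $\beta = \alpha_j$ and let $m = m_{s_i,s_j}$ be the order of $s_i s_j$; the three hypotheses on the Cartan pairings single out $m = 2$, $m = 4$, $m = 6$ (the case $m = 3$ is entirely analogous and produces $e^{c_1}_\alpha \cdot e^{c_1+c_2}_\beta \cdot e^{c_2}_\alpha = e^{c_2}_\beta \cdot e^{c_1+c_2}_\alpha \cdot e^{c_1}_\beta$, which will also be needed below). Apply (i) to the two reduced words $\mathbf i = (i,j,i,\dots)$ and $\mathbf i' = (j,i,j,\dots)$ of length $m$ for $w = s_i s_j s_i \cdots = s_j s_i s_j \cdots$. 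Expanding $e^t_{\mathbf i}$ and $e^t_{\mathbf i'}$ from the definition, the weights $\beta^{(p)}$ exhaust the positive roots of the rank-two subsystem $\langle\alpha,\beta\rangle$ (Lemma \ref{lemma:positive_roots_enumeration} applied to the reversed words), so all exponents $\beta^{(p)}(t)$ are $\N$-combinations of $\alpha(t)$ and $\beta(t)$; a direct bookkeeping in each of the four rank-two root systems identifies $e^t_{\mathbf i} = e^t_{\mathbf i'}$ with the displayed relation, with $(c_1,c_2)$ the appropriate reindexing of $\big(\alpha(t),\beta(t)\big)$, and these pairs range over all of $\R^2$.

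For (ii) $\Rightarrow$ (i) I would first record two invariance properties of the assignment $\mathbf i \mapsto e_{\mathbf i}$. \emph{Braid invariance:} if $\mathbf i'$ comes from $\mathbf i$ by one braid move on a length-$m_{s_i,s_j}$ segment, then $e_{\mathbf i} = e_{\mathbf i'}$. Writing $w_{\mathrm{tail}}$ for the product of the reflections strictly to the right of the segment, the weights $\beta^{(p)}$ to the left of the segment factor through the Weyl group element of the segment (unchanged by the braid move, since the longest element of a rank-two parabolic is an involution), the weights to the right are untouched, and for $p$ inside the segment one has $\beta^{(p)}(t) = \gamma_p\big(w_{\mathrm{tail}}^{-1}t\big)$ with $(\gamma_p)$ the positive-root enumeration of $\langle\alpha_i,\alpha_j\rangle$; hence the block of operators coming from the segment equals the left-hand side of the corresponding relation of (ii) evaluated at $c_1 = \alpha_i(w_{\mathrm{tail}}^{-1}t)$, $c_2 = \alpha_j(w_{\mathrm{tail}}^{-1}t)$, and (ii) rewrites it as the block attached to the braided segment. \emph{Nil invariance:} if $\mathbf i'$ deletes two adjacent equal letters $i_a = i_{a+1}$, then $\beta^{(a)} = -\beta^{(a+1)}$, so by (C4) the two corresponding operators compose to $e^0 = \operatorname{id}$, while every other weight is unchanged because the subword $s_{i_a}s_{i_{a+1}}$ drops out as the identity.

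With these in hand, given $\mathbf i \in I^k$ and $\mathbf i' \in I^{k'}$ both representing $w$: by Tits' solution of the word problem for Coxeter groups \cite{bib:Tits69}, any non-reduced word can be carried by braid moves to one exhibiting two adjacent equal letters, so braid and nil invariance allow me to shorten $\mathbf i$ and $\mathbf i'$ — without changing $e_{\mathbf i}$ or $e_{\mathbf i'}$ — down to reduced words for $w$ (induction on length). Two reduced words for $w$ are connected by braid moves (Lemma \ref{lemma:tits_lemma}), so braid invariance yields $e_{\mathbf i} = e_{\mathbf i'}$. The main obstacle is the braid-invariance step: one must check carefully that conjugating the segment's exponents by $w_{\mathrm{tail}}$ still produces an honest instance of a relation of (ii) — which is precisely why (ii) has to be assumed for \emph{all} real $c_1,c_2$ and not merely those of the form $\alpha(t)$, $\beta(t)$ — and that the operators outside the segment are genuinely unaffected; everything else reduces to routine rank-two computation and standard Coxeter combinatorics.
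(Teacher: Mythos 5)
Your proof is correct and follows essentially the same route as the paper's: rank-two reduction for (i) $\Rightarrow$ (ii), and braid moves plus deletion of adjacent equal letters (via Tits' theorem) for (ii) $\Rightarrow$ (i), with your treatment of the braid move embedded in a longer word (conjugating the segment's exponents by the tail) being more explicit than the paper's. You are also right to flag that the $A_2$ relation ($\alpha(\beta^\vee)=\beta(\alpha^\vee)=-1$) is missing from the list in (ii) as stated --- the paper's own proof tacitly includes it among the rank-two root systems $A_1\times A_1$, $A_2$, $BC_2$, $G_2$ --- and without it the implication (ii) $\Rightarrow$ (i) would fail for any group containing an $A_2$ subsystem.
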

\begin{proof}
$(i) \Rightarrow (ii)$: 
If ${\bf i}$ and ${\bf i'}$ are reduced expressions, then by Tits lemma (theorem \ref{lemma:tits_lemma}), one can obtain ${\bf i'}$ from ${\bf i}$ using braid moves. If $\alpha$ and $\beta$ are simple roots in $\Delta$ and satisfy a $d$-term braid relationship $s_\alpha s_\beta s_\alpha \dots  = s_\beta s_\alpha s_\beta \dots $, then we obtain:
$$ \forall t \in \afrak, 
e^{ \alpha(t) }_\alpha \cdot e^{ (s_\alpha \beta)(t) }_\beta \cdot e^{ (s_\alpha s_\beta \alpha)(t) }_\alpha \dots = e^{ \beta(t) }_\beta \cdot e^{ (s_\beta \alpha)(t) }_\alpha \cdot e^{ (s_\alpha s_\beta \alpha)(t) }_\alpha \dots $$
In particular, writing this equation for $t$ in the span of the coweights $\omega_\alpha^\vee$ and $\omega_\beta^\vee$, we find the Verma relations. This is the classical rank $2$ reduction. The list of relations corresponds to the root systems $A_1 \times A_1$, $A_2$, $BC_2$, $G_2$.

$(ii) \Rightarrow (i)$
Conversely, the Verma relations imply $e_{\bf i} = e_{\bf i'}$ for reduced words. If ${\bf i}$ and ${\bf i'}$ are not reduced, it is well known that one can reduce them by using braid moves and by deleting equal successive indices, as they correspond to a product of the form $s_\alpha^2 = id$. Therefore, we only need to notice that if ${\bf i}$ contains two equal successive indices and ${\bf i'}$ is the word obtained by deleting them, then $e_{\bf i} = e_{\bf i'}$. Indeed, if ${\bf i} \in I^k$, $k \in N$ and $i_j = i_{j+1}$ for a certain $j$ then for all $t \in \afrak$:
$$ \forall t \in \afrak, 
   e^{\left( s_{i_k} \dots s_{i_{j+1}} (\alpha_{i_{j  }}) \right)(t)}_{\alpha_{i_{j  }}}
   e^{\left( s_{i_k} \dots s_{i_{j+2}} (\alpha_{i_{j+1}}) \right)(t)}_{\alpha_{i_{j+1}}} = id $$

\end{proof}

\begin{proposition}
\label{proposition:group_verma}
For the geometric crystal $\Bc$, Verma relations hold.
\end{proposition}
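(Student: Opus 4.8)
The plan is to deduce the statement from the previous lemma (Lemma 2.1 of \cite{bib:BK00}, restated just above): its assertion (ii) \emph{is} the list of Verma relations, and it is equivalent to the well-definedness (i) of $e_w := e_{\bf i}$ for ${\bf i} \in R(w)$, so it suffices to check (ii). The first step is the standard \emph{rank $2$ reduction}, already carried out in the proof of that lemma: by Lemma \ref{lemma:tits_lemma} any two reduced words for $w$ are connected by braid moves, and a single braid move $s_i s_j s_i \cdots = s_j s_i s_j \cdots$ for the longest element of a rank $2$ parabolic $\langle s_i, s_j \rangle$ translates, upon evaluating the identity $e^{\alpha(t)}_\alpha e^{(s_\alpha\beta)(t)}_\beta \cdots = e^{\beta(t)}_\beta e^{(s_\beta\alpha)(t)}_\alpha \cdots$ at $t$ ranging over the span of $\omega_\alpha^\vee,\omega_\beta^\vee$, into precisely the three listed families, one for each rank $2$ root system $A_1 \times A_1$, $A_2$, $B_2$, $G_2$. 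So everything comes down to these four explicit identities.

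For the rank $2$ identities themselves I would argue directly from the formula in Theorem \ref{thm:geom_crystal_is_crystal},
$$ e_\alpha^c \cdot x \;=\; x_\alpha\!\left( \tfrac{e^c-1}{e^{\varepsilon_\alpha(x)}} \right)\, x \, x_\alpha\!\left( \tfrac{e^{-c}-1}{e^{\varphi_\alpha(x)}} \right), $$
combined with the transformation rules $(C1)$--$(C4)$ for $\gamma,\varepsilon_\alpha,\varphi_\alpha$ and the definitions $\varepsilon_\alpha = \chi_\alpha^-$, $\varphi_\alpha = \chi_\alpha^-\circ\iota$. Concretely, realise the relevant rank $2$ group as $SL_2 \times SL_2$, $SL_3$, $Sp_4$ or $G_2$, write a general element of $\Bc$ in a fixed subtraction-free chart (e.g.\ the Lusztig coordinates of Theorem \ref{thm:geom_weight_map}), and expand both sides of the relevant braid identity; by Theorem \ref{thm:charts_positivity} each side is a subtraction-free rational function of the coordinates and of the $c_i$, so the identity is a finite verification. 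Equivalently, one may simply cite Berenstein--Kazhdan: the maps $(\gamma,\varepsilon_\alpha,\varphi_\alpha,e_\alpha^{\cdot})$ on $\Bc$ are by construction the restriction to the totally positive part of the geometric crystal $\Fc(\mathbf{X},p)$ of the unipotent bicrystal $\mathbf{X}=B^+ w_0 B^+$, and the Verma relations for this standard (bi)crystal are established in \cite{bib:BK00, bib:BK06} (via the same rank $2$ reduction); since $\Bc$ is stable under all $e_\alpha^c$, the relations restrict.

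The main obstacle is the rank $2$ bookkeeping, not any conceptual point. The relations are four- and six-fold compositions, and each successive factor $e_\beta^{c'}$ is applied to an element whose $\varepsilon$'s and $\varphi$'s have already been shifted by the earlier actions; one must follow these nested substitutions carefully, and in particular watch the interaction of the right-hand balancing factor $x_\alpha(\tfrac{e^{-c}-1}{e^{\varphi_\alpha(x)}})$ with the left-hand factors of the next action. The $G_2$ case ($d=6$) is the heaviest; there it helps to work in a single positive chart, to use the remark $c_1+c_2=c$ so that exponents accumulate additively, and to use the $A$-invariance of the crystal actions (equivalently the $\hw$-invariance of Lemma \ref{lemma:hw_is_invariant}) to normalise $x$ and thereby cut down the number of free parameters before expanding. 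Once the four rank $2$ identities are in hand, the general case follows formally from the previous lemma.
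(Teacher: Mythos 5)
Your proposal is correct and matches the paper's treatment: the paper's proof of this proposition is simply a citation to \cite{bib:BK00}, noting that the relations are established there by direct computation in the group, which is exactly the fallback route you offer (restriction of the Verma relations for the unipotent bicrystal $B^+ w_0 B^+$ to the totally positive part $\Bc$). Your additional sketch of how the rank $2$ verification would go is consistent with that reference but is not carried out in the paper either.
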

\begin{proof}
See \cite{bib:BK00}. The proof is carried by direct computations in the group.
\end{proof}

\paragraph{W-action on the crystal:} As soon as the Verma relations hold on any abstract crystal $L$, one can define a $W$ action on $L$. If $w = s_{i_1} \dots s_{i_k}$, define:
$$ \forall x \in L, w \cdot x = e^{-\gamma(x)}_w \cdot x$$
\begin{proposition}[ \cite{bib:BK00} ]
The $W$ action on a crystal is well defined, and the weight map is equivariant with respect to this action.
\end{proposition}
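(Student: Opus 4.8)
\emph{Proof idea.} The plan is to first dispose of well-definedness, then establish the weight equivariance, and finally bootstrap the equivariance into the statement that $w\mapsto(x\mapsto w\cdot x)$ is a genuine $W$-action. The well-definedness is immediate from the standing hypothesis: since the Verma relations hold on $L$, part (i) of Lemma 2.1 of \cite{bib:BK00} (recalled just above) says that $e_{\bf i} = e_{\bf i'}$ whenever the words ${\bf i}, {\bf i'} \in \bigcup_k I^k$ — reduced or not — represent the same $w \in W$. Hence $e_w := e_{\bf i}$ is a well-defined map $\afrak\times L \to L$, continuous because it is built from the continuous maps $e^c_\alpha$ and $\gamma$, so that $w\cdot x := e_w^{-\gamma(x)}\cdot x$ depends only on $w$. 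For $w$ the identity of $W$ the empty word gives $e_w = \mathrm{id}_L$, whence $e\cdot x = x$.

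Next I would compute the weight of $e_{\bf i}^t\cdot x$ for a reduced word ${\bf i} = (i_1,\dots,i_k)$ of $w$ and an arbitrary $t\in\afrak$. Peeling off the operators of $e_{\bf i}^t$ from the right and applying axiom $(C2)$ each time gives
$$\gamma\bigl(e_{\bf i}^t\cdot x\bigr) = \gamma(x) + \sum_{j=1}^k \beta^{(j)}(t)\,\alpha_{i_j}^\vee, \qquad \beta^{(j)} = s_{i_k}\cdots s_{i_{j+1}}\alpha_{i_j}.$$
The key observation is that the family $(\beta^{(k)}, \beta^{(k-1)}, \dots, \beta^{(1)})$ read in that order is exactly the positive-roots enumeration attached to the opposite reduced word ${\bf i}^{\mathrm{op}} = (i_k, \dots, i_1)$, which is a reduced expression of $w^{-1}$; feeding this into the second identity of Lemma \ref{lbl:kumar} (with the $\lambda$ there equal to $t$) collapses the sum to $t - wt$, so $\gamma(e_{\bf i}^t\cdot x) = \gamma(x) + t - wt$. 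Specializing $t = -\gamma(x)$ then yields $\gamma(w\cdot x) = \gamma(x) - \gamma(x) + w\gamma(x) = w\gamma(x)$, the asserted equivariance.

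Finally, for the action law $(w_1 w_2)\cdot x = w_1\cdot(w_2\cdot x)$ I would pick words ${\bf i}_1$ of $w_1$ and ${\bf i}_2$ of $w_2$ (of lengths $p$ and $q$) and examine their concatenation ${\bf i}_1{\bf i}_2$, a word of $w_1 w_2$ of length $k=p+q$. For a position lying in the ${\bf i}_2$-block, the associated root $\beta^{(j)}$ of ${\bf i}_1{\bf i}_2$ is literally the corresponding root of ${\bf i}_2$; for a position $j\le p$ in the ${\bf i}_1$-block one splits $s_{i_k}\cdots s_{i_{j+1}} = w_2^{-1}\bigl(s_{i_p}\cdots s_{i_{j+1}}\bigr)$ and uses $(w_2^{-1}\mu)(t) = \mu(w_2 t)$, so the exponent there is the ${\bf i}_1$-root evaluated at $w_2 t$. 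This is the purely formal identity $e_{{\bf i}_1{\bf i}_2}^{\,t} = e_{{\bf i}_1}^{\,w_2 t}\circ e_{{\bf i}_2}^{\,t}$ of maps $L\to L$, valid for every $t$ and every (not necessarily reduced) word. Replacing $e_{{\bf i}_1{\bf i}_2}$ by $e_{w_1 w_2}$ and $e_{{\bf i}_i}$ by $e_{w_i}$ using well-definedness, and then setting $t=-\gamma(x)$, one obtains
$$(w_1 w_2)\cdot x = e_{w_1}^{\,-w_2\gamma(x)}\cdot\bigl(w_2\cdot x\bigr) = e_{w_1}^{\,-\gamma(w_2\cdot x)}\cdot\bigl(w_2\cdot x\bigr) = w_1\cdot(w_2\cdot x),$$
the middle step invoking the equivariance $\gamma(w_2\cdot x) = w_2\gamma(x)$ proved in the previous step. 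Together with $e\cdot x = x$, this exhibits $W$ acting on $L$.

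\emph{Main obstacle.} None of this is deep; the only place where care is needed is the combinatorial bookkeeping — matching $(\beta^{(j)})_j$ with the positive-roots enumeration of the reversed reduced word so that Lemma \ref{lbl:kumar} applies, and correctly splitting the Weyl-group products under concatenation while consistently using the convention $(w\mu)(t) = \mu(w^{-1}t)$. Everything else follows mechanically from axioms $(C2)$, $(C4)$, and the well-definedness of $e_w$.
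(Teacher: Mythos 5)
Your proposal is correct, and its backbone — the concatenation identity $e^{t}_{{\bf i}_1{\bf i}_2} = e^{w_2 t}_{{\bf i}_1}\circ e^{t}_{{\bf i}_2}$ followed by the specialization $t=-\gamma(x)$ — is exactly the paper's argument for the action law. Where you diverge is in how equivariance of $\gamma$ is obtained. The paper verifies $\gamma(s_\alpha\cdot x)=s_\alpha\gamma(x)$ only for simple reflections and then propagates it to general $w$ by induction on length, interleaved with the proof of the action property (it explicitly assumes equivariance for $v$ when proving $u\cdot(v\cdot x)=(uv)\cdot x$). You instead compute $\gamma(e^t_{\bf i}\cdot x)=\gamma(x)+\sum_j\beta^{(j)}(t)\,\alpha_{i_j}^\vee$ for a reduced word, identify $(\beta^{(k)},\dots,\beta^{(1)})$ with the positive-roots enumeration of ${\bf i}^{\mathrm{op}}\in R(w^{-1})$, and collapse the sum to $t-wt$ via the second identity of Lemma \ref{lbl:kumar}; the index bookkeeping checks out ($\beta^{(k-l+1)}=s_{i_k}\cdots s_{i_{k-l+2}}\alpha_{i_{k-l+1}}$ is indeed the $l$-th root of ${\bf i}^{\mathrm{op}}$). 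This buys you a fully decoupled proof of equivariance, valid for arbitrary $t$ and not just $t=-\gamma(x)$, and removes the induction and the slight circularity in the paper's phrasing ("once we know we have defined an action"). The cost is that you must invoke well-definedness to replace an arbitrary word by a reduced one before applying Lemma \ref{lbl:kumar}, which you do. Both proofs are complete; yours is marginally cleaner in its logical ordering.
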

\begin{proof}
Equivariance is easily checked on simple reflections, as for $\alpha \in \Delta$ and $x \in L$:
$$ \gamma\left( s_\alpha \cdot x \right)
= \gamma\left( e^{-\alpha(\gamma(x))}_\alpha \cdot x \right)
= \gamma\left( x \right) - \alpha\left(\gamma(x)\right) \alpha^\vee
= s_\alpha( \gamma(x) )
$$
Then it carries on to all elements in the Weyl group by writing them as products of simple reflections, once we know we have defined an action.

Now, in order to check we have an action, consider $w = u v \in W$. By induction on the length, one can suppose that equivariance for $v$ holds ($\gamma( v \cdot x ) = v \gamma(x)$). If ${\bf i} \in I^k$ (resp. ${\bf i'} \in I^l$) is a word giving $u$ (resp. $v$), then their concatenation gives $w$. Moreover, with $\beta^{(j)}, 1 \leq j \leq k+l$ the corresponding roots and $t \in \afrak$:
\begin{align*}
e^{t}_w & = e_{\alpha_{i_1 }}^{\beta^{(1  )}(t) } \dots e_{\alpha_{i_k }}^{\beta^{(k  )}(t) } \cdot
            e_{\alpha_{i_1'}}^{\beta^{(k+1)}(t) } \dots e_{\alpha_{i_l'}}^{\beta^{(k+l)}(t) }\\
& = e_{\alpha_{i_1 }}^{v\beta^{(1  )}(vt) } \dots e_{\alpha_{i_k }}^{v\beta^{(k  )}(vt) } \cdot
    e_{\alpha_{i_1'}}^{ \beta^{(k+1)}( t) } \dots e_{\alpha_{i_l'}}^{ \beta^{(k+l)}( t) }\\
& = e^{vt}_u \cdot e^{t}_v
\end{align*}
Finally, it is easy to check that:
\begin{align*}
  & u \cdot (v \cdot x)\\
= & e^{-\gamma(v\cdot x)}_u \cdot \left( e^{-\gamma(x)}_v \cdot x \right)\\
= & e^{-v \gamma(x)}_u \cdot e^{-\gamma(x)}_v \cdot x\\
= & e^{-\gamma(x)}_w \cdot x\\
= & w \cdot x
\end{align*}
\end{proof}

\section{Paths and groups}
\label{section:path_transforms}
Let $\Cc\left( \R^+, \afrak \right)$ be the set of continuous paths valued in the real Cartan subalgebra $\afrak$. In this section, we introduce the left-invariant flow $\left( B_t(X) \right)_{ t \in \R^+ }$ driven by a path $X \in \Cc\left( \R^+, \afrak \right)$. And we explain why it is totally positive. If $g \in G_{\geq 0}$, the group acts on the left and induces a transformation on paths:
$$ T_g: \Cc\left( \R^+, \afrak \right) \longrightarrow \Cc\left( \R^+, \afrak \right)$$
However, if $g \notin G_{\geq 0}$, the resulting paths can blow up in finite time. We provide a careful analysis which allows to extend the path transform $T_g$ to certain $g \notin G_{\geq 0}$. This takes us to very close to the root operators in the geometric path model.

In general, the transform $T_g$ will be defined thanks to the Gauss decomposition of $g B_.(X)$, when it exists. Let us now introduce the tool of generalized determinantal calculus in order to test if a group element has a Gauss decomposition and if it is totally positive.

\subsection{On generalized determinantal calculus}
We mean by determinantal calculus, the computations involving minors and relations among them.

In the classical case of $GL_n\left( \C \right)$, the minor $\Delta_{I, J}(x)$ of a matrix $x \in GL_n$ is obtained as the determinant of the submatrix with rows $I$ and columns $J$. $I$ and $J$ are subsets of $\{ 1, \dots, n\}$. It is well known that matrices having a Gauss decomposition are those having non-zero principal minors. Moreover, recall from theorem \ref{thm:total_positivity_gln}, that total positivity also means the positivity of minors.

For our purposes, we need a generalization of these two facts to all complex semi-simple groups. In a series of papers \cite{bib:BZ97, bib:FZ99, bib:BZ01}, Berenstein, Fomin and Zelevinsky constructed generalized minors and used them to study total positivity. For $x = n a u \in N H U$ dense subset of $G$, define the generalized principal minors indexed by the fundamental weights as $\Delta^{\omega_i}(x) := a^{\omega_i}$, which is a regular function on all of $G$. This can be easily seen from the following definition as matrix coefficients in fundamental representations:
\begin{lemma}[\cite{bib:BBO} section 3]
Let $v_{\omega_i}$ be a highest weight vector for the representation $V(\omega_i)$ and $\langle \cdot , \cdot \rangle$ an invariant scalar product. With $v_{\omega_i}$ normalised, we have:
$$ \forall x \in G, \Delta^{\omega_i}(x) = \langle x v_{\omega_i}, v_{\omega_i} \rangle $$
\end{lemma}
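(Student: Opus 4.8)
The statement to prove is the lemma identifying the generalized principal minor $\Delta^{\omega_i}(x)$ with the matrix coefficient $\langle x v_{\omega_i}, v_{\omega_i} \rangle$ in the fundamental representation $V(\omega_i)$, for all $x \in G$.

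\textbf{Plan of proof.} The strategy is to first establish the identity on the dense open subset $NHU$ where the Gauss decomposition exists, and then extend to all of $G$ by a density/regularity argument. Both sides are regular functions on $G$: the right-hand side $x \mapsto \langle x v_{\omega_i}, v_{\omega_i}\rangle$ is a matrix coefficient of a finite-dimensional representation, hence polynomial in the coordinates of $x$; the left-hand side, $\Delta^{\omega_i}$, is likewise known to extend to a regular function on $G$ (this is part of what the lemma asserts, so I would either invoke the Berenstein--Fomin--Zelevinsky construction or derive regularity from the matrix-coefficient description once the identity is checked on the dense set). Since $G$ is irreducible as a variety and $NHU$ is Zariski-dense, it suffices to verify the equality on $NHU$.

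\textbf{Key steps.} First I would take $x = nau \in NHU$ and compute $\langle x v_{\omega_i}, v_{\omega_i}\rangle = \langle n a u\, v_{\omega_i}, v_{\omega_i}\rangle$. Since $v_{\omega_i}$ is a highest weight vector, $\ufrak \cdot v_{\omega_i} = 0$, so $U$ fixes $v_{\omega_i}$, giving $u\, v_{\omega_i} = v_{\omega_i}$. Then $a\, v_{\omega_i} = a^{\omega_i} v_{\omega_i}$ by definition of the weight character on the torus. So the expression becomes $a^{\omega_i} \langle n\, v_{\omega_i}, v_{\omega_i}\rangle$. For the remaining factor, I would use that $n \in N$ is lower-unipotent, so $n\, v_{\omega_i} = v_{\omega_i} + (\text{lower weight terms})$, i.e. $n\, v_{\omega_i} - v_{\omega_i}$ lies in the sum of weight spaces $V(\omega_i)_\mu$ with $\mu < \omega_i$. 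Because $\langle\cdot,\cdot\rangle$ is an invariant scalar product, distinct weight spaces are orthogonal, and $v_{\omega_i}$ spans the top weight space $V(\omega_i)_{\omega_i}$; hence $\langle n\, v_{\omega_i}, v_{\omega_i}\rangle = \langle v_{\omega_i}, v_{\omega_i}\rangle = 1$ with $v_{\omega_i}$ normalised. This yields $\langle x v_{\omega_i}, v_{\omega_i}\rangle = a^{\omega_i} = \Delta^{\omega_i}(x)$ on $NHU$. Finally I would invoke density of $NHU$ in $G$ together with continuity (regularity) of both sides to conclude the identity holds for all $x \in G$.

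\textbf{Main obstacle.} The computational core is genuinely routine; the only delicate point is the extension from the dense cell to all of $G$, which hinges on knowing a priori that $\Delta^{\omega_i}$ extends to a regular (or at least continuous) function on $G$. I would handle this by noting that the right-hand side $\langle x v_{\omega_i}, v_{\omega_i}\rangle$ is manifestly a regular function on $G$, and it agrees with $\Delta^{\omega_i}$ on the dense set $NHU$; one then \emph{defines} (or recognises) the regular extension of $\Delta^{\omega_i}$ to be this matrix coefficient, which is exactly the content of the lemma. Care is also needed that the invariant scalar product restricted to $V(\omega_i)$ makes distinct weight spaces orthogonal — this follows from invariance under the compact real form (equivalently, $\langle h v, w\rangle + \langle v, h w\rangle$ considerations on the split real form), a standard fact I would cite rather than reprove.
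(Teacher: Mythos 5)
Your proof is correct and follows essentially the same route as the paper: restrict to the dense Gauss cell $NHU$, use that $U$ fixes $v_{\omega_i}$ and $H$ acts by $a^{\omega_i}$, and recognise the matrix coefficient as the regular extension of $\Delta^{\omega_i}$. The only cosmetic difference is in handling the $N$-factor — you use orthogonality of weight spaces to kill the lower-weight terms of $n\,v_{\omega_i}$, whereas the paper moves $n$ across the invariant form as $n^T \in U$; both are one-line applications of invariance.
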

\begin{proof}
On the dense subset $N H U \subset G$, write the Gauss decomposition $x = n a u$. Since the $U$ action fixes $v_{\omega_i}$ in the representation $V(\omega_i)$:
$$ \langle x v_{\omega_i}, v_{\omega_i} \rangle = \langle a u v_{\omega_i}, n^T v_{\omega_i} \rangle = \langle a v_{\omega_i},  v_{\omega_i} \rangle$$
The result holds using the fact that the torus $H$ acts multiplicatively on highest weight vectors:
$$a v_{\omega_i} = a^{\omega_i} v_{\omega_i}$$
\end{proof}

The first useful result is:
\begin{proposition}[\cite{bib:FZ99} Corollary 2.5]
 An element $x \in G$ admits a Gauss decomposition if and only if
$$ \forall \alpha \in \Delta, \Delta^{\omega_\alpha}(x) \neq 0$$
\end{proposition}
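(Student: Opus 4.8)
\emph{Overview.} My plan is to derive both implications from the matrix-coefficient formula $\Delta^{\omega_\alpha}(x)=\langle x v_{\omega_\alpha},v_{\omega_\alpha}\rangle$ just proved, together with the Bruhat decomposition $G=\bigsqcup_{\tau\in W}B\tau B^{+}$ from the preliminaries. Throughout I use the defining property of the invariant form, $\langle gy,z\rangle=\langle y,g^{T}z\rangle$, the fact that $U$ fixes the highest weight vector $v_{\omega_\alpha}$, that $N^{T}=U$ (since $f_\beta^{T}=e_\beta$), and that $a^{T}=a$ acts on $v_{\omega_\alpha}$ by the scalar $a^{\omega_\alpha}$ for $a\in H$.

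\emph{Necessity.} If $x=nau$ with $n\in N$, $a\in H$, $u\in U$, then by definition $\Delta^{\omega_\alpha}(x)=a^{\omega_\alpha}$, the value on $a$ of the character $\omega_\alpha$ of the torus $H$; since torus characters never vanish, $\Delta^{\omega_\alpha}(x)\neq 0$ for every $\alpha\in\Delta$. (This is a one-line instance of the preceding lemma.)

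\emph{Sufficiency.} Assume $\Delta^{\omega_\alpha}(x)\neq 0$ for all $\alpha\in\Delta$. By the Bruhat decomposition $x$ lies in a unique cell $B\tau B^{+}$, and since $\dot\tau\in\mathrm{Norm}_G(H)$ one has $\dot\tau H=H\dot\tau$, so $B\tau B^{+}=NH\dot\tau HU=NH\dot\tau U$; write $x=na\dot\tau u$ accordingly. Then I would compute, peeling off $u$ and $n^{T}\in U$ (which fix $v_{\omega_\alpha}$) and the scalar action of $a$:
$$\Delta^{\omega_\alpha}(x)=\langle na\dot\tau u\, v_{\omega_\alpha},v_{\omega_\alpha}\rangle=\langle a\dot\tau v_{\omega_\alpha},v_{\omega_\alpha}\rangle=a^{\omega_\alpha}\,\langle \dot\tau v_{\omega_\alpha},v_{\omega_\alpha}\rangle.$$
As $a^{\omega_\alpha}\neq 0$, the hypothesis forces $\langle \dot\tau v_{\omega_\alpha},v_{\omega_\alpha}\rangle\neq 0$. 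Now $\dot\tau v_{\omega_\alpha}$ has weight $\tau\omega_\alpha$, and distinct weight spaces are orthogonal for the invariant form: for $h\in H$, $h^{T}=h$ gives $h^{\mu}\langle v,w\rangle=\langle hv,w\rangle=\langle v,hw\rangle=h^{\nu}\langle v,w\rangle$ whenever $v\in V_\mu,\ w\in V_\nu$, so $\langle v,w\rangle=0$ unless $\mu=\nu$. Hence $\langle \dot\tau v_{\omega_\alpha},v_{\omega_\alpha}\rangle\neq 0$ forces $\tau\omega_\alpha=\omega_\alpha$ (and conversely, since $\dim V(\omega_\alpha)_{\omega_\alpha}=1$, in that case $\dot\tau v_{\omega_\alpha}$ is a nonzero multiple of $v_{\omega_\alpha}$, consistently). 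Thus $\tau$ fixes every fundamental weight $\omega_\alpha$; as these form a basis of $\hfrak^{*}$ on which $W$ acts faithfully, $\tau=e$, so $x\in BeB^{+}=NHU$, i.e. $x$ admits a Gauss decomposition.

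\emph{Main point of care.} The only step warranting attention is the bookkeeping in the Bruhat decomposition — identifying the identity cell $BeB^{+}=NHU$ as the one of interest and justifying the normalization $B\tau B^{+}=NH\dot\tau U$ via $\dot\tau H=H\dot\tau$ — so that the element $x$ in the cell indexed by $\tau$ really has the shape $na\dot\tau u$. Everything else reduces to the lemma's formula and the elementary orthogonality of weight spaces, so I anticipate no genuine obstacle.
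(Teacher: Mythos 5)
Your argument is correct. Note that the paper itself offers no proof of this proposition --- it is quoted verbatim from \cite{bib:FZ99} (Corollary 2.5) --- so you have supplied a proof where the paper simply cites one. What you wrote is the standard argument and it holds together: necessity is immediate from $\Delta^{\omega_\alpha}(nau)=a^{\omega_\alpha}\neq 0$; for sufficiency, the normalization $B\tau B^{+}=NH\dot\tau U$ is justified exactly as you say (conjugating the torus factor past $\dot\tau$), the unipotent factors drop out of $\langle na\dot\tau u\,v_{\omega_\alpha},v_{\omega_\alpha}\rangle$ because $U$ fixes the highest weight vector and $n^{T}\in U$, and the orthogonality of distinct weight spaces (which you correctly derive from $h^{T}=h$) forces $\tau\omega_\alpha=\omega_\alpha$ for every $\alpha$, hence $\tau=e$ by faithfulness of the $W$-action on the span of the fundamental weights. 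Two minor points worth keeping in mind, neither of which is a gap: the quantity $\langle\dot\tau v_{\omega_\alpha},v_{\omega_\alpha}\rangle$ depends on the choice of representative $\dot\tau$ only up to a nonzero scalar, so its vanishing or nonvanishing is well defined; and this argument uses the matrix-coefficient formula for $\Delta^{\omega_\alpha}$ on all of $G$, which is exactly how the paper (following Fomin--Zelevinsky) extends the principal minors beyond the open cell, so your proof is consistent with the definitions in force.
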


Arbitrary minors are indexed by the fundamental weights and couples of Weyl group elements:
$$ \forall (u, v) \in W \times W, \Delta_{u \omega_i, v \omega_i}(x) := \Delta^{\omega_i}\left( \overline{u}^{-1} x \bar{v} \right)$$
They can also be given a representation-theoretic definition:
$$ \forall (u, v) \in W \times W, \Delta_{u \omega_i, v \omega_i}(x) := \langle x \bar{v} v_{\omega_i}, \bar{u} v_{\omega_i} \rangle$$

This allows to state the second useful result which is a total positivity criterion for the lower unipotent group $N$.
\begin{thm}[Total positivity criterion - Particular case of Theorem 1.5 in \cite{bib:BZ97} or Theorem 1.11 in \cite{bib:FZ99}]
\label{thm:total_positivity_criterion}
The group element $x \in N$ is totally positive:
$$ x \in N^{w_0}_{>0}$$
if and only if:
$$ \forall w \in W, \forall \alpha \in \Delta, \Delta_{w \omega_\alpha, \omega_\alpha}(x) > 0$$
\end{thm}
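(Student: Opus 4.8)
The plan is to prove the two implications separately; the direction from total positivity to positivity of the minors is routine, and the converse — the substantive content — rests on the Chamber Ansatz of Berenstein, Fomin and Zelevinsky. For the easy direction, fix any reduced word ${\bf i}=(i_1,\dots,i_m)\in R(w_0)$. If $x\in N^{w_0}_{>0}$, then by the transpose of Theorem~\ref{thm:lusztig_variety_params} we may write $x=y_{i_1}(t_1)\cdots y_{i_m}(t_m)$ with all $t_k>0$. For fixed $w\in W$ and $\alpha\in\Delta$, the composite $(t_1,\dots,t_m)\mapsto \Delta_{w\omega_\alpha,\omega_\alpha}(y_{i_1}(t_1)\cdots y_{i_m}(t_m))$ is a polynomial with non-negative coefficients (generalized minors pulled back along such parametrizations are subtraction-free, \cite{bib:BZ97,bib:FZ99}), and it is not identically zero, since $\Delta_{w\omega_\alpha,\omega_\alpha}$ is a non-zero regular function on $G$ while $y_{\bf i}$ is dominant. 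A non-zero polynomial with non-negative coefficients is strictly positive on $\R_{>0}^m$, so $\Delta_{w\omega_\alpha,\omega_\alpha}(x)>0$ for every $w$ and $\alpha$.

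For the converse, suppose $\Delta_{w\omega_\alpha,\omega_\alpha}(x)>0$ for all $w\in W$ and $\alpha\in\Delta$. Fix ${\bf i}=(i_1,\dots,i_m)\in R(w_0)$ and let $M_k:=\Delta_{v_k\omega_{i_k},\omega_{i_k}}$, $k=1,\dots,m$, be the family of chamber minors attached to ${\bf i}$ (in the standard normalization $v_k=s_{i_{k+1}}\cdots s_{i_m}$); these lie among the minors assumed positive, so each $M_k(x)>0$ and in particular non-zero. On the open subset of $N$ where $M_1\cdots M_m\neq 0$, the Chamber Ansatz (\cite{bib:BFZ96}, \cite{bib:BZ97}) supplies an inverse to the parametrization $y_{\bf i}:(t_1,\dots,t_m)\mapsto y_{i_1}(t_1)\cdots y_{i_m}(t_m)$, with each coordinate $t_k(x)$ a subtraction-free rational expression in the $M_j(x)$ — in fact a Laurent monomial $\prod_j M_j(x)^{n_{k,j}}$, $n_{k,j}\in\Z$. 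Hence $x=y_{i_1}(t_1(x))\cdots y_{i_m}(t_m(x))$, and since each $t_k(x)$ is a monomial in the positive reals $M_j(x)$, it is a positive real; therefore $x=y_{\bf i}(t_1(x),\dots,t_m(x))$ with positive entries, i.e. $x\in N^{w_0}_{>0}$ by the transpose of Theorem~\ref{thm:lusztig_variety_params}. Note that only the $m$ chamber minors of one reduced word are used; the full hypothesis (all $w\in W$) is much more than necessary.

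The main obstacle is the Chamber Ansatz itself: producing the explicit Laurent-monomial expressions for the $t_k$ in terms of generalized minors and checking that they invert $y_{\bf i}$ on the relevant cell. The standard proof proceeds by induction on $m=\ell(w_0)$, peeling off the last factor $y_{i_m}(t_m)$ and repeatedly using the three-term (generalized Plücker) relations among the $\Delta_{u\omega_i,v\omega_i}$; since this is exactly Theorem~1.5 of \cite{bib:BZ97} (equivalently Theorem~1.11 of \cite{bib:FZ99}), I would invoke it rather than reproduce the computation. A secondary point requiring care is the non-vanishing and strictness in the easy direction, which I would handle via the density of the image of $y_{\bf i}$ together with the non-negativity of the pulled-back minors, as above; one can alternatively track the canonical-basis structure constants of the $f_i$-action directly, but that argument is heavier.
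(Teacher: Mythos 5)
The paper does not actually prove this theorem: it is imported as a ``particular case'' of Theorem 1.5 in \cite{bib:BZ97} (equivalently Theorem 1.11 in \cite{bib:FZ99}) with no in-house argument, so there is nothing internal to compare against. Your reconstruction is correct and is essentially the argument of the cited sources: the forward direction via non-negativity of the minors pulled back along the Lusztig parametrization, the converse via the Chamber Ansatz expressing the factorization parameters $t_k$ as Laurent monomials in chamber minors of the form $\Delta_{w\omega_\alpha,\omega_\alpha}$, all of which are positive by hypothesis. Two small remarks. First, in the easy direction your justification that the pulled-back polynomial is not identically zero (``$\Delta_{w\omega_\alpha,\omega_\alpha}$ is a non-zero regular function on $G$ while $y_{\bf i}$ is dominant'') is slightly off target: a regular function that is non-zero on $G$ could still vanish identically on $N$, which is what actually needs to be excluded. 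The clean fix is the weight-space argument the paper itself uses in the proof of Theorem \ref{thm:flow_B_total_positivity}: since $\bar{w} v_{\omega_\alpha}$ spans the one-dimensional weight space $V(\omega_\alpha)_{w\omega_\alpha}$, some monomial $f_{i_1}\cdots f_{i_k} v_{\omega_\alpha}$ has non-zero (and, by the positivity of the structure constants, non-negative) pairing with it, so the polynomial has at least one strictly positive coefficient and is therefore strictly positive on $\R_{>0}^m$. Second, your observation that only the $m$ chamber minors of a single reduced word are needed for the converse is exactly why the statement is labelled a particular case of the cited theorems, whose hypotheses are weaker than ``all $w\in W$, all $\alpha\in\Delta$''.
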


\subsection{Paths on the solvable group B}
Let $\left( B_t(X) \right)_{ t \in \R^+ }$ be the $B$-valued path, driven by $X$ and solution of the following ordinary differential equation:
\begin{align}
\label{lbl:process_B_ode}
\left\{ \begin{array}{ll}
dB_t(X) = B_t(X) \left( \sum_{\alpha \in \Delta} f_\alpha dt + dX_t\right) \\
B_0(X) = \exp( X_0 )
\end{array} \right.
\end{align}

The link to Kostant's Whittaker model is explained in appendix \ref{appendix:whittaker_model}. This equation can be understood as being formal if $X$ fails to be regular enough so that the differential equation has a meaning. The following expression is easy to check (\cite{bib:BBO} after transpose) and can be taken as a definition when discarding the smoothness assumption on $X$:
\begin{thm}
\begin{align}
\label{lbl:process_B_explicit}
B_t(X) & = \left( \sum_{k \geq 0} \sum_{ i_1, \dots, i_k } \int_{ t \geq t_k \geq \dots \geq t_1 \geq 0} e^{ -\alpha_{i_1}(X_{t_1}) \dots -\alpha_{i_k}(X_{t_k}) } dt_1 \dots dt_k f_{i_1} \cdot f_{i_2} \dots f_{i_k} \right) e^{X_t}
\end{align} 
\end{thm}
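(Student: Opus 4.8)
The plan is to pass to an ``interaction picture'' in which the $\afrak$-valued driving term $dX_t$ is peeled off, reducing \eqref{lbl:process_B_ode} to a linear equation driven only by the $f_\alpha$'s with time-dependent scalar coefficients; such an equation is solved explicitly by an iterated-integral (Chen) series, which is precisely the prefactor appearing in \eqref{lbl:process_B_explicit}.

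First I would set $A_t := \sum_{\alpha \in \Delta} e^{-\alpha(X_t)}\, f_\alpha \in \nfrak$ and let $C_t$ be the ($N$-valued) solution of $\dot C_t = C_t A_t$, $C_0 = 1$, which exists and is unique since $A_t$ depends continuously on $t$. Expanding this equation by Picard iteration, $C_t = 1 + \int_0^t C_s A_s\, ds = 1 + \int_0^t A_s\, ds + \dots$, yields the iterated-integral expansion
$$C_t = \sum_{k\geq 0}\ \sum_{i_1,\dots,i_k}\ \int_{0\le t_1 \le \dots \le t_k \le t} e^{-\alpha_{i_1}(X_{t_1})}\cdots e^{-\alpha_{i_k}(X_{t_k})}\, dt_1\cdots dt_k\; f_{i_1}\cdots f_{i_k},$$
which in every finite-dimensional $G$-module has only finitely many nonzero terms because $\nfrak$ acts nilpotently. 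Thus the right-hand side of \eqref{lbl:process_B_explicit} equals $C_t\, e^{X_t}$, and it lies in $N A \subset B$.

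Next I would verify that $B_t := C_t e^{X_t}$ solves \eqref{lbl:process_B_ode}. Since $X$ is $\afrak$-valued and $\afrak$ abelian, $e^{X_{t+h}} = e^{X_t} e^{X_{t+h}-X_t}$, hence $\frac{d}{dt}e^{X_t} = e^{X_t}\dot X_t$. Therefore
$$\dot B_t = \dot C_t\, e^{X_t} + C_t e^{X_t}\dot X_t = C_t A_t e^{X_t} + B_t \dot X_t = B_t\, \Ad\!\left(e^{-X_t}\right)\!(A_t) + B_t \dot X_t.$$
The one computation that matters is the root-space identity: since $f_\alpha \in \gfrak_{-\alpha}$, one has $\Ad(e^{X_t})(f_\alpha) = e^{\ad(X_t)}(f_\alpha) = e^{-\alpha(X_t)} f_\alpha$, so $\Ad(e^{-X_t})(A_t) = \sum_{\alpha\in\Delta} e^{-\alpha(X_t)}\, e^{\alpha(X_t)} f_\alpha = \sum_{\alpha\in\Delta} f_\alpha$. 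Substituting gives $\dot B_t = B_t\big(\sum_{\alpha\in\Delta} f_\alpha + \dot X_t\big)$, and $B_0 = C_0\, e^{X_0} = e^{X_0}$; by uniqueness of solutions of the linear ODE \eqref{lbl:process_B_ode}, this proves \eqref{lbl:process_B_explicit}.

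Finally, when $X$ is merely continuous — so that \eqref{lbl:process_B_ode} has no classical meaning — the series \eqref{lbl:process_B_explicit} still makes literal sense, its iterated integrals involving only the continuous scalar functions $t_j \mapsto e^{-\alpha_{i_j}(X_{t_j})}$, and I would simply take it as the definition of $B_t(X)$; its consistency with the smooth case is obtained by approximating $X$ uniformly on compact intervals by smooth paths and passing to the limit in each partial sum, the local uniform bounds on the iterated integrals justifying the interchange. I do not expect a genuine obstacle here: the whole argument reduces to the abelian peeling-off trick together with the identity $\Ad(e^{X})f_\alpha = e^{-\alpha(X)}f_\alpha$; the only mild technical points are the well-definedness of the series (nilpotency of $\nfrak$) and the differentiation of $t \mapsto e^{X_t}$ (abelianness of $\afrak$).
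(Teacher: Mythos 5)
Your argument is correct and is essentially the paper's (the paper leaves the identity as ``easy to check'', but its proof of the subsequent lemma giving $dN_t(X) = N_t(X)\bigl(\sum_{\alpha\in\Delta} e^{-\alpha(X_t)} f_\alpha\, dt\bigr)$ from the decomposition $B_t = N_t A_t$ is exactly your interaction-picture computation with $\Ad(e^{X_t})f_\alpha = e^{-\alpha(X_t)}f_\alpha$, run in the other direction). Your Picard/Chen-series expansion of that reduced equation and the limiting argument for merely continuous $X$ fill in precisely the steps the paper omits.
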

By convention, the term for $k=0$ is the identity element. For probabilistic purposes, $X$ can be taken as a semi-martingale. Then, equation \eqref{lbl:process_B_ode} has to be viewed as a stochastic differential equation (SDE) written in the Stratonovich convention.

One can also note that all the algebraic operations on group elements can be interpreted as matrix operations in any finite dimensional representation of the group $G$. When $X$ is differentiable, equation \eqref{lbl:process_B_ode} has to be understood the following way. In any finite dimensional group representation $V$, $B_.(X)$ is viewed as $GL(V)$-valued function of the time parameter:
$$ B_.(X): \R_+ \longrightarrow GL(V)$$
It is the solution of the system of ordinary differential equations written in matrix form as: 
\begin{align}
\left\{ \begin{array}{ll}
\frac{dB(X)}{dt}(t) = B_t(X) \left( \sum_{\alpha \in \Delta} f_\alpha + \frac{dX}{dt}(t)\right) \\
B_0(X) = \exp( X_0 )
\end{array} \right.
\end{align}

\begin{example}[$A_1$-type]
In the case of $SL_2$, in the canonical representation:
$$dB_t(X) = B_t(X) \begin{pmatrix} dX_t & 0\\ dt & -dX_t \end{pmatrix}  $$
Solving the differential equation leads to:
$$ B_t(X) = \begin{pmatrix} e^{X_t} & 0 \\ e^{X_t} \int_0^t e^{-2X_s}ds & e^{-X_t} \end{pmatrix} $$
\end{example}
\begin{example}[$A_2$-type]
For the canonical representation of $SL_3$, $\afrak = \{ x \in \R^3 | x_1 + x_2 + x_3 = 0 \}$:
$$dB_t(X) = B_t(X) \begin{pmatrix} dX^1_t & 0 & 0\\ dt & dX^2_t & 0 \\ 0 & dt & dX^3_t \end{pmatrix} $$
Solving the differential equation leads to:
$$ B_t(X) = \begin{pmatrix} e^{X^1_t}                                                               & 0                                        & 0
                         \\ e^{X^1_t} \int_0^t e^{-\alpha_1(X_s)}                                   & e^{X^2_t}                                & 0
                         \\ e^{X^1_t} \int_0^t e^{-\alpha_1(X_s) }ds \int_0^s e^{-\alpha_2(X_u) }du & e^{X^2_t} \int_0^t e^{-\alpha_2(X_s) }ds & e^{X^3_t} \end{pmatrix} $$
where $\left( \alpha_1 = (1,-1, 0), \alpha_2 = (0, 1, -1) \right) $ are the simple roots.
\end{example}

Now define $\left( A_t(X) \right)_{ t \in \R^+ }$ and $\left( N_t(X) \right)_{ t \in \R^+ }$ via the $NA$ decomposition of $B_.(X) = N_.(X) A_.(X)$:
\begin{align}
\label{lbl:process_A_explicit}
A_t(X) & = e^{X_t}
\end{align}
\begin{align}
\label{lbl:process_N_explicit}
N_t(X) & = \sum_{k \geq 0} \sum_{ i_1, \dots, i_k } \int_{ t \geq t_k \geq \dots \geq t_1 \geq 0 } e^{ -\alpha_{i_1}(X_{t_1}) \dots -\alpha_{i_k}(X_{t_k}) } f_{i_1} \cdot f_{i_2} \dots f_{i_k} dt_1 \dots dt_k
\end{align}

\begin{lemma}
$A_.(X)$ and $N_.(X)$ are solution of the following equations:
\begin{align}
\label{lbl:process_A_ode}
dA_t(X) = A_t(X) dX_t, & \quad A_0(X) = \exp( X_0 )
\end{align}
\begin{align}
\label{lbl:process_N_ode}
dN_t(X) = N_t(X) \left( \sum_{\alpha \in \Delta} e^{ -\alpha\left(X_t\right) }f_\alpha dt \right), & \quad N_0(X) = id
\end{align}
\end{lemma}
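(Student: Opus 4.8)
\emph{Proof plan.} The plan is to obtain \eqref{lbl:process_A_ode} and \eqref{lbl:process_N_ode} by differentiating the factorization $B_\cdot(X) = N_\cdot(X)A_\cdot(X)$ directly, feeding in the defining equation \eqref{lbl:process_B_ode} for $B_\cdot(X)$. First I would dispose of $A_\cdot(X)$: by \eqref{lbl:process_A_explicit} one has $A_t(X) = e^{X_t}$ with $X$ valued in the \emph{abelian} subalgebra $\afrak$, so all increments $X_t - X_s$ commute with one another and with $X_s$; hence $t\mapsto e^{X_t}$ is the product integral of $dX_t$ and satisfies $dA_t(X) = A_t(X)\,dX_t$ with $A_0(X)=e^{X_0}$, which is exactly \eqref{lbl:process_A_ode}. (When $X$ is only a semimartingale this is the Stratonovich chain rule applied to $e^{X_t}$, the quadratic-variation correction vanishing because $X$ lives in a commutative subalgebra.)

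Next, for $N_\cdot(X)$ I would write $N_t(X) = B_t(X)A_t(X)^{-1}$ and apply the Leibniz rule,
\[
  dN_t(X) = \bigl(dB_t(X)\bigr)A_t(X)^{-1} + B_t(X)\,d\!\left(A_t(X)^{-1}\right).
\]
From \eqref{lbl:process_A_ode} one gets $d(A_t(X)^{-1}) = -A_t(X)^{-1}(dA_t(X))A_t(X)^{-1} = -A_t(X)^{-1}\,dX_t$, using again that $A_t(X)=e^{X_t}$ commutes with $dX_t\in\afrak$. Substituting \eqref{lbl:process_B_ode} and cancelling the two $dX_t$-terms — they are equal, since $dX_t$ commutes with $A_t(X)^{-1}$ — leaves
\[
  dN_t(X) = B_t(X)\Bigl(\sum_{\alpha\in\Delta} f_\alpha\Bigr)A_t(X)^{-1}\,dt = N_t(X)\,\Ad\!\left(e^{X_t}\right)\!\Bigl(\sum_{\alpha\in\Delta} f_\alpha\Bigr)\,dt .
\]
Since $f_\alpha$ spans the root space $\gfrak_{-\alpha}$, we have $\Ad(e^{X_t})(f_\alpha) = e^{\ad(X_t)}(f_\alpha) = e^{-\alpha(X_t)}f_\alpha$, and $N_0(X) = B_0(X)A_0(X)^{-1} = e^{X_0}e^{-X_0} = \mathrm{id}$; this is precisely \eqref{lbl:process_N_ode}.

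Finally, as a cross-check — and as the route to follow when $X$ is merely continuous, so that \eqref{lbl:process_N_explicit} is taken as the definition of $N_\cdot(X)$ and \eqref{lbl:process_N_ode} is read formally — one can differentiate the iterated-integral series \eqref{lbl:process_N_explicit} in $t$ term by term: $\partial_t$ only sees the upper endpoint $t_k\le t$ of the outermost integral in the order-$k$ term, producing the factor $e^{-\alpha_{i_k}(X_t)}f_{i_k}$ to the right of the order-$(k-1)$ term, and resumming over $i_k$ and over $k$ reassembles $N_t(X)\sum_{\alpha\in\Delta}e^{-\alpha(X_t)}f_\alpha$. I do not expect a genuine obstacle here: the only thing to be careful about is the sense in which the equations hold (honest ODEs for $C^1$ paths, Stratonovich SDEs for semimartingales, formal identities among the series otherwise), and the algebra is the same in all three cases because every manipulation above is just the Leibniz rule together with the commutativity of $\afrak$.
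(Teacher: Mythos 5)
Your argument is correct and follows essentially the same route as the paper: the paper likewise treats the $A$-part as immediate and obtains the $N$-equation by differentiating the factorization $B_t(X)=N_t(X)A_t(X)$ with the (Stratonovich) product rule, cancelling the $dX_t$ terms, and conjugating $\sum_\alpha f_\alpha$ by $A_t(X)=e^{X_t}$ via the adjoint action of the torus. Your additional term-by-term differentiation of the iterated-integral series is a harmless cross-check not present in the paper, but the core computation is the same.
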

\begin{proof}
Obvious for the $A$-part. Then, since $B_t(X) = N_t(X) A_t(X)$, we have by differentiation (Stratonovich differentiation rule in the stochastic case):
\begin{align*}
   & \ dN_t(X) A_t(X) + N_t(X) dA_t(X) = B_t(X) \left(\sum_{\alpha \in \Delta} f_\alpha dt + dX_t\right) \\
\Leftrightarrow & \ dN_t(X) A_t(X) + N_t(X) A_t(X) dX_t = N_t(X) A_t(X) \left(\sum_{\alpha \in \Delta} f_\alpha dt + dX_t\right) \\
\Leftrightarrow & \ dN_t(X) A_t(X) = N_t(X) A_t(X) \left(\sum_{\alpha \in \Delta} f_\alpha dt\right) \\
\Leftrightarrow & \ dN_t(X) = N_t(X) A_t(X) \left(\sum_{\alpha \in \Delta} f_\alpha dt\right) A_t(X)^{-1}\\
\Leftrightarrow & \ dN_t(X) = N_t(X) \left(\sum_{\alpha \in \Delta} e^{-\alpha(X_t)} f_\alpha dt\right)
\end{align*}
The last step uses the $\Ad$ action of the torus on the Chevalley generators.
\end{proof}

\subsection{Total positivity of the flow}
\label{subsection:flow_total_positivity}
Morally speaking, $B_.(X)$ is obtained by infinitesimal increments that are totally non-negative. Therefore, as totally non-negative matrices form a semigroup, the following theorem is no surprise:
\begin{thm}[\cite{bib:BBO}, lemma 3.4 - Total positivity of the flow $B_.$]
\label{thm:flow_B_total_positivity}
Let $X \in \Cc\left( \R^+, \afrak \right)$. Then for all $t \geq 0$, $B_t(X)$ is totally non-negative. More precisely:
$$ \forall t>0, B_t(X) \in N^{w_0}_{>0} A$$
\end{thm}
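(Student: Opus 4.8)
The plan is to route everything through the $NA$-factorisation $B_t(X)=N_t(X)A_t(X)$ of \eqref{lbl:process_N_explicit}--\eqref{lbl:process_A_explicit}: since $A_t(X)=e^{X_t}\in A$, it is enough to prove $N_t(X)\in N^{w_0}_{>0}$ for every $t>0$, the case $t=0$ being $B_0(X)=e^{X_0}\in A\subset G_{\geq 0}$. First I would obtain plain total non-negativity by discretising the flow: for a subdivision $0=s_0<\dots<s_n=t$, the Volterra (Picard) expansion of \eqref{lbl:process_N_ode}, which is a genuine finite sum in any fixed finite-dimensional representation, identifies $N_t(X)$ with the mesh-limit of the ordered products $\prod_{k=1}^{n}\exp\bigl(\sum_{\alpha\in\Delta}(s_k-s_{k-1})e^{-\alpha(X_{s_{k-1}})}f_\alpha\bigr)$. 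Each factor is $\exp(\sum_\alpha c_\alpha f_\alpha)$ with $c_\alpha\geq 0$, and by the (iterated) Lie product formula $\exp(\sum_\alpha c_\alpha f_\alpha)=\lim_m\bigl(\prod_\alpha y_\alpha(c_\alpha/m)\bigr)^m$ lies in $N_{\geq 0}$, because $N_{\geq 0}$ is a closed sub-semigroup of $G$ (it is the closure of $N^{w_0}_{>0}$). Hence $N_t(X)\in N_{\geq 0}$ for all $t\geq 0$ and all continuous $X$, so in particular every generalized minor $\Delta_{w\omega_\alpha,\omega_\alpha}(N_t(X))$ is $\geq 0$.

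To promote this to the open cell I would invoke the total positivity criterion of Theorem~\ref{thm:total_positivity_criterion}: it remains to show $\Delta_{w\omega_\alpha,\omega_\alpha}(N_t(X))>0$ for all $w\in W$, $\alpha\in\Delta$ and $t>0$. Using the matrix-coefficient description, $\Delta_{w\omega_\alpha,\omega_\alpha}(N_t(X))=\langle N_t(X)v_{\omega_\alpha},\bar w v_{\omega_\alpha}\rangle$; expanding $N_t(X)v_{\omega_\alpha}$ by the explicit series \eqref{lbl:process_N_explicit} along the weight spaces of $V(\omega_\alpha)$, and using that $\bar w v_{\omega_\alpha}$ has pure weight $w\omega_\alpha$, only the monomials $f_{i_1}\cdots f_{i_k}v_{\omega_\alpha}$ with $\alpha_{i_1}+\dots+\alpha_{i_k}=\omega_\alpha-w\omega_\alpha$ survive the pairing; writing $\omega_\alpha-w\omega_\alpha=\sum_{\beta\in\Delta}n_\beta\beta$ (all $n_\beta\geq 0$ since $w\omega_\alpha\leq\omega_\alpha$), these monomials all have the same length $d=\sum_\beta n_\beta$. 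Since the weight space $V(\omega_\alpha)_{w\omega_\alpha}$ is one-dimensional, $f_{i_1}\cdots f_{i_d}v_{\omega_\alpha}=\lambda_{\bf i}\,\bar w v_{\omega_\alpha}$ for scalars $\lambda_{\bf i}$, and one gets $\Delta_{w\omega_\alpha,\omega_\alpha}(N_t(X))=\|\bar w v_{\omega_\alpha}\|^2\sum_{|{\bf i}|=d}\lambda_{\bf i}\,I_{\bf i}(t)$ with $I_{\bf i}(t)=\int_{t\geq t_d\geq\dots\geq t_1\geq 0}e^{-\sum_j\alpha_{i_j}(X_{t_j})}\,dt_1\cdots dt_d>0$. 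Now $\|\bar w v_{\omega_\alpha}\|^2>0$, each $I_{\bf i}(t)>0$ for $t>0$, all $\lambda_{\bf i}\geq 0$ — this is Lusztig's positivity for the action of the $f_i$ on the canonical basis of $V(\omega_\alpha)$, and it can alternatively be extracted from the previous paragraph, since $\Delta_{w\omega_\alpha,\omega_\alpha}(N_t(X))\geq 0$ for \emph{every} continuous $X$ while the weights $I_{\bf i}(t)$ can be made to concentrate on any single ordered word ${\bf i}$ by a suitable choice of $X$ — and at least one $\lambda_{\bf i}$ is $>0$, because $V(\omega_\alpha)=U(\nfrak)v_{\omega_\alpha}$ forces the space $V(\omega_\alpha)_{w\omega_\alpha}$, which is spanned by these $f_{i_1}\cdots f_{i_d}v_{\omega_\alpha}$, to be nonzero. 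Therefore $\Delta_{w\omega_\alpha,\omega_\alpha}(N_t(X))>0$ for all $t>0$, giving $N_t(X)\in N^{w_0}_{>0}$, i.e. $B_t(X)\in N^{w_0}_{>0}A$.

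The discretisation and the weight bookkeeping are soft; the only point with genuine content is the sign of the coefficients $\lambda_{\bf i}$ — that every nonzero length-$d$ lowering monomial carries $v_{\omega_\alpha}$ to a \emph{positive} multiple of the extremal vector $\bar w v_{\omega_\alpha}$ — and this is exactly where total positivity enters, so I expect it to be the main obstacle to a fully self-contained write-up. The concentration argument sketched above does make the whole thing self-contained, but verifying it rigorously (choosing $X$ so that one time-ordered iterated integral dominates all the others sharing the same index multiset) requires a little care. If one is content to cite it, \cite{bib:BBO} lemma~3.4 already supplies the total non-negativity, after which the second paragraph provides the refinement to the top Bruhat cell.
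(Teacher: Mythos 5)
Your proof is correct, and its decisive step is the same as the paper's: reduce to $N_t(X)$, apply the minor criterion of Theorem \ref{thm:total_positivity_criterion}, expand $\Delta_{w\omega_\alpha,\omega_\alpha}(N_t(X))=\langle N_t(X)v_{\omega_\alpha},\bar w v_{\omega_\alpha}\rangle$ via the series \eqref{lbl:process_N_explicit}, observe that every surviving term is non-negative, and produce one strictly positive term from the fact that the extremal weight space $V(\omega_\alpha)_{w\omega_\alpha}$ is one-dimensional and spanned by lowering monomials. Your weight bookkeeping (fixed multiset of indices, hence fixed length $d$) is a correct refinement of what the paper leaves implicit. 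Where you genuinely diverge is in how you justify $\lambda_{\bf i}\geq 0$: the paper simply cites Lemma 7.4 of \cite{bib:BZ01} for $\langle f_{i_1}\cdots f_{i_k}v_{\omega_\alpha},\bar w v_{\omega_\alpha}\rangle\geq 0$, whereas you offer a self-contained route — first prove $N_t(X)\in N_{\geq 0}$ for \emph{all} continuous $X$ by discretising the flow into totally non-negative increments and using closedness of the semigroup $N_{\geq 0}$, then force each individual coefficient $\lambda_{\bf i}$ to be non-negative by choosing $X$ so that the iterated integral $I_{\bf i}(t)$ of one ordered word dominates all others with the same index multiset. That concentration argument is sound (disjoint time windows in which successive $\alpha_{i_j}(X)$ are made very negative do isolate a single ordering, since the multiset is pinned down by the weight and only the ordering varies), and it buys independence from the canonical-basis positivity results of \cite{bib:BZ01}, at the cost of the extra semigroup preliminaries; the paper's citation is shorter but imports a nontrivial external input. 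Either way the theorem follows.
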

\begin{proof}
For $t=0$, $B_0(X) = e^{X_0} \in A$ which is totally non-negative.

For $t>0$, clearly we need to prove that $N_t(X) \in N^{w_0}_{>0}$ or equivalently, thanks to theorem \ref{thm:total_positivity_criterion} that all minors $\Delta_{ w \omega_i, \omega_i }\left( N_t(X) \right), 1 \leq i \leq n, w \in W$ are positive:
\begin{align*}
  & \Delta_{ w \omega_i, \omega_i }\left( N_t(X) \right)\\
= & \langle N_t(X) v_{\omega_i}, \bar{w} v_{\omega_i} \rangle\\
= & \sum_{k \geq 0} \sum_{ i_1, \dots, i_k } \int_{ t \geq t_k \geq \dots \geq t_1 \geq 0} e^{ -\alpha_{i_1}(X_{t_1}) \dots -\alpha_{i_k}(X_{t_k}) } dt_1 \dots dt_k \\
  & \ \ \langle f_{i_1} \cdot f_{i_2} \dots f_{i_k} v_{\omega_i}, \bar{w} v_{\omega_i} \rangle
\end{align*}
Because of lemma 7.4 in \cite{bib:BZ01}, we have that:
$$\forall k \in \N, \forall w \in W, \langle f_{i_1} \cdot f_{i_2} \dots f_{i_k} v_{\omega_i}, \bar{w} v_{\omega_i} \rangle \geq 0$$
and therefore, we have a sum of non-negative terms. In order to see that $\Delta_{ w \omega_i, \omega_i }\left( N_t(X) \right)$ is strictly positive, only one of them needs to be non-zero.

As $\bar{w} v_{\omega_i}$ generates the one dimensional weight space $V(\omega_i)_{w \omega_i}$, there is some sequence $i_1, \dots, i_k$ such that $\alpha_{i_1} + \dots + \alpha_{i_k} = \omega_i - w \omega_i$ and
$\bar{w} v_{\omega_i}$ is proportional to $f_{i_1} \cdot f_{i_2} \dots f_{i_k} v_{\omega_i}$. Hence a non-zero scalar product.
\end{proof}

\subsection{Path transforms}

The following path transform will play a fundamental role in the sequel.
\begin{definition}
 When it exists, for $g \in G$ and $X$ a continuous path in $\afrak$, define:
$$ T_g X(t) := \log \left[ g B_t(X)\right]_0$$
The previous expression makes sense when $g B_t(X)$ has a Gauss decomposition and $\left[ g B_t(X)\right]_0 \in A$, in order to be able to consider its logarithm.
\end{definition}

This path transform has the property:
\begin{thm}[\cite{bib:BBO2}, proposition 6.4]
Let $X$ be a continuous path in $\afrak$ and $g \in G$. Assume that $g B_t(X)$ has a Gauss decomposition on an open time interval $J$. Then $[ g B_t(X) ]_{-0}$ solves for $t \in J$:
$$d [ g B_t(X) ]_{-0} = [g B_t(X)]_{-0} \left( \sum_{\alpha} f_\alpha dt + d\left( T_g X \right)_t \right)$$
\end{thm}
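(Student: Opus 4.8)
The plan is to differentiate the Gauss decomposition of $gB_t(X)$ directly. Write $B_t = B_t(X)$, and on the interval $J$ set $M_t := [gB_t]_{-0}\in B$ and $V_t := [gB_t]_+\in U$, so that $gB_t = M_tV_t$. Since the Gauss decomposition map $NHU\to N\times H\times U$ is algebraic, $M_t$ and $V_t$ inherit the regularity of $gB_t$ — smooth, resp.\ semimartingales, when $X$ is — so the Leibniz rule applies, in the stochastic case as the Stratonovich product rule with no It\^o correction. Left-multiplying \eqref{lbl:process_B_ode} by $g$ gives $d(gB_t) = M_tV_t\big(\sum_{\alpha}f_\alpha\,dt + dX_t\big)$, while $d(gB_t) = (dM_t)V_t + M_t\,dV_t$; equating the two and then left-multiplying by $M_t^{-1}$ and right-multiplying by $V_t^{-1}$, I would obtain
\begin{align*}
M_t^{-1}\,dM_t + (dV_t)V_t^{-1} \;=\; \Ad(V_t)\Big(\sum_{\alpha\in\Delta}f_\alpha\,dt + dX_t\Big).
\end{align*}
On the left the first term is $\bfrak$-valued (as $M_t\in B$) and the second is $\ufrak$-valued (as $V_t\in U$), so projecting onto $\bfrak$ along $\ufrak$ in $\gfrak = \bfrak\oplus\ufrak$ isolates $M_t^{-1}\,dM_t$ as the $\bfrak$-part of the right-hand side.

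It then remains to identify that $\bfrak$-valued form with $\sum_\alpha f_\alpha\,dt + d(T_gX)_t$, which I would do componentwise along $\bfrak = \nfrak\oplus\hfrak$. For the $\hfrak$-component, factor $M_t = N_t'A_t'$ with $N_t'\in N$ and $A_t' = [gB_t]_0\in H$, so that $M_t^{-1}\,dM_t = \Ad(A_t'^{-1})\big(N_t'^{-1}\,dN_t'\big) + A_t'^{-1}\,dA_t'$, the first summand lying in $\nfrak$ (since $\Ad(H)$ preserves $\nfrak$) and the second in $\hfrak$; as $H$ is abelian, $A_t'^{-1}\,dA_t' = d\log[gB_t]_0 = d(T_gX)_t$ by the definition of $T_g$. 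For the $\nfrak$-component I would use two properties of $\Ad(U)$: first, $\bfrak^+ = \hfrak\oplus\ufrak$ is an $\Ad(U)$-stable subalgebra, so $\Ad(V_t)(dX_t)\in\bfrak^+$ contributes nothing to $\nfrak$; second, for each simple $\alpha$ and each $v\in U$ one has $\Ad(v)f_\alpha\in f_\alpha + \bfrak^+$. Granting these, the $\nfrak$-part of the right-hand side is exactly $\sum_\alpha f_\alpha\,dt$, and combining the two components gives $M_t^{-1}\,dM_t = \sum_\alpha f_\alpha\,dt + d(T_gX)_t$, i.e.\ the asserted equation.

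For the one Lie-theoretic input, $\Ad(v)f_\alpha\in f_\alpha + \bfrak^+$, I would reduce to $v = \exp(se_\beta)$ with $\beta\in\Phi^+$ (these generate $U$, and the property passes to products because $\bfrak^+$ is $\Ad(U)$-stable) and expand $\Ad(v)f_\alpha = f_\alpha + s[e_\beta,f_\alpha] + \tfrac{s^2}{2}[e_\beta,[e_\beta,f_\alpha]] + \cdots$. Here $[e_\beta,f_\alpha]\in\gfrak_{\beta-\alpha}$, and because $\alpha$ is simple the weight $\alpha-\beta$ is never a positive root, so $\beta-\alpha$ is never a negative root; hence $[e_\beta,f_\alpha]$ lies in $\hfrak$ (the case $\beta=\alpha$, giving $h_\alpha$) or in $\ufrak$, in either case in $\bfrak^+$, while all higher brackets stay in $\ufrak$ since $[\gfrak_\beta,\bfrak^+]\subset\ufrak$. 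This root bookkeeping — in essence the statement that $\Ad(U)$ preserves the affine slice $\sum_\alpha f_\alpha + \bfrak^+$, in the spirit of Kostant — is the only real content; everything else is the product rule and the definitions of $[\,\cdot\,]_{-0}$ and $T_g$. The secondary point requiring care is that, when $X$ is merely a semimartingale, all the manipulations above remain valid because Stratonovich integration obeys the ordinary Leibniz and chain rules.
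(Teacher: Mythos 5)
Your argument is correct and complete. Note that the paper does not prove this statement itself --- it quotes it from \cite{bib:BBO2} (Proposition 6.4) --- so there is no in-paper proof to compare against; your derivation stands as an independent, self-contained verification. The structure is sound: differentiating $gB_t = M_tV_t$ and rearranging gives $M_t^{-1}dM_t + (dV_t)V_t^{-1} = \Ad(V_t)\bigl(\sum_\alpha f_\alpha\,dt + dX_t\bigr)$, and since the two left-hand terms lie in the complementary subspaces $\bfrak$ and $\ufrak$, the projection argument is legitimate. Your handling of the $\hfrak$-component is the right move: rather than computing the $\hfrak$-part of the right-hand side (which is \emph{not} simply $dX_t$, since $\Ad(V_t)f_\alpha$ generally has a nonzero $\hfrak$-component), you read it off the left side via $M_t = N_t'A_t'$ and the definition $T_gX = \log[gB_t]_0$, so the only genuine content is the $\nfrak$-component, which reduces to the Kostant-slice fact $\Ad(U)\bigl(\sum_\alpha f_\alpha + \bfrak^+\bigr) \subset \sum_\alpha f_\alpha + \bfrak^+$; your root-theoretic verification of that (simplicity of $\alpha$ forces $\beta-\alpha \notin \Phi^-$ and $n\beta - \alpha \notin \Phi^- \cup \{0\}$ for $n \geq 2$) is correct. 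The one caveat, which you already flag, is the regularity of $X$: for $X$ merely continuous the displayed ODE is only formal (as the paper itself acknowledges for equation \eqref{lbl:process_B_ode}), so the statement should be read for smooth or semimartingale drivers, where your use of the Leibniz/Stratonovich product rule and the regularity of the Gauss factors on $J$ is justified.
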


There are certain sets $D \subset G$ such that for $g \in D$, $g B_t(X), t \geq 0$ has always a Gauss decomposition. The following will play an important role:
$$ D := N \cdot A \cdot U_{\geq 0} $$
\begin{proposition}
For $g \in D$, we have a well-defined path transform:
$$ T_g: \Cc\left( \R^+, \afrak \right) \rightarrow \Cc\left( \R^+, \afrak \right)$$
such that for $X \in \Cc\left( \R^+, \afrak \right)$, $T_g X$ is the unique path in $\afrak$ such that:
$$ [g B_t(X)]_{-0} = [g]_- B_t(T_g X) $$
\end{proposition}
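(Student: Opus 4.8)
**Proof plan for the statement: for $g \in D = N \cdot A \cdot U_{\geq 0}$, the transform $T_g$ is well-defined on $\Cc(\R^+, \afrak)$ and satisfies $[g B_t(X)]_{-0} = [g]_- B_t(T_g X)$.**

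The plan is to first reduce the problem to the case where $g$ has no lower-unipotent part. Write $g = [g]_- \cdot g'$ with $g' = [g]_0 [g]_+ \in A \cdot U_{\geq 0}$. Since left multiplication by $[g]_- \in N$ does not affect whether a Gauss decomposition exists and commutes with the operation $[\,\cdot\,]_{0+}$ in the sense of \eqref{lbl:gauss1} — more precisely $[g B_t(X)]_{-0} = [g]_- [g' B_t(X)]_{-0}$ and $[g B_t(X)]_0 = [g' B_t(X)]_0$ — it suffices to prove that $g' B_t(X)$ always has a Gauss decomposition with torus part in $A$, and that the resulting path $T_{g'} X = T_g X$ satisfies $[g' B_t(X)]_{-0} = B_t(T_g X)$. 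So from now on I take $g = e^{a} u$ with $a \in \afrak$ and $u \in U_{\geq 0}$.

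Next I would establish existence of the Gauss decomposition for all $t \geq 0$. Here the key point is that $g B_t(X) = e^a u B_t(X)$, and by Theorem \ref{thm:flow_B_total_positivity} we have $B_t(X) \in N^{w_0}_{>0} A$ for $t > 0$ (and $B_0(X) = e^{X_0} \in A$). Thus $g B_t(X)$ lies in $(A \cdot U_{\geq 0}) \cdot (N_{\geq 0} \cdot A)$. Writing $B_t(X) = N_t(X) A_t(X)$, the product $e^a u N_t(X) A_t(X)$ has a Gauss decomposition precisely when $u N_t(X)$ does, i.e. when $\Delta^{\omega_i}(u N_t(X)) \neq 0$ for all $i$; but $u N_t(X) \in U_{\geq 0} N^{w_0}_{>0}$ is a product of totally nonnegative elements, hence totally nonnegative, and $\Delta^{\omega_i}$ of a totally nonnegative element in the big cell is strictly positive (this is the positivity of principal minors from the total positivity criterion, Theorem \ref{thm:total_positivity_criterion}, together with $\Delta^{\omega_i}(\cdot) = \langle \cdot\, v_{\omega_i}, v_{\omega_i}\rangle$, which is a strictly positive sum as in the proof of Theorem \ref{thm:flow_B_total_positivity}). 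This gives $[g B_t(X)]_0 \in A$, so $\log[g B_t(X)]_0$ is defined and $t \mapsto T_g X(t)$ is a continuous $\afrak$-valued path starting at $\log[g B_0(X)]_0 = \log[e^a e^{X_0}]_0 = a + X_0$, so $T_g X \in \Cc(\R^+, \afrak)$; well-definedness and continuity in $X$ follow from the continuity of all the operations involved.

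The final step is to identify $[g B_t(X)]_{-0}$ with $B_t(T_g X)$. By the previous theorem cited from \cite{bib:BBO2}, on any open interval where $g B_t(X)$ has a Gauss decomposition — which we have just shown is all of $(0, \infty)$ — the path $Y_t := [g B_t(X)]_{-0}$ solves the left-invariant equation
$$ d Y_t = Y_t \left( \sum_{\alpha \in \Delta} f_\alpha \, dt + d(T_g X)_t \right). $$
Since $B_t(T_g X)$ is by definition \eqref{lbl:process_B_ode} the unique solution of this same equation, it remains only to check the initial condition: $Y_0 = [e^a u e^{X_0}]_{-0} = e^a [u e^{X_0}]_{-0}$; using \eqref{lbl:gauss2}, $[u e^{X_0}]_{-0} = [u e^{X_0}]_- e^{X_0} = e^{X_0}$ since the lower-unipotent part of $u e^{X_0} \in U \cdot A$ is trivial, so $Y_0 = e^{a + X_0} = e^{(T_g X)_0} = B_0(T_g X)$. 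By uniqueness of solutions to the ODE (interpreting it in any faithful finite-dimensional representation as in the discussion after \eqref{lbl:process_B_explicit}), $Y_t = B_t(T_g X)$ for all $t$. Unwinding the reduction, $[g B_t(X)]_{-0} = [g]_- Y_t = [g]_- B_t(T_g X)$, which is the claim; uniqueness of $T_g X$ with this property is immediate since $[g]_- B_t(T_g X)$ determines its own $[\,\cdot\,]_0$-part, namely $e^{(T_g X)_t}$.

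The main obstacle I anticipate is the existence-of-Gauss-decomposition step for \emph{all} $t$, not merely locally: one must be careful that multiplying the totally nonnegative $B_t(X)$ on the left by $u \in U_{\geq 0}$ (which is totally nonnegative) but then by $e^a$ (which is in $A$, harmless) genuinely keeps us in the big cell. The cleanest route is the minor computation above — $\Delta^{\omega_i}(u N_t(X)) > 0$ because $u N_t(X)$ is a product of totally nonnegative elements and the relevant matrix coefficient is a nonnegative sum with at least one strictly positive term (the weight-$\omega_i$ term survives) — rather than trying to track the decomposition dynamically, which would require ruling out blow-up by hand.
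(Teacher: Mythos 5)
Your proof is correct and follows essentially the same route as the paper: total nonnegativity of $B_t(X)$ plus the semigroup property of $A\, U_{\geq 0}$ gives $D\, B_t(X) \subset N G_{\geq 0}$, hence Gauss-decomposability with torus part in $A$, and the cited theorem from \cite{bib:BBO2} identifies the driving path of $[g B_t(X)]_{-0}$. The only difference is cosmetic: where you recompute positivity of the principal minors $\Delta^{\omega_i}(u N_t(X))$ by hand, the paper simply invokes Lusztig's result that every element of $G_{\geq 0}$ admits a Gauss decomposition $nau$ with $a \in A$ (Theorem \ref{thm:totally_positive_gauss_decomposition}); your extra care with the initial condition and ODE uniqueness fills in a step the paper leaves implicit.
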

\begin{proof}
We only need to prove that the path transform is well defined. As for all $t \geq 0$, $B_t(X) \in G_{\geq 0}$, we have that $( A U_{\geq 0}) B_t(X) \subset G_{\geq 0}$, since the totally non-negative matrices form a semigroup. Hence $D B_t(X) \subset N G_{\geq 0}$, which is a set whose elements admit a Gauss decomposition (see theorem \ref{thm:totally_positive_gauss_decomposition}).

In order to prove that the equation driving $[g B_t(X)]_{-0}$ is of the required form, use the previous theorem.
\end{proof}

\begin{properties}
\label{lbl:path_transform_properties}
  Let $X$ be a continuous path. Then:
\begin{itemize}
 \item[(i)]$\forall g_1, g_2 \in G$ such that $g_1 \in D, g_1 g_2 \in D$, we have:
$$ T_{g_1 g_2} = T_{g_1 [g_2]_{-}} \circ T_{g_2} $$ 
In particular, $\forall u_1, u_2 \in U_{\geq 0}, T_{u_1 u_2} = T_{u_1} \circ T_{u_2}$.
 \item[(ii)]  $\forall g \in D, \forall n \in N, T_{ng} = T_g$
 \item[(iii)] $\forall g \in D, \forall a \in A, T_{a g} X = T_g X + \log a$
 \item[(iv)]  $\forall g \in D, x \in \afrak, T_g( X + x) = T_{ g e^x } (X)$
 \item[(v)]   If $\alpha \in \Delta$ and $g=x_\alpha(\xi), \xi>0$, then:
$$ \left(T_g X \right)_t = X_t + \log\left( 1 + \xi \int_0^t e^{-\alpha( X_s)} ds \right) \alpha^{\vee}$$
\end{itemize}
\end{properties}
\begin{proof}
The proof uses the standard properties of the Gauss decomposition given in equations \ref{lbl:gauss1} and \ref{lbl:gauss2}.

\begin{itemize}
 \item[(i)] \begin{eqnarray*}
& & [ g_1 g_2 B_t(X) ]_{-0} = [ g_1 [g_2 B_t(X)]_{-0} ]_{-0}\\
\Rightarrow & & [ g_1 g_2]_- B_t(T_{g_1 g_2} X) = [  g_1 [g_2]_- B_t(T_{g_2} X)]_{-0} = [  g_1 [g_2]_- ]_- B_t( T_{ g_1 [g_2]_{-} } \circ T_{g_2} X)\\
\Rightarrow & & B_t(T_{g_1 g_2} X) = B_t( T_{ g_1 [g_2]_{-} } \circ T_{g_2} X)
\end{eqnarray*}

 \item[(ii)] $[n g B_t(X)]_{-0} = [n g]_{-} B(T_{gn}X)$ by definition. And on the other hand, is it also equal to $n [g B_t(X)]_{-0} = n [g]_- B_t(T_gX)$

 \item[(iii)] $[agB_t(X)]_{-0} = [ag]_{-} B(T_{a g}X) = a [g]_{-} a^{-1} B(T_{a g}X) $ by definition. And on the other hand, is it also equal to $a[gB_t(X)g]_{-0} = a [g]_- B_t(T_gX)$. 
Then, we have $a^{-1} B_t(T_{ag}(X) = B_t(T_g X)$.

 \item[(iv)] One can check that $B_t(X+x) = \exp(x) B_t(X)$

 \item[(v)] Direct computation, using the embedding from $SL_2$ into the closed subgroup of $G$ whose Lie algebra is generated by the $\mathfrak{sl}_2$-triplets $(e_\alpha, f_\alpha, h_\alpha)$. One can also use the lemma in the next subsection.
\end{itemize}
\end{proof}

\subsection{Extension of the path transform}
Now, looking at property $(v)$, it is natural to expect the path transform $T_{x_\alpha(\xi)}$ to be extended to negative values of $\xi$, although this will depend on the path taken as input. Let us examine first when a Gauss decomposition exists for $x_\alpha(\xi) B_t(X)$, or equivalently $x_\alpha(\xi) N_t(X)$.

\begin{lemma}
For different $\alpha$ and $\beta$ in $\Delta$:
$$ \forall t \geq 0, \Delta^{\omega_\beta}\left( x_\alpha(\xi) N_t(X) \right) = 1$$
$$ \forall t \geq 0, \Delta^{\omega_\alpha}\left( x_\alpha(\xi) N_t(X) \right) = 1 + \xi \int_0^t e^{-\alpha(X_s)} ds$$
\end{lemma}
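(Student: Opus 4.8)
The plan is to evaluate both minors as matrix coefficients in the fundamental representations, exploiting that $N_t(X)$ is lower unipotent. Recall from the lemma above that $\Delta^{\omega_\gamma}(g) = \langle g v_{\omega_\gamma}, v_{\omega_\gamma}\rangle$ for a normalised highest weight vector $v_{\omega_\gamma} \in V(\omega_\gamma)$, and that the invariant form is contravariant, i.e. $\langle g v, w\rangle = \langle v, g^T w\rangle$ with $x_\alpha(\xi)^T = y_\alpha(\xi) = e^{\xi f_\alpha}$. First I would push the factor $x_\alpha(\xi)$ onto the second slot, obtaining for every $\gamma \in \Delta$
$$ \Delta^{\omega_\gamma}\bigl( x_\alpha(\xi) N_t(X) \bigr) = \bigl\langle N_t(X) v_{\omega_\gamma},\, e^{\xi f_\alpha} v_{\omega_\gamma} \bigr\rangle . $$
Since $\omega_\gamma(\alpha^\vee) = \delta_{\alpha,\gamma}$, the $\alpha$-string through $v_{\omega_\gamma}$ (for the subalgebra $\phi_\alpha(\mathfrak{sl}_2)$) has length $\omega_\gamma(\alpha^\vee)+1$; hence $f_\alpha v_{\omega_\gamma} = 0$ when $\gamma \neq \alpha$, while $f_\alpha^2 v_{\omega_\alpha} = 0$, so $e^{\xi f_\alpha} v_{\omega_\alpha} = v_{\omega_\alpha} + \xi f_\alpha v_{\omega_\alpha}$.

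For $\gamma = \beta \neq \alpha$ the right-hand side becomes $\langle N_t(X) v_{\omega_\beta}, v_{\omega_\beta}\rangle$. Because $N_t(X) \in N$ is lower unipotent, $N_t(X) v_{\omega_\beta} - v_{\omega_\beta}$ lies in the span of weight spaces of weights strictly below $\omega_\beta$, which are orthogonal to $v_{\omega_\beta}$; equivalently, in the series \eqref{lbl:process_N_explicit} only the $k=0$ term contributes to $\langle N_t(X) v_{\omega_\beta}, v_{\omega_\beta}\rangle$, giving $\|v_{\omega_\beta}\|^2 = 1$. This proves the first identity.

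For $\gamma = \alpha$ one gets $\Delta^{\omega_\alpha}(x_\alpha(\xi) N_t(X)) = \langle N_t(X) v_{\omega_\alpha}, v_{\omega_\alpha}\rangle + \xi\,\psi_\alpha(t)$ with $\psi_\alpha(t) := \langle N_t(X) v_{\omega_\alpha}, f_\alpha v_{\omega_\alpha}\rangle$, the first summand being $1$ by the same weight argument. To compute $\psi_\alpha(t)$ I would again expand $N_t(X)$ via \eqref{lbl:process_N_explicit}: a term $\langle f_{i_1}\cdots f_{i_k} v_{\omega_\alpha}, f_\alpha v_{\omega_\alpha}\rangle$ can be nonzero only if $\alpha_{i_1}+\dots+\alpha_{i_k} = \alpha$, which — $\alpha$ being simple — forces $k=1$ and $i_1 = \alpha$; the surviving term is $\bigl(\int_0^t e^{-\alpha(X_s)}\,ds\bigr)\,\|f_\alpha v_{\omega_\alpha}\|^2$. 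Finally $\|f_\alpha v_{\omega_\alpha}\|^2 = \langle v_{\omega_\alpha}, e_\alpha f_\alpha v_{\omega_\alpha}\rangle = \langle v_{\omega_\alpha}, h_\alpha v_{\omega_\alpha}\rangle = \omega_\alpha(\alpha^\vee) = 1$, using $e_\alpha^T = f_\alpha$, $e_\alpha v_{\omega_\alpha} = 0$ and $\|v_{\omega_\alpha}\| = 1$. Hence $\psi_\alpha(t) = \int_0^t e^{-\alpha(X_s)}\,ds$ and the second identity follows.

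The computation is essentially mechanical; the only delicate points are the weight bookkeeping that keeps the relevant $\mathfrak{sl}_2$-strings of length at most two and the normalisation constant $\|f_\alpha v_{\omega_\alpha}\|^2 = \omega_\alpha(\alpha^\vee) = 1$. A shorter alternative route uses Property \ref{lbl:path_transform_properties}(v): for $\xi>0$, since $B_t(X) = N_t(X)e^{X_t}$ and $[ga]_0 = [g]_0 a$ for $a \in H$, it gives $[x_\alpha(\xi) N_t(X)]_0 = \bigl(1 + \xi\int_0^t e^{-\alpha(X_s)}\,ds\bigr)^{\alpha^\vee}$, whence $\Delta^{\omega_\gamma}(x_\alpha(\xi) N_t(X)) = \bigl(1 + \xi\int_0^t e^{-\alpha(X_s)}\,ds\bigr)^{\omega_\gamma(\alpha^\vee)} = \bigl(1 + \xi\int_0^t e^{-\alpha(X_s)}\,ds\bigr)^{\delta_{\alpha,\gamma}}$; as both sides are polynomial in $\xi$, the identities extend from $\xi>0$ to all $\xi \in \R$.
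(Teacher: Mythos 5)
Your main argument is correct and is essentially the paper's proof: both evaluate the minors as matrix coefficients $\langle \cdot\, v_{\omega_\gamma}, v_{\omega_\gamma}\rangle$ in the fundamental representation, use weight-space orthogonality to kill all but the $k=0$ and $k=1$ terms of the series for $N_t(X)$, and finish with $e_\alpha f_\alpha v_{\omega_\alpha} = h_\alpha v_{\omega_\alpha} = v_{\omega_\alpha}$ (the paper expands $e^{\xi e_\alpha}$ on the left where you transpose it onto the right slot, and it cites Fomin--Zelevinsky for the first identity where you argue it directly — both immaterial differences). One caution on your ``shorter alternative route'': the paper's proof of Property \ref{lbl:path_transform_properties}(v) lists this very lemma as one of its two possible justifications, so you may only invoke (v) here if you take its independent $SL_2$-embedding proof; otherwise the argument is circular.
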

\begin{proof}
The first identity is a consequence of proposition 2.2 in \cite{bib:FZ99}. For the second, we start by using equation \eqref{lbl:process_N_explicit} and work with the highest weight representation $V(\omega_\alpha)$. We have:
\begin{align*}
  & \Delta^{\omega_\alpha}\left( x_\alpha(\xi) N_t(X) \right) \\
= & \langle \exp(\xi e_\alpha) N_t(X) v_{\omega_\alpha}, v_{\omega_\alpha} \rangle\\
= & \sum_{k \geq 0} \sum_{i_1, i_2, \dots, i_k} \int_{t\geq t_1 \geq \dots \geq t_k \geq 0} e^{ -\alpha_{i_1}(X_{t_1}) - \dots \alpha_{i_k}(X_{t_k})} dt_1 \dots dt_k \\
  & \ \ \langle \exp( \xi e_\alpha) f_{i_1} \dots f_{i_k}  v_{\omega_\alpha}, v_{\omega_\alpha} \rangle
\end{align*}
Hence, as we will write the expansion $e^{\xi e_\alpha} = \sum_{n \in \N} \frac{\xi^n}{n!} e_\alpha^n$, we need to consider vectors of the form:
$$e^n_\alpha f_{i_1} \dots f_{i_k} v_{\omega_\alpha}$$
Now notice that:
$$ \forall n \in \N, \forall k \in \N, e^n_\alpha f_{i_1} \dots f_{i_k} v_{\omega_\alpha} \in V(\omega_\alpha)_\mu$$
where $V(\omega_\alpha)_\mu$ is the weight space in $V(\omega_\alpha)$ corresponding to the weight
$$ \mu = \omega_\alpha + n \alpha - \sum_{j=1}^k \alpha_{i_j}$$
Weight spaces in $V(\omega_\alpha)$ corresponding to different weights are orthogonal under the invariant scalar product $\langle., . \rangle$. Therefore, if $k \neq n$ or there is a $j$ such that $\alpha_{i_j} \neq \alpha$, we have:
$$ \langle e^n_\alpha f_{i_1} \dots f_{i_k} v_{\omega_\alpha}, v_{\omega_\alpha} \rangle = 0$$
Moreover, in the representation  $V(\omega_\alpha)$, we have:
$$ \forall n\geq 2, f_\alpha^n v_{\omega_\alpha} = 0$$
Therefore, most terms are zero:
\begin{align*}
  & \Delta^{\omega_\alpha}\left( x_\alpha(\xi) N_t(X) \right) \\
= & \langle v_{\omega_\alpha}, v_{\omega_\alpha} \rangle + \xi \langle e_\alpha f_\alpha v_{\omega_\alpha}, v_{\omega_\alpha} \rangle \int_0^t e^{-\alpha(X_s)}ds\\
= & 1 + \xi \int_0^t e^{-\alpha(X_s)}ds
\end{align*}
The last equality is due to the fact that:
\begin{align*}
  & e_\alpha f_\alpha v_{\omega_\alpha}\\
= & [e_\alpha, f_\alpha] v_{\omega_\alpha} + f_\alpha e_\alpha v_{\omega_\alpha}\\
= & h_\alpha v_{\omega_\alpha}\\
= & v_{\omega_\alpha}
\end{align*}
\end{proof}

This lemma encourages us to consider paths only up to a certain horizon $T$ and when applying $T_{x_\alpha(\xi)}$ to $X \in \Cc\left([0; T], \afrak\right)$, one can only take $\xi \in (-\frac{1}{\int_0^T e^{-\alpha(X)}}; +\infty)$. This can be summarized in the following proposition.
\begin{proposition}
 For every $\alpha \in \Delta$ and $c \in \R$, there is a path transform:
$$ e^c_\alpha: \Cc\left([0; T], \afrak\right) \longrightarrow \Cc\left([0; T], \afrak\right)$$
such that for $X \in \Cc\left([0; T], \afrak\right)$:
$$ e^c_\alpha \cdot X = T_{x_\alpha\left(\frac{e^c-1}{\int_0^T e^{-\alpha(X)}}\right) }\left( X \right) $$
\end{proposition}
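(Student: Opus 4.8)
Fix $\alpha\in\Delta$, $c\in\R$ and $X\in\Cc([0,T],\afrak)$. Write $\phi(t):=\int_0^t e^{-\alpha(X_s)}\,ds$, so that $\phi$ is continuous and non-decreasing with $\phi(0)=0$ and $\phi(T)=\int_0^T e^{-\alpha(X)}>0$ (the integrand being continuous and strictly positive), and set $\xi:=\frac{e^c-1}{\phi(T)}$. By the Definition of the path transform, everything reduces to showing that $x_\alpha(\xi)B_t(X)$ has a Gauss decomposition with $[x_\alpha(\xi)B_t(X)]_0\in A$ for every $t\in[0,T]$; we then put $e^c_\alpha\cdot X:=T_{x_\alpha(\xi)}X$. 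Note that the earlier well-definedness statement does not apply directly, since $x_\alpha(\xi)\notin D=N\cdot A\cdot U_{\geq0}$ as soon as $c<0$; instead we argue with generalized minors.

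First I would compute the principal minors of $x_\alpha(\xi)B_t(X)$. Since $B_t(X)=N_t(X)e^{X_t}$, combining the matrix-coefficient description $\Delta^{\omega_\beta}(g\,e^{X_t})=e^{\omega_\beta(X_t)}\,\Delta^{\omega_\beta}(g)$ with the Lemma preceding the statement (which computes $\Delta^{\omega_\beta}(x_\alpha(\xi)N_t(X))$) yields $\Delta^{\omega_\beta}(x_\alpha(\xi)B_t(X))=e^{\omega_\beta(X_t)}>0$ for $\beta\neq\alpha$ and
\[
\Delta^{\omega_\alpha}\bigl(x_\alpha(\xi)B_t(X)\bigr)=\bigl(1+\xi\,\phi(t)\bigr)\,e^{\omega_\alpha(X_t)}.
\]
The map $t\mapsto 1+\xi\phi(t)$ is monotone and runs from the value $1$ at $t=0$ to the value $1+\xi\phi(T)=1+(e^c-1)=e^c$ at $t=T$; hence it stays inside $[\min(1,e^c),\max(1,e^c)]\subset(0,\infty)$ for all $t\in[0,T]$, regardless of the sign of $c$. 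Consequently $\Delta^{\omega_\beta}(x_\alpha(\xi)B_t(X))>0$ for every $\beta\in\Delta$ and every $t\in[0,T]$, so by the Gauss-decomposition criterion (Corollary 2.5 of \cite{bib:FZ99}) the decomposition exists throughout $[0,T]$. Moreover $[x_\alpha(\xi)B_t(X)]_0^{\omega_\beta}=\Delta^{\omega_\beta}(x_\alpha(\xi)B_t(X))>0$ for each fundamental weight $\omega_\beta$, and since these form a $\Z$-basis of $P$ this forces $[x_\alpha(\xi)B_t(X)]_0^\gamma>0$ for all $\gamma\in P$, i.e. $[x_\alpha(\xi)B_t(X)]_0\in H_{>0}=A=\exp(\afrak)$. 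Therefore $T_{x_\alpha(\xi)}X(t)=\log[x_\alpha(\xi)B_t(X)]_0$ is a well-defined element of $\afrak$.

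For the continuity in $t$: where it is defined, the torus factor of the Gauss decomposition is a rational, hence continuous, function of the matrix entries of $x_\alpha(\xi)B_t(X)$ in any faithful representation, and $t\mapsto B_t(X)$ is continuous; so $e^c_\alpha\cdot X\in\Cc([0,T],\afrak)$, with $(e^c_\alpha\cdot X)_0=X_0$. Finally, repeating the $SL_2$ reduction through the embedding $\phi_\alpha$ exactly as in Properties~\ref{lbl:path_transform_properties}(v) — whose only use of positivity was $1+\xi\phi(t)>0$ — gives the explicit formula $(e^c_\alpha\cdot X)_t=X_t+\log\bigl(1+\xi\,\phi(t)\bigr)\alpha^\vee$; in particular $(e^c_\alpha\cdot X)_T=X_T+c\,\alpha^\vee$.

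\textbf{Main obstacle.} The only genuine point is that, unlike the case $\xi>0$ already handled, $x_\alpha(\xi)$ is no longer totally non-negative when $c<0$, so positivity of the flow $B_\cdot(X)$ is not sufficient. The crux is the elementary sign analysis of $1+\xi\phi(t)$ over the whole interval $[0,T]$: the normalisation $\xi=(e^c-1)/\phi(T)$ is chosen precisely so that this quantity has endpoint values $1$ and $e^c$, both positive, which keeps $x_\alpha(\xi)B_t(X)$ Gauss-decomposable for every $t$ and every real $c$.
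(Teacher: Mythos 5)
Your proof is correct and follows essentially the same route as the paper: the paper's argument is precisely the preceding lemma computing $\Delta^{\omega_\beta}(x_\alpha(\xi)N_t(X))$, followed by the observation that the choice $\xi=(e^c-1)/\int_0^T e^{-\alpha(X)}$ keeps $1+\xi\int_0^t e^{-\alpha(X)}$ strictly positive on all of $[0,T]$, so the Gauss decomposition exists for every $t$. You merely spell out the details the paper leaves implicit (the reduction from $N_t$ to $B_t$, membership of the torus part in $A$, continuity, and the $SL_2$ computation of the explicit formula), all of which are fine.
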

This path transform is in fact the corner stone of the geometric path model we will now present. For instance, as we will see, we have:
$$ e^{c+c'}_\alpha = e^{c}_\alpha e^{c'}_\alpha$$
It is in fact the geometric lifting of the Littelmann operators.

\section{Geometric crystals: The path model}
\label{section:path_model}

In this section, we define a continuous family of Littelmann path models depending on a parameter $q$, as well as $q$-tensor product. For $h \rightarrow 0$, we recover the continuous 'frozen' setting presented at the beginning of \cite{bib:BBO2}. Since all $q$-Littelmann models for $q>0$ are equivalent in certain sense, our study will focus on the $h=1$ case and prove that tensor product of crystals is given by the concatenation of their elements.

The path transforms described in the previous section naturally appear as the building blocks for the Littelmann operators $e^c_\alpha$. We will also benefit from the construction of Berenstein and Kazhdan (\cite{bib:BK00, bib:BK04, bib:BK06}) while exhibiting Verma relations and finally we show that a simple projection exists between the path model and the group picture $\Bc$.

\subsection{Path models}

Let $\Cc\left( [0; T], \afrak \right)$ be set of $\afrak$-valued continuous functions on $[0, T]$. Its elements are loosely referred to as paths in $\afrak$. We call a 'model' a candidate for becoming a crystal. Hence a path model will be a set of paths endowed with structure maps. The subscript $0$ will indicate that they are starting at zero.

\begin{definition}
\label{def:path_model}
A path crystal $L$ is a subset $\Cc\left( [0; T], \afrak \right)$, where $\afrak$ is the real Cartan subalgebra, endowed with maps $\gamma$, $\varepsilon_\alpha$, $\varphi_\alpha$ and actions $\left(e^._\alpha\right)_{ \alpha \in \Delta }$ such that
\begin{itemize}
 \item The weight map $\gamma: L \rightarrow \afrak$ gives the endpoint:
       $$ \gamma(\pi) = \pi(T) $$
 \item $L$ is an abstract geometric crystal as in definition \ref{def:crystal}.
\end{itemize}
\end{definition}

\paragraph{Time reversal duality:}
Define the duality map $\iota: \Cc_0\left( [0; T], \afrak \right) \rightarrow \Cc_0\left( [0; T], \afrak \right)$ that associates to each path $\pi$ its dual $\pi^\iota$. It is defined as the time reversal:
$$\pi^\iota(t) = \pi(T-t) - \pi(T)$$
A crystal structure is said to behave well with respect to duality if:
$$e^{-c}_\alpha = \iota \circ e^c_\alpha \circ \iota$$
$$\varphi_\alpha = \varepsilon_\alpha \circ \iota $$

\begin{rmk}
We use the same symbol as the Kashiwara involution, because the two correspond as we explained in section \ref{section:involutions}.
\end{rmk}

\paragraph{Continuous h-Littelmann model:}
Here we define a family of path models indexed by $h>0$, a parameter that can be understood as temperature (\cite{bib:OConnell}). When $h=0$, we recover the continuous path model introduced and studied in \cite{bib:BBO2} as the continuous counterpart of Littelmann's path model.

When $h>0$, a continuous $h$-Littelmann model is a subset $L$ of $\Cc_0\left( [0; T], \afrak \right)$ endowed with the structure $L_h = \left( \gamma, \left( \varepsilon_\alpha, \varphi_\alpha, e^._\alpha \right)_{\alpha \in \Delta} \right)$: 
\begin{itemize}
 \item The weight map $\gamma: L \rightarrow \afrak$ is the endpoint:
       $$ \gamma(\pi) = \pi(T) $$
 \item $\varepsilon_\alpha, \varphi_\alpha: L \rightarrow \R$ defined for every $\alpha \in \Delta$ as
$$\varepsilon_\alpha( \pi ) := h \log\left( \int_0^T e^{ -h^{-1} \alpha(\pi(s)) }ds \right)$$
$$\varphi_\alpha( \pi ) := \alpha\left( \pi(T) \right) + h \log\left( \int_0^T e^{-h^{-1} \alpha(\pi(s))}ds \right)$$
 \item $e^c_\alpha: L \rightarrow L$, $c \in \R$, $\alpha \in \Delta$ defined as
$$ e^c_\alpha \cdot \pi(t) := \pi(t) + h \log\left( 1 + \frac{ e^{h^{-1} c} - 1 }{e^{h^{-1}\varepsilon_\alpha(\pi)}}\int_0^t e^{ -h^{-1} \alpha(\pi(s)) }ds \right) \alpha^\vee $$
\end{itemize}
When $h=0$, we take as defining axioms the limit $h \rightarrow 0$:
$$\varepsilon_\alpha\left( \pi \right) = -\inf_{0 \leq s \leq T} \alpha\left( \pi(s) \right) $$
$$\varphi_\alpha\left( \pi \right) = \alpha\left( \pi(T) \right)-\inf_{0 \leq s \leq T} \alpha\left( \pi(s) \right) $$
\begin{align}
\label{eq:root_operator_trop}
\forall \ 0<t<T, \ e^c_\alpha\left( \pi \right)(t) & = \ \pi(t) + \inf_{0 \leq s \leq T} \alpha\left( \pi(s) \right) \alpha^\vee \\
 & \quad - \min\left( \inf_{0 \leq s \leq t} \alpha\left( \pi(s) \right) - c,
                 \inf_{t \leq s \leq T} \alpha\left( \pi(s) \right) \right) \alpha^\vee 
\end{align}
$$ e^c_\alpha\left( \pi \right)(0) = \pi(0) = 0$$
$$ e^c_\alpha\left( \pi \right)(T) = \pi(T) + c \alpha^\vee$$
Indeed, the limits for $\varepsilon_\alpha$ and $\varphi_\alpha$ are an immediate application of the Laplace method. The limit for the root operator comes from re-arranging the geometric expression before using the Laplace method as well:
\begin{align*}
e^c_\alpha \cdot \pi(t) & = \pi(t) + h \log\left( 1 + \frac{ e^{h^{-1} c} - 1 }{e^{h^{-1}\varepsilon_\alpha(\pi)}}\int_0^t e^{ -h^{-1} \alpha(\pi(s)) }ds \right) \alpha^\vee\\
& = \pi(t) + h \log\left( e^{h^{-1} c} \int_0^t e^{ -h^{-1} \alpha(\pi(s)) }ds + \int_t^T e^{ -h^{-1} \alpha(\pi(s)) }ds \right) \alpha^\vee \\
& \ \ \ - h \log\left( \int_0^T e^{ -h^{-1} \alpha(\pi(s)) }ds \right) \alpha^\vee\\
& \stackrel{ h \rightarrow 0 }{ \rightarrow }
    \pi(t) + \inf_{0 \leq s \leq T} \alpha\left( \pi(s) \right) \alpha^\vee 
    - \alpha^\vee \min\left( \inf_{0 \leq s \leq t} \alpha\left( \pi(s) \right) - c,
                             \inf_{t \leq s \leq T} \alpha\left( \pi(s) \right) \right)
\end{align*}
Notice that the condition $0<t<T$ is essential in the Laplace method, as certain integral terms disappear at $t=0$ and $t=T$. We claim that this expression is exactly the same as the one defining the generalized Littelmann operators defined in \cite{bib:BBO2} section 3. We discuss that point in the next subsection '$h=0$ limit'.

A $h$-Littelmann model that satisfies the crystal axioms is called a $h$-Littelmann crystal. An important fact is that for $h>0$, all the continuous $h$-Littelmann structures $\left( L_h \right)_{h>0}$ on $\Cc_0\left( [0; T], \afrak \right)$ are equivalent. That is why we can restrict our attention to the case $h=1$. We will use the term ``Geometric Littelmann crystal'' to refer to the $h=1$ model as it is the path model for the geometric crystals introduced by Berenstein and Kazhdan.

The fact that $h$-Littelmann crystal structures on $\Cc_0\left( [0; T], \afrak \right)$ are equivalent for $h>0$ can easily be checked by using the rescaling on reals and on paths.
$$ \forall x \in \R, \psi_{h,h'}\left( x \right) = \frac{h'}{h} x$$
$$ \forall \pi \in \Cc_0\left( [0; T], \afrak \right), \psi_{h,h'}\left( \pi \right) = \frac{h'}{h} \pi$$
$\psi_{h,h'}$ intertwines structural maps. Consider two continuous Littelmann structures on $\Cc_0\left( [0; T], \afrak \right)$ associated to $h$ and $h'$:
$$ L_h    = \left( \gamma, \left( \varepsilon_\alpha, \varphi_\alpha, e^._\alpha \right)_{\alpha \in \Delta} \right) \quad  
   L_{h'} = \left( \gamma', \left( \varepsilon^{'}_{\alpha}, \varphi^{'}_{\alpha}, e^{'.}_{\alpha} \right)_{\alpha \in \Delta} \right)$$
We have:
\begin{align*}
\gamma^{'}\left( \psi_{h,h'}\left( \pi \right) \right) & = \psi_{h,h'}\left( \gamma\left( \pi \right) \right)\\
\varepsilon^{'}_{\alpha}\left( \psi_{h,h'}\left( \pi \right) \right) & = \psi_{h,h'}\left( \varepsilon_\alpha\left( \pi \right) \right)\\
\varphi^{'}_{\alpha}\left( \psi_{h,h'}\left( \pi \right) \right) & = \psi_{h,h'}\left( \varphi_\alpha\left( \pi \right) \right)\\
e^{'\psi_{h,h'}(c)}_{\alpha} \cdot \psi_{h,h'}\left( \pi \right) & = \psi_{h,h'}\left(  e^{c}_\alpha \cdot \pi \right)
\end{align*}

\begin{rmk}
 Our choice of describing this relationship as 'equivalence' and not 'isomorphism' in the strict sense is because the real parameter $c$ in the actions $e^c_\alpha, \alpha \in \Delta$ is rescaled. The structural maps $\varepsilon_\alpha, \varphi_\alpha$ are also rescaled. Clearly, this transformation is more easily seen as a change of underlying semifields.
\end{rmk}

Another fact worth mentioning is the commutation with respect to tensor product: If $B_h$ and $B_h^{'}$ are $h$-Littelmann crystals then $$ \psi_{h, h'}\left( B_h \otimes_h B_h^{'} \right) = \psi_{h, h'}\left( B_h \right) \otimes_{h'} \psi_{h, h'}\left( B_h^{'} \right) $$

\subsection{Classical Littelmann model as a limit}
\label{subsection:classical_littemann_limit}

In \cite{bib:BBO2} definition 3.3, the continuous path model described, which in fact coincides with Littelmann's original definition, has the following structural maps:
$$\varepsilon_\alpha\left( \pi \right) = -\inf_{0 \leq s \leq T} \alpha\left( \pi(s) \right) $$
$$\varphi_\alpha\left( \pi \right) = \alpha\left( \pi(T) \right)-\inf_{0 \leq s \leq T} \alpha\left( \pi(s) \right) $$
$$\textrm{If } 0 \leq c \leq \varepsilon_\alpha\left( \pi \right), \Ec^c_\alpha\left( \pi \right)(t) = \pi(t)
  - \min\left(0, -c - \inf_{0 \leq s \leq T} \alpha\left( \pi(s) \right) - \inf_{0 \leq s \leq t} \alpha\left( \pi(s) \right) \right) \alpha^\vee
 $$
$$\textrm{If } - \varphi_\alpha\left( \pi \right) \leq c \leq 0, \Ec^c_\alpha\left( \pi \right)(t) = \pi(t)
  - \min\left(-c, \inf_{t \leq s \leq T} \alpha\left( \pi(s) \right) - \inf_{0 \leq s \leq T} \alpha\left( \pi(s) \right) \right) \alpha^\vee
 $$
In this subsection, we give explicit indications on why these are exactly the same maps as our $h=0$ limit. The identification is immediate except when it comes to recognizing our tropical actions $e_\alpha^.$ (equation \eqref{eq:root_operator_trop}). It shows that our description has at least an advantage at $h=0$: Only one formula for $e_\alpha^c$ independently of the sign of $c$. The following computation proves they are equivalent. For paths starting from zero, if $c \geq 0$:
\begin{align*}
  & \min\left( \inf_{0 \leq s \leq t} \alpha(\pi(s)) - c, \inf_{t \leq s \leq T} \alpha(\pi(s)) \right)\\
= & \min\left( \inf_{0 \leq s \leq t} \alpha(\pi(s)) - c, \inf_{0 \leq s \leq t} \alpha(\pi(s)), \inf_{t \leq s \leq T} \alpha(\pi(s)) \right)\\
= & \min\left( \inf_{0 \leq s \leq t} \alpha(\pi(s)) - c, \inf_{0 \leq s \leq T} \alpha(\pi(s)) \right)
\end{align*}
While if $c \leq 0$:
\begin{align*}
  & \min\left( \inf_{0 \leq s \leq t} \alpha(\pi(s)) - c, \inf_{t \leq s \leq T} \alpha(\pi(s)) \right)\\
= & -c + \min\left( \inf_{0 \leq s \leq t} \alpha(\pi(s)), c + \inf_{t \leq s \leq T} \alpha(\pi(s)) \right)\\
= & -c + \min\left( \inf_{0 \leq s \leq t} \alpha(\pi(s)), \inf_{t \leq s \leq T} \alpha(\pi(s)), c + \inf_{t \leq s \leq T} \alpha(\pi(s)) \right)\\
= & -c + \min\left( \inf_{0 \leq s \leq T} \alpha(\pi(s)), c + \inf_{t \leq s \leq T} \alpha(\pi(s)) \right)\\
= & \min\left( -c + \inf_{0 \leq s \leq T} \alpha(\pi(s)), \inf_{t \leq s \leq T} \alpha(\pi(s)) \right)
\end{align*}
Replacing in each case, $\min\left( \inf_{0 \leq s \leq t} \alpha(\pi(s)) - c, \inf_{t \leq s \leq T} \alpha(\pi(s)) \right)$ in our expression for $e^c_\alpha$ recovers $\Ec^c, c \geq 0$ and $\Ec^c, c \leq 0$.

\begin{rmk}
The usual cutting conditions $-\varphi_\alpha(\pi) \leq c \leq \varepsilon_\alpha(\pi)$ appear naturally only when tropicalizing the canonical measure on geometric crystals. We refrain from saying more as it is the subject of \cite{bib:chh14c}. For now, we can notice that in order for $e^c_\alpha$ to preserve continuity at $t=0$, one needs $c \leq \varepsilon_\alpha(\pi)$. In order to preserve continuity at $t=T$, we need $-\varphi_\alpha(\pi) \leq c$.
\end{rmk}

\subsection{A rank 1 example}
In rank $1$, crystal actions on paths in $\afrak$ are in fact one dimensional, and via projection $\afrak$ can be considered as $\R$.

\paragraph{Connected crystal at $\bf h=1$:}
Let $\pi \in \Cc_0\left( [0; T], \afrak \right)$ be a path and $\langle \pi \rangle$ be the connected crystal generated by $\pi$:
$$\langle \pi \rangle = \left\{ \pi_c = e^c_\alpha \cdot \pi, c \in \R \right\} = \left\{ t \mapsto \pi(t) + \log\left( 1 + (e^c - 1)\frac{\int_0^t e^{-\alpha\left( \pi \right)}}{\int_0^T e^{-\alpha\left( \pi \right)}} \right) \alpha^\vee \right\}$$ 
Notice that there is an extremal element $\eta = e^{-\infty}_\alpha \cdot \pi$ that does not belong to the crystal, as it diverges at its endpoint ($t=T$):
$$\eta\left(t\right) = \pi\left(t\right) + \log\left(1 - \frac{\int_0^t e^{-\alpha\left( \pi \right)}}{\int_0^T e^{-\alpha\left( \pi \right)}} \right)\alpha^\vee $$

The transform $e^{-\infty}_\alpha$ is a projection as it gives $\eta$ when applied to any element of the crystal, as a consequence of $e^{-\infty}_\alpha \cdot e^{c}_\alpha = e^{-\infty}_\alpha$. As such, it is clearly not an injective map. However there is only one real number that is lost in this process, and it is in fact $\int_0^T e^{-\alpha\left( \pi \right)}$. This basic remark will be a key element in parametrizing path crystals.

\subsection{Geometric Littelmann model} 

As announced, we will now restrict our attention to the $h=1$ case, which we call the geometric case. In the next subsection we prove there is a projection morphism to $\Bc$ the typical crystal in the sense of Berenstein and Kazhdan.

\subsubsection{Geometric Littelmann Crystal}
 A geometric Littelmann crystal $L$ is a subset of $\Cc_0\left( [0; T], \mathfrak{a} \right)$ endowed with
\begin{itemize}
 \item A weight map $\gamma: L \rightarrow \mathfrak{a}$ defined as
$$ \gamma(\pi) = \pi(T) $$
 \item For every $\alpha \in \Delta$, maps $\varepsilon_\alpha, \varphi_\alpha$ defined as:
$$\varepsilon_\alpha( \pi ) := \log\left( \int_0^T e^{ -\alpha(\pi(s)) }ds \right)$$
$$\varphi_\alpha( \pi ) := \varepsilon_\alpha \circ \iota ( \pi )
                        = \alpha\left( \pi(T) \right) + \log\left( \int_0^T e^{-\alpha(\pi(s))}ds \right)$$
 \item The Littelmann operators $\left(e^._\alpha\right)_{ \alpha \in \Delta }$ are defined as:
$$ e^c_\alpha \cdot \pi(t) := \pi(t) + \log\left( 1 + \frac{ e^c - 1 }{e^{\varepsilon_\alpha(\pi)}}\int_0^t e^{ -\alpha(\pi(s)) }ds \right) \alpha^\vee $$
\end{itemize}

\begin{example}
 The whole set $\Cc_0\left( [0; T], \mathfrak{a} \right)$ is a geometric Littelmann crystal.
\end{example}

A little lemma shows how the Littelmann operators are linked to the transform $T_g$:
\begin{lemma}
 For $g = x_\alpha\left( \frac{e^c-1}{e^{\varepsilon_\alpha(\pi)}} \right)$, one has:
$$\forall \pi \in \Cc_0\left( [0, T], \afrak \right), e^c_\alpha \cdot \pi = T_g \pi$$
\end{lemma}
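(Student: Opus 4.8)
The plan is to unwind everything to the explicit formula for $T_g$ when $g$ is a single one-parameter subgroup element $x_\alpha(\xi)$, and to observe that this is essentially the content of Property~(v) in \ref{lbl:path_transform_properties}, extended to the range of $\xi$ needed here. First I would rewrite the scalar: by the defining formula of the geometric Littelmann model one has $e^{\varepsilon_\alpha(\pi)} = \int_0^T e^{-\alpha(\pi(s))}\,ds$, so the parameter of $g$ is $\xi := \frac{e^c-1}{e^{\varepsilon_\alpha(\pi)}} = \frac{e^c-1}{\int_0^T e^{-\alpha(\pi)}}$, which is exactly the argument appearing in the proposition that extends $T_{x_\alpha(\cdot)}$. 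Since $e^c - 1 > -1$, this $\xi$ lies in the admissible interval $\bigl(-\tfrac{1}{\int_0^T e^{-\alpha(\pi)}},\,+\infty\bigr)$, and consequently $1 + \xi\int_0^t e^{-\alpha(\pi(s))}\,ds > 0$ for all $t\in[0,T]$; this guarantees that $g\,B_t(\pi)$ has a Gauss decomposition with torus part in $A$, so that $T_g\pi$ is legitimately defined.

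Next I would compute $\bigl[g\,B_t(\pi)\bigr]_0$ directly. Writing $B_t(\pi) = N_t(\pi)\,e^{\pi(t)}$ and pulling the torus factor through the Gauss decomposition gives $\bigl[g\,B_t(\pi)\bigr]_0 = \bigl[x_\alpha(\xi)\,N_t(\pi)\bigr]_0\, e^{\pi(t)}$. The torus element $\bigl[x_\alpha(\xi)\,N_t(\pi)\bigr]_0$ is determined by its values under the fundamental weight characters, and these are precisely the principal generalized minors $\Delta^{\omega_\beta}\bigl(x_\alpha(\xi) N_t(\pi)\bigr)$. By the lemma computing the minors $\Delta^{\omega_\beta}\bigl(x_\alpha(\xi) N_t(X)\bigr)$, these equal $1$ when $\beta\neq\alpha$ and $1 + \xi\int_0^t e^{-\alpha(\pi(s))}\,ds$ when $\beta = \alpha$. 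Because the fundamental weights form the basis dual to the simple coroots, this forces $\bigl[x_\alpha(\xi)\,N_t(\pi)\bigr]_0 = \bigl(1 + \xi\int_0^t e^{-\alpha(\pi(s))}\,ds\bigr)^{\alpha^\vee}$, hence $T_g\pi(t) = \log\bigl[g\,B_t(\pi)\bigr]_0 = \pi(t) + \log\bigl(1 + \xi\int_0^t e^{-\alpha(\pi(s))}\,ds\bigr)\alpha^\vee$.

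Finally I would substitute $\xi = \frac{e^c-1}{e^{\varepsilon_\alpha(\pi)}}$ into this expression, which reproduces verbatim the defining formula $e^c_\alpha\cdot\pi(t) = \pi(t) + \log\bigl(1 + \tfrac{e^c-1}{e^{\varepsilon_\alpha(\pi)}}\int_0^t e^{-\alpha(\pi(s))}\,ds\bigr)\alpha^\vee$ of the geometric Littelmann operator, completing the proof. I expect the only genuinely delicate point --- the ``obstacle'', modest as it is --- to be the bookkeeping around $c<0$: one must be sure the integral factor never vanishes, so that both the Gauss decomposition and the logarithm make sense; this is exactly the admissibility bound on $\xi$ recorded in the first step, and it is also the reason the lemma is stated for a path on the fixed finite horizon $[0,T]$. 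An alternative that sidesteps the minor computation is to invoke Property~(v) directly for $\xi>0$ and then extend to $-\tfrac{1}{\int_0^T e^{-\alpha(\pi)}} < \xi \le 0$ by analytic continuation in $\xi$, both sides being rational in $\xi$ on the admissible interval.
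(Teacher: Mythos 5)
Your proof is correct and uses exactly the machinery the paper sets up for this purpose: the lemma on the principal minors $\Delta^{\omega_\beta}\bigl(x_\alpha(\xi)N_t(X)\bigr)$ from the subsection on extending the path transform, which both guarantees the Gauss decomposition on the admissible interval $\xi>-1/\int_0^T e^{-\alpha(\pi)}$ and, since the fundamental weights separate points of the torus, pins down $[x_\alpha(\xi)N_t(\pi)]_0$ as $\bigl(1+\xi\int_0^t e^{-\alpha(\pi)}\bigr)^{\alpha^\vee}$. The paper leaves this verification implicit, and your argument supplies it along the intended lines; only the closing remark about "analytic continuation in $\xi$" is superfluous (and slightly loose, since $T_g\pi$ itself is logarithmic rather than rational in $\xi$), but the main minor-based computation is complete and does not need it.
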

Such an expression for the group element $x_\alpha\left( \frac{e^c-1}{e^{\varepsilon_\alpha(\pi)}} \right)$ is no coincidence, as it is exactly the left action on the crystal $\Bc$. In fact, we derived it starting from the path model, which is in a sense independent from the work of Berenstein and Kazhdan. Then realized we were looking at geometric crystals.

Let $L \subset \Cc\left( [0; T], \afrak \right)$ be a geometric Littelmann crystal and $\pi \in L$. The following properties show that it is indeed a crystal in the usual sense:
\begin{properties}
\begin{enumerate}
 \item[(i)]   $\varphi_\alpha(\pi) = \varepsilon_\alpha(\pi) + \alpha\left( \gamma(\pi) \right)$
 \item[(ii)]  $\gamma\left( e^c_\alpha \cdot \pi \right) = \gamma\left( \pi \right) + c \alpha^\vee$
 \item[(iii)] $\varepsilon_\alpha\left( e^c_\alpha \cdot \pi \right) = \varepsilon_\alpha\left( \pi \right) - c$
 \item[(iv)]  $\varphi_\alpha\left( e^c_\alpha \cdot \pi \right) = \varphi_\alpha\left( \pi \right) + c$
 \item[(v)]   $e^._\alpha$ are indeed actions as $e^{c}_\alpha \cdot e^{c'}_\alpha = e^{c+c'}_\alpha $
 \item[(vi)]  The Littelmann action behaves well with respect to time-reversal:
              $$e^{-c}_\alpha = \iota \circ e^c_\alpha \circ \iota$$
              $$\varphi_\alpha = \varepsilon_\alpha \circ \iota $$
 \end{enumerate}
\end{properties}
\begin{proof}
\begin{enumerate}
 \item[(i)] Obvious.
 \item[(ii)]  \begin{align*}
		 & \gamma\left( e^c_\alpha \cdot \pi \right)\\
               = & \pi(T) + \log\left( 1 + \frac{ e^c - 1 }{e^{\varepsilon_\alpha(\pi)}}\int_0^T e^{ -\alpha(\pi(s)) }ds \right) \alpha^\vee\\
               = & \pi(T) + \log\left( 1 + ( e^c - 1 ) \right) \alpha^\vee\\
               = & \gamma\left( \pi \right) + c \alpha^\vee
              \end{align*}
 \item[(iii)] \begin{align*}
                 & \varepsilon_\alpha\left( e^c_\alpha \cdot \pi \right)\\
               = & \log\left( \int_0^T \frac{ e^{ -\alpha(\pi(s)) } }{\left( 1 + \frac{ e^c - 1 }{e^{\varepsilon_\alpha(\pi)}}\int_0^s e^{ -\alpha(\pi(u)) }du\right)^2 }ds \right)\\
               = & \log\left( \frac{e^{\varepsilon_\alpha(\pi)}}{ e^c - 1 } \left( \int_0^T -\frac{d}{ds}\left( \frac{ 1 }{ 1 + \frac{ e^c - 1 }{e^{\varepsilon_\alpha(\pi)}}\int_0^s e^{ -\alpha(\pi(u)) }du } \right)ds \right) \right)\\
               = & \log\left( \frac{e^{\varepsilon_\alpha(\pi)}}{ e^c - 1 } \left( 1 - e^{-c} \right) \right)\\
               = & \varepsilon_\alpha(\pi) - c\\
              \end{align*}
 \item[(iv)]  Obvious using (i), (ii) and (iii).
 \item[(v)]   \begin{align*}
                & \left( e^{c}_\alpha \cdot e^{c'}_\alpha \cdot \pi \right) (t)\\
              = & \left( e^{c'}_\alpha \cdot \pi \right)(t) + \log\left( 1 + \frac{e^c - 1}{e^{ \varepsilon\left( e^{c'}_\alpha \cdot \pi \right)}}\int_0^t e^{ -\alpha\left( e^{c'}_\alpha \cdot \pi (s) \right) } ds \right) \alpha^{\vee}\\
              = & \left( e^{c'}_\alpha \cdot \pi \right)(t) + \log\left( 1 + \frac{e^c - 1}{e^{ \varepsilon\left( \pi \right) - c'}}\int_0^t \frac{ e^{ -\alpha\left( \pi (s) \right) } }{\left( 1 + \frac{ e^{c'} - 1 }{e^{\varepsilon_\alpha(\pi)}}\int_0^s e^{ -\alpha(\pi(u)) }du \right)^2 } ds \right) \alpha^{\vee}\\
              = & \left( e^{c'}_\alpha \cdot \pi \right)(t) + \log\left( 1 + \frac{e^c - 1}{e^{ \varepsilon\left( \pi \right) - c'}} \frac{ e^{\varepsilon\left(\pi\right)} }{e^{c'} - 1}\int_0^t -\frac{d}{ds}\left( \frac{ 1 }{ 1 + \frac{ e^{c'} - 1 }{e^{\varepsilon_\alpha(\pi)}}\int_0^s e^{ -\alpha(\pi(u)) }du } \right) ds \right) \alpha^{\vee}\\
              = & \left( e^{c'}_\alpha \cdot \pi \right)(t) + \log\left( 1 + e^{c'}\frac{e^c - 1}{e^{c'} - 1}\left( 1 - \frac{ 1 }{ 1 + \frac{ e^{c'} - 1 }{e^{\varepsilon_\alpha(\pi)}}\int_0^t e^{ -\alpha(\pi(s)) }ds } \right) \right) \alpha^{\vee}\\
              = & \left( e^{c'}_\alpha \cdot \pi \right)(t) + \log\left( 1 + e^{c'} \frac{ \frac{ e^{c} - 1 }{e^{\varepsilon_\alpha(\pi)}}\int_0^t e^{ -\alpha(\pi(s)) }ds }{ 1 + \frac{ e^{c'} - 1 }{e^{\varepsilon_\alpha(\pi)}}\int_0^t e^{ -\alpha(\pi(s)) }ds } \right) \alpha^{\vee}\\
              = & \left( e^{c'}_\alpha \cdot \pi \right)(t) + \log\left( \frac{ 1 + \frac{ e^{c+c'} - 1 }{e^{\varepsilon_\alpha(\pi)}}\int_0^t e^{ -\alpha(\pi(s)) }ds }{ 1 + \frac{ e^{c'} - 1 }{e^{\varepsilon_\alpha(\pi)}}\int_0^t e^{ -\alpha(\pi(s)) }ds } \right) \alpha^{\vee}\\
              = & \left( e^{c+c'}_\alpha \cdot \pi \right)(t)\\
               \end{align*}
 \item[(vi)]   \begin{align*}
                & \left( \iota \circ e^c_\alpha \circ \iota \right)(\pi) (t)\\
              = & \iota\left( t \mapsto \pi^\iota(t) + \log\left( 1 + \frac{ e^c - 1 }{e^{\varepsilon_\alpha(\pi^\iota)}}\int_0^t e^{ -\alpha(\pi^\iota(s)) }ds \right) \alpha^\vee \right)(t)\\
              = & \pi(t) + \iota\left( t \mapsto \log\left( 1 + \frac{ e^c - 1 }{e^{\varphi_\alpha(\pi)}}\int_0^t e^{ -\alpha(\pi^\iota(s)) }ds \right) \alpha^\vee \right)(t)\\
              = & \pi(t) + \iota\left( t \mapsto \log\left( 1 + \frac{ e^c - 1 }{e^{\varphi_\alpha(\pi)}} e^{\alpha(\pi(T))} \int_{T-t}^T e^{ -\alpha(\pi(s)) }ds \right) \alpha^\vee \right)(t)\\
              = & \pi(t) + \iota\left( t \mapsto \log\left( 1 + \frac{ e^c - 1 }{e^{\varepsilon_\alpha(\pi)}} \int_{T-t}^T e^{ -\alpha(\pi(s)) }ds \right) \alpha^\vee \right)(t)\\
              = & \pi(t) + \log\left( 1 + \frac{ e^c - 1 }{e^{\varepsilon_\alpha(\pi)}} \int_t^T e^{ -\alpha(\pi(s)) }ds \right) \alpha^\vee
                         - \log\left( 1 + \frac{ e^c - 1 }{e^{\varepsilon_\alpha(\pi)}} \int_0^T e^{ -\alpha(\pi(s)) }ds \right) \alpha^\vee \\
              = & \pi(t) + \log\left( 1 + \frac{ e^c - 1 }{e^{\varepsilon_\alpha(\pi)}} \left( \int_0^T e^{ -\alpha(\pi(s)) }ds - \int_0^t e^{ -\alpha(\pi(s)) }ds \right) \right) \alpha^\vee - c \alpha^\vee \\
              = & \pi(t) + \log\left( e^c - \frac{ e^c - 1 }{e^{\varepsilon_\alpha(\pi)}} \int_0^t e^{ -\alpha(\pi(s)) }ds \right) \alpha^\vee - c \alpha^\vee \\
              = & \pi(t) + \log\left( 1 + \frac{ e^{-c} - 1 }{e^{\varepsilon_\alpha(\pi)}} \int_0^t e^{ -\alpha(\pi(s)) }ds \right) \alpha^\vee
              \end{align*}
              As for $\varphi_\alpha = \varepsilon_\alpha \circ \iota $, it is obvious.
\end{enumerate}
\end{proof}

\subsubsection{Tensor products of crystals and concatenation of paths}
In this subsection, we will see that the seemingly complicated definition for the tensor product of crystals is in fact easily coded within a path model using the concatenation of paths. Define the concatenation of two paths $\pi_1: [0,T] \rightarrow \afrak$ and $\pi_2: [0,S] \rightarrow \afrak$ as 
the path $\pi_1 \ast \pi_2: [0,T+S] \rightarrow \afrak$ given by:
$$ \pi_1 \ast \pi_2 \left(t\right)
   = \left\{\begin{array}{cc}
             \pi_1(t)                    & \textrm{ if } 0 \leq t \leq T \\
             \pi_1(T) + \pi_2\left( t - T\right) & \textrm{ otherwise }
            \end{array}
     \right.
$$

\begin{thm}
\label{thm:concatenation_is_isomorphism}
$$ \begin{array}{cccc}
   \theta: & \Cc_0\left( [0; T], \afrak \right) \otimes \Cc_0\left( [0; S], \afrak \right) & \rightarrow & \Cc_0\left( [0; T+S], \afrak \right)\\
       & \pi_1 \otimes \pi_2                                                 & \mapsto     & \pi_1 \ast \pi_2
   \end{array}
$$
is a crystal isomorphism. In fact, the following properties are true:
\begin{itemize}
	\item[(i)] $\gamma\left( \pi_1 \ast \pi_2 \right) = \gamma\left( \pi_1 \otimes \pi_2 \right)$
	\item[(ii)] $\varepsilon_\alpha\left( \pi_1 \ast \pi_2 \right) = \varepsilon_\alpha\left( \pi_1 \otimes \pi_2 \right)$ or equivalently $\varphi_\alpha\left( \pi_1 \ast \pi_2 \right) = \varphi_\alpha\left( \pi_1 \otimes \pi_2 \right)$
	\item[(iii)] $e^c_\alpha\left( \pi_1 \ast \pi_2 \right) = \theta\left( e^c_\alpha\left( \pi_1 \otimes \pi_2 \right) \right)$
\end{itemize}
\end{thm}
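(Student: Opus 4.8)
The plan is to verify the three displayed properties (i)--(iii), since once these hold, $\theta$ is manifestly a bijection (its inverse simply cuts a path on $[0,T+S]$ at time $T$ and re-centers the second piece), and a bijective structure-preserving map with structure-preserving inverse is a crystal isomorphism. Property (i) is essentially immediate: $\gamma(\pi_1 \ast \pi_2) = (\pi_1 \ast \pi_2)(T+S) = \pi_1(T) + \pi_2(S) = \gamma(\pi_1) + \gamma(\pi_2)$, which is the definition of $\gamma(\pi_1 \otimes \pi_2)$.

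For property (ii), the key computation is to split the integral defining $\varepsilon_\alpha$ at the concatenation time. Writing $I_1 := \int_0^T e^{-\alpha(\pi_1(s))}\,ds = e^{\varepsilon_\alpha(\pi_1)}$ and, using that $\pi_1 \ast \pi_2(t) = \pi_1(T) + \pi_2(t-T)$ for $t \geq T$, the tail integral becomes $\int_T^{T+S} e^{-\alpha(\pi_1 \ast \pi_2(s))}\,ds = e^{-\alpha(\pi_1(T))} \int_0^S e^{-\alpha(\pi_2(u))}\,du = e^{-\alpha(\gamma(\pi_1))} e^{\varepsilon_\alpha(\pi_2)} = e^{\varepsilon_\alpha(\pi_2) - \varphi_\alpha(\pi_1) + \varepsilon_\alpha(\pi_1)}$, where the last step uses axiom (C1), i.e. $\varphi_\alpha(\pi_1) = \varepsilon_\alpha(\pi_1) + \alpha(\gamma(\pi_1))$. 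Therefore
\[
e^{\varepsilon_\alpha(\pi_1 \ast \pi_2)} = I_1 + e^{-\alpha(\gamma(\pi_1))} e^{\varepsilon_\alpha(\pi_2)} = e^{\varepsilon_\alpha(\pi_1)}\left( 1 + e^{\varepsilon_\alpha(\pi_2) - \varphi_\alpha(\pi_1)} \right),
\]
and taking logarithms recovers exactly the definition of $\varepsilon_\alpha(\pi_1 \otimes \pi_2)$ at $h=1$. The equivalent statement for $\varphi_\alpha$ follows from (i), (ii) and axiom (C1) applied on both sides.

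Property (iii) is the main obstacle and requires the most care. One must show that the single-path operator $e^c_\alpha$ applied to $\pi_1 \ast \pi_2$ agrees, piecewise on $[0,T]$ and on $[T,T+S]$, with the concatenation $\left( e^{c_1}_\alpha \cdot \pi_1 \right) \ast \left( e^{c_2}_\alpha \cdot \pi_2 \right)$, where $c_1, c_2$ are the split parameters from the tensor-product definition. The strategy is: first, for $t \in [0,T]$, substitute the definition of $e^c_\alpha \cdot (\pi_1 \ast \pi_2)(t)$, note that $\int_0^t e^{-\alpha(\pi_1 \ast \pi_2(s))}\,ds = \int_0^t e^{-\alpha(\pi_1(s))}\,ds$ depends only on $\pi_1$, and use the identity $e^{\varepsilon_\alpha(\pi_1 \ast \pi_2)} = e^{\varepsilon_\alpha(\pi_1)} + e^{\varepsilon_\alpha(\pi_2)-\varphi_\alpha(\pi_1)+\varepsilon_\alpha(\pi_1)}$ from step (ii) to rewrite the prefactor $(e^c-1)/e^{\varepsilon_\alpha(\pi_1\ast\pi_2)}$; one then checks this matches $(e^{c_1}-1)/e^{\varepsilon_\alpha(\pi_1)}$ exactly, using the closed form $e^{c_1} = (e^c + e^{\varepsilon_\alpha(\pi_2)-\varphi_\alpha(\pi_1)})/(1 + e^{\varepsilon_\alpha(\pi_2)-\varphi_\alpha(\pi_1)})$. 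Second, for $t \in [T,T+S]$, write $e^c_\alpha \cdot (\pi_1 \ast \pi_2)(t) = e^c_\alpha \cdot (\pi_1 \ast \pi_2)(T) + \big[\text{correction on } [T,t]\big]$; the first term is handled by the $t=T$ case just done (giving $\gamma$-shift by $c_1 \alpha^\vee$, consistent with $e^{c_1}_\alpha \cdot \pi_1$ at its endpoint), and the increment is computed by splitting $\int_0^t = \int_0^T + \int_T^t$, extracting the $e^{-\alpha(\pi_1(T))}$ factor from the tail and simplifying the resulting logarithm of a ratio until it equals the defining expression for $e^{c_2}_\alpha \cdot \pi_2(t-T)$. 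The arithmetic is the geometric analogue of Littelmann's piecewise-reflection bookkeeping, so the bulk of the difficulty is organizing the algebra of these exponential fractions cleanly; the Remark $c_1 + c_2 = c$ and axiom (C1) are the essential simplifying facts, and continuity of the resulting path at $t=T$ is automatic since both pieces agree there by construction.
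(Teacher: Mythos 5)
Your proposal is correct and follows essentially the same route as the paper's proof: endpoint additivity for (i), splitting the integral $\int_0^{T+S}e^{-\alpha(\pi_1\ast\pi_2)}$ at time $T$ and invoking $\varphi_\alpha(\pi_1)=\varepsilon_\alpha(\pi_1)+\alpha(\gamma(\pi_1))$ for (ii), and for (iii) the piecewise verification via the prefactor identity $(e^{c_1}-1)/e^{\varepsilon_\alpha(\pi_1)}=(e^c-1)/e^{\varepsilon_\alpha(\pi_1\ast\pi_2)}$ on $[0,T]$ together with the tail-integral manipulation and $c_1+c_2=c$ on $[T,T+S]$. No gaps.
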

\begin{proof}
Given those properties, $\theta$ clearly transports the crystal structure. The fact that it is invertible with a morphism as inverse map is obvious. Let us show these relations:
\begin{itemize}
	\item[(i)] \begin{align*}
	             & \gamma\left( \pi_1 \ast \pi_2 \right)\\
	           = & \pi_1 \ast \pi_2\left( T + S\right)\\
	           = & \pi_1(T) + \pi_2(S)\\
	           = & \gamma\left( \pi_1 \otimes \pi_2 \right)
	           \end{align*}
	\item[(ii)] \begin{align*}
	             & \varepsilon_\alpha\left( \pi_1 \ast \pi_2 \right)\\
	           = & \log\left( \int_0^{T+S} e^{-\alpha\left( \pi_1 \ast \pi_2(s) \right) } ds \right)\\
	           = & \log\left( \int_0^{T} e^{-\alpha\left( \pi_1 (s) \right) } ds + e^{ -\alpha\left( \pi_1(T)\right) } \int_0^{S} e^{-\alpha\left( \pi_2(s) \right) } ds \right)\\
	           = & \varepsilon_\alpha\left( \pi_1 \right) + \log\left( 1 + e^{\varepsilon_\alpha\left(\pi_2\right) - \alpha\left( \gamma(\pi_1) \right) - \varepsilon_\alpha\left(\pi_2\right)} \right)\\
	           = & \varepsilon_\alpha\left( \pi_1 \right) + \log\left( 1 + e^{\varepsilon_\alpha\left(\pi_2\right) - \varphi_\alpha\left(\pi_2\right)} \right)\\
	           = & \varepsilon_\alpha\left( \pi_1 \otimes \pi_2 \right)	           
	            \end{align*}
	\item[(iii)] One first needs to remember that
	             $$  e^c_\alpha\left( \pi_1 \otimes \pi_2 \right) = e^{c_1}_\alpha \cdot \pi_1 \otimes e^{c_2}_\alpha \cdot \pi_2 $$
	             with
	             $$ e^{c_1} = \frac{ e^{c+\varphi_\alpha\left( \pi_1 \right) } + e^{ \varepsilon_\alpha\left(\pi_2\right) } }
	                               { e^{  \varphi_\alpha\left( \pi_1 \right) } + e^{ \varepsilon_\alpha\left(\pi_2\right) } }  $$
	             $$ e^{c_2} = \frac{ e^{  \varphi_\alpha\left( \pi_1 \right) } + e^{ \varepsilon_\alpha\left(\pi_2\right) } }
	                               { e^{  \varphi_\alpha\left( \pi_1 \right) } + e^{-c+ \varepsilon_\alpha\left(\pi_2\right) } }  $$
	            We will compute both parts of the concatenated path $\theta\left( e^c_\alpha\left( \pi_1 \otimes \pi_2 \right) \right)$ separately. The first half is, using $0 \leq t \leq T$:
	            $$ \theta\left( e^c_\alpha\left( \pi_1 \otimes \pi_2 \right) \right)(t) = e^{c_1}_\alpha \cdot \pi_1(t) = \pi_1(t) + \alpha^\vee \log\left( 1 + \frac{e^{c_1}-1}{e^{\varepsilon_\alpha\left(\pi_1\right)}} \int_0^t e^{-\alpha\left( \pi_1(s)\right)}ds \right)$$
	            But as:
	            \begin{align*}
	              & \frac{e^{c_1}-1}{e^{\varepsilon_\alpha\left(\pi_1\right)}}\\
	            = & \frac{\frac{ e^{c+\varphi_\alpha\left( \pi_1 \right) } + e^{ \varepsilon_\alpha\left( \pi_2 \right) } }{ e^{  \varphi_\alpha\left( \pi_1 \right) } + e^{ \varepsilon_\alpha\left( \pi_2 \right) }  }-1}{ e^{\varepsilon_\alpha\left(\pi_1\right)} }\\
	            = & \frac{ e^{\varphi_\alpha\left( \pi_1 \right) } \left( e^c - 1 \right) }
	                     { e^{\varepsilon_\alpha\left(\pi_1\right)}\left( e^{  \varphi_\alpha\left( \pi_1 \right) } + e^{ \varepsilon_\alpha\left( \pi_2 \right) } \right) } \\
	            = & \frac{ e^c - 1  }
	                     { e^{\varepsilon_\alpha\left(\pi_1\right)} + e^{ \varepsilon_\alpha\left( \pi_2 \right) - \alpha\left( \gamma\left(\pi_1\right) \right) } } \\
	            = & \frac{ e^c - 1  }{ e^{\varepsilon\left( \pi_1 \ast \pi_2 \right)} }
	            \end{align*}
	            We get:
	            $$ \forall 0 \leq t \leq T, \theta\left( e^c_\alpha\left( \pi_1 \otimes \pi_2 \right) \right)(t) =  e^c_\alpha \cdot \left( \pi_1 \ast \pi_2 \right)(t)$$
	            
	            Moving on to the second half:
              \begin{align*}
                & \theta\left( e^c_\alpha\left( \pi_1 \otimes \pi_2 \right) \right)(T + t)\\
              = & e^{c_1}_\alpha \cdot \pi_1(T) + e^{c_2}_\alpha \cdot \pi_2(t)\\
              = & \pi_1(T) + c_1 \alpha^\vee + \pi_2(t) + \log\left( 1 + \frac{e^{c_2}-1}{e^{\varepsilon_\alpha\left(\pi_2\right)}} \int_0^t e^{-\alpha\left( \pi_2(s)\right)}ds \right) \alpha^\vee \\
              = & \pi_1 \ast \pi_2 (T+t) + c_1 \alpha^\vee + \log\left( 1 + \frac{e^{c_2}-1}{e^{\varepsilon_\alpha\left(\pi_2\right) }} \int_0^t e^{-\alpha\left( \pi_2(s)\right)}ds \right)\alpha^\vee
              \end{align*}
              Then, as:
              \begin{align*}
	              & \frac{e^{c_2}-1}{e^{\varepsilon_\alpha\left(\pi_2\right)}}\\
	            = & \frac{\frac{ e^{\varphi_\alpha\left( \pi_1 \right) } + e^{ \varepsilon_\alpha\left( \pi_2 \right) } }{ e^{  \varphi_\alpha\left( \pi_1 \right) } + e^{ -c + \varepsilon_\alpha\left( \pi_2 \right) }  }-1}{ e^{\varepsilon_\alpha\left(\pi_2\right)} }\\
	            = & \frac{ 1 - e^{-c}  }
	                     { e^{\varphi_\alpha\left(\pi_1\right)} + e^{ -c +\varepsilon_\alpha\left( \pi_2 \right)}} \\
	            \end{align*}
	            We get:
	            \begin{align*}
                & \theta\left( e^c_\alpha\left( \pi_1 \otimes \pi_2 \right) \right)(T + t)\\
              = & \pi_1 \ast \pi_2 (T+t) + c_1 \alpha^\vee + \log\left( 1 + \frac{ e^{c}-1 }{ e^{ c + \varphi_\alpha\left(\pi_1\right) } + e^{\varepsilon_\alpha\left(\pi_2\right) }} \int_0^t e^{-\alpha\left( \pi_2(s)\right)}ds \right) \alpha^\vee \\
              = & \pi_1 \ast \pi_2 (T+t) + c_1 \alpha^\vee + \\
                & \quad \log\left( 1 + \frac{ e^{c}-1 }{ e^{ c + \varphi_\alpha\left(\pi_1\right) } + e^{\varepsilon_\alpha\left(\pi_2\right) }} e^{\alpha\left( \gamma(\pi_1) \right) } \left( \int_0^{T+t} e^{-\alpha\left( \pi_1 \ast \pi_2(s)\right)}ds - e^{\varepsilon_\alpha\left( \pi_1 \right) }\right) \right) \alpha^\vee \\
              \end{align*}
              But 
              \begin{align*}
                & 1- \frac{ e^{c}-1 }{ e^{ c + \varphi_\alpha\left(\pi_1\right) } +  e^{ \varepsilon_\alpha\left( \pi_2 \right) }} e^{\alpha\left( \gamma(\pi_1) \right) + \varepsilon_\alpha\left( \pi_1 \right) }\\
              = & 1- \frac{ e^{c}-1 }{ e^{ c + \varphi_\alpha\left(\pi_1\right) } +  e^{ \varepsilon_\alpha\left( \pi_2 \right) }} e^{ \varphi_\alpha\left( \pi_1 \right) }\\
              = & \frac{ e^{ \varphi_\alpha\left( \pi_1 \right) } + e^{ \varepsilon_\alpha\left( \pi_2 \right) } }{ e^{ c + \varphi_\alpha\left(\pi_1\right) } +  e^{ \varepsilon_\alpha\left( \pi_2 \right) }}\\
              = & e^{-c_1}
              \end{align*}
              So that:
              \begin{align*}
                & \theta\left( e^c_\alpha\left( \pi_1 \otimes \pi_2 \right) \right)(T + t)\\
              = & \pi_1 \ast \pi_2 (T+t) + c_1 \alpha^\vee + \log\left( e^{-c_1} + \frac{ e^{c}-1 }{ e^{ c + \varphi_\alpha\left(\pi_1\right) } + e^{\varepsilon_\alpha\left(\pi_2\right) }} e^{\alpha\left( \gamma(\pi_1) \right) } \int_0^{T+t} e^{-\alpha\left( \pi_1 \ast \pi_2(s)\right)}ds \right) \alpha^\vee \\
              = & \pi_1 \ast \pi_2 (T+t) + \log\left( 1 + e^{c_1} \frac{ e^{c}-1 }{ e^{ c + \varphi_\alpha\left(\pi_1\right) } + e^{\varepsilon_\alpha\left(\pi_2\right) }} e^{\alpha\left( \gamma(\pi_1) \right) } \int_0^{T+t} e^{-\alpha\left( \pi_1 \ast \pi_2(s)\right)}ds \right) \alpha^\vee \\
              = & \pi_1 \ast \pi_2 (T+t) + \log\left( 1 + \frac{ \frac{ e^{c}-1 }{ e^{ c + \varphi_\alpha\left(\pi_1\right) } + e^{\varepsilon_\alpha\left(\pi_2\right) }} e^{\alpha\left( \gamma(\pi_1) \right) } }{ 1 - \frac{ e^{c}-1 }{ e^{ c + \varphi_\alpha\left(\pi_1\right) } + e^{\varepsilon_\alpha\left(\pi_2\right) }} e^{\alpha\left( \gamma(\pi_1) \right) + \varepsilon_\alpha\left( \pi_1 \right) }} \int_0^{T+t} e^{-\alpha\left( \pi_1 \ast \pi_2(s)\right)}ds \right) \alpha^\vee \\
              = & \pi_1 \ast \pi_2 (T+t) + \log\left( 1 + \frac{ \left( e^{c}-1 \right) e^{\alpha\left( \gamma(\pi_1) \right) } }{  e^{ c + \varphi_\alpha\left(\pi_1\right) } + e^{\varepsilon_\alpha\left(\pi_2\right) } - \left( e^{c}-1 \right) e^{\varphi_\alpha\left( \pi_1 \right) }} \int_0^{T+t} e^{-\alpha\left( \pi_1 \ast \pi_2(s)\right)}ds \right) \alpha^\vee \\
              = & \pi_1 \ast \pi_2 (T+t) + \log\left( 1 + \frac{ e^{c}-1 }{ e^{ \varphi_\alpha\left(\pi_1\right) } + e^{-\alpha\left( \gamma(\pi_1) \right)+\varepsilon_\alpha\left(\pi_2\right) } } \int_0^{T+t} e^{-\alpha\left( \pi_1 \ast \pi_2(s)\right)}ds \right) \alpha^\vee \\
              = & \pi_1 \ast \pi_2 (T+t) + \log\left( 1 + \frac{ e^{c}-1 }{e^{\varepsilon_\alpha\left(\pi_1 \ast \pi_2(s) \right)}} \int_0^{T+t} e^{-\alpha\left( \pi_1 \ast \pi_2(s)\right)}ds \right) \alpha^\vee \\
              = & e^c_\alpha\left( \pi_1 \ast \pi_2 \right)( T + t )
              \end{align*}
\end{itemize}
\end{proof}

\subsection{Projection onto the group picture}

We define the projection map as:
$$ \begin{array}{cccc}
    p: & L   & \rightarrow & \Bc = \left( B \cap B^+ w_0 B^+ \right)_{\geq 0}\\
       & \pi & \mapsto     & B_T\left(\pi\right)
   \end{array}
$$
We claim that $p$ is injective if $L$ is connected, and its image contains a $\Bc\left( \lambda \right)$ as soon as the intersection is non-empty. This will be a consequence of being an isomorphism of crystals if restricted to a connected component (section \ref{section:isom_results}). For now, let us just show that $p$ transports structures:

\begin{thm}
\label{thm:p_is_morphism}
$p$ is a morphism of abstract crystals, as the following properties hold:
\begin{itemize}
 \item[(i)]   $\gamma\left( \pi \right) = \gamma\left( p\left( \pi \right) \right)$
 \item[(ii)]  $p \circ \iota = \iota \circ p$ where $\iota$ stands for path duality on the left-hand side and for the Kashiwara involution on the right-hand side.
 \item[(iii)] $\varepsilon_\alpha = \varepsilon_\alpha \circ p$ or equivalently $\varphi_\alpha = \varphi_\alpha \circ p$
 \item[(iv)] $e^c_\alpha \cdot p \left( \pi \right) = p\left( e^c_\alpha \cdot \pi \right)$
\end{itemize}
\end{thm}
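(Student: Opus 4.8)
The plan is to establish the four identities in the order (i), (iii), (ii), (iv), since (i) is needed to deduce the $\varphi_\alpha$-version of (iii), and (iii) is needed for (iv). Throughout I would use two facts already available: first, $\pi \in \Cc_0$ starts at $0$, so $B_0(\pi) = e^{\pi(0)} = \mathrm{id}$; second, by Theorem \ref{thm:flow_B_total_positivity} one has $B_T(\pi) \in N^{w_0}_{>0} A \subset \Bc$ (so $p$ is genuinely valued in $\Bc$), and the $NA$-factorisation $B_.(\pi) = N_.(\pi) A_.(\pi)$ is already the Gauss decomposition, i.e. $[B_T(\pi)]_- = N_T(\pi)$, $[B_T(\pi)]_0 = A_T(\pi) = e^{\pi(T)}$, $[B_T(\pi)]_+ = \mathrm{id}$. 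Property (i) is then immediate: $\gamma(p(\pi)) = \log[B_T(\pi)]_0 = \log e^{\pi(T)} = \pi(T) = \gamma(\pi)$ by \eqref{lbl:process_A_explicit}.

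For (iii) I would compute $\varepsilon_\alpha(p(\pi)) = \chi_\alpha^-(B_T(\pi)) = \chi_\alpha^-(N_T(\pi))$ directly from the flow. One may either expand $N_T(\pi)$ via \eqref{lbl:process_N_explicit} and observe that the additive character $\chi_\alpha^-$ annihilates every term $f_{i_1}\cdots f_{i_k}$ with $k \geq 2$ (in exponential coordinates these lie in the derived algebra) and retains only the degree-one coefficient; or, more cleanly, note that the differential of $\chi_\alpha^-$ kills $[\nfrak,\nfrak]$ and sends $f_\beta \mapsto \delta_{\alpha\beta}$, so by \eqref{lbl:process_N_ode} one gets $\tfrac{d}{dt}\chi_\alpha^-(N_t(\pi)) = e^{-\alpha(\pi(t))}$. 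Integrating from $0$, where $\chi_\alpha^-(\mathrm{id}) = 0$, yields $\varepsilon_\alpha(p(\pi)) = \int_0^T e^{-\alpha(\pi(s))}\,ds$, which, keeping track of the additive/multiplicative dictionary, is $\varepsilon_\alpha(\pi)$. The equivalent form $\varphi_\alpha = \varphi_\alpha \circ p$ then follows from axiom (C1) on both sides together with (i).

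For (ii) the task is to show $B_T(\pi)^\iota = B_T(\pi^\iota)$, i.e. that time reversal of paths intertwines with the group anti-automorphism $\iota$. I would use the iterated-integral formula \eqref{lbl:process_B_explicit}: applying $\iota$ reverses the order of each product $f_{i_1}\cdots f_{i_k}$, fixes every $f_i$, and inverts the torus factor $e^{\pi(T)}$; after the substitution $s_\ell = T - t_{k+1-\ell}$ in the simplex integral and pushing the torus factor back to the right via $e^{-\pi(T)} f_j = e^{\alpha_j(\pi(T))} f_j\, e^{-\pi(T)}$, using $\pi^\iota(s) = \pi(T-s) - \pi(T)$ and $\pi^\iota(T) = -\pi(T)$, the result is term-by-term the formula \eqref{lbl:process_B_explicit} for $\pi^\iota$. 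A slicker alternative: $V_t := B_T(\pi)^\iota\big(B_{T-t}(\pi)^\iota\big)^{-1}$ satisfies $V_0 = \mathrm{id}$, and because $\iota$ converts the left-invariant equation \eqref{lbl:process_B_ode} into a right-invariant one, which the time change $s = T - t$ turns back into left-invariant form, $V$ solves $dV_t = V_t\big(\sum_\alpha f_\alpha\,dt + d\pi^\iota_t\big)$; uniqueness gives $V_t = B_t(\pi^\iota)$, and $t = T$ is the claim.

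For (iv) I would reduce both sides to the same Gauss projection. By Theorem \ref{thm:geom_crystal_is_crystal} and (iii), $e^c_\alpha \cdot p(\pi) = [x_\alpha(\xi) B_T(\pi)]_{-0}$ with $\xi = (e^c-1)/e^{\varepsilon_\alpha(B_T(\pi))} = (e^c-1)/e^{\varepsilon_\alpha(\pi)}$. On the path side, $e^c_\alpha\cdot\pi = T_{x_\alpha(\xi)}\pi$ (the lemma identifying Littelmann operators with $T_g$, via the extension of $T_{x_\alpha(\cdot)}$ to $\xi > -e^{-\varepsilon_\alpha(\pi)}$, which covers every $c \in \R$). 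Since $x_\alpha(\xi) \in U$ has $[x_\alpha(\xi)]_- = \mathrm{id}$ and $x_\alpha(\xi) B_t(\pi)$ has a Gauss decomposition throughout $[0,T]$ (principal minors $1$ for $\beta \neq \alpha$ and $1 + \xi\int_0^t e^{-\alpha(\pi(s))}\,ds > 0$ for $\beta = \alpha$, by the lemma preceding the extension proposition), the identity $[gB_t(X)]_{-0} = [g]_- B_t(T_g X)$ yields $p(e^c_\alpha\cdot\pi) = B_T(T_{x_\alpha(\xi)}\pi) = [x_\alpha(\xi) B_T(\pi)]_{-0} = e^c_\alpha\cdot p(\pi)$. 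I expect (ii) to be the main obstacle: the two $\iota$'s live in genuinely different categories, and matching them forces one to track how conjugating the left-invariant flow by an anti-automorphism produces a right-invariant flow that only the time reversal $s = T - t$ restores to standard form; once (i) and (iii) are in hand, (iv) is a routine assembly of the path-transform results of Section \ref{section:path_transforms}.
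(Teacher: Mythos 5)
Your proposal is correct and follows essentially the same route as the paper: (i) by reading off $[B_T(\pi)]_0=e^{\pi(T)}$, (ii) via the iterated-integral formula \eqref{lbl:process_B_explicit} with the substitution $t\mapsto T-t$ and reversal of the products $f_{i_1}\cdots f_{i_k}$, (iii) by matching the value at $T=0$ and the $T$-derivative of $\chi_\alpha^-(N_T(\pi))$ against $\int_0^T e^{-\alpha(\pi)}$, and (iv) by identifying $e^c_\alpha$ on paths with $T_{x_\alpha(\xi)}$ for $\xi=(e^c-1)e^{-\varepsilon_\alpha(\pi)}$ and comparing Gauss projections. Your alternative ODE-plus-uniqueness argument for (ii) is a legitimate (and cleaner) variant of the paper's term-by-term computation, and your explicit check that $1+\xi\int_0^t e^{-\alpha(\pi)}>0$ for negative $\xi$ in (iv) makes precise a point the paper leaves implicit.
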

\begin{proof}
\begin{itemize}
 \item[(i)]   $$ \gamma\left( p\left( \pi \right) \right) = \gamma\left( B_T\left(\pi\right) \right) = \pi(T) = \gamma\left( \pi \right) $$
 \item[(ii)]  \begin{align*}
                & p \circ \iota \left( \pi \right)\\
              = & p\left( \pi^\iota \right)\\
              = & \left(\sum_{k \geq 1} \sum_{ i_1, \dots, i_k } \int_{ T \geq t_k \geq \dots \geq t_1 \geq 0} e^{ -\alpha_{i_1}(\pi^\iota(t_1)) \dots -\alpha_{i_k}(\pi^\iota(t_k)) } f_{i_1} f_{i_2} \dots f_{i_k} dt_1 \dots dt_k\right)e^{\pi^\iota(T)} + e^{ \pi^\iota(T) }\\
              \end{align*}
              Since:
              \begin{align*}
                & e^{ -\alpha_{i_1}(\pi^\iota(t_1)) \dots -\alpha_{i_k}(\pi^\iota(t_k)) } f_{i_1} f_{i_2} \dots f_{i_k}\\
              = & e^{ -\alpha_{i_1}(\pi(T-t_1)) \dots -\alpha_{i_k}(\pi(T-t_k)) + \alpha_{i_1}(\pi(T)) \dots + \alpha_{i_k}(\pi(T)) } f_{i_1} f_{i_2} \dots f_{i_k}\\
              = & e^{ -\alpha_{i_1}(\pi(T-t_1)) \dots -\alpha_{i_k}(\pi(T-t_k)) } e^{ -\pi(T) } f_{i_1} f_{i_2} \dots f_{i_k} e^{ \pi(T) }\\
              \end{align*}
              We obtain:
              \begin{align*}
                & p \circ \iota \left( \pi \right)\\
              = & \left( \sum_{k \geq 1} \sum_{ i_1, \dots, i_k } \int_{ T \geq t_k \geq \dots \geq t_1 \geq 0} e^{ -\alpha_{i_1}(\pi(T-t_1)) \dots -\alpha_{i_k}(\pi(T-t_k)) } e^{-\pi(T)} f_{i_1} f_{i_2} \dots f_{i_k} e^{ \pi(T) } dt_1 \dots dt_k\right) \\
                & \ \ \ e^{-\pi(T)} + e^{-\pi(T)} \\
              = &  e^{-\pi(T)}\left( id + \sum_{k \geq 1} \sum_{ i_1, \dots, i_k } \int_{ T \geq t_k \geq \dots \geq t_1 \geq 0} e^{ -\alpha_{i_1}(\pi(T-t_1)) \dots -\alpha_{i_k}(\pi(T-t_k)) }f_{i_1} f_{i_2} \dots f_{i_k} dt_1 \dots dt_k\right) \\
              = & \left( \left(\sum_{k \geq 1} \sum_{ i_1, \dots, i_k } \int_{ T \geq t_k \geq \dots \geq t_1 \geq 0} e^{ -\alpha_{i_1}(\pi(T-t_1)) \dots -\alpha_{i_k}(\pi(T-t_k)) } f_{i_k} \dots f_{i_2} f_{i_1} dt_1 \dots dt_k + id\right) e^{ \pi(T) } \right)^{ \iota}\\
              = & \left( \left(\sum_{k \geq 1} \sum_{ i_1, \dots, i_k } \int_{ T \geq t_1 \geq \dots \geq t_k \geq 0} e^{ -\alpha_{i_1}(\pi(t_1)) \dots -\alpha_{i_k}(\pi(t_k)) } f_{i_k} \dots f_{i_2} f_{i_1} dt_1 \dots dt_k + id\right) e^{ \pi(T) } \right)^{ \iota}\\
              = & \left( \left(\sum_{k \geq 1} \sum_{ i_1, \dots, i_k } \int_{ T \geq t_k \geq \dots \geq t_1 \geq 0} e^{ -\alpha_{i_1}(\pi(t_1)) \dots -\alpha_{i_k}(\pi(t_k)) } f_{i_1} f_{i_2} \dots f_{i_k} dt_1 \dots dt_k + id\right) e^{ \pi(T) } \right)^{ \iota}\\
              = & \left( B_T\left(\pi\right) \right)^\iota\\
              = & \iota \circ p \left( \pi \right)
              \end{align*}
 \item[(iii)] The two propositions are equivalent as in both the path model and Berenstein and Kazhdan's model, $\varepsilon_\alpha = \varphi_\alpha \circ \iota$.
              $$ \varepsilon_\alpha \circ p (\pi) = \chi_\alpha^-\left( B_T\left(\pi\right) \right) = \chi_\alpha^-\left( N_T\left(\pi\right) \right)$$
             All that remains to be proven is $e^{ \chi_\alpha^-\left( N_T\left(\pi\right) \right) }= \int_0^T e^{-\alpha\left(\pi(s)\right) } ds $. Both expressions coincide for $T=0$, and have the same derivatives with respect to $T$.
 \item[(iv)] \begin{align*}
               & e^c_\alpha \cdot p \left( \pi \right)\\
             = & e^c_\alpha \cdot B_T\left(\pi\right)\\
             = & x_\alpha\left( \frac{e^c-1}{e^{\varepsilon_\alpha\left( B_T\left(\pi\right) \right)}} \right) \cdot B_T\left(\pi\right) \cdot x_\alpha\left( \frac{e^{-c}-1}{e^{\varphi_\alpha\left( B_T\left(\pi\right) \right)}} \right)\\
             = & B_T\left( T_g \pi \right)\\
               & \quad \textrm{ where } g = x_\alpha\left( \frac{e^c-1}{e^{\varepsilon_\alpha\left( \pi \right)}} \right)\\
             = & B_T\left( e^c_\alpha \cdot \pi \right)\\
             = & p\left( e^c_\alpha \cdot \pi \right)
             \end{align*}
\end{itemize} 
\end{proof}

\subsection{Verma relations in the path model}
Thanks to the previous subsection, we know that the geometric crystals given by the path model, in a certain sense, sit above the group picture $\Bc$. The Verma relations are also valid at the path level: Given a geometric Littelmann crystal $L$, for any ${\bf i} \in I^k$, $k \in \N$ consider the map $e_{\bf i}^.$ as in subsection \ref{subsection:additional_structure}. The analogue of proposition \ref{proposition:group_verma} holds:
\begin{proposition}
 In the Littelmann geometric path model, $e_{\bf i}$ depends only on:
 $$ w = s_{i_1} \dots s_{i_k} \in W$$
\end{proposition}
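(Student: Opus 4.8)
The statement asserts that Verma relations hold in the geometric Littelmann path model, i.e.\ that $e_{\bf i}$ depends only on the Weyl group element $w = s_{i_1}\cdots s_{i_k}$. The cleanest route is to transport the problem to the group picture via the projection morphism $p$ of Theorem~\ref{thm:p_is_morphism}, combined with the characterisation of Verma relations in Lemma~2.1 of \cite{bib:BK00} (reproduced in the excerpt). Concretely, the plan is first to observe that, by definition of $e_{\bf i}^\cdot$ and the compatibility relation $e^c_\alpha \cdot p(\pi) = p(e^c_\alpha \cdot \pi)$, one has for every word ${\bf i}\in I^k$, every $t\in\afrak$ and every $\pi$,
$$ p\left( e_{\bf i}^t \cdot \pi \right) = e_{\bf i}^t \cdot p(\pi),$$
where on the right-hand side $e_{\bf i}^t$ denotes the map built from the same roots $\beta^{(j)} = s_{i_k}\cdots s_{i_{j+1}}(\alpha_{i_j})$ but acting on $\Bc$. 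Since Proposition~\ref{proposition:group_verma} tells us that the Verma relations, and hence the well-definedness of $e_w$, already hold on $\Bc$, the group-side maps $e_{\bf i}^t$ and $e_{{\bf i}'}^t$ agree whenever $s_{i_1}\cdots s_{i_k} = s_{i_1'}\cdots s_{i_{k'}'}$.

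The second step is to descend this equality from $\Bc$ back to the path model. Here one cannot invoke injectivity of $p$ directly on all of $\Cc_0([0,T],\afrak)$ — $p$ is only injective on connected components, and this is not proven until Section~\ref{section:isom_results}. Instead I would argue as follows. The two candidate transforms $e_{\bf i}^t\cdot\pi$ and $e_{{\bf i}'}^t\cdot\pi$ are both obtained from $\pi$ by a composition of root operators $e^{c}_\alpha$, and by construction each lies in the connected component $\langle\pi\rangle$ of $\pi$. We know $p$ maps both to the \emph{same} element of $\Bc$. It therefore suffices to show that on a single connected component the projection $p$ is injective — but rather than appeal to the later theorem, one can give a self-contained argument in the rank~$1$ situation to which Verma relations reduce: the classical rank~$2$ reduction (exactly as in the proof of Lemma~2.1 in the excerpt) shows that it is enough to verify, for each pair of simple roots $\alpha,\beta$ spanning an $A_1\times A_1$, $A_2$, $BC_2$ or $G_2$ subsystem, the corresponding braid-type identity among the $e^c_\alpha$, $e^c_\beta$, plus the relation $e^{\,(s_{i_k}\cdots s_{i_{j+1}}\alpha_{i_j})(t)}_{\alpha_{i_j}} e^{\,(s_{i_k}\cdots s_{i_{j+2}}\alpha_{i_{j+1}})(t)}_{\alpha_{i_{j+1}}} = \mathrm{id}$ when $i_j = i_{j+1}$. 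This last cancellation is immediate from axiom (C4) ($e^{c+c'}_\alpha = e^c_\alpha e^{c'}_\alpha$) together with (C2)/(C3): if $i_j=i_{j+1}$ with root $\alpha$, the two exponents are $\alpha(t')$ and $(s_\alpha\alpha)(t') = -\alpha(t')$ for the appropriate $t'$, which sum to zero. For the braid-type identities themselves, I would lift them from $\Bc$: applying $p$ to a connected component $\langle\pi\rangle$ gives a crystal isomorphism onto its image (this part only needs the rank~$1$/rank~$2$ bookkeeping, which is elementary and can be spelled out here, or one simply forward-references Theorem~\ref{thm:static_geometric_rs}), so the group-level Verma relations of Proposition~\ref{proposition:group_verma} pull back to path-level Verma relations. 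Then Lemma~2.1 of \cite{bib:BK00} yields $e_{\bf i} = e_{{\bf i}'}$ for arbitrary (possibly non-reduced) words $\bf i, i'$ representing the same $w$.

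Assembling: one states that by Theorem~\ref{thm:p_is_morphism}(iv) the root operators on $L$ intertwine with those on $\Bc$; that Proposition~\ref{proposition:group_verma} gives the Verma relations on $\Bc$; that the rank-2 reduction (identical to the one carried out in the proof of Lemma~2.1 above) reduces checking the Verma relations on $L$ to four rank-2 identities plus the trivial $s_\alpha^2 = \mathrm{id}$ cancellation; that each rank-2 identity, being an equality of compositions of root operators, can be verified on $L$ by pushing forward through $p$ and using injectivity of $p$ on the connected component generated by the path (either cited from Theorem~\ref{thm:static_geometric_rs} or argued directly in rank~$\leq 2$); and hence that $e_{\bf i}$ depends only on $w = s_{i_1}\cdots s_{i_k}$.

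\textbf{Main obstacle.} The delicate point is the logical ordering: injectivity of $p$ on connected components is the natural tool but is only established later in Section~\ref{section:isom_results}, so one must either (a) forward-reference Theorem~\ref{thm:static_geometric_rs} and accept a mild circularity risk — checking that Verma relations are not used in its proof — or (b) supply a direct, self-contained verification of the rank-2 Verma identities for the explicit root-operator formula $e^c_\alpha\cdot\pi(t) = \pi(t) + \log\!\bigl(1 + \tfrac{e^c-1}{e^{\varepsilon_\alpha(\pi)}}\int_0^t e^{-\alpha(\pi(s))}ds\bigr)\alpha^\vee$, which is a genuine but routine computation analogous to the one already done for property (v) (the $e^{c+c'}_\alpha = e^c_\alpha e^{c'}_\alpha$ identity). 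Option~(b) is safer and is the one I would present, falling back on the fact that all the structural data entering a rank-2 identity live in a single connected component, on which the projection — combined with the already-proven group-side relations — is enough.
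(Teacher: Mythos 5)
Your proposal is correct in outline and correctly isolates the real difficulty (injectivity of $p$ is not yet available at this point in the paper), but the paper resolves that difficulty by a third route you did not consider, and it is worth comparing. The paper writes $\eta = e^t_{\bf i}\cdot\pi = T_u\pi$ and $\eta' = e^t_{\bf i'}\cdot\pi = T_{u'}\pi$ for explicit $u,u'\in U$ (each root operator is a $T_{x_\alpha(\xi)}$, and these compose), pushes forward by $p$ to get $B_T(\eta)=[uB_T(\pi)]_{-0}=[u'B_T(\pi)]_{-0}=B_T(\eta')$ from the group-side Verma relations, and then \emph{recovers $u$ from the pair} $(B_T(\pi),B_T(\eta))$ via the identity $[\bar{w}_0^{-1}B_T(\eta)^\iota]_+=[\bar{w}_0^{-1}B_T(\pi)^\iota]_+\,u^\iota$; equality of the group images therefore forces $u=u'$, hence $\eta=T_u\pi=T_{u'}\pi=\eta'$. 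This is a targeted substitute for injectivity: one does not need $p$ to be injective on $\langle\pi\rangle$, only that the implementing element $u\in U$ is determined by the image, which is a two-line Gauss-decomposition computation. By contrast, your option (a) is logically sound (the proof of Theorem \ref{thm:static_geometric_rs} rests on the parametrization results, which use the braid relations for $\Tc_{w}$ and the path-transform composition properties rather than the path-level Verma relations, so there is no genuine circularity), but it imports a result proved two sections later and obliges the reader to audit the dependency chain. Your preferred option (b) is the one I would caution against presenting as ``routine'': the paper itself remarks that the direct computation is more complicated, states the $A_2$ identity (Lemma \ref{label:lemma_verma_A2}) without proof, and does not attempt the $BC_2$ or $G_2$ cases, which involve four- and six-fold compositions of the integral operators; so as written your fallback is a plan rather than a proof precisely in the cases where a proof is hardest. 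Your reduction of the $i_j=i_{j+1}$ cancellation to axioms (C2)--(C4) is correct and matches the paper's treatment of non-reduced words.
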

\begin{proof}
For $\pi \in \Cc_0\left( [0, T], \afrak \right)$, $t \in \afrak$ and ${\bf i}, {\bf i'}$ words defining the same Weyl group element consider:
$$ \eta  = e^t_{\bf i } \cdot \pi$$
$$ \eta' = e^t_{\bf i'} \cdot \pi$$
Now let us prove that $\eta = \eta'$. Because the Littelmann path operators can be expressed thanks to the operator $T_.$, there are two elements $u, u' \in U$ such that:
$$ \eta = T_u \pi, \eta' = T_{u'} \pi$$
Furthermore, after applying the crystal morphism $p = B_T(.)$:
$$ B_T(\eta) = [u B_T(\pi)]_{-0}, B_T(\eta') = [u' B_T(\pi)]_{-0}$$
But since the Verma relations hold for the group picture (proposition \ref{proposition:group_verma}):
$$ B_T(\eta) = B_T(\eta')$$
Now, using the fact that $[u B_T(\pi)]_{-0} = u B_T(\pi) [u B_T(\pi)]_{+}^{-1}$ write:
\begin{align*}
g & = [\bar{w}_0^{-1} B_T(\eta)^\iota]_+\\
  & = [\bar{w}_0^{-1} \left( [u B_T(\pi)]_+^{-1} \right)^\iota B_T(\pi)^\iota u^\iota]_+\\
  & = [\bar{w}_0^{-1} B_T(\pi)^\iota ]_+ u^\iota
\end{align*}
Symetrically, $g=[\bar{w}_0^{-1} B_T(\pi)^\iota ]_+ u^\iota=[\bar{w}_0^{-1} B_T(\pi)^\iota ]_+ (u')^\iota$. Hence $u=u'$ and $\eta=\eta'$.
\end{proof}

Let us remark that one can also prove the Verma relations for the path model by direct computation on paths, though it is more complicated. Instead of group operations, one has to do multiple integrations by parts. We only give a partial sketch in the simply-laced case.

\paragraph{ $\mathbf{A_1}$ case:} In the case that $\alpha\left( \beta^\vee \right) = \beta\left( \alpha^\vee \right) = 0$, the actions $e_\alpha^.$ and $e_\beta^.$ commute, which proves the required Verma relation for type $A_1$:
$$ e^{c_1}_\alpha \cdot e^{c_2}_\beta = e^{c_2}_\beta \cdot e^{c_1}_\alpha$$

\paragraph{ $\mathbf{A_2}$ case:} By writing $t = c_1 \omega_1 + c_2 \omega_2$, the Verma relationship becomes
$$ e^{c_1 }_\alpha \cdot e^{c_1 + c_2 }_\beta \cdot e^{c_2 }_\alpha 
 = e^{c_2 }_\beta \cdot e^{c_1 + c_2 }_\alpha \cdot e^{c_1 }_\beta $$

A tedious computation gives the following lemma, that we give without proof:
\begin{lemma}
\label{label:lemma_verma_A2}
If $\alpha\left( \beta^\vee \right) = \beta\left( \alpha^\vee \right) = -1$, then
\begin{align*}
        & e^{c_1 }_\alpha \cdot e^{c_1 + c_2 }_\beta \cdot e^{c_2 }_\alpha \cdot \pi (t) = \pi(t)\\
        & + \alpha^\vee \log\left( 1 + \left( e^{c_1 + c_2} - 1 \right)
\frac
{\int_0^t e^{-\alpha(\pi(s))} \left( 1 + \left( e^{c_1} - 1 \right) \frac{\int_0^s e^{ -\beta(\pi(u)) }du }{\int_0^T e^{ -\beta(\pi(u)) }du } \right)}
{\int_0^T e^{-\alpha(\pi(s))} \left( 1 + \left( e^{c_1} - 1 \right) \frac{\int_0^s e^{ -\beta(\pi(u)) }du }{\int_0^T e^{ -\beta(\pi(u)) }du } \right)} \right) \\
        & + \beta^\vee  \log\left( 1 + \left( e^{c_1 + c_2} - 1 \right)
\frac
{\int_0^t e^{-\beta(\pi(s))} \left( 1 + \left( e^{c_2} - 1 \right) \frac{\int_0^s e^{ -\alpha(\pi(u)) }du }{\int_0^T e^{ -\alpha(\pi(u)) }du } \right)}
{\int_0^T e^{-\beta(\pi(s))} \left( 1 + \left( e^{c_2} - 1 \right) \frac{\int_0^s e^{ -\alpha(\pi(u)) }du }{\int_0^T e^{ -\alpha(\pi(u)) }du } \right)} \right)
\end{align*}
\end{lemma}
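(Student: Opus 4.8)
The plan is to prove the lemma by a direct computation: substitute the explicit formula for each root operator and simplify the threefold composition. I will write $F(t)=\int_0^t e^{-\alpha(\pi(s))}\,ds$ and $G(t)=\int_0^t e^{-\beta(\pi(s))}\,ds$, with totals $F=F(T)$, $G=G(T)$, and set $p=e^{c_1}-1$, $q=e^{c_2}-1$, $r=e^{c_1+c_2}-1$, the one algebraic relation needed being $1+r=(1+p)(1+q)$. Recall that $\alpha(\alpha^\vee)=\beta(\beta^\vee)=2$ while $\alpha(\beta^\vee)=\beta(\alpha^\vee)=-1$.

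First I would compute $\pi_1:=e^{c_2}_\alpha\cdot\pi$, which by definition is $\pi(t)+A_1(t)\,\alpha^\vee$ with $e^{A_1(t)}=1+q\,F(t)/F$. Then I would apply $e^{c_1+c_2}_\beta$: because $\beta(\alpha^\vee)=-1$ one has $e^{-\beta(\pi_1(s))}=e^{-\beta(\pi(s))}\bigl(1+q\,F(s)/F\bigr)$, so $e^{\varepsilon_\beta(\pi_1)}=\widetilde G$ where $\widetilde G(t):=\int_0^t e^{-\beta(\pi(s))}(1+q\,F(s)/F)\,ds$ and $\widetilde G:=\widetilde G(T)$, and $\pi_2:=e^{c_1+c_2}_\beta\cdot\pi_1$ equals $\pi_1(t)+B(t)\,\beta^\vee$ with $e^{B(t)}=1+r\,\widetilde G(t)/\widetilde G$. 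Since the remaining operator moves only the $\alpha^\vee$ direction, $B(t)$ is already the $\beta^\vee$-coefficient of the left-hand side, and it agrees term by term with the $\beta^\vee$-part of the lemma — so after two steps half of the claim is in place.

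The substance is in the third step, $\pi_3:=e^{c_1}_\alpha\cdot\pi_2$. Using $\alpha(\alpha^\vee)=2$ and $\alpha(\beta^\vee)=-1$,
\[ e^{-\alpha(\pi_2(s))}=e^{-\alpha(\pi(s))}\,\frac{1+r\,\widetilde G(s)/\widetilde G}{\bigl(1+q\,F(s)/F\bigr)^{2}}, \]
so with $\widehat F(t):=\int_0^t e^{-\alpha(\pi_2(s))}\,ds$ I would integrate by parts against $-\tfrac Fq\,\tfrac{d}{ds}\bigl(1+q\,F(s)/F\bigr)^{-1}$; the cross term simplifies because $\widetilde G'(s)/(1+q\,F(s)/F)=e^{-\beta(\pi(s))}$, so no new integral appears, and using $1+r=(1+p)(1+q)$ to collapse the boundary term gives the closed form $\widehat F(T)=F(rG-p\widetilde G)/(q\widetilde G)$. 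The $\alpha^\vee$-coefficient of $\pi_3(t)$ is $A_1(t)+\log\bigl(1+\tfrac{p}{\widehat F(T)}\widehat F(t)\bigr)$, and I would show that its exponential equals $1+r\,K(t)/K(T)$ with $K(t):=\int_0^t e^{-\alpha(\pi(s))}(1+p\,G(s)/G)\,ds$ — which is exactly the $\alpha^\vee$-term of the lemma. Both sides equal $1$ at $t=0$, so it suffices to match their derivatives; after differentiation the $e^{-\beta(\pi(t))}$ contributions cancel and the identity reduces to the scalar relation $K(T)=(\widetilde G/G)\,\widehat F(T)$, which one proves with a last integration by parts (compute $\int_0^T e^{-\alpha(\pi)}G$ and $\int_0^T e^{-\beta(\pi)}F$, then substitute into the definition of $\widetilde G$ to get $K(T)=F(rG-p\widetilde G)/(qG)$).

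The main obstacle is precisely the third and fourth steps: obtaining the closed form for $\widehat F(T)$ and then the identity $K(T)=(\widetilde G/G)\widehat F(T)$. Both require carrying several integrations by parts through carefully and repeatedly invoking $1+r=(1+p)(1+q)$ to keep the expressions from growing out of control — the bookkeeping, rather than any conceptual difficulty, is where all the work lies. As a consistency check one can note that the right-hand side of the lemma is symmetric under the simultaneous swap $(\alpha,c_1)\leftrightarrow(\beta,c_2)$, which is precisely the $A_2$ Verma relation $e^{c_1}_\alpha\cdot e^{c_1+c_2}_\beta\cdot e^{c_2}_\alpha=e^{c_2}_\beta\cdot e^{c_1+c_2}_\alpha\cdot e^{c_1}_\beta$ already established in the preceding proposition; so the explicit formula above is really the only new content.
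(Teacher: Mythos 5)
Your computation is correct, and it supplies exactly what the paper omits: the lemma is stated ``without proof'' and the Verma relations themselves are established in the text by passing through the group picture, so there is nothing in the paper to compare your argument against. I checked the two pivotal steps. With your notation, integrating by parts against $-\frac{F}{q}\frac{d}{ds}\bigl(1+qF(s)/F\bigr)^{-1}$ and using $\widetilde G'(s)/(1+qF(s)/F)=e^{-\beta(\pi(s))}$ gives
\[
\widehat F(t)=\frac{F}{q}\Bigl(1-\frac{1+r\widetilde G(t)/\widetilde G}{1+qF(t)/F}\Bigr)+\frac{rF\,G(t)}{q\widetilde G},
\]
whose value at $t=T$ collapses via $\frac{1+r}{1+q}=1+p$ to $\widehat F(T)=F(rG-p\widetilde G)/(q\widetilde G)$, as you claim. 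Likewise $\int_0^T e^{-\alpha(\pi)}G=FG-F(\widetilde G-G)/q$ yields $K(T)=F(rG-p\widetilde G)/(qG)=(\widetilde G/G)\,\widehat F(T)$, and the product $\bigl(1+qF(t)/F\bigr)\bigl(1+p\widehat F(t)/\widehat F(T)\bigr)$ simplifies (using $G(t)\,u(t)-\widetilde G(t)=\frac{q}{F}\int_0^t G\,e^{-\alpha(\pi)}$) to $1+rK(t)/K(T)$, which is the stated $\alpha^\vee$-coefficient; the $\beta^\vee$-coefficient is immediate after your first two steps since $e^{c_1}_\alpha$ only moves the $\alpha^\vee$ direction. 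Your closing observation that the resulting formula is symmetric under $(\alpha,c_1)\leftrightarrow(\beta,c_2)$ is precisely how the paper intends the lemma to be used, so the write-up is consistent with the surrounding text.
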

By inspecting the formula, one realizes that is symmetric in $\alpha$ and $c_1$ on the one hand, and $\beta$ and $c_2$ on the other hand. As such, by swapping those variables in the left hand-side term, one gets the Verma relation for type $A_2$:
$$ e^{c_1 }_\alpha \cdot e^{c_1 + c_2 }_\beta \cdot e^{c_2 }_\alpha 
 = e^{c_2 }_\beta \cdot e^{c_1 + c_2 }_\alpha \cdot e^{c_1 }_\beta $$

\paragraph{ $\mathbf{ADE}$ case:} Root systems from the ADE classifications have Dynkin diagrams with single edges as $\alpha\left( \beta^\vee \right) = -1$ in all cases. Hence, the only Verma relations needed are of type $A_1$ and $A_2$ and $e_w$ is unambiguously defined for any element $w$ in a Weyl group of $ADE$ type.

\section{Paths on the edge and the geometric Pitman transform}
\label{section:paths_on_the_edge}
A novelty in the path model approach to geometric crystals is the appearance of extended paths. Indeed, in order to parametrize geometric path crystals, we will need to consider paths whose endpoint or starting point is not defined anymore, as we move to the 'edges' of the crystal. This allows a simple compactification that does not involve the geometry of Bruhat cells. A visual sketch is given in figure \ref{figure:ExtremalPaths}.

The highest weight path of a connected crystal $\langle \pi_0 \rangle$ will be given by the geometric Pitman transform $\Tc_{w_0}$ applied to any $\pi \subset \langle \pi_0 \rangle$.

\setcounter{figure}{ \value{equation} }
\addtocounter{equation}{1}
\begin{figure}[ht!]
\centering
\caption{Sketch of extremal paths corresponding to a geometric Littelmann path $\pi$ with Lusztig parameter $g \in U^{w_0}_{>0}$ }
\label{figure:ExtremalPaths}

\begin{tikzpicture}[auto, scale=0.7]
\node at ( -0.3, 0 ) {$0$};
\node at ( 9.3, -0.3 ) {$T$};
\path [draw, very thick, color = black, ->] (0,0) -- (10 , 0 );
\path [draw, very thick, color = black, ->] ( 0, -7 ) -- ( 0, 7 );

\path [draw, dashed, color = black] (9,-7) -- (9 , 7 );

\definecolor{red  }{rgb}{ 0.5,0.0,0.0 }
\definecolor{blue }{rgb}{ 0.0,0.0,0.5 }
\definecolor{black}{rgb}{ 0.0,0.0,0.0 }

\draw (0,0) .. controls (3,2) and (6,2)  .. (9,3) node (pi) [right]{$\pi$};

\draw[dashed, color=blue] (0.3,-6) -- (1.05,-4);
\draw[color=blue] (1.05, -4) .. controls (1.8,-2) and (6, 5) .. (9, 6) node (high) [right]{$\eta^{high} = \Tc_{w_0} \pi$};

\draw[color=red] (0,0) .. controls (3,1) and (6.3,-2) .. (7.5, -4);
\draw[dashed, color=red] (7.5, -4) -- (8.7, -6) node (low) [right]{$\eta^{low} = e^{-\infty}_{w_0} \pi$};

\Edge[lw=0.1cm,style={post, bend right,color=black,}, label=$T_g$](low)(pi)
\Edge[lw=0.1cm,style={post, bend right,color=black,}, label=$\Tc_{w_0}$](pi)(high)
\Edge[lw=0.3cm,style={post, bend right,color=black,}, label=$e^{-\infty}_{w_0}$](pi)(low)
\end{tikzpicture}

\end{figure}
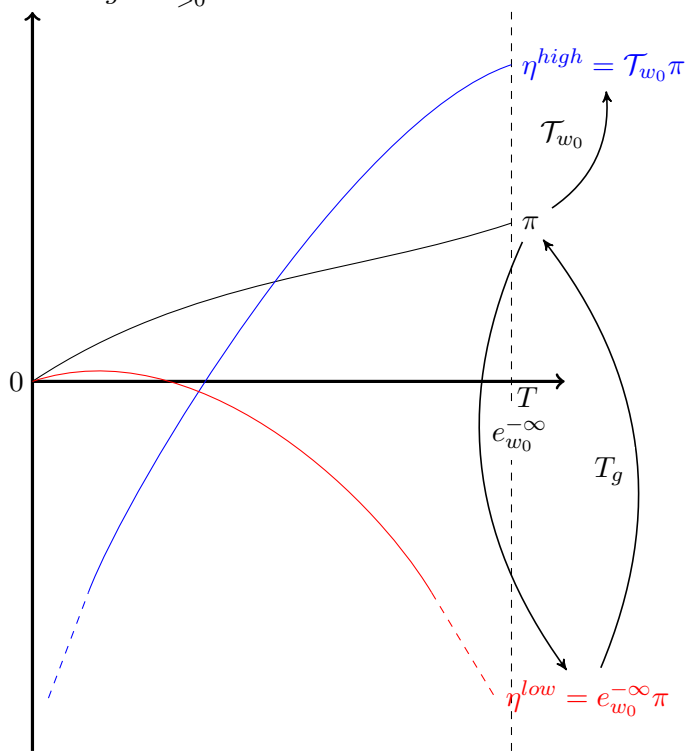

\subsection{High path transforms}
A first example giving extended paths was introduced in \cite{bib:BBO}: for every simple root $\alpha$, define the following transform of a continuous path $\pi \in C(\R_+^*, \afrak )$, such that $e^{-\alpha(\pi)}$ is integrable at the neighborhood of $0$:
$$ \forall t > 0, \Tc_\alpha(\pi)(t) := \pi(t) + \log\left(\int_0^t e^{-\alpha(\pi(s))} ds\right) \alpha^{\vee} $$
These degenerate to the tropical Pitman operators $\Pc_{\alpha^\vee} := \lim_{\varepsilon \rightarrow 0} \varepsilon \Tc_{\alpha} \varepsilon^{-1}$ where:
$$ \forall t>0, \Pc_{\alpha^\vee}(\pi)(t) = \pi(t) - \inf_{0\leq s \leq t} \alpha\left(\pi(s)\right) \alpha^{\vee}$$
Notice that $\Tc_\alpha(\pi)$ is not defined at zero. Therefore, contrary to the tropical case, composing the transforms $\left(\Tc_\alpha\right)_{\alpha \in \Delta}$ requires some care. It was also proven that the $\left(\Tc_\alpha\right)_{\alpha \in \Delta}$ satisfy the braid relationships. We give a simpler proof shortly.
\begin{thm}[\cite{bib:BBO}]
\label{thm:braid_relations_Tc}
If $\bar{w} = \bar{s}_{i_1} \dots \bar{s}_{i_k}$ is a representative in $G$ of $w\in W$ written in a reduced fashion and $\pi$ is a continuous path, then:
$$ e^{\Tc_w(\pi)_t} := [\bar{w}^{-1} B_t(\pi)]_0$$ 
is well defined for $t>0$ and
$$ \Tc_w =  \Tc_{\alpha_{i_k}} \circ \dots \circ \Tc_{\alpha_{i_2}} \circ \Tc_{\alpha_{i_1}}$$
Moreover, the operators $(\Tc_\alpha)_{\alpha \in \Delta}$ satisfy the braid relationships.
\end{thm}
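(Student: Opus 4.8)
The plan is to identify the path transform $\Tc_w$ with the map $\pi \mapsto \log[\bar w^{-1}B_.(\pi)]_0$ by induction on $\ell(w)$, reducing the braid relations to the already-established braid relations for the group representatives $\bar s_i$. First I would check the base case: for $w = s_\alpha = s_i$ a simple reflection, we must see that $[\bar s_i^{-1}B_t(\pi)]_0 = e^{\Tc_\alpha(\pi)(t)}$ for $t>0$. Here $\bar s_i^{-1}=\bar{\bar s}_i$ lies in the rank-one $SL_2$ associated to $\alpha$, so the computation localizes: writing $B_t(\pi) = N_t(\pi)e^{\pi(t)}$ and projecting to the $SL_2$-factor, one sees that the relevant quantity is controlled by $\Delta^{\omega_\alpha}(\bar s_i^{-1}B_t(\pi))$ and $\Delta^{\omega_\beta}(\bar s_i^{-1}B_t(\pi))$ for $\beta\neq\alpha$. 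By the lemma computing $\Delta^{\omega_\beta}(x_\alpha(\xi)N_t(\pi))$ (adapted to $\bar s_i$ in place of $x_\alpha(\xi)$, using the $f_\alpha$-part of $N_t(\pi)$) one gets $\Delta^{\omega_\alpha}=\int_0^t e^{-\alpha(\pi(s))}ds$ and $\Delta^{\omega_\beta}=1$, which is exactly $e^{\alpha(\Tc_\alpha\pi(t))-\alpha(\pi(t))}$, hence $[\bar s_i^{-1}B_t(\pi)]_0 = e^{\Tc_\alpha\pi(t)}$. In particular this is well defined for $t>0$ precisely because $e^{-\alpha(\pi)}$ is integrable near $0$, which holds for continuous $\pi$.

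Next I would run the induction. Suppose $\bar w = \bar s_{i_1}\cdots\bar s_{i_k}$ is a reduced expression and set $w' = s_{i_2}\cdots s_{i_k}$, so $\ell(w) = 1 + \ell(w')$ and $\bar w = \bar s_{i_1}\bar w'$. The key algebraic identity is the Gauss-decomposition cocycle relation: for $g_1,g_2$ in the appropriate Bruhat cell,
\begin{equation}
[g_1 g_2]_0 = [g_1 [g_2]_{-0}]_0 \cdot [\,[g_1 [g_2]_{-0}]_{0+}\,[g_2]_+\,]_0^{\,?},
\end{equation}
but in fact the clean statement I want is simpler: $[\bar w^{-1}B_t(\pi)]_{-0} = [\bar s_{i_1}^{-1}[\bar w'^{-1}B_t(\pi)]_{-0}]_{-0}$, which follows from $[\bar w'^{-1}B_t(\pi)]_{0+}\in B^+$ commuting through. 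Combining this with Property (iii) of the $\hw$-type argument and the defining relation of $T_g$ — namely $[gB_t(\pi)]_{-0} = [g]_-B_t(T_g\pi)$ when $g\in D$ — together with $\bar w'^{-1}B_t(\pi) = [\bar w'^{-1}B_t(\pi)]_{-0}[\bar w'^{-1}B_t(\pi)]_+$, I get $B_t(\Tc_{w'}\pi) = [\bar w'^{-1}B_t(\pi)]_{-0}\cdot e^{-\Tc_{w'}\pi(t)+(\text{correction})}$, so that applying the base case with $\bar s_{i_1}$ to $B_.(\Tc_{w'}\pi)$ yields $[\bar s_{i_1}^{-1}B_t(\Tc_{w'}\pi)]_0 = e^{\Tc_{\alpha_{i_1}}(\Tc_{w'}\pi)(t)}$. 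Unwinding, $[\bar w^{-1}B_t(\pi)]_0 = e^{\Tc_{\alpha_{i_1}}\circ\Tc_{w'}\pi(t)}$, which by the induction hypothesis equals $e^{\Tc_{\alpha_{i_1}}\circ\cdots\circ\Tc_{\alpha_{i_k}}\pi(t)}$. This proves both that $\Tc_w$ is well defined independently of the reduced word and the composition formula $\Tc_w = \Tc_{\alpha_{i_k}}\circ\cdots\circ\Tc_{\alpha_{i_1}}$.

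Finally, the braid relations for $(\Tc_\alpha)_{\alpha\in\Delta}$ fall out for free: if $s_is_js_i\cdots = s_js_is_j\cdots$ is a braid relation in $W$, then the two sides are reduced expressions for the same $w$, so by the already-established braid relations for the Weyl group representatives (Lemma from \cite{bib:KacPeterson}) we have $\bar s_i\bar s_j\bar s_i\cdots = \bar s_j\bar s_i\bar s_j\cdots$ as elements of $G$, hence $[\bar w^{-1}B_t(\pi)]_0$ is literally the same function; by the composition formula just proved, $\Tc_{\alpha_j}\circ\Tc_{\alpha_i}\circ\cdots = \Tc_{\alpha_i}\circ\Tc_{\alpha_j}\circ\cdots$ (note the reversal of order). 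The main obstacle I anticipate is bookkeeping in the inductive step: keeping track of the torus corrections that appear when one passes between $[\,\cdot\,]_{-0}$ and $[\,\cdot\,]_0$, and making sure the $T_g$-reformulation is legitimate, i.e. that $\bar s_{i_1}^{-1}$ — which is not in $D$ — nonetheless yields a genuine Gauss decomposition when applied to the totally positive flow $B_.(\Tc_{w'}\pi)$; this is where positivity of the relevant generalized minors, as in the proof of total positivity of the flow, must be invoked to guarantee non-vanishing of the principal minors along $t>0$.
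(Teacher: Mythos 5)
Your overall strategy (induction on $\ell(w)$ via telescoping Gauss decompositions, with the rank-one case computed through generalized minors, and the braid relations inherited from the braid relations for the representatives $\bar{s}_i$) is viable and is in fact close to how the paper later proves the inversion lemmas; the paper itself proves this theorem differently, by approximating $\bar{w}^{-1}$ by the totally positive elements $\xi_k^{h_{i_k}}x_{i_k}(1/\xi_k)\cdots\xi_1^{h_{i_1}}x_{i_1}(1/\xi_1)$, applying the composition property $T_{g_1g_2}=T_{g_1[g_2]_-}\circ T_{g_2}$ to these approximants, and letting the $\xi_j\to 0$ one at a time. However, your inductive step as written contains a genuine error. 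With $\bar{w}=\bar{s}_{i_1}\bar{w}'$ you have $\bar{w}^{-1}=\bar{w}'^{-1}\bar{s}_{i_1}^{-1}$, so the correct peeling is $[\bar{w}^{-1}B_t(\pi)]_{-0}=[\bar{w}'^{-1}\,[\bar{s}_{i_1}^{-1}B_t(\pi)]_{-0}]_{-0}$, i.e.\ $\bar{s}_{i_1}^{-1}$ acts \emph{first} and $\Tc_w=\Tc_{w'}\circ\Tc_{\alpha_{i_1}}$. The identity you assert, $[\bar{w}^{-1}B_t(\pi)]_{-0}=[\bar{s}_{i_1}^{-1}[\bar{w}'^{-1}B_t(\pi)]_{-0}]_{-0}$, computes instead $[(\bar{w}'\bar{s}_{i_1})^{-1}B_t(\pi)]_{-0}$ and is false for your $\bar w$; the composition $\Tc_{\alpha_{i_1}}\circ\Tc_{\alpha_{i_2}}\circ\cdots\circ\Tc_{\alpha_{i_k}}$ you arrive at is the reverse of the theorem's $\Tc_{\alpha_{i_k}}\circ\cdots\circ\Tc_{\alpha_{i_1}}$, and these are genuinely different operators in general (the reversed string corresponds to the element $s_{i_2}\cdots s_{i_k}s_{i_1}$, not to $w$). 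This is not absorbed by the parenthetical ``note the reversal of order'' at the end.

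The second gap is the intermediate object $B_.(\Tc_{w'}\pi)$: the flow $B_t(\eta)$ is only defined for paths $\eta$ that start at time $0$, whereas $\Tc_{w'}\pi$ diverges logarithmically at $0$ and the iterated integrals in the series defining $B_t(\eta)$ are then divergent (e.g.\ $e^{-\alpha_{i_1}(\Tc_{\alpha_{i_1}}\pi(s))}\sim s^{-2}$ near $0$). So ``applying the base case to $B_.(\Tc_{w'}\pi)$'' is not legitimate as stated. To repair this you must carry the induction entirely on $Y_t:=[\bar{u}^{-1}B_t(\pi)]_{-0}$, using that $Y$ solves the left-invariant equation driven by $\Tc_u\pi$ on $(0,T]$ and identifying the successive factors $[\bar{s}_{i}^{-1}Y_t]_{0+}$ as elements of the reduced cell $N\bar{s}_i^{-1}N\cap B^+$ (exactly as in the paper's proof of Theorem \ref{thm:string_inversion_lemma}), or else verify at each stage that $e^{-\alpha_{i_{j+1}}(\Tc_{s_{i_1}\cdots s_{i_j}}\pi)}$ is integrable at $0$ — which is where the reducedness of the word enters via Lemma \ref{lemma:rho_property}, a point your sketch does not touch. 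Your closing remark about positivity of minors addresses non-vanishing of the Gauss decomposition but not this integrability/definedness issue. The deduction of the braid relations from those of the $\bar{s}_i$ is fine once the composition formula is correctly established.
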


The operator $\Tc_{w_0}$ arises then in a very natural way as the highest weight path transform for the geometric Littelmann model. Indeed, considering a connected geometric crystal, because crystal actions are free, there is no such thing as a dominant path that could be preferred, unlike the $h=0$ case considered in \cite{bib:BBO2}, or the original setting considered by Littelmann. Hence the idea of finding an invariant under crystal actions that will play that role. In the group picture, we have already introduced a notion of highest weight in definition \ref{def:highest_lowest_weight} which fullfills that purpose as invariant (lemma \ref{lemma:hw_is_invariant}). Now that we have at our disposal the projection map $p$, it is natural to transport the definition of highest weight from the group picture. 

\begin{definition}
If $\pi \in C( [0, T], \afrak )$ the associated highest weight is given by:
$$ \hw(\pi) := \hw( B_T(\pi) )$$
\end{definition}

And evidently:
\begin{proposition}
If $\pi \in C( [0, T], \afrak )$ then:
$$  \hw(\pi) = \Tc_{w_0}(\pi)(T)$$
\end{proposition}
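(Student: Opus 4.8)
The plan is to unwind the two definitions and show they agree by a short group-theoretic computation. By definition $\hw(\pi) = \hw(B_T(\pi))$ where the right-hand side is the highest weight map on $\Bc$ from Definition \ref{def:highest_lowest_weight}, namely $\hw(g) = \log[\bar w_0^{-1} g]_0$. On the other hand, $\Tc_{w_0}(\pi)(T)$ is characterized in Theorem \ref{thm:braid_relations_Tc} by $e^{\Tc_{w_0}(\pi)_t} = [\bar w_0^{-1} B_t(\pi)]_0$. So the statement is essentially immediate: setting $t = T$ in the defining relation for $\Tc_{w_0}$ gives
\begin{align*}
e^{\Tc_{w_0}(\pi)(T)} = [\bar w_0^{-1} B_T(\pi)]_0 = e^{\hw(B_T(\pi))} = e^{\hw(\pi)},
\end{align*}
and taking logarithms (both sides lie in $A = \exp(\afrak)$ by total positivity of the flow, Theorem \ref{thm:flow_B_total_positivity}, which guarantees $B_T(\pi) \in N^{w_0}_{>0} A \subset B^+ w_0 B^+$ so the Gauss decomposition of $\bar w_0^{-1} B_T(\pi)$ exists and its torus part is in $A$) yields $\Tc_{w_0}(\pi)(T) = \hw(\pi)$.

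First I would recall why $\bar w_0^{-1} B_T(\pi)$ admits a Gauss decomposition with torus part in $A$: by Theorem \ref{thm:flow_B_total_positivity}, for $T>0$ we have $B_T(\pi) \in N^{w_0}_{>0} A$, hence $B_T(\pi) \in B \cap B^+ w_0 B^+ = X^-$, and on this cell the highest weight map $\hw$ of Definition \ref{def:highest_lowest_weight} is well-defined with values in $\afrak$. This is exactly the hypothesis under which Theorem \ref{thm:braid_relations_Tc} asserts that $e^{\Tc_{w_0}(\pi)_t} = [\bar w_0^{-1} B_t(\pi)]_0$ is well-defined for $t>0$. So both objects $\hw(\pi)$ and $\Tc_{w_0}(\pi)(T)$ are genuine elements of $\afrak$ and are characterized by the \emph{same} formula $\log[\bar w_0^{-1} B_T(\pi)]_0$.

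Then I would simply identify the two expressions. From the definition of the highest weight of a path, $\hw(\pi) := \hw(B_T(\pi)) = \log[\bar w_0^{-1} B_T(\pi)]_0$ by Definition \ref{def:highest_lowest_weight}. From Theorem \ref{thm:braid_relations_Tc} evaluated at $t=T$, $\Tc_{w_0}(\pi)(T) = \log[\bar w_0^{-1} B_T(\pi)]_0$. These are literally the same quantity, so $\hw(\pi) = \Tc_{w_0}(\pi)(T)$.

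I do not expect any real obstacle here: the content of the proposition is that the path-level definition of highest weight, which was deliberately transported from the group picture via $p = B_T(\cdot)$, coincides with the endpoint of the geometric Pitman transform, and this is built into the characterization of $\Tc_{w_0}$ in Theorem \ref{thm:braid_relations_Tc}. The only point requiring a word of justification — and it is a short one — is that the relevant Gauss decomposition exists and has torus part in $A$ so that the logarithm makes sense and is $\afrak$-valued; this is handled by the total positivity of the flow. Everything else is a matter of citing Definition \ref{def:highest_lowest_weight}, Theorem \ref{thm:flow_B_total_positivity} and Theorem \ref{thm:braid_relations_Tc} and reading off the equality.
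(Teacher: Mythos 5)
Your proof is correct and is exactly the argument the paper intends — the paper states this proposition with the word ``evidently'' and no written proof, since $\hw(\pi):=\hw(B_T(\pi))=\log[\bar w_0^{-1}B_T(\pi)]_0$ and $\Tc_{w_0}(\pi)(T)$ is defined by the same formula in Theorem \ref{thm:braid_relations_Tc}. Your added remark that total positivity of the flow (Theorem \ref{thm:flow_B_total_positivity}) guarantees the Gauss decomposition exists with torus part in $A$ is the right justification for why both sides are well-defined elements of $\afrak$.
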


Now, going back to discussing the braid relations, we make a simple remark:
\begin{align*}
\forall t>0, T_{\xi^{-\alpha^\vee} x_\alpha\left( \xi \right) }( \pi )(t)
& = \pi(t) + \log\left( \frac{1}{\xi} + \int_0^t e^{-\alpha(\pi(s))} ds\right) \alpha^{\vee}\\
& \stackrel{ \xi \rightarrow +\infty }{\longrightarrow}  \Tc_\alpha(\pi)(t)
\end{align*} 
This will allow us to give a simpler proof of the braid relations for $(\Tc_\alpha)_{\alpha \in \Delta}$ using the path transform properties \ref{lbl:path_transform_properties} as suggested at the end of \cite{bib:BBO2} (section 6.6).

\begin{proof}[Proof of theorem \ref{thm:braid_relations_Tc}]
We will show that the operators $(\Tc_\alpha)_{\alpha \in \Delta}$ satisfy the braid relationships as a consequence of the fact that the representatives $(\bar{s}_\alpha)_{\alpha \in \Delta}$ also satisfy them.

Let $\pi$ a continuous path in $\afrak$ and fix $g \in H_{>0} U^w_{>0}$ such that:
$$ g = \xi_k^{h_{i_k}} x_{i_k} \left( \frac{1}{\xi_k} \right) \dots \xi_1^{h_{i_1}} x_{i_1}\left( \frac{1}{\xi_1} \right) $$
for parameters $\xi_i > 0$. We will make use of the approximants:
\begin{align} 
\label{lbl:weyl_group_approximants}
\bar{s}_i(t) & := \phi_i\left(  \left( \begin{array}{cc} t & -1 \\ 1 & 0 \end{array}\right) \right) = y_i\left( -\frac{1}{t} \right) t^{h_i} x_i\left( \frac{1}{t} \right)\\
\bar{\bar{s}}_i(t) & := \phi_i\left(  \left( \begin{array}{cc} t & 1 \\ -1 & 0 \end{array}\right) \right) = y_i\left( \frac{1}{t} \right) t^{h_i} x_i\left( -\frac{1}{t} \right)
\end{align}
which converges respectively to $\bar{s}_i$ and $\bar{\bar{s}}_i = \bar{s}_{i_1}^{-1}$ as the parameter goes to zero. Let us start by writing:
\begin{align*}
\forall t>0, e^{T_g\left( \pi \right)(t)}
& = \left[g B_t(\pi)\right]_0\\
& = \left[\xi_k^{h_{i_k}} x_{i_k}\left( \frac{1}{\xi_k} \right)
    \dots \xi_1^{h_{i_1}} x_{i_1}\left( \frac{1}{\xi_1} \right)
          B_t(\pi)\right]_0\\
& = \left[ y_{i_k}\left( \frac{1}{\xi_k} \right) \bar{\bar{s}}_{i_k}( \xi_k )
     \dots y_{i_1}\left( \frac{1}{\xi_1} \right) \bar{\bar{s}}_{i_1}( \xi_1 )
           B_t(\pi) \right]_0\\ 
\end{align*}
Here we need to make $\xi_j$ successively go to zero in the decreasing order $j=k, \dots, 1$. Let us prove by induction that at the step $k \geq j > 1$, we get the quantity:
\begin{align}
 \label{eqn:quantity_j}
 \left[ \overline{ \overline{s_{i_k} \dots s_{i_{j+1}} }}
                y_{i_j}\left( \frac{1}{\xi_j} \right) \bar{\bar{s}}_{i_j}( \xi_j ) 
          \dots y_{i_1}\left( \frac{1}{\xi_1} \right) \bar{\bar{s}}_{i_1}( \xi_1 )
           B_t(\pi) \right]_0
\end{align}
Since $B_t(\pi)$ is totally positive inside $B$ (theorem \ref{thm:flow_B_total_positivity}), then for all $j=k, \dots, 1$, it is also the case for 
$$ g_j = \xi_j^{h_{i_j}} x_{i_j}\left( \frac{1}{\xi_j} \right)
    \dots \xi_1^{h_{i_1}} x_{i_1}\left( \frac{1}{\xi_1} \right) B_t(\pi)$$
Therefore, the minors $\Delta^{\omega_\alpha}\left( \overline{s_{i_{j+1}} \dots s_{i_k}}^{-1} g_j\right)$ are non zero, a topologically open property that stays valid for the $\xi_j$ in a neighborhood of zero. Hence, taking those limits and considering those Gauss decompositions is allowed.

First, at step $j=k$, we can get rid of $y_{i_k} \left( \frac{1}{\xi_k} \right) \in N$:
\begin{align*}
e^{T_g\left( \pi \right)(t)}
& = \left[ y_{i_k}\left( \frac{1}{\xi_k} \right) \bar{\bar{s}}_{i_k}( \xi_k )
     \dots y_{i_1}\left( \frac{1}{\xi_1} \right) \bar{\bar{s}}_{i_1}( \xi_1 )
           B_t(\pi) \right]_0\\ 
& \stackrel{ \xi_k \rightarrow 0 }{\longrightarrow}
    \left[ \bar{\bar{s}}_{i_k}
     \dots y_{i_1}\left( \frac{1}{\xi_1} \right) \bar{\bar{s}}_{i_1}( \xi_1 )
           B_t(\pi) \right]_0
\end{align*}
Now, assume that equation \ref{eqn:quantity_j} is proven for step $j$:
$$ \left[ \overline{ \overline{s_{i_k} \dots s_{i_{j+1}} }}
                y_{i_j}\left( \frac{1}{\xi_j} \right) \bar{\bar{s}}_{i_j}( \xi_j ) 
          \dots y_{i_1}\left( \frac{1}{\xi_1} \right) \bar{\bar{s}}_{i_1}( \xi_1 )
           B_t(\pi) \right]_0$$
Here, write:
$$ \overline{ \overline{s_{i_k} \dots s_{i_{j+1}} }} y_{i_j}\left( \frac{1}{\xi_j} \right)
 = \exp\left( \frac{1}{\xi_j} \Ad(\overline{ \overline{s_{i_k} \dots s_{i_{j+1}} }}) f_{i_j} \right) 
   \overline{ \overline{s_{i_k} \dots s_{i_{j+1}} }}$$
And since $\Ad(\overline{ \overline{s_{i_k} \dots s_{i_{j+1}} }}) f_{i_j} \in \mathfrak{g}_{ -s_{i_k} \dots s_{i_{j+1}} \alpha_{i_j} } \subset \nfrak$, one has:
\begin{align*}
  & \left[ \overline{ \overline{s_{i_k} \dots s_{i_{j+1}} }}
                  y_{i_j}\left( \frac{1}{\xi_j} \right) \bar{\bar{s}}_{i_j}( \xi_j ) 
            \dots y_{i_1}\left( \frac{1}{\xi_1} \right) \bar{\bar{s}}_{i_1}( \xi_1 )
            B_t(\pi) \right]_0\\
= & \left[ \exp\left( \frac{1}{\xi_j} \Ad(\overline{ \overline{s_{i_k} \dots s_{i_{j+1}} }}) f_{i_j} \right) 
           \overline{ \overline{s_{i_k} \dots s_{i_{j+1}} }} \bar{\bar{s}}_{i_j}( \xi_j ) 
            \dots y_{i_1}\left( \frac{1}{\xi_1} \right) \bar{\bar{s}}_{i_1}( \xi_1 )
            B_t(\pi) \right]_0\\
= & \left[ \overline{ \overline{s_{i_k} \dots s_{i_{j+1}} }} \bar{\bar{s}}_{i_j}( \xi_j ) 
            \dots y_{i_1}\left( \frac{1}{\xi_1} \right) \bar{\bar{s}}_{i_1}( \xi_1 )
            B_t(\pi) \right]_0\\
& \stackrel{ \xi_j \rightarrow 0}{\longrightarrow}
    \left[ \overline{ \overline{s_{i_k} \dots s_{i_{j+1}} s_{i_j} }}
            \dots y_{i_1}\left( \frac{1}{\xi_1} \right) \bar{\bar{s}}_{i_1}( \xi_1 )
            B_t(\pi) \right]_0
\end{align*}
The previous limit gives step $j-1$.

At the end, we get:
$$\left[\overline{ \overline{s_{i_k} \dots s_{i_1} }} B_t(\pi) \right]_0 = \left[\bar{w}^{-1} B_t(\pi) \right]_0 = \exp\left( \Tc_w \pi(t) \right)$$ 
On the other hand, because the group elements belong to the appropriate sets, we can use the composition property among properties \ref{lbl:path_transform_properties}:
\begin{align*}
\forall t>0, T_g\left( \pi \right)(t)
& = T_{ \xi_1^{h_{i_k}} x_{i_k} \left( \frac{1}{\xi_k} \right) } \circ \dots \circ T_{ \xi_1^{h_{i_1}} x_{i_1} \left( \frac{1}{\xi_1} \right) }\left( \pi \right)(t)\\
& \longrightarrow  \Tc_{\alpha_k} \circ \dots \circ \Tc_{\alpha_1}(\pi)(t)
\end{align*}
The previous limit makes sense if and only if for every $j=1, \dots, k$, $e^{-\alpha_{i_j}( \Tc_{s_{i_1} \dots s_{i_{j-1}}}\pi)}$ is integrable. Later, we have a much precise description of this integrability property, but for now, we already know thanks to the previous computation that the Gauss decompositions exist at every level. Therefore, the highest path transforms $(\Tc_\alpha)_{\alpha \in \Delta}$ must have been applied to paths with the appropriate integrability property. Identifying both limits, the braid relationships are proven:
$$\forall t>0,  \Tc_{\alpha_k} \circ \dots \circ \Tc_{\alpha_1}(\pi)(t) = \Tc_w \pi(t)$$
\end{proof}

\subsection{Low path transforms}
Define $e^{-\infty}_\alpha: \Cc_0\left( [0; T), \afrak \right) \rightarrow \Cc_0\left( [0; T), \afrak \right)$ as
$$ \forall 0 \leq t < T, e^{-\infty}_\alpha \cdot \pi(t) := \pi(t) + \log\left( 1 - \frac{ \int_0^t e^{-\alpha(\pi)} }{ \int_0^T e^{-\alpha(\pi)} } \right) \alpha^\vee$$
Notice that $T$ is excluded and that this path transform makes sense even if $\int_0^T e^{\alpha(\pi)} = \infty$. The notation obviously comes from the fact that $e^{-\infty}_\alpha = \lim_{c \rightarrow -\infty} e^c_\alpha$, hence the name of 'low' path transforms.

Clearly, $e^{-\infty}_\alpha$ is a projection in the sense that $e^c_\alpha \cdot e^{-\infty}_\alpha = e^{-\infty}_\alpha \cdot e^c_\alpha = e^{-\infty}_\alpha$ and it stabilizes paths $\pi$ such that $\int_0^T e^{-\alpha(\pi)} = +\infty$. In fact, we can associate such transforms to each element of the Weyl group:
\begin{definition}
Given a reduced expression $w=s_{i_1} \dots s_{i_l}$ for $w \in W$ and $\ell(w) = l$, $e^{-\infty}_w$ is defined unambiguously as 
$$ e^{-\infty}_w = e^{-\infty}_{\alpha_{i_l}} \dots e^{-\infty}_{\alpha_{i_1}} $$
\end{definition}
\begin{proof}
A first proof uses lemmma \ref{lemma:high_low_duality} and the braid relations for $\left( \Tc_w \right)_{w \in W}$. A second proof consists of using the Verma relations in order to check the claim in the case of a braid move: $w = s_i s_j s_i \dots = s_j s_i s_j \dots \ $. Indeed let $\left( \beta_1, \beta_2, \beta_3, \dots \right)$ and  $\left( \beta_1', \beta_2', \beta_3', \dots \right)$ the two positive roots enumerations associated to each reduced word.
$$\forall t \in \afrak, e_w^t = e^{ \beta_1(t) }_{\alpha_i} \cdot e^{ \beta_2(t) }_{\alpha_j} \dots = e^{ \beta_1'(t) }_{\alpha_j} \cdot e^{ \beta_2'(t) }_{\alpha_i} \dots $$
Then take $t = - M \mu$ with $\mu$ in the open Weyl chamber, $M$ a real number, and have $M$ go to $+\infty$.
\end{proof}

The name of 'high' path transforms for $\left( \Tc_w \right)_{w \in W}$ is justified by the fact that they are dual to 'low' path transforms:
\begin{lemma}
\label{lemma:high_low_duality}
$$ e^{-\infty}_{\alpha} \circ \iota = \iota \circ \Tc_{\alpha}  $$
And for $w \in W$:
$$ e^{-\infty}_{w} \circ \iota = \iota \circ \Tc_{w} $$ 
\end{lemma}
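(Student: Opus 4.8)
The plan is to prove the identity for a single simple root by a direct computation, and then bootstrap to arbitrary $w \in W$ by iterating it along a reduced word.

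\textbf{Step 1: the case of a simple root $\alpha$.} Fix a path $\pi$ (with $e^{-\alpha(\pi)}$ integrable wherever it is needed) and a time $t$. On one hand, $e^{-\infty}_\alpha \circ \iota(\pi)(t) = \pi^\iota(t) + \log\!\left( 1 - \frac{\int_0^t e^{-\alpha(\pi^\iota(s))}ds}{\int_0^T e^{-\alpha(\pi^\iota(s))}ds} \right)\alpha^\vee$. Since $\alpha(\pi^\iota(s)) = \alpha(\pi(T-s)) - \alpha(\pi(T))$, the change of variables $u = T-s$ gives $\int_0^t e^{-\alpha(\pi^\iota(s))}ds = e^{\alpha(\pi(T))}\int_{T-t}^T e^{-\alpha(\pi(u))}du$ and likewise for the integral over $[0,T]$; the common factor $e^{\alpha(\pi(T))}$ cancels in the ratio, and $1 - \frac{\int_{T-t}^T}{\int_0^T} = \frac{\int_0^{T-t}}{\int_0^T}$. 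Hence $e^{-\infty}_\alpha \circ \iota(\pi)(t)$ equals $\pi(T-t) - \pi(T) + \log\!\left(\int_0^{T-t} e^{-\alpha(\pi(u))}du\right)\alpha^\vee - \log\!\left(\int_0^T e^{-\alpha(\pi(u))}du\right)\alpha^\vee$. On the other hand $\iota \circ \Tc_\alpha(\pi)(t) = \Tc_\alpha\pi(T-t) - \Tc_\alpha\pi(T)$, and substituting the definition of $\Tc_\alpha$ yields exactly the same expression. One should also note that the domains match: $\Tc_\alpha$ produces a path defined on $(0,T]$, whose $\iota$-dual is defined on $[0,T)$, which is precisely the domain of $e^{-\infty}_\alpha \circ \iota$. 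Using $\iota^2 = \mathrm{id}$ this rewrites equivalently as $\iota \circ \Tc_\alpha = e^{-\infty}_\alpha \circ \iota$.

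\textbf{Step 2: general $w \in W$.} Pick a reduced expression $w = s_{i_1}\dots s_{i_l}$. By Theorem \ref{thm:braid_relations_Tc} one has $\Tc_w = \Tc_{\alpha_{i_l}}\circ \dots \circ \Tc_{\alpha_{i_1}}$, and by definition $e^{-\infty}_w = e^{-\infty}_{\alpha_{i_l}}\circ \dots \circ e^{-\infty}_{\alpha_{i_1}}$. Starting from $\iota \circ \Tc_w = \iota \circ \Tc_{\alpha_{i_l}} \circ \Tc_{\alpha_{i_{l-1}}} \circ \dots \circ \Tc_{\alpha_{i_1}}$, apply Step 1 repeatedly, each time moving $\iota$ one factor to the right via $\iota \circ \Tc_{\alpha_{i_j}} = e^{-\infty}_{\alpha_{i_j}} \circ \iota$; after $l$ steps this gives $\iota \circ \Tc_w = e^{-\infty}_{\alpha_{i_l}} \circ \dots \circ e^{-\infty}_{\alpha_{i_1}} \circ \iota = e^{-\infty}_w \circ \iota$. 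Whenever $\Tc_w\pi$ is defined, every intermediate path $\Tc_{\alpha_{i_j}} \circ \dots \circ \Tc_{\alpha_{i_1}}(\pi)$ is defined — this is the content of the integrability discussion in the proof of Theorem \ref{thm:braid_relations_Tc} — so each application of Step 1 is legitimate; as a byproduct this also confirms that $e^{-\infty}_w$ does not depend on the chosen reduced word.

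\textbf{Expected difficulty.} There is no serious obstacle here: the computation in Step 1 is a change of variables, and Step 2 is a formal iteration. The only care required is the bookkeeping of domains — the extended paths live only on half-open intervals, and the composed transforms require the successive integrability conditions — so one must make sure the iteration in Step 2 never passes through an undefined intermediate path, which is guaranteed by the analysis already carried out for $\Tc_w$.
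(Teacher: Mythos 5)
Your proof is correct and follows the same route the paper intends: the paper's own proof consists of the single remark that the first identity is ``a quick computation'' and the second ``a consequence,'' which is exactly your change-of-variables calculation in Step 1 followed by the iteration along a reduced word in Step 2. Your attention to the domain bookkeeping (the intermediate high paths and the integrability inherited from the analysis in Theorem \ref{thm:braid_relations_Tc}) is the only nontrivial point, and you handle it correctly.
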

\begin{proof}
The first identity is a quick computation. The second one is a consequence.
\end{proof}

\begin{rmk}
Notice that it does not make sense to apply the duality map $\iota$ after $e^{-\infty}_w$ for $w \in W$, since it produces a path lacking an endpoint. Though, an extended duality holds, as we will see.

Moreover, the transforms $\left( \Tc_w \right)_{w \in W}$ are not projections.
\end{rmk}

Of course $e^{-\infty}_{w_0}$ is special projection as:
$$\forall \alpha \in \Delta, \forall c \in \R, e^{-\infty}_{w_0} \cdot e^{c}_{\alpha} = e^{c}_{\alpha} \cdot e^{-\infty}_{w_0} = e^{-\infty}_{w_0}$$
The following proposition shows that it is constant on the crystal's components:
\begin{proposition}
\label{proposition:partial_connectedness_criterion}
 If $\pi_1$ and $\pi_2$ are connected then $e^{-\infty}_{w_0} \cdot \pi_1 = e^{-\infty}_{w_0} \cdot \pi_2$
\end{proposition}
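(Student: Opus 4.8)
The plan is to show that $e^{-\infty}_{w_0}$ factors through the group picture $\Bc$, and that being connected in the path model forces equality after projection. The key observation is that $e^{-\infty}_{w_0}$ is a projection that kills the whole crystal action (as noted just before the statement: $e^{-\infty}_{w_0} \cdot e^c_\alpha = e^c_\alpha \cdot e^{-\infty}_{w_0} = e^{-\infty}_{w_0}$ for all $\alpha$ and $c$), so if $\pi_1$ and $\pi_2$ are connected, writing $\pi_2 = e^{c_1}_{\alpha_{i_1}} \cdots e^{c_k}_{\alpha_{i_k}} \cdot \pi_1$ and applying $e^{-\infty}_{w_0}$ on the left would immediately give the claim \emph{provided} one knows that $e^{-\infty}_{w_0}$ really absorbs arbitrary products of root operators, i.e. that the identity $e^{-\infty}_{w_0} \cdot e^c_\alpha = e^{-\infty}_{w_0}$ holds for every simple $\alpha$, not just those appearing in some privileged reduced word.

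First I would establish the single-root absorption identity $e^{-\infty}_{w_0} \cdot e^c_\alpha = e^{-\infty}_{w_0}$ for \emph{every} $\alpha \in \Delta$ and every $c \in \R$. Since $e^{-\infty}_{w_0}$ is defined via a fixed reduced word $w_0 = s_{i_1}\cdots s_{i_m}$ as $e^{-\infty}_{w_0} = e^{-\infty}_{\alpha_{i_m}} \cdots e^{-\infty}_{\alpha_{i_1}}$, and the definition was shown to be independent of the reduced word, I can for any given $\alpha$ choose a reduced word for $w_0$ \emph{starting} with $s_\alpha$ (possible because $\alpha \in \mathrm{Inv}(w_0)$, as $w_0$ inverts all positive roots). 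With such a word the outermost-applied operator is $e^{-\infty}_\alpha$, and then the elementary identity $e^{-\infty}_\alpha \cdot e^c_\alpha = e^{-\infty}_\alpha$ (the relation $e^c_\alpha \cdot e^{-\infty}_\alpha = e^{-\infty}_\alpha \cdot e^c_\alpha = e^{-\infty}_\alpha$ already recorded in the subsection) together with the fact that $e^c_\alpha$ commutes past the remaining factors in the appropriate way — or more directly, using the Verma relations to move $e^c_\alpha$ through and then applying the commutation — yields $e^{-\infty}_{w_0} \cdot e^c_\alpha = e^{-\infty}_{w_0}$. Actually the cleanest route is: for a word starting with $s_\alpha$, $e^{-\infty}_{w_0} \cdot e^c_\alpha = \big(e^{-\infty}_{\alpha_{i_m}} \cdots e^{-\infty}_{\alpha_{i_2}}\big) e^{-\infty}_\alpha e^c_\alpha = \big(e^{-\infty}_{\alpha_{i_m}} \cdots e^{-\infty}_{\alpha_{i_2}}\big) e^{-\infty}_\alpha = e^{-\infty}_{w_0}$.

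With the single-root identity in hand, the proof concludes by an immediate induction: if $\pi_1, \pi_2$ are connected, there exist $k \in \N$, $(c_1,\dots,c_k) \in \R^k$ and $(i_1,\dots,i_k) \in I^k$ with
$$ \pi_2 = e^{c_1}_{\alpha_{i_1}} \cdot e^{c_2}_{\alpha_{i_2}} \cdots e^{c_k}_{\alpha_{i_k}} \cdot \pi_1, $$
and applying $e^{-\infty}_{w_0}$ and peeling off the root operators one at a time gives $e^{-\infty}_{w_0} \cdot \pi_2 = e^{-\infty}_{w_0} \cdot \pi_1$.

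The main obstacle is the first step: justifying that $e^{-\infty}_{w_0}$ absorbs a root operator for an \emph{arbitrary} simple root, which rests on the well-definedness of $e^{-\infty}_{w_0}$ under changes of reduced word (established via the Verma relations just above) and on the existence of a reduced word for $w_0$ beginning with any prescribed simple reflection. Both ingredients are available, so the argument is short; the only care needed is to make sure the limit arguments underlying the definition of $e^{-\infty}_w$ are not disturbed when the root operator $e^c_\alpha$ is present, which follows since $e^c_\alpha$ acts continuously and $e^{-\infty}_\alpha e^c_\alpha = e^{-\infty}_\alpha$ is an exact identity on paths (not merely a limit).
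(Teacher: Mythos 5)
Your proof is correct and follows essentially the same route as the paper: write $\pi_2$ as a string of root operators applied to $\pi_1$ and absorb them one by one into the projection $e^{-\infty}_{w_0}$. The only difference is that you carefully justify the absorption identity $e^{-\infty}_{w_0}\cdot e^c_\alpha = e^{-\infty}_{w_0}$ for arbitrary $\alpha\in\Delta$ (via a reduced word for $w_0$ beginning with $s_\alpha$ and the well-definedness of $e^{-\infty}_{w_0}$), a step the paper simply asserts as obvious just before the proposition.
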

\begin{proof}
Being connected means that there are real numbers $\left(c_1, \dots, c_l \right)$ and indices 
$\left( i_1, \dots, i_l \right)$ such that:
$$\pi_2 = e^{c_1}_{\alpha_{i_1}} \dots e^{c_l}_{\alpha_{i_l}} \pi_1$$
Then:
$$e^{-\infty}_{w_0} \cdot \pi_2 = e^{-\infty}_{w_0} \cdot e^{c_1}_{\alpha_{i_1}} \dots e^{c_l}_{\alpha_{i_l}} \pi_1 = e^{-\infty}_{w_0} \cdot \pi_1$$
\end{proof}

In theorem \ref{thm:connectedness_criterion}, we will see that the converse is true giving a connectedness criterion.

\subsection{A certain property of the Weyl co-vector}
While moving to the edges of geometric crystals, we will obtain paths that can blow-up in finite time. The direction taken to go to infinity will be of utmost importance, involving the Weyl co-vector:
$$ \rho^\vee := \sum_{\alpha \in \Delta} \omega_\alpha^\vee = \frac{1}{2} \sum_{ \beta \in \Phi^+ } \beta^\vee$$

We will need a little property linking $\rho^\vee$ and the weak Bruhat order.
\begin{lemma}
  \label{lemma:rho_property}
  Let $w = s_{i_1} s_{i_2} \dots s_{i_k} \in W$ with $\ell(w) = k$. It defines a positive roots enumeration 
  $\left( \beta_1, \beta_2, \dots, \beta_k \right)$. Then:
  \begin{itemize}
   \item $$\rho^\vee - w \rho^\vee = \beta_1^\vee + \beta_2^\vee + \dots + \beta_k^\vee$$ 
   \item $\ell(s_\alpha w) = \ell(w) + 1$ if and only if $-\alpha\left( \rho^\vee - w \rho^\vee \right) \geq 0$
  \end{itemize}
\end{lemma}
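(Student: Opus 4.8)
The plan is to obtain the first identity as a direct specialization of Kumar's formula (Lemma~\ref{lbl:kumar}), and then to deduce the second bullet from it together with the standard description of the length function through inversion sets.

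For the first identity, I would apply the first equality of Lemma~\ref{lbl:kumar} with $\lambda=\rho^\vee\in\afrak$ and the given reduced word $w=s_{i_1}\cdots s_{i_k}$, whose associated positive roots enumeration is $(\beta_1,\dots,\beta_k)$. Since $\rho^\vee=\sum_{\gamma\in\Delta}\omega_\gamma^\vee$ and the fundamental coweights $\omega_\gamma^\vee$ are by definition the dual basis of the simple roots, one has $\alpha_{i_j}(\rho^\vee)=1$ for every $j$. Lemma~\ref{lbl:kumar} then gives at once
$$\rho^\vee-w\rho^\vee=\sum_{j=1}^{k}\alpha_{i_j}(\rho^\vee)\,\beta_j^\vee=\sum_{j=1}^{k}\beta_j^\vee ,$$
matching the stated identity.

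For the second bullet (with $\alpha\in\Delta$ a simple root), I would first record the elementary identity
$$-\alpha\bigl(\rho^\vee-w\rho^\vee\bigr)=\alpha(w\rho^\vee)-\alpha(\rho^\vee)=(w^{-1}\alpha)(\rho^\vee)-1 ,$$
using $W$-equivariance of the pairing $\hfrak^*\times\hfrak\to\C$ to rewrite $\alpha(w\rho^\vee)=(w^{-1}\alpha)(\rho^\vee)$, together with $\alpha(\rho^\vee)=1$. Next, for an arbitrary root $\gamma=\sum_i m_i\alpha_i$ one has $\gamma(\rho^\vee)=\sum_i m_i$, again because $\alpha_i(\rho^\vee)=1$; hence $\gamma(\rho^\vee)\ge 1$ for $\gamma\in\Phi^+$ and $\gamma(\rho^\vee)\le-1$ for $\gamma\in\Phi^-$. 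Consequently $-\alpha(\rho^\vee-w\rho^\vee)\ge0$ if and only if $w^{-1}\alpha\in\Phi^+$. It then remains to recall the classical equivalence, for $\alpha$ simple, $\ell(s_\alpha w)=\ell(w)+1\iff w^{-1}\alpha\in\Phi^+$: since $\ell(s_\alpha w)$ differs from $\ell(w)$ by exactly $\pm1$, it suffices to apply $\ell(v)=|\Inv(v)|$ (recalled just before Lemma~\ref{lemma:positive_roots_enumeration}) to $v=w^{-1}s_\alpha=(s_\alpha w)^{-1}$ and to use that $s_\alpha$ sends $\alpha$ to $-\alpha$ and permutes $\Phi^+\setminus\{\alpha\}$, which gives $|\Inv(w^{-1}s_\alpha)|=|\Inv(w^{-1})|+1$ exactly when $w^{-1}\alpha\in\Phi^+$. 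Chaining the two equivalences yields the statement.

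This computation is routine and I do not expect a real obstacle; the only thing to watch is bookkeeping — the distinction between the $W$-action on $\hfrak^*$ and on $\hfrak$, and the direction of the length inequality $\ell(s_\alpha w)$ versus $\ell(w)$. I would sanity-check the signs on $w=e$ (both sides trivially agree) and on $w=s_\alpha$, where $-\alpha(\rho^\vee-s_\alpha\rho^\vee)=-\alpha(\alpha^\vee)=-2<0$ matches $\ell(s_\alpha s_\alpha)=0<\ell(s_\alpha)$. The one genuinely used input is that $\rho^\vee$ pairs to $1$ with every simple root, which turns Lemma~\ref{lbl:kumar} into a telescoping identity and makes root heights govern the length criterion.
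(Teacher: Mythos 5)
Your proof is correct, and for the first bullet it is exactly the paper's argument: specialize Lemma \ref{lbl:kumar} to $\lambda=\rho^\vee$ and use $\alpha_{i_j}(\rho^\vee)=1$ to make the sum telescope. For the second bullet the two routes differ only in which standard characterization of the length condition is invoked: the paper cites Bourbaki's chamber criterion ($\ell(s_\alpha w)=\ell(w)+1$ iff $C$ and $wC$ lie on the same side of the wall of $\alpha$, i.e.\ $\alpha(w\rho^\vee)>0$) and then upgrades the resulting inequality $-\alpha(\rho^\vee-w\rho^\vee)>-1$ to $\geq 0$ by remarking that the quantity is an integer, whereas you pass through the equivalent condition $w^{-1}\alpha\in\Phi^+$, derived from the inversion-set count, and get the dichotomy $(w^{-1}\alpha)(\rho^\vee)\geq 1$ or $\leq -1$ directly from root heights. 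These are the same underlying fact; your version is slightly more self-contained (you reprove the length criterion rather than citing it), and the integrality step the paper uses at the end is precisely your observation that $\gamma(\rho^\vee)$ equals the height of $\gamma$. No gaps.
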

\begin{proof}
 The first statement comes as an application of lemma \ref{lbl:kumar}. Concerning the second, following Bourbaki ( \cite{bib:Bourbaki}, Ch. V, \S 3, Th. 1, (ii)), $\ell\left(s_\alpha w\right) = 1 + \ell\left(w\right)$ is equivalent to saying that $C$ and $w\left( C \right)$ are on the same side of the wall associated to $\alpha$. As the Weyl co-vector is inside $C$, it tantamounts to $\alpha\left( w \rho^\vee \right) > 0$. In the end:
$$ -\alpha\left( \rho^\vee - w \rho^\vee \right) > -1 $$
The proof is finished once we notice that the left-hand side is an integer.
\end{proof}
\begin{rmk}
If $w$ is taken as $w_0$ the longest element, we recover the identity we used to define the Weyl co-vector.
\end{rmk}

\subsection{Extended path types}
The paths transforms $\left( \Tc_w \right)_{w \in W}$ ( resp. $\left( e^{-\infty}_w \right)_{w \in W}$) give paths that lack a starting (resp. an ending) point. In order to examine the possible asymptotics, let us first start by a simple lemma:
\begin{lemma}
 \label{lemma:start_asymptotics}
 If $\pi \in \Cc\left( [0, T], \afrak \right)$ then for all $w \in W$:
$$ \Tc_w \pi(t) = w^{-1} \pi(0) + \log(t)\left( \rho^\vee - w^{-1} \rho^\vee \right) + c_w + o(1)$$
where $c_w$ is a constant depending only on $w$ and $o(1)$ goes to zero as $t \rightarrow 0$.
\end{lemma}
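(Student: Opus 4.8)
The plan is to induct on $\ell(w)$, exploiting the composition rule $\Tc_w = \Tc_{\alpha_{i_k}} \circ \Tc_{w'}$ from Theorem \ref{thm:braid_relations_Tc}: here $w = s_{i_1}\cdots s_{i_k}$ is a reduced expression, $w' := s_{i_1}\cdots s_{i_{k-1}}$, so $w = w's_{i_k}$ and $\ell(w) = \ell(w')+1$. The base case $w = e$ is immediate since $\Tc_e = \mathrm{id}$ and $\pi(t) = \pi(0) + o(1)$ by continuity, with $c_e = 0$.

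For the inductive step I would set $\eta := \Tc_{w'}\pi$ (well defined on $(0,T]$ by Theorem \ref{thm:braid_relations_Tc}) and $\alpha := \alpha_{i_k}$, and substitute the induction hypothesis for $\eta$ into $\Tc_w\pi(t) = \eta(t) + \log\bigl(\int_0^t e^{-\alpha(\eta(s))}\,ds\bigr)\alpha^\vee$. Applying $\alpha$ to the asymptotics of $\eta$ gives $\alpha(\eta(s)) = a + m\log(s) + o(1)$ as $s\to 0$, with $m := \alpha\bigl(\rho^\vee - (w')^{-1}\rho^\vee\bigr)$ and $a$ a constant. The crucial input is that $m \le 0$: the equality $\ell(w's_{i_k}) = \ell(w')+1$ is equivalent (after taking inverses) to $\ell(s_{i_k}(w')^{-1}) = \ell((w')^{-1})+1$, which by Lemma \ref{lemma:rho_property} says exactly $-m \ge 0$ (an integer). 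Hence $e^{-\alpha(\eta(s))} = e^{-a}s^{-m}(1+o(1))$ is integrable near $0$, so $\Tc_\alpha$ does apply to $\eta$, and absorbing the $o(1)$ uniformly yields $\int_0^t e^{-\alpha(\eta(s))}\,ds = e^{-a}\frac{t^{1-m}}{1-m}(1+o(1))$ as $t\to 0$.

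Taking logarithms and collecting terms then gives
\begin{equation*}
\Tc_w\pi(t) = (w')^{-1}\pi(0) + c_{w'} + \bigl(-a-\log(1-m)\bigr)\alpha^\vee + \log(t)\,\bigl(\rho^\vee - (w')^{-1}\rho^\vee + (1-m)\alpha^\vee\bigr) + o(1).
\end{equation*}
To identify this with the claimed form I would use the reflection formula $s_{i_k}\mu = \mu - \alpha(\mu)\alpha^\vee$ on $\afrak$ together with $\alpha(\rho^\vee) = 1$ (the fundamental coweights being dual to the simple roots), which give $\alpha((w')^{-1}\rho^\vee) = 1 - m$ and $w^{-1} = s_{i_k}(w')^{-1}$. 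Then the $\log(t)$-coefficient becomes $\rho^\vee - s_{i_k}(w')^{-1}\rho^\vee = \rho^\vee - w^{-1}\rho^\vee$, and the constant part becomes $w^{-1}\pi(0) + c_w$ with $c_w := s_{i_k}(c_{w'}) - \log(1-m)\alpha^\vee$. From this recursion and $c_e = 0$ one sees that $c_w$ depends only on $w$ and not on $\pi$, and since $\Tc_w$ is independent of the chosen reduced word by Theorem \ref{thm:braid_relations_Tc}, so is $c_w$; this closes the induction.

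The only non-formal point — hence the main obstacle — is the passage from the pointwise asymptotics of the integrand $e^{-\alpha(\eta(s))}$ to those of $\int_0^t e^{-\alpha(\eta(s))}\,ds$. This is elementary once one knows $s^{-m}$ is integrable on $(0,t)$, which is precisely what the length criterion of Lemma \ref{lemma:rho_property} secures; everything else is bookkeeping with the reflection formula and the normalization $\alpha(\rho^\vee)=1$.
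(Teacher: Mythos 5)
Your proof is correct and follows essentially the same route as the paper's: induction on $\ell(w)$ via $\Tc_w = \Tc_{\alpha_{i_k}}\circ\Tc_{w'}$, integrability of $s^{-m}$ near $0$ secured by Lemma \ref{lemma:rho_property} applied to $w^{-1}$, integration of equivalents, and the same recursion $c_w = s_{i_k}c_{w'} - \log\bigl((w'\alpha_{i_k})(\rho^\vee)\bigr)\alpha_{i_k}^\vee$ with well-definedness deduced from the reduced-word independence of $\Tc_w$ and uniqueness of the asymptotic expansion. No gaps.
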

\begin{proof}
By induction on $\ell(w)$. If $\ell(w)=0$, then $w=e$ and the result is obvious ($c_e=0$). If $w = u s_\alpha$ with $u \in W$ and $\ell(w) = \ell(u) + 1$, then:
$$\Tc_w = \Tc_\alpha \circ \Tc_{u}$$
Using the induction hypothesis, for $s>0$:
$$e^{-\alpha(\Tc_u\pi(s))} = e^{-\alpha\left( u^{-1} \pi(0) + c_u\right) + o(1)} s^{ -\alpha( \rho^\vee - u \rho^\vee) }$$
Lemma \ref{lemma:rho_property} applied to $w^{-1}$ tells us that $-\alpha( \rho^\vee - u^{-1} \rho^\vee) \geq 0$, and $e^{-\alpha(\Tc_u\pi(s))}$ is integrable at the neighborhood of zero. Since integrating equivalents is allowed, and using that $\alpha\left( \rho^\vee \right) = 1$:
\begin{align*}
  & \int_0^t e^{-\alpha(\Tc_u \pi)}\\
= & e^{-\alpha\left( u^{-1} \pi(0) + c_u\right) + o(1)} \int_0^t s^{ -\alpha( \rho^\vee - u^{-1} \rho^\vee) } ds\\
= & e^{-\alpha\left( u^{-1} \pi(0) + c_u\right) + o(1)} \frac{t^{1-\alpha( \rho^\vee - u^{-1} \rho^\vee) }}{1-\alpha( \rho^\vee - u^{-1} \rho^\vee)}\\
= & e^{-\alpha\left( u^{-1} \pi(0) + c_u\right) + o(1)} \frac{t^{ (u\alpha)( \rho^\vee) }}{(u\alpha)( \rho^\vee)}
\end{align*}
Then:
\begin{align*}
\Tc_w \pi(t) & = \Tc_{u}\pi(t) + \alpha^\vee \log \int_0^t e^{-\alpha(\Tc_u \pi)}\\
& = u^{-1} \pi(0) + \log(t)\left( \rho^\vee - u^{-1}\rho^\vee \right) + c_u + \\
& \ \ -\alpha\left( u^{-1} \pi(0) + c_u\right)\alpha^\vee  + \log \frac{t^{ (u\alpha)( \rho^\vee) }}{(u\alpha)( \rho^\vee)}\alpha^\vee  + o(1)\\
& = w^{-1} \pi(0) + \log(t) \left( \rho^\vee - u^{-1} \rho^\vee + (u\alpha)(\rho^\vee) \alpha^\vee \right) + s_\alpha c_u - \alpha^\vee \log\left( (u\alpha)( \rho^\vee)\right) + o(1)\\
& = w^{-1} \pi(0) + \log(t) \left( \rho^\vee - w^{-1} \rho^\vee \right) + s_\alpha c_u - \alpha^\vee \log\left( (u\alpha)( \rho^\vee)\right) + o(1)
\end{align*}
Set $c_w = s_\alpha c_u - \alpha^\vee \log\left( (u\alpha)( \rho^\vee)\right)$.

Finally, in order to prove that $c_w$ depends only on $w$ and not the reduced expression used, we invoke the fact that the asymptotic development is unique and $\Tc_w$ depends only on $w$.
\end{proof}

For completeness, we give an explicit expression for the constants $c_w, w \in W$, without proof, as we will not use them:
\begin{lemma}
$$ \forall w \in W, c_w = w^{-1} \sum_{\beta \in \Inv(w)} \log\left( \beta(\rho^\vee) \right) \beta^\vee$$
where $\Inv(w)$ is the set of inversions of $W$.
\end{lemma}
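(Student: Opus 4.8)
The plan is to unroll the recursion defining $c_w$ in the proof of Lemma~\ref{lemma:start_asymptotics} and then recognize the outcome as $w^{-1}$ applied to a weighted sum of inversion coroots. I would fix a reduced word ${\bf i}=(i_1,\dots,i_k)\in R(w)$, set $u_j=s_{i_1}\cdots s_{i_j}$ so that $u_0=e$, $u_k=w$ and $u_j=u_{j-1}s_{\alpha_{i_j}}$, and recall from that proof that
$$c_{u_j}=s_{\alpha_{i_j}}c_{u_{j-1}}-\alpha_{i_j}^\vee\log\!\big((u_{j-1}\alpha_{i_j})(\rho^\vee)\big),\qquad c_{u_0}=0 .$$
Iterating this telescopes to
$$c_w=-\sum_{j=1}^k\big(s_{i_k}s_{i_{k-1}}\cdots s_{i_{j+1}}\alpha_{i_j}^\vee\big)\,\log\!\big((s_{i_1}\cdots s_{i_{j-1}}\alpha_{i_j})(\rho^\vee)\big),$$
where the product $s_{i_k}\cdots s_{i_{j+1}}$ is read as the identity when $j=k$.

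The key step is the coroot identity $s_{i_k}\cdots s_{i_{j+1}}\alpha_{i_j}^\vee=-\,w^{-1}\beta_{{\bf i},j}^\vee$, where $\beta_{{\bf i},j}=s_{i_1}\cdots s_{i_{j-1}}\alpha_{i_j}$ is the positive-root enumeration of Lemma~\ref{lemma:positive_roots_enumeration}. This is immediate from $w^{-1}=s_{i_k}\cdots s_{i_1}$, the relation $s_\alpha^2=e$, and $s_{i_j}\alpha_{i_j}^\vee=-\alpha_{i_j}^\vee$: indeed $w^{-1}\beta_{{\bf i},j}^\vee=s_{i_k}\cdots s_{i_1}\,s_{i_1}\cdots s_{i_{j-1}}\alpha_{i_j}^\vee=s_{i_k}\cdots s_{i_j}\alpha_{i_j}^\vee=-\,s_{i_k}\cdots s_{i_{j+1}}\alpha_{i_j}^\vee$. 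Substituting this into the telescoped expression gives
$$c_w=w^{-1}\sum_{j=1}^k\log\!\big(\beta_{{\bf i},j}(\rho^\vee)\big)\,\beta_{{\bf i},j}^\vee,$$
and since $\{\beta_{{\bf i},j}:1\le j\le k\}=\Inv(w)$ by Lemma~\ref{lemma:positive_roots_enumeration}, this is exactly the asserted formula. As consistency checks one notes that the $j=1$ term vanishes because $\beta_{{\bf i},1}$ is simple and $\alpha(\rho^\vee)=1$ for simple $\alpha$, matching $c_{s_{i_1}}=0$; and the right-hand side no longer refers to the reduced word, as it must not since $c_w$ was already shown to depend only on $w$.

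I do not expect a genuine obstacle here: the argument is essentially bookkeeping, and the only places demanding care are keeping the index ranges of the partial products $s_{i_k}\cdots s_{i_{j+1}}$ straight through the telescoping and pinning down the sign in the coroot identity above. To guard against an indexing or sign slip I would check the closed form against a rank-two example, say $w=s_is_j$ in type $A_2$, computing $c_w$ both from the recursion and from the final formula.
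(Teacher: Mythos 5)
Your proof is correct. Note that the paper states this lemma explicitly without proof ("we give an explicit expression for the constants $c_w$, without proof, as we will not use them"), so there is no argument of the author's to compare against; your derivation — unrolling the recursion $c_{us_\alpha}=s_\alpha c_u-\alpha^\vee\log\big((u\alpha)(\rho^\vee)\big)$ from the proof of Lemma \ref{lemma:start_asymptotics}, applying the identity $s_{i_k}\cdots s_{i_{j+1}}\alpha_{i_j}^\vee=-w^{-1}\beta_{{\bf i},j}^\vee$, and invoking Lemma \ref{lemma:positive_roots_enumeration} to identify the resulting sum with one over $\Inv(w)$ — is the natural route and checks out, including the signs and the vanishing of the $j=1$ term.
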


Lemma \ref{lemma:start_asymptotics} suggests to allow paths with undefined starting point, and to distinguish between them depending on their asymptotic behaviour at $t=0$, hence a definition:

\begin{definition}
\label{def:high_path_types}
For $T > 0$, we say that a path $\pi \in \Cc\left( (0, T], \afrak \right)$ is a high path of type $w \in W$ when the following asymptotic development holds at $0$:
$$ \pi(t) = \log(t)\left( \rho^\vee - w^{-1} \rho^\vee \right) + c_w + o(1)$$
The set of all high paths of type $w \in W$ in $\Cc\left( (0, T], \afrak \right)$ is denoted $\Cc^{high}_w\left( (0, T], \afrak \right)$.
\end{definition}
\begin{rmk}
As $c_e = 0$, $\Cc^{high}_e\left( (0, T], \afrak \right) = \Cc_0\left( [0, T], \afrak \right)$.
\end{rmk}

By duality, having in mind remark \ref{lemma:high_low_duality}:
\begin{definition}
\label{def:low_path_types}
For $T > 0$, we say that a path $\pi \in \Cc_0\left( [0; T), \afrak \right)$ is a low path of type $w \in W$ when the following asymptotic development holds at $T$:
$$ \exists C_\pi, \pi(t) = C_\pi + \log(T-t)\left( \rho^\vee - w^{-1} \rho^\vee \right) + c_w + o(1)$$
The set of all low paths of type $w \in W$ in $\Cc\left( [0, T), \afrak \right)$ is denoted $\Cc^{low}_w\left( [0, T), \afrak \right)$.
\end{definition}
\begin{rmk}
A low path $\pi$ of type $e$ has a continuous extension at $t=T$ by letting $\pi(T)=C_\pi$, hence:
$$\Cc^{low}_e = \Cc_0\left( [0, T], \afrak \right)$$
\end{rmk}

Both high and low paths will be referred to as extended paths. Clearly, the extended paths of type $w_0$ deserve a special name. As such, high (resp. low) paths of type $w_0$ will be referred to as highest (resp. lowest) paths. In that sense, the geometric Pitman transform gives the highest path of a crystal.

\section{Parametrizing geometric path crystals}
\label{section:parametrizations_in_path_model}

In this section, we will show how connected components of a geometric Littelmann path crystal are parametrized by the totally positive group elements. In the same fashion as in the group picture (section \ref{section:geom_lifting}), we will associate to every path its Lusztig parameter, an element in $U^{w_0}_{>0}$ or equivalently a Kashiwara parameter in $C^{w_0}_{>0}$. More precisely, we fix a time horizon $T>0$, then consider a path $\pi \in \Cc_0\left( [0, T], \afrak \right)$ and the crystal $\langle \pi \rangle$ it generates. We will show how Lusztig parameters and Kashiwara (or string) parameters can be retrieved from a path. The expressions found for string coordinates are geometric liftings of the formulas used in the classical Littelmann path model. Basically, we construct maps for every reduced word ${\bf i} \in R(w_0)$, $m = \ell(w_0)$:
$$\varrho_{\bf i}^L : \langle \pi \rangle \rightarrow \left(\R_{>0} \right)^m$$
$$\varrho_{\bf i}^K : \langle \pi \rangle \rightarrow \left(\R_{>0} \right)^m$$
The maps $\varrho_{\bf i}^L$ (resp. $\varrho_{\bf i}^K$) give the Lusztig (resp. Kashiwara) parameters for a path, in the sense that the diagram \ref{fig:parametrizations_diagram} is commutative (corollaries \ref{corollary:commutative_diagram_kashiwara} and \ref{corollary:commutative_diagram_lusztig}). This completes the group picture from figure \ref{fig:geom_parametrizations} and shows that indeed parametrizations of both the group picture and the path model are parallel.

\setcounter{figure}{ \value{equation} }
\addtocounter{equation}{1}
\begin{figure}[htp!]
\centering
\begin{tikzpicture}[baseline=(current bounding box.center)]
\matrix(m)[matrix of math nodes, row sep=3em, column sep=5em, text height=3ex, text depth=1ex, scale=1.2]
{ 
U_{>0}^{w_0} & \left( \R_{>0} \right)^m & \langle \pi \rangle    & \left( \R_{>0} \right)^m & C_{>0}^{w_0}\\
             &                          & \Bc(\lambda)\\
};
\path[->, font=\scriptsize] (m-1-3) edge node[above, scale=1.5]{$\varrho_{\bf i}^K$} (m-1-4);
\path[->, font=\scriptsize] (m-1-4) edge node[above, scale=1.5]{$x_{-\bf i}$} (m-1-5);
\path[->, font=\scriptsize] (m-1-3) edge node[above, scale=1.5]{$\varrho_{\bf i}^L$} (m-1-2);
\path[->, font=\scriptsize] (m-1-2) edge node[above, scale=1.5]{$x_{ \bf i}$} (m-1-1);

\path[->, font=\scriptsize] (m-1-3) edge node[left,  scale=1.5]{$p$} (m-2-3);
\draw[->] (m-2-3) to [bend right=30] node[right]{$p^{-1}$} (m-1-3);

\path[->, font=\scriptsize] (m-1-1) edge node[below, scale=1.5]{$b^L_\lambda$} (m-2-3);
\draw[->] (m-2-3) to [bend left=30] node[below]{$\varrho^L$} (m-1-1);
\path[->, font=\scriptsize] (m-1-5) edge node[below, scale=1.5]{$b^K_\lambda$} (m-2-3);
\draw[->] (m-2-3) to [bend right=30] node[below]{$\varrho^K$} (m-1-5);
\end{tikzpicture} 
\caption{Parametrizations for a connected crystal $\langle \pi \rangle$, with $\pi \in C_0([0, T], \afrak)$ and $\lambda = \Tc_{w_0} \pi(T)$}
\label{fig:parametrizations_diagram}
\end{figure}

Finally, for the purpose of greater generality, we will also consider a possibly infinite time horizon. In such a case, paths will need to have a drift inside the Weyl chamber (subsection \ref{subsection:infinite_T}).

\subsection{String parameters for paths}
\label{subsection:string_params}

\subsubsection{Definition}
Let ${\bf i} \in R(w_0)$. For any path $\pi \in \Cc_0\left( [0, T], \afrak \right)$, define $\varrho_{\bf i}^K(\pi)$ as the sequence of numbers ${ \bf c} = \left( c_1, c_2, \dots, c_m \right)$ recursively as:
$$ c_k = \frac{1}{\int_0^T e^{-\alpha_{i_k}\left( \Tc_{s_{i_1} \dots s_{i_{k-1}}} \pi \right)}}$$
\begin{thm}
\label{thm:string_params_paths}
The map $\varrho_{\bf i}^K$ is well-defined on $\Cc_0\left( [0, T], \afrak \right)$ and takes values in $\R_{>0}^m$. Moreover, for $\pi \in \Cc_0\left( [0, T], \afrak \right)$, the $m$-tuple $\varrho_{\bf i}^K(\pi)$ allows to recover $\pi$ from the highest weight path $\Tc_{w_0} \pi$.
\end{thm}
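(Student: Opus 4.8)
The plan is to build the map $\varrho_{\bf i}^K$ one coordinate at a time, tracking the successive high path transforms, and to produce at the same time an explicit inversion procedure. First I would set $\pi^{(0)} = \pi$ and, for $k = 1, \dots, m$, define $\pi^{(k)} := \Tc_{s_{i_1} \cdots s_{i_k}} \pi = \Tc_{\alpha_{i_k}} \pi^{(k-1)}$. By Lemma~\ref{lemma:rho_property} applied to $w^{-1}$, each partial product $s_{i_1} \cdots s_{i_{k-1}}$ has length $k-1$ and the chosen reduced word for $w_0$ guarantees $\ell(s_{\alpha_{i_k}} (s_{i_1}\cdots s_{i_{k-1}})^{-1})$ increases, so by Lemma~\ref{lemma:start_asymptotics} the path $\pi^{(k-1)}$ is a high path of type $s_{i_1}\cdots s_{i_{k-1}}$ and its asymptotic exponent at $0$ is $\rho^\vee - (s_{i_1}\cdots s_{i_{k-1}})^{-1}\rho^\vee$; pairing with $-\alpha_{i_k}$ gives an exponent $> -1$, exactly the integrability needed for $\int_0^T e^{-\alpha_{i_k}(\pi^{(k-1)})}$ to converge. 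Hence $c_k \in (0, +\infty)$ and $\varrho_{\bf i}^K$ is well-defined with values in $\R_{>0}^m$; this is essentially a repackaging of the integrability bookkeeping already carried out in the proof of Theorem~\ref{thm:braid_relations_Tc}.

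Next I would establish the inversion. The key observation is that $\Tc_\alpha$ is injective on paths for which $e^{-\alpha(\pi)}$ is integrable near $0$: from $\Tc_\alpha \pi(t) = \pi(t) + \log\left(\int_0^t e^{-\alpha(\pi(s))}ds\right)\alpha^\vee$ one can recover $\int_0^t e^{-\alpha(\pi(s))}ds$ — hence $\pi$ — by a standard one-dimensional ODE argument (differentiate the scalar quantity $u(t) = \int_0^t e^{-\alpha(\pi)}$, which satisfies $u'(t) = e^{-\alpha(\Tc_\alpha\pi(t))} u(t)^2$ with $u(0^+)=0$ in the appropriate regularized sense). This is the geometric lift of the classical fact that the Pitman transform of a path is invertible given its minimum; here the role of the minimum is played precisely by the scalar $c_k^{-1} = \int_0^T e^{-\alpha_{i_k}(\pi^{(k-1)})}$, which is the single datum lost in passing from $\pi^{(k-1)}$ to $\pi^{(k)}$ (compare the rank-$1$ discussion in Section~\ref{section:path_model}). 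So $\pi^{(k-1)}$ is reconstructed from the pair $(\pi^{(k)}, c_k)$, and iterating downward from $k = m$ — where $\pi^{(m)} = \Tc_{w_0}\pi$ — to $k=1$ recovers $\pi = \pi^{(0)}$ from $\Tc_{w_0}\pi$ and $(c_1, \dots, c_m)$, as claimed.

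I would then note that the downward reconstruction is well-posed: at each stage the input $\pi^{(k)}$ is a high path of type $s_{i_1}\cdots s_{i_k}$, and applying the inverse Pitman step with parameter $c_k$ produces a high path of type $s_{i_1}\cdots s_{i_{k-1}}$, so the asymptotic types match all the way down to type $e$ at $\pi^{(0)}$, i.e.\ an honest element of $\Cc_0([0,T],\afrak)$. The main obstacle I anticipate is making the one-step inversion of $\Tc_\alpha$ rigorous at the boundary $t=0$, where both $\pi^{(k-1)}$ and $\pi^{(k)}$ blow up logarithmically: one must check that the scalar ODE for $u(t)$ has a unique solution with the correct $u(0^+)=0$ behaviour despite the singular coefficient $e^{-\alpha_{i_k}(\pi^{(k)})}$, which behaves like $t^{\text{(exponent)}}$ near $0$. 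This is handled by the asymptotic estimates of Lemma~\ref{lemma:start_asymptotics} together with the observation that the relevant exponent is $> -1$, so the singularity is integrable and the usual Picard argument applies on $[\varepsilon, T]$ with a controlled limit as $\varepsilon \to 0$. The remaining verifications — that $\varrho_{\bf i}^K$ and its inverse depend only on ${\bf i}$ through $\Tc_{w_0}$, using the braid relations of Theorem~\ref{thm:braid_relations_Tc} — are routine and can be deferred to the corollaries establishing commutativity of Figure~\ref{fig:parametrizations_diagram}.
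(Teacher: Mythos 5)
Your proposal follows essentially the same route as the paper: climb the weak order one simple reflection at a time, use Lemma \ref{lemma:rho_property} together with the asymptotics of Lemma \ref{lemma:start_asymptotics} to get integrability of $e^{-\alpha_{i_k}(\pi^{(k-1)})}$ at $0$ (hence $c_k>0$) at each step, then invert step by step from $\Tc_{w_0}\pi$ back down to $\pi$. The paper packages exactly this one-step analysis as Proposition \ref{proposition:extracting_string}, and its proof of the theorem is just the iteration you describe.

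One claim in your inversion step is wrong as stated and should be repaired. You assert that $\Tc_\alpha$ is injective on paths with $e^{-\alpha(\pi)}$ integrable near $0$, and that $u(t)=\int_0^t e^{-\alpha(\pi)}$ is determined by the ODE $u'=e^{-\alpha(\Tc_\alpha\pi)}\,u^2$ together with $u(0^+)=0$. It is not: the general solution is $1/u(t)=c+\int_t^T e^{-\alpha(\Tc_\alpha\pi)}$ with $c>0$ arbitrary, and since $\eta=\Tc_\alpha\pi$ has type $w s_\alpha$ the integral $\int_0^T e^{-\alpha(\eta)}$ diverges at $0$, so \emph{every} member of this one-parameter family satisfies $u(0^+)=0$ (as does $u\equiv 0$). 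The transform $\Tc_\alpha$ genuinely loses one real number — that is the whole point of the theorem — and the datum selecting the right solution is the terminal value $u(T)=1/c_k$, not the initial condition. Your very next sentence supplies exactly this datum ($\pi^{(k-1)}$ is reconstructed from the pair $(\pi^{(k)},c_k)$), and with it the inversion becomes the explicit closed form $\pi(t)=\eta(t)+\log\bigl(c_k+\int_t^T e^{-\alpha(\eta)}\bigr)\alpha^\vee$ of Proposition \ref{proposition:extracting_string}(2); written this way there is no singular Picard argument to run at $t=0$, only the asymptotic check that the reconstructed path has type $s_{i_1}\cdots s_{i_{k-1}}$, which is the computation the paper carries out.
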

The proof is given soon after a few discussions.

\subsubsection{Extracting string parameters}
Building up on lemma \ref{lemma:start_asymptotics}, we will see when it is possible to apply $\left( \Tc_\alpha \right)_{\alpha \in \Delta}$ depending on a path's type. The importance of the weak Bruhat order is quite remarkable. Also, the use of the geometric Pitman operator $\Tc_\alpha$ for $\alpha \in \Delta$ corresponds to the loss of exactly one real number:
\begin{proposition}
\label{proposition:extracting_string}
Let $w \in W$ and $\alpha \in \Delta$ such that $\ell(w s_\alpha) = \ell(w) + 1$.

(1) If $\pi$ is a high path of type $w$ then $c = \frac{1}{\int_0^T e^{-\alpha(\pi)}} > 0$ and $\eta = \Tc_\alpha \pi$ has type $w s_\alpha$.

(2) Reciprocally, given a high path $\eta$ with type $w s_\alpha$ and a positive $c>0$, there is a unique high path $\pi$ of type $w$ such that $c = \frac{1}{\int_0^T e^{-\alpha(\pi)}} > 0$ and $\eta = \Tc_\alpha \pi$. It is given by:
$$ \forall 0 < t \leq T, \pi(t) = \eta(t) + \log\left( c + \int_t^T e^{-\alpha(\eta)} \right) \alpha^\vee$$
\end{proposition}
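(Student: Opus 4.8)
The strategy is to treat the two directions as a single reversible computation, anchored on the known behaviour of $\Tc_\alpha$ and the integrability criterion from Lemma~\ref{lemma:rho_property}. First I would establish the integrability fact underlying (1): if $\pi$ is a high path of type $w$, then by Definition~\ref{def:high_path_types} we have $\pi(t) = \log(t)(\rho^\vee - w^{-1}\rho^\vee) + c_w + o(1)$ near $0$, hence $e^{-\alpha(\pi(s))} = e^{-\alpha(c_w) + o(1)} s^{-\alpha(\rho^\vee - w^{-1}\rho^\vee)}$. Since $\ell(w s_\alpha) = \ell(w) + 1$ is equivalent (via Lemma~\ref{lemma:rho_property}, applied to $w^{-1}$, noting $\ell(w s_\alpha) = \ell(s_\alpha w^{-1}... )$ — more precisely using that $\ell(ws_\alpha)=\ell(w)+1$ iff $-\alpha(\rho^\vee - w^{-1}\rho^\vee)\ge 0$, which is the content of the second bullet of Lemma~\ref{lemma:rho_property} after taking inverses) to $-\alpha(\rho^\vee - w^{-1}\rho^\vee) \ge 0 > -1$, the function $e^{-\alpha(\pi)}$ is integrable on a neighbourhood of $0$. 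Therefore $\int_0^T e^{-\alpha(\pi)}$ is a finite strictly positive number and $c = 1/\int_0^T e^{-\alpha(\pi)} > 0$ is well-defined; also $\Tc_\alpha\pi$ is well-defined since the defining integral converges.

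Next, for the type assertion in (1): I would compute the asymptotics of $\eta = \Tc_\alpha\pi$ at $0$. By definition $\eta(t) = \pi(t) + \log\left(\int_0^t e^{-\alpha(\pi(s))}ds\right)\alpha^\vee$, and integrating the equivalent $e^{-\alpha(\pi(s))} \sim e^{-\alpha(c_w)} s^{-\alpha(\rho^\vee - w^{-1}\rho^\vee)}$ gives, exactly as in the proof of Lemma~\ref{lemma:start_asymptotics},
\[
\int_0^t e^{-\alpha(\pi(s))}ds = e^{-\alpha(c_w)+o(1)}\,\frac{t^{(w\alpha)(\rho^\vee)}}{(w\alpha)(\rho^\vee)},
\]
using $1 - \alpha(\rho^\vee - w^{-1}\rho^\vee) = \alpha(w^{-1}\rho^\vee) = (w\alpha)(\rho^\vee)$ and $\alpha(\rho^\vee)=1$. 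Substituting back and collecting terms, the $\log t$ coefficient of $\eta$ becomes $\rho^\vee - w^{-1}\rho^\vee + (w\alpha)(\rho^\vee)\alpha^\vee = \rho^\vee - s_\alpha w^{-1}\rho^\vee = \rho^\vee - (ws_\alpha)^{-1}\rho^\vee$, and the constant becomes $s_\alpha c_w - \alpha^\vee\log((w\alpha)(\rho^\vee)) = c_{ws_\alpha}$ by the recursion in Lemma~\ref{lemma:start_asymptotics}. Hence $\eta$ has type $ws_\alpha$, proving (1).

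For (2), I would simply invert the relation $\eta = \Tc_\alpha\pi$ algebraically. From $\eta(t) = \pi(t) + \log\left(\int_0^t e^{-\alpha(\pi)}\right)\alpha^\vee$ one gets $e^{-\alpha(\eta(t))} = e^{-\alpha(\pi(t))}\left(\int_0^t e^{-\alpha(\pi)}\right)^{-1}$, so $e^{-\alpha(\eta(t))} = -\frac{d}{dt}\log\left(\int_0^t e^{-\alpha(\pi)}\right)^{-1}\cdot(\cdots)$; more cleanly, if $F(t) = \int_0^t e^{-\alpha(\pi)}$ then $e^{-\alpha(\eta)} = F'/F$, whence $(\,-1/F\,)' = F'/F^2 = e^{-\alpha(\eta)}\cdot e^{\alpha(\pi)-\alpha(\eta)}\cdot$... — the slick route is: $\int_t^T e^{-\alpha(\eta)} = \int_t^T F'/F^2\,ds = 1/F(t) - 1/F(T) = 1/F(t) - c$, so $F(t) = \left(c + \int_t^T e^{-\alpha(\eta)}\right)^{-1}$, and then $\pi(t) = \eta(t) - \log F(t)\cdot\alpha^\vee = \eta(t) + \log\left(c + \int_t^T e^{-\alpha(\eta)}\right)\alpha^\vee$, which is the claimed formula. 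One then checks this $\pi$ indeed has type $w$ (by reversing the asymptotic computation of (1), or by noting it must, since $\Tc_\alpha$ of it is the type-$ws_\alpha$ path $\eta$ and types are determined) and that $c = 1/\int_0^T e^{-\alpha(\pi)} = 1/F(T)$. Uniqueness is immediate from the derivation, since every step was an equivalence.

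The main obstacle I anticipate is the bookkeeping in the inversion step of (2): one must be careful that $\int_t^T e^{-\alpha(\eta)}$ converges (it does, since $\eta$ is continuous on $(0,T]$ and the singularity is at $0$, not $T$), that the formula produces a genuinely high path of type $w$ rather than something with a worse singularity at $0$, and that the boundary term at $T$ is handled correctly — this is where the identity $\int_t^T F'/F^2 = 1/F(t) - 1/F(T)$ and the definition $c = 1/F(T)$ must be matched up precisely. Everything else is a direct transcription of the asymptotic analysis already carried out for Lemma~\ref{lemma:start_asymptotics}.
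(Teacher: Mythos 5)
Your plan is correct and follows essentially the same route as the paper: Lemma \ref{lemma:rho_property} (applied to $w^{-1}$) for the integrability of $e^{-\alpha(\pi)}$ at $0$, the asymptotic computation from the proof of Lemma \ref{lemma:start_asymptotics} to identify the type of $\Tc_\alpha\pi$, and the identity $e^{-\alpha(\eta)}=F'/F^{2}$ with $F(t)=\int_0^t e^{-\alpha(\pi)}$, integrated to give $\int_t^T e^{-\alpha(\eta)}=1/F(t)-c$, for the inversion in (2). Two small cautions: your intermediate claims $e^{-\alpha(\eta(t))}=e^{-\alpha(\pi(t))}F(t)^{-1}$ and $e^{-\alpha(\eta)}=F'/F$ are slips — the exponent is $2$ because $\alpha(\alpha^\vee)=2$, and your final ``slick route'' does use the correct $F'/F^{2}$ — and of your two suggested ways to verify that the reconstructed $\pi$ has type $w$, only the explicit asymptotic computation is valid (the ``types are determined'' shortcut is circular), which the paper carries out by showing that $\int_t^T e^{-\alpha(\eta)}$ diverges as $t\to 0$ at rate $t^{-(w\alpha)(\rho^\vee)}$ so that the constant $c$ is absorbed and the constant term comes out to $c_w$.
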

\begin{proof}
(1) Using lemma \ref{lemma:rho_property}, if $\pi$ is a high path of type $w$ and $\ell(w s_\alpha) = \ell(w) + 1$, then $e^{-\alpha(\pi)}$ is integrable at the neighborhood of zero, hence $c>0$.

Thanks to the proof of \ref{lemma:start_asymptotics}, we have seen that $\eta = \Tc_\alpha \pi$ will have the required asymptotics at $t=0$, so that it will be of type $w s_\alpha$.

(2) By composing the equality $\eta = \Tc_\alpha \pi$ with $e^{-\alpha(.)}$:
$$ e^{-\alpha(\eta(t))} = \frac{ e^{-\alpha(\pi(t))} }{\left(\int_0^t e^{-\alpha(\pi)}\right)^2} = - \frac{d}{dt} \frac{1}{\int_0^t e^{-\alpha(\pi)}} $$
Then after integration between $t>0$ and $T$:
$$ \forall t>0, \int_t^T e^{-\alpha(\eta)} = \frac{1}{\int_0^t e^{-\alpha(\pi)}} - c$$
Reinjecting this relation in the definition of $\eta$, we have:
$$ \forall t>0, \pi(t) = \eta(t) + \alpha^\vee \log\left( c + \int_t^T e^{-\alpha(\eta)} \right)$$
Finally, all that is left is to check that $\pi$ has the type $w$. The asymptotic development at $0$ follows from a computation similar to the proof of lemma \ref{lemma:start_asymptotics}. Indeed, since $\eta$ is of type $w s_\alpha$, we have the following asymptotics for $e^{-\alpha(\eta)}$ at zero:
$$ e^{-\alpha(\eta(t))} = e^{-\alpha(c_{w s_\alpha}) + o(1)}t^{-\alpha(\rho^\vee - s_\alpha w^{-1}\rho^\vee) }$$
As $-\alpha(\rho^\vee - s_\alpha w^{-1}\rho^\vee) \leq -1$ (lemma \ref{lemma:rho_property}), integrating $e^{-\alpha(\eta)}$ gives a divergent integral at zero. Therefore, we have the following equivalent for $t \rightarrow 0$:
\begin{align*}
  & \int_t^T e^{-\alpha(\eta)}\\
\sim \ & e^{-\alpha(c_{w s_\alpha}) + o(1)} \int_t^{T} s^{-\alpha(\rho^\vee - s_\alpha w^{-1}\rho^\vee) } ds\\
= & e^{-\alpha(c_{w s_\alpha}) + o(1)}
    \frac{\left[ s^{1-\alpha(\rho^\vee - s_\alpha w^{-1}\rho^\vee) } \right]_t^{T}}
         { 1-\alpha(\rho^\vee - s_\alpha w^{-1}\rho^\vee) }\\
\sim \ & \frac{ e^{-\alpha(c_{w s_\alpha}) + o(1)} }
         { (w \alpha)\left( \rho^\vee \right) t^{(w \alpha)\left( \rho^\vee \right)} }
\end{align*}
Because $\ell(w s_\alpha) = \ell(w) +1$, $w \alpha$ is a positive root and $w \alpha(\rho^\vee)>0$. Thus, we can write:
\begin{align*}
  & \pi(t)\\
= & \log(t)\left( \rho^\vee - s_\alpha w^{-1} \rho^\vee \right) + c_{w s_\alpha} + \log\left( c +  \frac{ e^{-\alpha(c_{w s_\alpha}) + o(1)} }{ (w \alpha)\left( \rho^\vee \right) t^{(w \alpha)\left( \rho^\vee \right)} } \right) \alpha^\vee + o(1)\\
= & \log(t)\left( \rho^\vee - s_\alpha w^{-1} \rho^\vee - (w \alpha)(\rho^\vee) \alpha^\vee \right) + s_\alpha c_{w s_\alpha} - \log\left( (w \alpha)( \rho^\vee ) \right)\alpha^\vee + o(1)\\
= & \log(t)\left( \rho^\vee - w^{-1} \rho^\vee \right) + s_\alpha c_{w s_\alpha} - \log\left( (w \alpha)( \rho^\vee ) \right)\alpha^\vee + o(1)
\end{align*}
Noticing that $s_\alpha c_{w s_\alpha} - \log\left( (w \alpha)( \rho^\vee ) \right)\alpha^\vee = c_w$ concludes the proof.
\end{proof}

\begin{proof}[Proof of theorem \ref{thm:string_params_paths}]
Start with $\pi \in \Cc_0\left( [0, T], \afrak \right)$. It is a high path of type $e$. When composing the geometric Pitman operators $\left( \Tc_\alpha \right)_{\alpha}$ while respecting the weak Bruhat order, we obtain paths whose types are climbing the Hasse diagram, until we reach $\Tc_{w_0}\pi$. At each step, exactly one positive real number is lost using proposition \ref{proposition:extracting_string}. 
\end{proof}

This drawing sums up the situation in the case of $A_2$:

\setcounter{figure}{ \value{equation} }
\addtocounter{equation}{1}
\begin{figure}[htp!]
\begin{center}
\begin{tikzpicture}[scale=0.7]
\useasboundingbox (0,0) rectangle (7.0cm,10.0cm);
\definecolor{black}{rgb}{0.0,0.0,0.0}
\definecolor{white}{rgb}{1.0,1.0,1.0}
\Vertex[style={minimum size=1, shape=circle}, x=2.5, y=10, L=$w_0$]{v0}
\Vertex[style={minimum size=1, shape=circle}, x=4,   y=7, L=$s_1 s_2$]{v1}
\Vertex[style={minimum size=1, shape=circle}, x=1,   y=7, L=$s_2 s_1$]{v2}
\Vertex[style={minimum size=1, shape=circle}, x=1,   y=4, L=$s_1$]{v3}
\Vertex[style={minimum size=1, shape=circle}, x=4,   y=4, L=$s_2$]{v4}
\Vertex[style={minimum size=1, shape=circle}, x=2.5, y=1, L=$e$]{v5}
\Edge[lw=0.1cm,style={post, color=black,},](v5)(v3)
\Edge[lw=0.1cm,style={post, color=black,},](v5)(v4)
\Edge[lw=0.1cm,style={post, color=black,},](v3)(v2)
\Edge[lw=0.1cm,style={post, color=black,},](v4)(v1)
\Edge[lw=0.1cm,style={post, color=black,},](v2)(v0)
\Edge[lw=0.1cm,style={post, color=black,},](v1)(v0)
\Edge[lw=0.1cm,style={post, bend left,color=black,}, label=$\Tc_{\alpha_1}$](v5)(v3)
\Edge[lw=0.1cm,style={post, bend left,color=black,}, label=$\Tc_{\alpha_2}$](v3)(v2)
\Edge[lw=0.1cm,style={post, bend left,color=black,}, label=$\Tc_{\alpha_1}$](v2)(v0)
\end{tikzpicture}
\end{center}
\label{lbl:string_hasse_diagram}
\caption{Extracting string parameters and climbing Hasse diagram of type $A_2$} 
\end{figure}
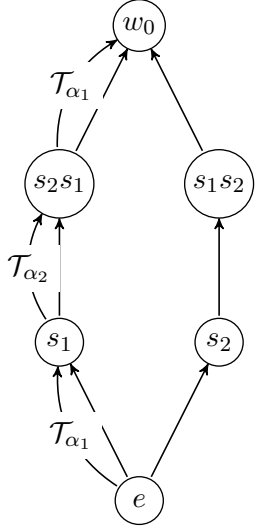

\subsubsection{Inversion lemma}
The inversion lemma is a bijective correspondence between $N_T(\pi)$ and the string parameters $\varrho_{\bf i}^K(\pi)$. Its proof is inspired from theorem 6.5 in \cite{bib:BBO2}.

\begin{thm}
\label{thm:string_inversion_lemma}
For ${\bf i} \in R(w_0)$ and $\pi \in \Cc_0\left( [0, T], \afrak \right)$:
$$ x_{\bf -i} \circ \varrho_{\bf i}^K\left( \pi \right) = \left[\bar{w_0}^{-1} N_T(\pi) \right]_{0+}^T$$
or equivalently:
$$ N_T(\pi) = \left[ \left( \bar{w}_0 \left( x_{\bf -i} \circ \varrho_{\bf i}^K\left( \pi \right) \right)^T \right)^{-1} \right]_-^{-1}$$
\end{thm}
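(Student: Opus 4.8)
The strategy is to prove the identity by induction on the length of the reduced word, peeling off one simple reflection at a time and using the recursive structure of both sides. Write ${\bf i} = (i_1, \dots, i_m)$ and let $\alpha = \alpha_{i_1}$ be the first simple root. Set $c_1 = 1/\int_0^T e^{-\alpha(\pi)}$ and $\eta = \Tc_\alpha \pi = \Tc_{s_{i_1}} \pi$, which by Proposition \ref{proposition:extracting_string} is a high path of type $s_{i_1}$, and whose string parameters $\varrho^K_{{\bf i}'}(\eta)$ for the shortened word ${\bf i}' = (i_2, \dots, i_m) \in R(s_{i_1} w_0)$ are precisely $(c_2, \dots, c_m)$ by the very definition of the recursion. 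So $x_{-{\bf i}}\circ \varrho^K_{\bf i}(\pi) = x_{-i_1}(c_1)\, \bigl(x_{-{\bf i}'}\circ\varrho^K_{{\bf i}'}(\eta)\bigr)$.

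The main work is to establish the single-step relation connecting $N_T(\pi)$ and $N_T(\eta)$. From the flow equation \eqref{lbl:process_N_ode} and the relation $\eta = \Tc_\alpha \pi$, I would first show that $N_t(\eta)$ and $N_t(\pi)$ are related through a Gauss-type decomposition along the $SL_2$ attached to $\alpha$: using $B_t(\eta) = [\bar{s}_{i_1}^{-1} B_t(\pi)]_{-0}$ (the definition of $\Tc_\alpha$, Theorem \ref{thm:braid_relations_Tc}, together with Properties \ref{lbl:path_transform_properties}), and the earlier computation $\Delta^{\omega_\alpha}(x_\alpha(\xi) N_t(X)) = 1 + \xi\int_0^t e^{-\alpha(X)}$, one obtains an explicit factorization of $\bar{w}_0^{-1} N_T(\pi)$ of the form $x_{-i_1}(c_1)^{?} \cdot (\text{something built from } N_T(\eta))$. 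This is exactly the structure underlying Lemma \ref{lemma:change_of_coordinates_UC} and the twist map $\eta_{w_0}$ from the proof of Theorem \ref{thm:charts_positivity}; I expect the cleanest route is to transpose everything and recognize $[\bar{w}_0^{-1} N_T(\pi)]_{0+}^T$ as $\eta^{e,w_0}$-type twist applied to the Lusztig-side object, matching the base-case identity $\Delta^{\omega_\alpha}(x_\alpha(\xi)N_T) = 1 + \xi\int_0^T e^{-\alpha(\pi)}$ with $c_1 = 1/\int_0^T e^{-\alpha(\pi)}$, i.e. $\xi \leftrightarrow c_1^{-1}$.

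For the base case $m=1$ (rank one), one checks directly in $SL_2$: with $B_t(\pi)$ given by the explicit $A_1$ example, $N_T(\pi) = y_\alpha\bigl(\int_0^T e^{-2\pi}\bigr)$, and $[\bar{w}_0^{-1} N_T(\pi)]_{0+}^T$ is computed by an explicit $2\times 2$ matrix multiplication to be $x_{-\alpha}(c_1)$ with $c_1 = 1/\int_0^T e^{-2\pi}$; this matches $x_{-{\bf i}}\circ\varrho^K_{\bf i}(\pi)$. The inductive step then combines the single-step factorization above with the identity $[\bar{w}_0^{-1} x]_{0+} = [[\bar{s}_{i_1}\bar{s}_{i_1}^{-1}\bar{w}_0^{-1}] x]_{0+}$ and the Gauss-decomposition manipulation rules \eqref{lbl:gauss1}--\eqref{lbl:gauss2}, tracking how the torus part conjugates the $x_{-i_k}$ factors; this conjugation is exactly the monomial change of variables appearing in Lemma \ref{lemma:change_of_coordinates_UC}, which guarantees that the nested product assembles into $x_{-i_1}(c_1)\cdots x_{-i_m}(c_m)$ without spurious torus factors surviving (the transpose in $[\,\cdot\,]^T$ reverses the order, matching $x_{-{\bf i}}$ as written). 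The equivalent reformulation $N_T(\pi) = [(\bar{w}_0 (x_{-{\bf i}}\circ\varrho^K_{\bf i}(\pi))^T)^{-1}]_-^{-1}$ follows formally: if $v := x_{-{\bf i}}\circ\varrho^K_{\bf i}(\pi)$ then $[\bar{w}_0^{-1} N_T(\pi)]_{0+} = (v^T)^{\,}$ gives $\bar{w}_0^{-1}N_T(\pi) \in N\cdot v^T$ hence $N_T(\pi) \in \bar{w}_0 v^T \cdot (\text{upper})$... unwinding via $[g]_- = g[g]_{0+}^{-1}$ and the fact that $N_T(\pi) \in N$ yields the stated closed form; this is the same algebra as in the definition of $\varrho^K$ (Definition \ref{def:crystal_parameter}) and Proposition \ref{proposition:crystal_param_maps}, so I would simply cite those manipulations.

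**Main obstacle.** The delicate point is the bookkeeping in the inductive step: showing that after peeling off $x_{-i_1}(c_1)$, the torus factor $c_1^{-\alpha_{i_1}^\vee}$ produced by the twist conjugates the remaining word exactly into the shape needed so that $\varrho^K_{{\bf i}'}(\eta)$ (computed for the path $\eta$) reproduces $(c_2,\dots,c_m)$ — equivalently, that the "path-level" recursion defining the $c_k$ and the "group-level" recursion coming from iterated Gauss decomposition of $\bar{w}_0^{-1}N_T(\pi)$ are the same recursion. The earlier lemma on $\Delta^{\omega_\beta}(x_\alpha(\xi)N_t) = 1$ for $\beta \neq \alpha$ and $\Delta^{\omega_\alpha}(x_\alpha(\xi)N_t) = 1 + \xi\int_0^t e^{-\alpha(\pi)}$ is the crucial input that makes the first factor come out as a pure $x_{-i_1}(c_1)$ with no leftover; everything else is a transcription of the change-of-coordinates Lemma \ref{lemma:change_of_coordinates_UC} into the path setting. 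I expect the proof to be essentially a careful induction mirroring Theorem 6.5 of \cite{bib:BBO2}, as the authors indicate.
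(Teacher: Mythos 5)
Your overall strategy---peel off one simple reflection at a time and identify each rank-one factor---is the same as the paper's, which establishes the telescoping factorization
$$\left[\bar{w}_0^{-1}N_T(\pi)\right]_{0+}=\left[\bar{s}_{i_m}^{-1}\,[\overline{s_{i_1}\cdots s_{i_{m-1}}}^{-1}N_T(\pi)]_-\right]_{0+}\cdots\left[\bar{s}_{i_1}^{-1}N_T(\pi)\right]_{0+},$$
recognizes each factor as an element of the reduced cell $N\bar{s}_{i_k}^{-1}N\cap B^+$, hence of the form $y_{-i_k}(c_k)$ (\cite{bib:BZ01}, Theorem 4.5), and reads off $c_k$ from the torus part $[\,\cdot\,]_0=e^{\Tc_{s_{i_1}\cdots s_{i_k}}\pi(T)-\Tc_{s_{i_1}\cdots s_{i_{k-1}}}\pi(T)}$. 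However, there is a genuine gap in the way you set up the single-step relation and the induction hypothesis: the object $N_T(\eta)$ for $\eta=\Tc_{\alpha}\pi$ does not exist. By Lemma \ref{lemma:start_asymptotics}, $\eta$ is a high path of type $s_{i_1}$, so $\eta(t)=\log(t)\,\alpha^\vee+O(1)$ near $t=0$ and $e^{-\alpha(\eta(t))}\sim C\,t^{-2}$; already the first-order term $\int_0^T e^{-\alpha(\eta_s)}\,ds\,f_\alpha$ in the series \eqref{lbl:process_N_explicit} diverges. Likewise $B_t(\eta)$ in the sense of \eqref{lbl:process_B_explicit} is undefined, so the identity you invoke, $B_t(\eta)=[\bar{s}_{i_1}^{-1}B_t(\pi)]_{-0}$, cannot be taken as a definition of the left-hand side; Theorem \ref{thm:braid_relations_Tc} only identifies the torus part $[\bar{s}_{i_1}^{-1}B_t(\pi)]_0$. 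Consequently an induction whose hypothesis is the theorem "for the path $\eta$ and the shortened word ${\bf i}'$" is not well posed, because the statement being induced upon refers to $N_T$ of a path for which $N_T$ is not defined.

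The repair is exactly the substitution the paper makes: replace every occurrence of "$N_T(\Tc_{s_{i_1}\cdots s_{i_k}}\pi)$" by the honest group element $[\overline{s_{i_1}\cdots s_{i_k}}^{-1}N_T(\pi)]_-$, which always exists and plays the role of the flow driven by the transformed path. Once this is done, your induction collapses into the paper's single telescoping product, and the torus bookkeeping you flag as the main obstacle disappears: each factor $[\bar{s}_{i_k}^{-1}[\cdots]_-]_{0+}$ is a complete element $y_{-i_k}(c_k)=c_k^{-h_{i_k}}x_{i_k}(c_k)$ of the reduced Bruhat cell, so its torus part is internal to the factor and no conjugation through the remaining word (and hence no appeal to Lemma \ref{lemma:change_of_coordinates_UC}) is needed; the transpose at the end converts $y_{-i_m}(c_m)\cdots y_{-i_1}(c_1)$ into $x_{-i_1}(c_1)\cdots x_{-i_m}(c_m)$ as you anticipated. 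Your base-case computation in $SL_2$ and your derivation of the equivalent reformulation are fine. Also note that the minor identity $\Delta^{\omega_\alpha}(x_\alpha(\xi)N_t(X))=1+\xi\int_0^t e^{-\alpha(X)}$ concerns left multiplication by $x_\alpha(\xi)\in U$, not by $\bar{s}_{i_1}^{-1}$; using it here would require the limiting-approximant argument from the proof of Theorem \ref{thm:braid_relations_Tc}, which is avoidable.
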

\begin{proof}
In fact, this works with $w \in W$ and ${\bf i} = \left(i_1, \dots, i_j\right) \in R(w)$. Write:
\begin{align*}
  & \left[\bar{w}^{-1} N_T(\pi)\right]_{0+}\\
= & \left[\bar{w}^{-1} N_T(\pi)\right]_-^{-1} \bar{w}^{-1} N_T(\pi)\\
= & \left[\bar{w}^{-1} N_T(\pi)\right]_-^{-1} \bar{s}^{-1}_{i_j} \dots \bar{s}^{-1}_{i_1} N_T(\pi)\\
= &     \left[\bar{w}^{-1} N_T(\pi)\right]_-^{-1} \bar{s}^{-1}_{i_j} [\overline{ s_{i_1} \dots s_{i_{j-1}} }^{-1} N_T(\pi)]_-\\
  & \ \ \left[\overline{ s_{i_1} \dots s_{i_{j-1}} }^{-1} N_T(\pi)\right]_-^{-1} \bar{s}^{-1}_{i_{j-1}} [\overline{ s_{i_1} \dots s_{i_{j-2}} }^{-1} N_T(\pi)]_-\\
  & \ \ \left[\overline{ s_{i_1} \dots s_{i_{j-2}} }^{-1} N_T(\pi)\right]_-^{-1} \bar{s}^{-1}_{i_{j-2}} \dots \\
  & \ \ \dots \bar{s}^{-1}_{i_2} [\bar{s}^{-1}_{i_1} N_T(\pi)]_-\\
  & \ \ \left[\bar{s}^{-1}_{i_1} N_T(\pi)\right]_-^{-1} \bar{s}^{-1}_{i_1} N_T(\pi)\\
= &     \left[\bar{s}^{-1}_{i_j}     [\overline{ s_{i_1} \dots s_{i_{j-1}} }^{-1} N_T(\pi)]_- \right]_{0+}\\
  & \ \ \left[\bar{s}^{-1}_{i_{j-1}} [\overline{ s_{i_1} \dots s_{i_{j-2}} }^{-1} N_T(\pi)]_- \right]_{0+}\\
  & \ \ \dots \\
  & \ \ \left[\bar{s}^{-1}_{i_1}     N_T(\pi)\right]_{0+}
\end{align*}
Notice that each element $x_k = \left[\bar{s}^{-1}_{i_{k}} [\bar{s}^{-1}_{i_{k-1}} \dots \bar{s}^{-1}_{i_1} N_T(\pi)]_- \right]_{0+}$, $ 1 \leq k \leq j$, in the previous product, belongs to the reduced Bruhat cell $N \bar{s}^{-1}_{i_k} N \cap B^+$. Using theorem 4.5 in \cite{bib:BZ01}, we know $x_k$ is of the form $x_k = y_{-i_k}(c_k)$ where:
\begin{align*}
c_k^{-\alpha_{i_k}^\vee} = & [x_k]_0 \\
= & [\bar{s}^{-1}_{i_{k}} [\overline{ s_{i_1} \dots s_{i_{k-1}} }^{-1}N_T(\pi)]_- ]_{0}\\
= & [\bar{s}^{-1}_{i_{k}} [\overline{ s_{i_1} \dots s_{i_{k-1}} }^{-1} B_T(\pi)]_- ]_{0}\\
= & [\overline{ s_{i_1} \dots s_{i_{k}} }^{-1} B_T(\pi) [\overline{ s_{i_1} \dots s_{i_{k-1}} }^{-1} B_T(\pi)]_{0+}^{-1} ]_{0}\\
= & e^{ \Tc_{s_{i_1} \dots s_{i_k}} \pi - \Tc_{s_{i_1} \dots s_{i_{k-1}}} \pi }
\end{align*}
Hence we obtain exactly the string parameters
$$ c_k = \frac{1}{\int_0^T e^{-\alpha_{i_k}( \Tc_{s_{i_1} \dots s_{i_{k-1}}} \pi )}}$$
And
$$[\bar{w}^{-1} N_T(\pi)]_{0+} = y_{-i_j}(c_j) \dots y_{-i_1}(c_1)$$
Taking the transpose concludes the proof.

Finally, in order to see that the second expression can be deduced from the first, write 
\begin{align*}
  & N_T(\pi)\\
= & \left[ \left( \bar{w}_0 [\bar{w_0}^{-1} N_T(\pi)]_-^{-1} \bar{w}_0^{-1} N_T(\pi) \right)^{-1} \right]_-^{-1}\\
= & \left[ \left( \bar{w}_0 [\bar{w_0}^{-1} N_T(\pi)]_{0+} \right)^{-1} \right]_-^{-1}\\
= & \left[ \left( \bar{w}_0 \left( x_{\bf -i} \circ \varrho_{\bf i}^K\left( \pi \right) \right)^T \right)^{-1} \right]_-^{-1}
\end{align*}
\end{proof}

As a corollary, we get the commutativity of the right side in the diagram \ref{fig:parametrizations_diagram}:
\begin{corollary}
\label{corollary:commutative_diagram_kashiwara}
$$ \forall {\bf i} \in R(w_0), \forall \pi \in \Cc_0\left( [0,T], \afrak \right), x_{\bf -i} \circ \varrho_{\bf i}^K\left( \pi \right) = \varrho^K \circ p (\pi)$$
\end{corollary}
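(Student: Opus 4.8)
The plan is to read off the corollary directly from the Inversion Lemma (Theorem~\ref{thm:string_inversion_lemma}) together with the definition of the Kashiwara parameter map $\varrho^K$ in the group picture (Definition~\ref{def:crystal_parameter}) and the fact that $p=B_T(\cdot)$ is a morphism of crystals whose value on $\pi$ is $B_T(\pi)$. Concretely, fix ${\bf i}\in R(w_0)$ and $\pi\in\Cc_0\left([0,T],\afrak\right)$, and set $x:=p(\pi)=B_T(\pi)$. By Theorem~\ref{thm:flow_B_total_positivity}, $x\in N^{w_0}_{>0}A$, so $x$ lies in the double Bruhat cell where all the Gauss decompositions below make sense, and moreover $[x]_-=N_T(\pi)$ since $B_T(\pi)=N_T(\pi)A_T(\pi)$ is the $NA$-decomposition with $A_T(\pi)=e^{\pi(T)}\in H$.

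The key computation is the following chain of identities. On one hand, Theorem~\ref{thm:string_inversion_lemma} gives
\begin{align*}
x_{\bf -i}\circ\varrho_{\bf i}^K(\pi) = \left[\bar{w}_0^{-1}N_T(\pi)\right]_{0+}^T .
\end{align*}
On the other hand, unwinding Definition~\ref{def:crystal_parameter} for $\varrho^K$ applied to $x$,
\begin{align*}
\varrho^K\circ p(\pi) = \varrho^K(x) = \left[\bar{w}_0^{-1}[x]_-\right]_{0+}^T = \left[\bar{w}_0^{-1}N_T(\pi)\right]_{0+}^T ,
\end{align*}
where the last step uses $[x]_-=[B_T(\pi)]_-=N_T(\pi)$. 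Comparing the two right-hand sides yields $x_{\bf -i}\circ\varrho_{\bf i}^K(\pi)=\varrho^K\circ p(\pi)$, which is exactly the claim. One should also remark, for completeness, that $\varrho^K(x)$ is only defined once one knows $x\in\Bc$, i.e.\ that $B_T(\pi)$ is a totally positive element of $B$ lying in $B\cap B^+w_0B^+$; this is precisely Theorem~\ref{thm:flow_B_total_positivity} (the flow is totally non-negative and in fact $B_t(\pi)\in N^{w_0}_{>0}A$ for $t>0$), together with the fact that $N^{w_0}_{>0}\subset B^+w_0B^+$ by definition.

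There is no real obstacle here: the corollary is a formal consequence of Theorem~\ref{thm:string_inversion_lemma}, and the only point requiring a word of care is the identification $[B_T(\pi)]_-=N_T(\pi)$, which is immediate from the explicit $NA$-decomposition $B_.(X)=N_.(X)A_.(X)$ established in equations~\eqref{lbl:process_A_explicit}--\eqref{lbl:process_N_explicit} together with $A_T(\pi)=e^{\pi(T)}\in H$, so that the $N$-part of the Gauss decomposition of $B_T(\pi)$ coincides with $N_T(\pi)$. Thus the proof is just the two-line comparison above.
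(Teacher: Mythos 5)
Your proposal is correct and is exactly the paper's argument: the paper's proof is the one-line "Theorem \ref{thm:string_inversion_lemma} and definition \ref{def:crystal_parameter} because $[p(\pi)]_- = N_T(\pi)$," which you have simply spelled out, including the (correct) identification $[B_T(\pi)]_- = N_T(\pi)$ from the $NA$-decomposition and the well-definedness remark via Theorem \ref{thm:flow_B_total_positivity}.
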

\begin{proof}
Theorem \ref{thm:string_inversion_lemma} and definition \ref{def:crystal_parameter} because $ [p(\pi)]_- = N_T(\pi)$.
\end{proof}

\subsection{Lusztig parameters for paths}
\label{subsection:lusztig_param}
In the same fashion, we define Lusztig parameters for a path and show how to extract them.

\subsubsection{Definition}
Let ${\bf i} \in R(w_0)$. For any path $\pi \in \Cc_0\left( [0, T], \afrak \right)$, define $\varrho_{\bf i}^L(\pi)$ as the sequence of numbers ${ \bf t } = \left( t_1, t_2, \dots, t_m \right)$ recursively as:
$$ t_k = \frac{1}{\int_0^T e^{-\alpha_{i_k}\left( e_{s_{i_1} \dots s_{i_{k-1}}}^{-\infty} \pi \right)}}$$
\begin{thm}
\label{thm:lusztig_params_paths}
The map $\varrho_{\bf i}^L$ is well-defined on $\Cc_0\left( [0, T], \afrak \right)$ and takes values in $\R_{>0}^m$. Moreover, for $\pi \in \Cc_0\left( [0, T], \afrak \right)$, the $m$-tuple $\varrho_{\bf i}^L(\pi)$ allows to recover $\pi$ from the lowest weight path $e_{w_0}^{-\infty} \pi$.
\end{thm}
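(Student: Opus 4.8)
The plan is to deduce this statement from the already-established string-parameter version (Theorem \ref{thm:string_params_paths} together with Proposition \ref{proposition:extracting_string}) by time-reversal duality, or equivalently to rerun the same argument with $\Tc_w$ replaced by $e^{-\infty}_w$ and ``high path'' replaced by ``low path''. The bridge is Lemma \ref{lemma:high_low_duality}, which gives $e^{-\infty}_w = \iota\circ\Tc_w\circ\iota$ for every $w\in W$. Since $\iota$ preserves $\Cc_0\left([0,T],\afrak\right)$ and, by the remarks following Definitions \ref{def:high_path_types} and \ref{def:low_path_types}, exchanges high paths of type $w$ with low paths of type $w$, applying $\iota$ to Lemma \ref{lemma:start_asymptotics} shows that for $\pi\in\Cc_0\left([0,T],\afrak\right)$ the path $e^{-\infty}_w\pi$ is a low path of type $w$; in particular $e^{-\infty}_{w_0}\pi$ is a lowest path.

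First I would establish the dual of Proposition \ref{proposition:extracting_string}: for $w\in W$ and $\alpha\in\Delta$ with $\ell(ws_\alpha)=\ell(w)+1$, if $\pi$ is a low path of type $w$ then near $t=T$ one has $e^{-\alpha(\pi(t))}\sim\mathrm{const}\cdot(T-t)^{-\alpha(\rho^\vee-w^{-1}\rho^\vee)}$; since $\ell(ws_\alpha)=\ell(w)+1$ is equivalent, via $\ell(ws_\alpha)=\ell(s_\alpha w^{-1})$ and Lemma \ref{lemma:rho_property}, to $-\alpha(\rho^\vee-w^{-1}\rho^\vee)\ge 0$, the integral $\int_0^T e^{-\alpha(\pi)}$ converges, so $t=1/\int_0^T e^{-\alpha(\pi)}>0$ and $\eta=e^{-\infty}_\alpha\pi$ is a low path of type $ws_\alpha$. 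Conversely, given a low path $\eta$ of type $ws_\alpha$ and $t>0$, differentiating the defining formula for $e^{-\infty}_\alpha$ and integrating shows that the unique low path $\pi$ of type $w$ with $\eta=e^{-\infty}_\alpha\pi$ and $t=1/\int_0^T e^{-\alpha(\pi)}$ is $\pi(s)=\eta(s)+\log\bigl(1+t\int_0^s e^{-\alpha(\eta)}\bigr)\alpha^\vee$, the asymptotics near $T$ being checked exactly as in the proof of Proposition \ref{proposition:extracting_string}(2) (here the relevant integral diverges because now $-\alpha(\rho^\vee-s_\alpha w^{-1}\rho^\vee)\le -1$). All of this is the literal $\iota$-image of Proposition \ref{proposition:extracting_string}, using in addition $\varphi_\alpha=\varepsilon_\alpha\circ\iota$.

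Granting this, the proof of Theorem \ref{thm:lusztig_params_paths} is the same induction as for Theorem \ref{thm:string_params_paths}: starting from $\pi\in\Cc_0\left([0,T],\afrak\right)=\Cc^{low}_e$, apply $e^{-\infty}_{\alpha_{i_1}},e^{-\infty}_{\alpha_{i_2}},\dots$; because ${\bf i}\in R(w_0)$ we have $\ell(s_{i_1}\cdots s_{i_k})=k$, so at the $k$-th step the hypothesis $\ell(s_{i_1}\cdots s_{i_k})=\ell(s_{i_1}\cdots s_{i_{k-1}})+1$ of the dual extraction lemma holds, the path $e^{-\infty}_{s_{i_1}\cdots s_{i_{k-1}}}\pi$ is a low path of type $s_{i_1}\cdots s_{i_{k-1}}$, and $t_k=1/\int_0^T e^{-\alpha_{i_k}(e^{-\infty}_{s_{i_1}\cdots s_{i_{k-1}}}\pi)}$ is a well-defined positive real. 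After $m=\ell(w_0)$ steps one reaches $e^{-\infty}_{w_0}\pi$, and the reverse formula of the extraction lemma, applied $m$ times in the opposite order, reconstructs $\pi$ from $e^{-\infty}_{w_0}\pi$ and $(t_1,\dots,t_m)$. Hence $\varrho^L_{\bf i}$ is well-defined, $\R_{>0}^m$-valued, and $\pi$ is recovered from the lowest weight path. The only genuine work beyond invoking earlier results is the asymptotic bookkeeping of low paths at $t=T$ in the dual extraction lemma — and even that is avoided if one simply transports Proposition \ref{proposition:extracting_string} through $\iota$, the main point then being to verify carefully that $\iota$ maps $\Cc^{high}_w\left((0,T],\afrak\right)$ bijectively onto $\Cc^{low}_w\left([0,T),\afrak\right)$ and intertwines $\Tc_\alpha$ with $e^{-\infty}_\alpha$.
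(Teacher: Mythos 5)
Your proposal is correct and follows essentially the same route as the paper: the paper proves the dual extraction statement (its Proposition \ref{proposition:extracting_lusztig}) by transporting Proposition \ref{proposition:extracting_string} through the extended duality of Lemma \ref{lemma:extended_duality} — which is exactly the careful bijection $\Cc^{high}_w\left( (0,T], \afrak \right) \leftrightarrow \Cc^{low}_w\left( [0,T), \afrak \right)$ you flag at the end as the remaining point to verify — and then iterates down the Hasse diagram exactly as you describe.
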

Let us now explain how to prove this theorem carefully using duality.

\subsubsection{Extracting Lusztig parameters}
Let $w \in W$. It is obvious that if $\pi' \in \Cc^{high}_w\left( (0, T], \afrak\right)$ then $\iota(\pi')$ makes sense and belongs to $\Cc^{low}_w\left( [0, T), \afrak\right)$. The symmetric situation is not quite true, since one cannot apply the duality map $\iota$ to low paths. But we still have:
\begin{lemma}[Extended duality lemma]
\label{lemma:extended_duality}
Let $w \in W$. If $\pi \in \Cc^{low}_w\left( [0, T), \afrak\right)$ is a low path of type $w$, there is a unique $\pi' \in \Cc^{high}_w\left( (0, T], \afrak\right)$ such that:
$$ \pi = \left( \pi' \right)^\iota$$
\end{lemma}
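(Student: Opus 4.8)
The plan is to reverse the obstruction that prevents $\iota$ from being directly applicable to low paths. Recall $\pi^\iota(t) = \pi(T-t) - \pi(T)$; the issue for a low path $\pi$ is precisely that $\pi(T)$ is undefined (the path blows up at $T$ along the direction $\rho^\vee - w^{-1}\rho^\vee$). So the first step is to record exactly what $\pi^\iota$ would be if we could form it, keeping the divergent term explicit. Writing the low-path asymptotics from Definition \ref{def:low_path_types}, $\pi(t) = C_\pi + \log(T-t)(\rho^\vee - w^{-1}\rho^\vee) + c_w + o(1)$ as $t \to T$, one sees that the ``correction'' $\pi(T-t) - \pi(T)$ does not converge, but the path $t \mapsto \pi(T-t) - C_\pi - c_w$ behaves near $t=0$ like $\log(t)(\rho^\vee - w^{-1}\rho^\vee)$ plus lower-order terms — exactly the shape required for a high path of type $w$ in Definition \ref{def:high_path_types}. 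This motivates the candidate $\pi'(t) := \pi(T-t) - C_\pi$ (possibly up to a $w$-dependent constant shift that I will pin down so that the asymptotic constant matches $c_w$ on the nose).

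Next I would verify the two things that need checking: (a) that $\pi'$, so defined, genuinely lies in $\Cc^{high}_w\left( (0,T], \afrak \right)$, i.e. it is continuous on $(0,T]$ and has the precise asymptotic expansion $\pi'(t) = \log(t)(\rho^\vee - w^{-1}\rho^\vee) + c_w + o(1)$ at $0$; and (b) that $(\pi')^\iota = \pi$ as elements of $\Cc^{low}_w\left( [0,T), \afrak \right)$. For (a), continuity on $(0,T]$ is immediate since $\pi$ is continuous on $[0,T)$ and we have reparametrized by $t \mapsto T-t$; the asymptotic expansion is just the substitution of the low-path expansion, with the additive constant absorbed. For (b), one computes $(\pi')^\iota(t) = \pi'(T-t) - \pi'(T) = \bigl(\pi(t) - C_\pi\bigr) - \bigl(\pi(0) - C_\pi\bigr) = \pi(t) - \pi(0) = \pi(t)$, using that $\pi$ is a path starting from zero ($\pi(0)=0$) — this is where the ``starting at zero'' normalization of low paths in Definition \ref{def:low_path_types} and the decoration $\Cc_0$ does its work, and it is also what forces the constant $C_\pi$ to be the right one and makes $\pi'$ unique.

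Finally, uniqueness: if $\pi'_1, \pi'_2 \in \Cc^{high}_w$ both satisfy $(\pi'_j)^\iota = \pi$, then applying the ordinary time-reversal $\iota$ to high paths (which \emph{is} defined, since high paths have a well-defined endpoint at $t=T$ — only the starting point is lost) gives $\pi'_j = \iota(\pi)$ up to the fixed normalization, hence $\pi'_1 = \pi'_2$. In other words $\iota$ restricted to $\Cc^{high}_w$ is injective into $\Cc^{low}_w$ and the construction above exhibits the two-sided inverse on the image; since every low path of type $w$ is hit (by the candidate $\pi'$), $\iota: \Cc^{high}_w \to \Cc^{low}_w$ is a bijection. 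I do not expect a serious obstacle here; the only delicate point is bookkeeping the additive constants so that the asymptotic constant term of $\pi'$ is exactly $c_w$ rather than $c_w$ plus an artifact of the reparametrization, and this is handled by the observation that $c_w$ is the same constant appearing in both Definitions \ref{def:high_path_types} and \ref{def:low_path_types} by design.
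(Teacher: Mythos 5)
Your proposal is correct and follows the same route as the paper: the paper's proof simply sets $\pi'(t) = \pi(T-t) - C_\pi$ and declares the verifications immediate, which is exactly your candidate. Your extra bookkeeping confirms that no further $w$-dependent shift is needed (subtracting $C_\pi$ already leaves the asymptotic constant equal to $c_w$) and that $\pi(0)=0$ forces $(\pi')^\iota = \pi$ and uniqueness.
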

\begin{proof}
Simply take for $0 \leq t < T$, $\pi'(t) = \pi(T-t) - C_\pi$, where $C_\pi$ is the constant appearing in the definition of low paths. It is immediate that it satisfies all the requirements.
\end{proof}
\begin{rmk}
Mapping $\pi$ to $\pi'$ can be thought of as an extension of the duality map to low paths.
\end{rmk}
\begin{rmk}
Putting together the previous lemma and lemma \ref{lemma:high_low_duality}, one sees that the path transforms $\left( e^{-\infty}_w \right)_{w \in W}$ and $\left( \Tc_w \right)_{w \in W}$ are in extended duality.
\end{rmk}

Thus, we can prove analogous statements to the previous case:
\begin{proposition}
\label{proposition:extracting_lusztig}
Let $w \in W$ and $\alpha \in \Delta$ such that $\ell(w s_\alpha) = \ell(w) + 1$.

(1) If $\pi$ is a low path of type $w$ then $c = \frac{1}{\int_0^T e^{-\alpha(\pi)}} > 0$ and $\eta = e_\alpha^{-\infty} \pi$ has type $w s_\alpha$.

(2) Reciprocally, given $\eta$, a low path with type $w s_\alpha$, and a positive $c>0$, there is a unique low path $\pi$ of type $w$ such that $c = \frac{1}{\int_0^T e^{-\alpha(\pi)}} > 0$ and $\eta = e_\alpha^{-\infty} \pi$. It is given by:
$$ \forall 0 \leq t < T, \pi(t) = T_{x_\alpha(c)} \eta(t) = \eta(t) + \log\left( 1 + c \int_0^t e^{-\alpha(\eta)} \right)\alpha^\vee$$
\end{proposition}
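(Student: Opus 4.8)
The plan is to prove Proposition~\ref{proposition:extracting_lusztig} by reducing it to Proposition~\ref{proposition:extracting_string} via the extended duality from Lemma~\ref{lemma:extended_duality} and Lemma~\ref{lemma:high_low_duality}, rather than redoing the asymptotic analysis from scratch. The point is that $e^{-\infty}_\alpha$ and $\Tc_\alpha$ are intertwined by $\iota$, and $\iota$ exchanges low paths of type $w$ with high paths of type $w$, so the two statements are mirror images.

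First I would treat part (1). Let $\pi$ be a low path of type $w$ with $\ell(ws_\alpha)=\ell(w)+1$. By Lemma~\ref{lemma:extended_duality} there is a unique high path $\pi'$ of type $w$ with $\pi=(\pi')^\iota$. Since $\iota$ is the time reversal $\pi'^\iota(t)=\pi'(T-t)-\pi'(T)$, a change of variables gives
$$\int_0^T e^{-\alpha(\pi(s))}\,ds = e^{\alpha(\pi'(T))}\int_0^T e^{-\alpha(\pi'(s))}\,ds,$$
which is finite and positive because Proposition~\ref{proposition:extracting_string}(1) guarantees $\int_0^T e^{-\alpha(\pi')}<\infty$ for a high path of type $w$; hence $c=\left(\int_0^T e^{-\alpha(\pi)}\right)^{-1}>0$. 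Then, using Lemma~\ref{lemma:high_low_duality} (namely $e^{-\infty}_\alpha\circ\iota=\iota\circ\Tc_\alpha$), one has $\eta = e^{-\infty}_\alpha\pi = e^{-\infty}_\alpha\circ\iota(\pi') = \iota\circ\Tc_\alpha(\pi')$. By Proposition~\ref{proposition:extracting_string}(1), $\Tc_\alpha(\pi')$ is a high path of type $ws_\alpha$, and applying $\iota$ to a high path of type $ws_\alpha$ yields, by Lemma~\ref{lemma:extended_duality} read in reverse, a low path of type $ws_\alpha$; so $\eta$ has type $ws_\alpha$, as claimed.

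For part (2), given a low path $\eta$ of type $ws_\alpha$ and $c>0$, I would again dualize: by Lemma~\ref{lemma:extended_duality} write $\eta=(\eta')^\iota$ with $\eta'$ a high path of type $ws_\alpha$. Proposition~\ref{proposition:extracting_string}(2) produces a unique high path $\pi'$ of type $w$ with $\Tc_\alpha\pi'=\eta'$ and the matching normalization $\left(\int_0^T e^{-\alpha(\pi')}\right)^{-1}=c'$ for the appropriate constant $c'$ --- here care is needed, since the time-reversal rescales the constant by $e^{\alpha(\eta(T)\text{-type shift})}$, so one should choose the input constant for Proposition~\ref{proposition:extracting_string} to be $c$ multiplied by the relevant exponential factor so that, after applying $\iota$, the normalization comes out exactly as $c$. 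Set $\pi=(\pi')^\iota$, a low path of type $w$ by Lemma~\ref{lemma:extended_duality}. Then $e^{-\infty}_\alpha\pi = \iota\Tc_\alpha\pi' = \iota\eta' = \eta$ by Lemma~\ref{lemma:high_low_duality}, and uniqueness of $\pi$ follows from uniqueness in Lemma~\ref{lemma:extended_duality} and in Proposition~\ref{proposition:extracting_string}(2). It remains to identify the explicit formula: I would compute $\pi=(\pi')^\iota$ from the formula $\pi'(t)=\eta'(t)+\log\!\left(c'+\int_t^T e^{-\alpha(\eta')}\right)\alpha^\vee$ of Proposition~\ref{proposition:extracting_string}(2), substitute $\eta'(t)=\eta(T-t)-\eta(T)$ (so that $\int_t^T e^{-\alpha(\eta'(s))}ds$ becomes $e^{\alpha(\eta(T))}\int_0^{T-t}e^{-\alpha(\eta(s))}ds$), and simplify; the exponential prefactors collapse exactly so that $\pi(t)=\eta(t)+\log\!\left(1+c\int_0^t e^{-\alpha(\eta)}\right)\alpha^\vee$. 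Finally I would note that this is precisely $T_{x_\alpha(c)}\eta$ by Property~(v) of~\ref{lbl:path_transform_properties}.

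The main obstacle I anticipate is bookkeeping the constant under time reversal: $\iota$ shifts a path by its endpoint, and since the endpoint of a low path of type $ws_\alpha$ is genuinely present (it is the constant $C_\eta$ plus a logarithmic blow-up), one must track how $\int e^{-\alpha(\cdot)}$ transforms and check that the normalization $c$ is preserved on the nose rather than up to an exponential factor. Once that matching is pinned down, everything else is a mechanical translation of Proposition~\ref{proposition:extracting_string} through $\iota$, together with the explicit substitution to recover the stated closed form.
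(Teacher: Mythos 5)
Your argument is correct, and for part (1) it is essentially verbatim the paper's proof: dualize via Lemma \ref{lemma:extended_duality}, use $e^{-\infty}_\alpha\circ\iota=\iota\circ\Tc_\alpha$ from Lemma \ref{lemma:high_low_duality}, and invoke Proposition \ref{proposition:extracting_string}(1). For part (2) the paper likewise gets existence and uniqueness by dualizing to Proposition \ref{proposition:extracting_string}(2), but for the closed form it explicitly declines the route you take (``rather than rearranging the formula\dots'') and instead solves $\eta(t)=\pi(t)+\alpha^\vee\log\bigl(1-c\int_0^t e^{-\alpha(\pi)}\bigr)$ directly for $\pi$: applying $e^{-\alpha(\cdot)}$ and integrating yields $\bigl(1+c\int_0^t e^{-\alpha(\eta)}\bigr)\bigl(1-c\int_0^t e^{-\alpha(\pi)}\bigr)=1$, from which both the stated formula and the normalization $c=1/\int_0^T e^{-\alpha(\pi)}$ (via $\int_0^T e^{-\alpha(\eta)}=+\infty$) fall out with no bookkeeping at all. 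Your translation-through-$\iota$ route does go through --- the prefactors collapse exactly as you claim --- but the constant matching you flag as the main obstacle is slightly subtler than ``$c$ multiplied by an exponential factor'': writing $\eta=(\eta')^\iota$ and $\pi=(\pi')^\iota$, one has $\alpha(\pi'(T))=\alpha(\eta'(T))+2\log c'$ from the formula in Proposition \ref{proposition:extracting_string}(2), and combining this with $c=c'e^{-\alpha(\pi'(T))}$ forces the input constant to be $c'=e^{-\alpha(\eta'(T))}/c$, i.e.\ inversely proportional to $c$ (so in particular it is determined by $\eta'$ and $c$ alone, and there is no circularity). Once that relation is written down the two computations agree; the paper's direct inversion is simply the shorter path to the same formula.
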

\begin{proof}
(1) Using lemma \ref{lemma:extended_duality}, we get $\pi'$ a high path of type $w$. Using proposition \ref{proposition:extracting_string}, we have:
$$c = \frac{1}{\int_0^T e^{-\alpha(\pi)}} = \frac{e^{-\alpha(\pi'(T))}}{\int_0^T e^{-\alpha(\pi')}} > 0$$
And:
$$ \eta = e_\alpha^{-\infty} \pi = e_\alpha^{-\infty} \circ \iota(\pi') = \iota \circ \Tc_\alpha(\pi') $$
which is a low path of type $w s_\alpha$ since $\Tc_\alpha(\pi') \in C_{w s_\alpha}^{high}\left( (0, T], \afrak \right)$.

(2) Again, using the extended duality lemma, there exists $\eta' \in C_{w s_\alpha}^{high}\left( (0, T], \afrak \right)$ such that $\eta = \iota(\eta')$. The result is proven by using (2) from proposition \ref{proposition:extracting_string}. In order to recover $\pi$ from $\eta$, rather than rearranging the formula from proposition \ref{proposition:extracting_string}, let us direcly solve:
$$ \eta(t) = \pi(t) + \alpha^\vee \log\left( 1 - c \int_0^t e^{-\alpha( \pi )} \right) $$
with $\pi$ as unknown and $\eta$ as variable. By evaluating the $e^{-\alpha(.)}$ on each side of the previous equality:
$$ e^{-\alpha\left(\eta(t)\right)} = \frac{ e^{-\alpha\left(\pi(t)\right)} }{ \left( 1 - c \int_0^t e^{-\alpha( \pi)}\right)^2 }$$
This expression can be integrated and rearranged as:
$$ \left( 1 + c \int_0^t e^{ -\alpha(\eta) } \right) \left( 1 - c \int_0^t e^{-\alpha(\pi)} \right) = 1$$
As such, since $\eta$ has type $w s_\alpha$, we have that $\int_0^T e^{-\alpha(\eta)} = +\infty$. Hence:
$$ c = \frac{1}{\int_0^T e^{-\alpha(\pi)} }$$
Also by replacing $\log\left( 1 - c \int_0^t e^{-\alpha(\pi)} \right)$ by $-\log\left( 1 + c \int_0^t e^{ -\alpha(\eta) } \right)$, we get the result:
$$ \pi(t) = \eta(t) + \log\left(1 + c \int_0^t e^{-\alpha(\eta)} \right) \alpha^\vee = T_{x_\alpha\left( c \right)} \eta (t)$$
\end{proof}

\begin{proof}[Proof of theorem \ref{thm:lusztig_params_paths}]
Apply iteratively proposition \ref{proposition:extracting_lusztig}. Successive projections give low paths whose types goes down the Hasse diagram. At every composition, exactly one positive real parameter is lost.  
\end{proof}

This drawing illustrates the situation in the case of $A_2$:

\setcounter{figure}{ \value{equation} }
\addtocounter{equation}{1}
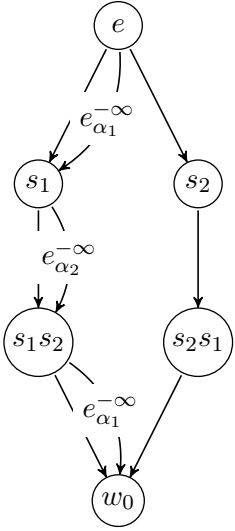
\begin{figure}[htp!]
\label{lbl:lusztig_hasse_diagram}
\caption{Extracting Lusztig parameters and going down the Hasse diagram of type $A_2$} 
\begin{center}
\begin{tikzpicture}[scale=0.7]
\useasboundingbox (0,0) rectangle (7.0cm,10.0cm);
\definecolor{black}{rgb}{0.0,0.0,0.0}
\definecolor{white}{rgb}{1.0,1.0,1.0}
\Vertex[style={minimum size=1, shape=circle}, x=2.5, y=10, L=$e$]{v0}
\Vertex[style={minimum size=1, shape=circle}, x=4,   y=7, L=$s_2$]{v1}
\Vertex[style={minimum size=1, shape=circle}, x=1,   y=7, L=$s_1$]{v2}
\Vertex[style={minimum size=1, shape=circle}, x=1,   y=4, L=$s_1 s_2$]{v3}
\Vertex[style={minimum size=1, shape=circle}, x=4,   y=4, L=$s_2 s_1$]{v4}
\Vertex[style={minimum size=1, shape=circle}, x=2.5, y=1, L=$w_0$]{v5}
\Edge[lw=0.1cm,style={post, color=black,},](v3)(v5)
\Edge[lw=0.1cm,style={post, color=black,},](v4)(v5)
\Edge[lw=0.1cm,style={post, color=black,},](v2)(v3)
\Edge[lw=0.1cm,style={post, color=black,},](v1)(v4)
\Edge[lw=0.1cm,style={post, color=black,},](v0)(v2)
\Edge[lw=0.1cm,style={post, color=black,},](v0)(v1)
\Edge[lw=0.1cm,style={post, bend left,color=black,}, label=$e^{-\infty}_{\alpha_1}$](v3)(v5)
\Edge[lw=0.1cm,style={post, bend left,color=black,}, label=$e^{-\infty}_{\alpha_2}$](v2)(v3)
\Edge[lw=0.1cm,style={post, bend left,color=black,}, label=$e^{-\infty}_{\alpha_1}$](v0)(v2)
\end{tikzpicture}
\end{center}
\end{figure}
 
This time, a path transform we already encountered appears:
\begin{thm}
\label{thm:from_lowest_path_2_path}
Given a lowest path $\eta$, a reduced word ${\bf i} = \left( i_1, \dots, i_m\right) \in R\left(w_0\right)$ and strictly positive parameters $\left(t_1, \dots, t_m\right) \in \R_{>0}^m$, there is a unique path $\pi \in \Cc_0\left( [0, T], \afrak \right)$ such that:
\begin{itemize}
 \item $e^{-\infty}_{w_0} \pi = \eta$
 \item $\eta_j = e^{-\infty}_{s_{i_1} \dots s_{i_j}} \pi = e^{-\infty}_{\alpha_{i_j}} \cdot \eta_{j-1}$
 \item $t_j = \frac{1}{\int_0^T e^{-\alpha_{i_j}( \eta_{j-1} ) } }$
\end{itemize}
  It is given by:
$$ \pi = T_z \eta$$
where
$$ z = x_{i_1}\left( t_1 \right) \dots x_{i_m}\left( t_m \right) \in U^{w_0}_{>0}$$
\end{thm}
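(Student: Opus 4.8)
The plan is to obtain the theorem by iterating part (2) of Proposition \ref{proposition:extracting_lusztig} backwards along the reduced word ${\bf i}$, exactly mirroring the forward procedure used to define Lusztig parameters in Theorem \ref{thm:lusztig_params_paths} and dual to the string inversion of Theorem \ref{thm:string_inversion_lemma}. Write $v_j := s_{i_1}\cdots s_{i_j}$ for $0\le j\le m$, so that $v_0 = e$ and $v_m = w_0$; since ${\bf i}\in R(w_0)$ is reduced one has $\ell(v_j) = j$, and in particular $\ell(v_{j-1}s_{i_j}) = \ell(v_{j-1})+1$ for every $j$, which is precisely the hypothesis under which Proposition \ref{proposition:extracting_lusztig} applies.

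First I would set $\eta_m := \eta$, a low path of type $v_m = w_0$, and construct $\eta_{m-1},\dots,\eta_0$ recursively: assuming $\eta_j$ is a low path of type $v_j$, I apply Proposition \ref{proposition:extracting_lusztig}(2) with $w = v_{j-1}$, $\alpha = \alpha_{i_j}$ and $c = t_j>0$ to get the unique low path $\eta_{j-1}$ of type $v_{j-1}$ with $e^{-\infty}_{\alpha_{i_j}}\eta_{j-1} = \eta_j$ and $t_j = 1/\int_0^T e^{-\alpha_{i_j}(\eta_{j-1})}$, explicitly $\eta_{j-1} = T_{x_{i_j}(t_j)}\eta_j$. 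After $m$ steps $\eta_0$ is a low path of type $e$, hence $\eta_0\in\Cc_0([0,T],\afrak)$ by the remark that $\Cc^{low}_e = \Cc_0([0,T],\afrak)$; I then set $\pi := \eta_0$, and by construction $\pi(0)=\eta_0(0)=0$.

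The verification is then short. Using $e^{-\infty}_{v_j} = e^{-\infty}_{\alpha_{i_j}}\cdots e^{-\infty}_{\alpha_{i_1}}$ and $\eta_0 = \pi$, an easy induction gives $\eta_j = e^{-\infty}_{v_j}\pi$ for all $j$, so in particular $e^{-\infty}_{w_0}\pi = \eta_m = \eta$; the identities $t_j = 1/\int_0^T e^{-\alpha_{i_j}(\eta_{j-1})}$ and $\eta_j = e^{-\infty}_{\alpha_{i_j}}\eta_{j-1}$ hold by construction. Unwinding the recursion, $\pi = T_{x_{i_1}(t_1)}\circ\cdots\circ T_{x_{i_m}(t_m)}\,\eta$, and since each $x_{i_j}(t_j)\in U_{\ge 0}$, property (i) of \ref{lbl:path_transform_properties} collapses the composition to $T_z$ with $z = x_{i_1}(t_1)\cdots x_{i_m}(t_m)$; that $z\in U^{w_0}_{>0}$ is Theorem \ref{thm:lusztig_variety_params}, since the $t_j$ are positive and ${\bf i}\in R(w_0)$. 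Hence $\pi = T_z\eta$.

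For uniqueness I would take any $\pi'\in\Cc_0([0,T],\afrak)$ satisfying the three conditions, set $\eta_j' := e^{-\infty}_{v_j}\pi'$ (so $\eta_0'=\pi'$, $\eta_m'=\eta$), and first run Proposition \ref{proposition:extracting_lusztig}(1) forwards from $\pi'$ (a low path of type $e$) to see that each $\eta_j'$ is a low path of type $v_j$. Then a downward induction on $j$, starting from $\eta_m'=\eta=\eta_m$ and using the uniqueness clause of Proposition \ref{proposition:extracting_lusztig}(2) at each step — with the datum $1/\int_0^T e^{-\alpha_{i_j}(\eta_{j-1}')} = t_j$ supplied by the third condition on $\pi'$ — forces $\eta_j'=\eta_j$ for all $j$, hence $\pi'=\eta_0'=\eta_0=\pi$. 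The one place requiring genuine care rather than routine bookkeeping is keeping the \emph{types} of the intermediate paths straight so that every invocation of Proposition \ref{proposition:extracting_lusztig} is legitimate; this is exactly where reducedness of ${\bf i}$, through $\ell(v_{j-1}s_{i_j})=\ell(v_{j-1})+1$, is used, and it is the only step where the argument could fail if one were careless about which Weyl group element labels which path.
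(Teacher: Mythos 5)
Your proposal is correct and follows essentially the same route as the paper: iterate Proposition \ref{proposition:extracting_lusztig}(2) backwards along the reduced word to build the chain $\eta_m=\eta,\eta_{m-1},\dots,\eta_0=\pi$, then collapse $T_{x_{i_1}(t_1)}\circ\cdots\circ T_{x_{i_m}(t_m)}$ to $T_z$ via the composition property in \ref{lbl:path_transform_properties}. The only difference is that you spell out the type-bookkeeping and the uniqueness induction explicitly, which the paper leaves implicit.
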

\begin{proof}
At each level, $ \eta_{j-1} = T_{x_{\alpha_{i_j}}\left( \xi_j \right)} \eta_j$. Then, using the composition property among properties \ref{lbl:path_transform_properties}:
$$ \pi = T_{x_{\alpha_{i_1}}\left( \xi_1 \right)} \circ T_{x_{\alpha_{i_2}}\left( \xi_2 \right)} \circ \dots \circ T_{x_{\alpha_{i_j}}\left( \xi_j \right)}\left( \eta \right)
       = T_z\left( \eta \right)$$
\end{proof}
\begin{rmk}
A similar statement holds for paths of type $w$, using group elements in $U^{w}_{>0}$. And this $z \in U^{w_0}_{>0}$ is the Lusztig parameter, as we will see shortly.
\end{rmk}

\begin{corollary}
\label{corollary:connected_component_bijection}
A connected component generated by a path $\pi_0$ can be parametrized by the totally positive part $U^{w_0}_{>0}$ 
thanks to the bijection:
 $$\begin{array}{ccc}
    U^{w_0}_{>0} & \rightarrow & \langle \pi_0 \rangle\\
          z      & \mapsto     & T_z\left( e^{-\infty}_{w_0} \cdot \pi_0 \right) 
   \end{array}$$
\end{corollary}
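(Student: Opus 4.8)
The plan is to combine Theorem \ref{thm:from_lowest_path_2_path} with Proposition \ref{proposition:partial_connectedness_criterion} and Theorem \ref{thm:lusztig_params_paths}. Recall that for $\pi_0 \in \Cc_0\left( [0,T], \afrak \right)$ the path $\eta_0 := e^{-\infty}_{w_0} \cdot \pi_0$ is a lowest path, i.e. an element of $\Cc^{low}_{w_0}\left( [0,T), \afrak \right)$. Fix once and for all a reduced word ${\bf i} \in R(w_0)$, so that $x_{\bf i}$ identifies $\R_{>0}^m$ with $U^{w_0}_{>0}$; by Theorem \ref{thm:lusztig_variety_params} this identification, and the change-of-reduced-word maps, are subtraction-free, so the corollary statement is independent of the choice of ${\bf i}$. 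Write $\Psi: z \mapsto T_z\left( \eta_0 \right)$ for the map in question, defined on $U^{w_0}_{>0}$.

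First I would check that $\Psi$ lands in $\langle \pi_0 \rangle$. Write $z = x_{i_1}(t_1) \dots x_{i_m}(t_m)$. By Theorem \ref{thm:from_lowest_path_2_path}, $\Psi(z) = T_z(\eta_0)$ is the unique path $\pi \in \Cc_0\left( [0,T], \afrak \right)$ with $e^{-\infty}_{w_0} \pi = \eta_0$ and whose successive low path transforms have string-of-Lusztig data $(t_1, \dots, t_m)$. By the composition property \ref{lbl:path_transform_properties}(i), $\pi = T_{x_{i_1}(t_1)} \circ \dots \circ T_{x_{i_m}(t_m)}(\eta_0)$, and by the lemma expressing $T_{x_\alpha(\xi)}$ as a Littelmann operator $e^c_\alpha$ (with $c$ depending on $\xi$ and $\varepsilon_\alpha$ of the current path), each $T_{x_{i_j}(t_j)}$ equals some $e^{c_j}_{\alpha_{i_j}}$ applied to the intermediate path. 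Hence $\pi \in \langle \eta_0 \rangle$; and since $\eta_0 = e^{-\infty}_{w_0}\pi_0$ is connected to $\pi_0$ by Proposition \ref{proposition:partial_connectedness_criterion}, in fact $\langle \eta_0 \rangle$ — the set of paths connected to $\eta_0$ — coincides with $\langle \pi_0 \rangle$ (note $\eta_0$ itself belongs to neither as a \emph{path in} $\Cc_0$, but every $\pi$ it produces via finite $T_{x_\alpha(\xi)}$ compositions is genuinely connected to $\pi_0$; one must be slightly careful and phrase connectedness through a common finite intermediate path, using $\pi_0 = T_z(\eta_0)$ for $z$ the Lusztig parameter of $\pi_0$ itself). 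I would also record, via property \ref{lbl:path_transform_properties}(i) and Proposition \ref{proposition:extracting_lusztig}, that the $j$-th Lusztig coordinate of $\Psi(z)$ in the sense of $\varrho^L_{\bf i}$ is exactly $t_j$, i.e. $\varrho^L_{\bf i} \circ \Psi = x_{\bf i}^{-1}$.

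For injectivity: if $\Psi(z) = \Psi(z')$ then applying $\varrho^L_{\bf i}$ and using the previous paragraph gives $x_{\bf i}^{-1}(z) = x_{\bf i}^{-1}(z')$, hence $z = z'$. For surjectivity: let $\pi \in \langle \pi_0 \rangle$. Then $\pi$ is connected to $\pi_0$, so by Proposition \ref{proposition:partial_connectedness_criterion} $e^{-\infty}_{w_0}\pi = e^{-\infty}_{w_0}\pi_0 = \eta_0$. Set $(t_1, \dots, t_m) := \varrho^L_{\bf i}(\pi)$, which lies in $\R_{>0}^m$ by Theorem \ref{thm:lusztig_params_paths}, and $z := x_{\bf i}(t_1,\dots,t_m) \in U^{w_0}_{>0}$. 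By Theorem \ref{thm:from_lowest_path_2_path}, the path $T_z(\eta_0)$ is the \emph{unique} path in $\Cc_0\left([0,T],\afrak\right)$ having $\eta_0$ as its lowest path and $(t_1, \dots, t_m)$ as the associated positive parameters; since $\pi$ also has these two properties (the second by Theorem \ref{thm:lusztig_params_paths}, which says $\varrho^L_{\bf i}(\pi)$ together with $e^{-\infty}_{w_0}\pi$ recovers $\pi$), uniqueness forces $\pi = T_z(\eta_0) = \Psi(z)$. This proves $\Psi$ is a bijection.

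The main obstacle is the bookkeeping around the lowest path $\eta_0$: it is a path on the edge (no endpoint), so one cannot literally say "$\eta_0 \in \langle \pi_0 \rangle$", and the cleanest fix is to route every connectedness argument through honest paths in $\Cc_0\left([0,T],\afrak\right)$, using that $\pi_0 = T_{z_0}(\eta_0)$ where $z_0 = x_{\bf i}(\varrho^L_{\bf i}(\pi_0))$, so that $\Psi(z_0) = \pi_0$ and hence $\pi_0 \in \mathrm{Im}\,\Psi \subset \langle \pi_0\rangle$; combined with the fact (already used implicitly) that each $T_{x_\alpha(\xi)}$ is a genuine crystal operator $e^c_\alpha$, this shows $\mathrm{Im}\,\Psi$ is exactly the connected component. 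Everything else is a direct appeal to Theorem \ref{thm:from_lowest_path_2_path}, Theorem \ref{thm:lusztig_params_paths}, Proposition \ref{proposition:extracting_lusztig} and Proposition \ref{proposition:partial_connectedness_criterion}.
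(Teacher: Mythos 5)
Your argument is assembled from the same ingredients as the paper's own (very short) proof: Proposition \ref{proposition:partial_connectedness_criterion} to see that $\eta_0=e^{-\infty}_{w_0}\pi$ depends only on the component, and Theorem \ref{thm:from_lowest_path_2_path} together with Theorem \ref{thm:lusztig_params_paths} to see that each $\pi$ in the component is recovered as $T_z\eta_0$ from a unique $z\in U^{w_0}_{>0}$. Your injectivity step ($\varrho^L_{\bf i}\circ\Psi=x_{\bf i}^{-1}$) and your surjectivity step (the uniqueness clause of Theorem \ref{thm:from_lowest_path_2_path} forces $\pi=\Psi\bigl(x_{\bf i}(\varrho^L_{\bf i}(\pi))\bigr)$ for every $\pi\in\langle\pi_0\rangle$) are correct and are exactly what the paper does, merely spelled out; they give $\langle\pi_0\rangle\subset\mathrm{Im}\,\Psi$ and that $\Psi$ is injective.

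The one step you try to supply beyond the paper --- well-definedness, i.e.\ $\mathrm{Im}\,\Psi\subset\langle\pi_0\rangle$ --- is also the one step of your write-up that does not hold together as written. Factoring $T_z=T_{x_{i_1}(t_1)}\circ\dots\circ T_{x_{i_m}(t_m)}$ and saying each factor ``is a crystal operator applied to the intermediate path'' does not produce a chain of operators $e^c_\alpha$ inside $\Cc_0([0,T],\afrak)$: the intermediate paths $T_{x_{i_j}(t_j)}\circ\dots\circ T_{x_{i_m}(t_m)}(\eta_0)$ for $j\geq 2$ are low paths of nontrivial type, have no endpoint, and hence are not elements of any path crystal --- only the final composite is an honest path. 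Your proposed repair only establishes that the single element $\Psi(z_0)=\pi_0$ lies in $\langle\pi_0\rangle$; it does not connect $\Psi(z)$ to $\Psi(z')$ by finitely many operators when $z\neq z'$. What is genuinely needed is the connectedness of the fibre $\{\pi\,:\,e^{-\infty}_{w_0}\pi=\eta_0\}$, for instance by showing that the scalings of the first Lusztig coordinate with respect to the various reduced words (Proposition \ref{proposition:actions_in_coordinates}) act transitively on $\R_{>0}^m$, or by a total-positivity argument in the group picture. In fairness, the paper's own two-line proof is silent on this point as well, so your proposal is no less complete than the original; but this is the real content hidden behind the word ``bijection'' here, and it deserves an explicit argument rather than the circular one you sketch.
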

\begin{proof}
Recall that $\eta = e^{-\infty}_{w_0} \pi$ does not depend on $\pi \in \langle \pi_0 \rangle$, but only on the connected component. And every path in $\pi \in \langle \pi_0 \rangle$ is uniquely determined by an $u \in U^{w_0}_{>0}$ such that $\pi = T_u \eta$ thanks to the previous theorem.
\end{proof}

\subsubsection{Inversion lemma}
Again, we have a bijective correspondence between $N_T(\pi)$ and the Lusztig parameters $\varrho_{\bf i}^L(\pi) \in \R_{>0}^{\ell(w_0)}$:

\begin{thm}
\label{thm:inversion_lemma_lusztig}
For $\eta \in \Cc^{low}_{w_0}\left( [0, T), \afrak \right)$, let $\pi = T_g \eta$ be is a crystal element with Lusztig parameters encoded by $g = x_{i_1}\left( t_1 \right) \dots x_{i_m}\left( t_m \right) = x_{\bf i} \circ \varrho_{\bf i}^L(\pi) \in U^{w_0}_{>0}$. Then:
$$ N_T\left( \pi \right) = [ g \bar{w}_0 ]_-$$
or equivalently
$$ g = [\bar{w}_0^{-1} N_T\left( \pi \right)^\iota ]_+^\iota = [\bar{w}_0^{-1} B_T\left( \pi \right)^\iota ]_+^\iota$$
\end{thm}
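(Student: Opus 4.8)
The strategy is to induct on the length $m = \ell(w_0)$, peeling off one factor $x_{i_1}(t_1)$ at a time and tracking how $T_{x_{i_1}(t_1)}$ interacts with the flow $B_\cdot(\eta)$ via the Gauss decomposition. Concretely, I would first establish the more general claim: for $w = s_{i_1}\cdots s_{i_j} \in W$ with $\ell(w) = j$ and ${\bf i} = (i_1,\dots,i_j)$ a reduced word, if $\eta$ is a low path of type $w$ and $\pi = T_g \eta$ with $g = x_{i_1}(t_1)\cdots x_{i_j}(t_j) \in U^w_{>0}$ (so $\pi \in \Cc_0([0,T],\afrak)$ when $w = w_0$), then $N_T(\pi) = [g\bar w]_-$. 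The base case $j = 0$ is trivial: $g = \mathrm{id}$, $\pi = \eta$, and $N_T(\eta) = [\bar e]_- = \mathrm{id}$.

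For the inductive step, write $g = x_{i_1}(t_1)\, g'$ with $g' = x_{i_2}(t_2)\cdots x_{i_j}(t_j) \in U^{w'}_{>0}$, $w' = s_{i_2}\cdots s_{i_j}$. By Theorem~\ref{thm:from_lowest_path_2_path} (and the composition property~(i) of Properties~\ref{lbl:path_transform_properties}), $\pi = T_{x_{i_1}(t_1)}\big(T_{g'}\eta\big) = T_{x_{i_1}(t_1)}\pi'$ where $\pi' = T_{g'}\eta$. The induction hypothesis, applied to the path $\eta$ regarded through $w'$ (one checks $\eta$ has the right type for $w'$ by Proposition~\ref{proposition:extracting_lusztig}), gives $N_T(\pi') = [g'\bar w']_-$. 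Now I would use the proposition describing the evolution of $[g B_t(X)]_{-0}$: since $x_{i_1}(t_1) \in U_{\geq 0}$, we have $x_{i_1}(t_1) B_t(\pi')$ always admits a Gauss decomposition, and
$$[x_{i_1}(t_1) B_t(\pi')]_{-0} = [x_{i_1}(t_1)]_- \, B_t(T_{x_{i_1}(t_1)}\pi') = B_t(\pi),$$
using $[x_{i_1}(t_1)]_- = \mathrm{id}$. Taking $NA$-parts at $t = T$ and using $B_T(\pi') = N_T(\pi') A_T(\pi')$, together with identity~\eqref{lbl:gauss1}, yields $N_T(\pi) = [\,x_{i_1}(t_1) N_T(\pi')\,]_- = [\,x_{i_1}(t_1) [g'\bar w']_-\,]_-$. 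The remaining task is the purely group-theoretic identity $[\,x_{i_1}(t_1)[g'\bar w']_-\,]_- = [x_{i_1}(t_1) g' \bar w']_- = [g\bar w]_-$, where the first equality is again~\eqref{lbl:gauss1} and the last uses $\bar w = \bar s_{i_1}\bar w'$ (legitimate since $\ell(w) = 1 + \ell(w')$). One must check $x_{i_1}(t_1) g' \bar w' \in NHU$, which holds because $g\bar w_0 \in NHU$ follows from the fact that $x_{\bf i}$ parametrizes $U^{w_0}_{>0} \subset B w_0 B$ (Theorem~\ref{thm:lusztig_variety_params}) and hence $g \in B w_0 B$, so $g\bar w_0^{-1}\bar w_0 = g$ decomposes appropriately; more directly, $g\bar w \in B^+ w B^+ \cap B\bar w$ sits inside a cell where the lower-triangular Gauss part is well-defined. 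I would cite the analogue in the proof of Proposition~\ref{proposition:crystal_param_maps} rather than redo the cell analysis.

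For the ``equivalently'' clause, I would argue exactly as in the final lines of the proof of Theorem~\ref{thm:string_inversion_lemma} and the Lusztig-parameter computations inside Proposition~\ref{proposition:crystal_param_maps}: from $N_T(\pi) = [g\bar w_0]_-$, apply $\iota$ and use that $N_T(\pi)^\iota = [g\bar w_0]_-^\iota$. Since $g \in U^{w_0}_{>0}$ consists of products $x_{i_k}(t_k)$, which are $\iota$-fixed, and $g\bar w_0 = [g\bar w_0]_-[g\bar w_0]_0[g\bar w_0]_+$, one gets $\bar w_0^{-1} N_T(\pi)^\iota = \bar w_0^{-1} [g\bar w_0]_+^\iota [g\bar w_0]_0^\iota g$, whence $[\bar w_0^{-1} N_T(\pi)^\iota]_+ = g^\iota = g$ after applying $\iota$ — the intermediate factors $\bar w_0^{-1}[g\bar w_0]_{0+}^\iota$ contribute nothing to the $U$-part since $\bar w_0^{-1} \cdot (\text{upper Borel}) \in N\bar w_0^{-1} B^+$, following the pattern of the $\varrho^L$ computation. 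Finally $N_T(\pi)^\iota$ can be replaced by $B_T(\pi)^\iota$ because $\chi$-type arguments (or simply $[B_T(\pi)]_- = N_T(\pi)$ and the fact that $\iota$ and taking $[\cdot]_+$ are insensitive to the $A$-factor $A_T(\pi) = e^{\pi(T)}$, which is $\iota$-inverted but absorbed) show the two give the same answer.

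\textbf{Main obstacle.} The genuinely delicate point is verifying that all the Gauss decompositions invoked actually exist — i.e.\ that $x_{i_1}(t_1) g'\bar w'$ and the intermediate products stay inside the big cell $NHU$. This is where total positivity of the flow (Theorem~\ref{thm:flow_B_total_positivity}) and the semigroup property of $G_{\geq 0}$ do the work: $B_T(\pi') \in N^{w_0}_{>0}A$, left-multiplication by $x_{i_1}(t_1) \in U_{\geq 0}$ keeps us in $N G_{\geq 0}$, and such elements have a Gauss decomposition by Theorem~\ref{thm:totally_positive_gauss_decomposition}. The bookkeeping that the resulting $N$-part is exactly $[g\bar w_0]_-$ (rather than merely lying in the same cell) is routine given~\eqref{lbl:gauss1}, but one should be careful that the reduced-word factorization of $\bar w_0$ matches the order of the $x_{i_k}$ factors — this is guaranteed by $\ell(w) = \ell(s_{i_1}) + \ell(s_{i_2}\cdots s_{i_j})$ at each step, which is built into ${\bf i}$ being reduced.
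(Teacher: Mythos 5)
Your handling of the ``equivalently'' clause is essentially the paper's own argument and is fine. The problem is the induction you propose for the main identity $N_T(\pi)=[g\bar w_0]_-$. The generalized claim you induct on --- $\eta$ a low path of type $w$, $g=x_{i_1}(t_1)\cdots x_{i_j}(t_j)\in U^{w}_{>0}$, $\pi=T_g\eta$ implies $N_T(\pi)=[g\bar w]_-$ --- is false for every $w\neq w_0$, starting with your base case: for $w=e$ it asserts $N_T(\eta)=\mathrm{id}$ for an arbitrary $\eta\in\Cc_0([0,T],\afrak)$, whereas Theorem \ref{thm:flow_B_total_positivity} places $N_T(\eta)$ in $N^{w_0}_{>0}\subset B^+w_0B^+$ for every $T>0$, so it is never the identity. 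More conceptually, the right-hand side $[g\bar w]_-$ depends only on $g$, while $N_T(T_g\eta)$ genuinely depends on the low path $\eta$ unless $w=w_0$; the independence of $N_T(\pi)$ from the lowest path is precisely the special feature of $w_0$ that the theorem encodes, so it cannot be propagated up the weak order by induction.

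The inductive step fails for a second, independent reason. Writing $\pi=T_{x_{i_1}(t_1)}\pi'$ with $\pi'=T_{g'}\eta$ and $g'=x_{i_2}(t_2)\cdots x_{i_j}(t_j)$: by Proposition \ref{proposition:extracting_lusztig}, each application of $T_{x_{i_k}(t_k)}$ strips one simple reflection off the \emph{right} end of the type, so $\pi'$ is a low path of type $s_{i_1}$, not of type $e$. Such a path diverges logarithmically at $t=T$ (one has $e^{-\alpha_{i_1}(\pi'(t))}\sim C(T-t)^{-2}$), so $B_T(\pi')$ and $N_T(\pi')$ are undefined and the identity $N_T(\pi)=[x_{i_1}(t_1)N_T(\pi')]_-$ has no meaning. (Relatedly, your appeal to the induction hypothesis needs $\eta$ to be of type $w'=s_{i_2}\cdots s_{i_j}$, but it is of type $w$; Proposition \ref{proposition:extracting_lusztig} governs the type of $T_{x_\alpha(c)}\eta$, not of $\eta$ itself.) Salvaging a one-factor-at-a-time argument would require working at times $t<T$ and controlling the blow-up of $N_t(\pi')$ as $t\to T$, which is exactly what the paper's proof avoids: it passes to the dual path $\pi^\iota$, telescopes $[\bar w_0^{-1}B_T(\pi^\iota)]_+$ into a product of $m$ factors lying in the reduced cells $U\cap Bs_{i_j}B$, identifies each factor as some $x_{i_j}(t_j)$ via Theorem 4.5 of \cite{bib:BZ01}, and reads off $t_j$ from torus parts expressed through $\Tc_{s_{i_1}\cdots s_{i_{j-1}}}\circ\iota(\pi)$ --- all evaluated on honest paths.
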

\begin{proof}
This can be deduced from theorem \ref{thm:string_inversion_lemma}. However, we choose to give a separate proof that is easily adapted to the case of $T=\infty$. First, in order to see that both identities are equivalent, since $\bar{w}_0^{-1} \left([g \bar{w}_0]_{0+}^{-1}\right)^\iota \bar{w}_0 \in B$, write:
\begin{align*}
g = & [g^\iota]_+^\iota\\
= & [\bar{w}_0^{-1} \left([g \bar{w}_0]_{0+}^{-1}\right)^\iota \bar{w}_0 g^\iota]_+^\iota\\
= & [\bar{w}_0^{-1} \left( g \bar{w}_0 [g \bar{w}_0]_{0+}^{-1}\right)^\iota]_+^\iota
\end{align*}
Therefore:
$$N_T\left( \pi \right) = [ g \bar{w}_0 ]_- = g \bar{w}_0 [g \bar{w}_0]_{0+}^{-1}$$
if and only if:
$$ g = [\bar{w}_0^{-1} N_T\left( \pi \right)^\iota]_+^\iota $$
One can also add the torus part and write:
$$ g = [\bar{w}_0^{-1} N_T\left( \pi \right)^\iota ]_+^\iota = [\bar{w}_0^{-1} B_T\left( \pi \right)^\iota ]_+^\iota$$

Now, let us prove the above statement using a similar decomposition to the one used in the proof of theorem \ref{thm:string_inversion_lemma}:
\begin{align*}
      & \left[\bar{w}_0^{-1} B_T\left( \pi^\iota \right) \right]_+\\
    = & \left[\bar{w}_0^{-1} B_T\left( \pi^\iota \right) \right]_{-0}^{-1} \bar{w}_0^{-1} B_T\left( \pi^\iota \right) \\
    = & \left[\bar{w}_0^{-1} B_T\left( \pi^\iota \right) \right]_{-0}^{-1} \bar{s}_{i_m}^{-1}
         \left[\overline{s_{i_1} \dots s_{i_{m-1}}}^{-1} B_T\left( \pi^\iota \right) \right]_{-0}\\
& \cdot \left[\overline{s_{i_1} \dots s_{i_{m-1}}}^{-1} B_T\left( \pi^\iota \right) \right]_{-0}^{-1} \bar{s}_{i_{m-1}}^{-1}
         \left[\overline{s_{i_1} \dots s_{i_{m-2}}}^{-1} B_T\left( \pi^\iota \right) \right]_{-0}\\
& \cdot \left[\overline{s_{i_1} \dots s_{i_{m-2}}}^{-1} B_T\left( \pi^\iota \right) \right]_{-0}^{-1} \bar{s}_{i_{m-2}}^{-1}
         \left[\overline{s_{i_1} \dots s_{i_{m-3}}}^{-1} B_T\left( \pi^\iota \right) \right]_{-0}\\
& \dots \dots \dots \dots\\
& \cdot \left[\overline{s_{i_1}}^{-1} B_T\left( \pi^\iota \right) \right]_{-0}^{-1} \bar{s}_{i_1}^{-1}
         B_T\left( \pi^\iota \right)\\
    = & \left[ \bar{s}_{i_m}^{-1}
         \left[\overline{s_{i_1} \dots s_{i_{m-1}}}^{-1} B_T\left( \pi^\iota \right) \right]_{-0} \right]_+\\
& \cdot \left[ \bar{s}_{i_{m-1}}^{-1}
         \left[\overline{s_{i_1} \dots s_{i_{m-2}}}^{-1} B_T\left( \pi^\iota \right) \right]_{-0} \right]_+\\
& \cdot \left[ \bar{s}_{i_{m-2}}^{-1}
         \left[\overline{s_{i_1} \dots s_{i_{m-3}}}^{-1} B_T\left( \pi^\iota \right) \right]_{-0} \right]_+\\
& \dots \dots \dots \dots\\
& \cdot \left[ \bar{s}_{i_1}^{-1}
         B_T\left( \pi^\iota \right) \right]_+
\end{align*}
In the previous equation, we have a product of $m$ terms, each of the form $x_j = [ \bar{s}_{i_j}^{-1} b ]_{-0}^{-1} \bar{s}_{i_j}^{-1} b = [ \bar{s}_{i_j}^{-1} b ]_+$, $1 \leq k \leq m$ with $b = \left[ \overline{s_{i_1} \dots s_{i_{j-1}}}^{-1}B_T\left( \pi^\iota\right) \right]_{-0}$. As it belongs to the reduced double Bruhat cell $U \cap B s_{i_j} B$, we can use theorem theorem 4.5 in \cite{bib:BZ01} and write $x_j = x_{\alpha_{i_j}}\left( t_j \right)$. The quantity $t_j$ will be computed at the end.
Hence:
$$ [\bar{w}_0^{-1} B_T\left( \pi^\iota \right) ]_+ = x_{i_m}\left( t_m \right) \dots x_{i_2}\left( t_2 \right) x_{i_1}\left( t_1 \right)$$
or equivalently:
$$ [\bar{w}_0^{-1} B_T\left( \pi \right)^\iota ]_+^\iota = x_{i_1}\left( t_1 \right) x_{i_2}\left( t_2 \right) \dots x_{i_m}\left( t_m \right)$$

Now, all that is left is to prove that the Lusztig parameters for $\pi$ are nothing but the quantities $t_j, j=1, \dots, m$. We have:
\begin{align*}
  & x_{\alpha_{i_j}}\left( t_j \right)\\
= & \left[ \bar{s}_{i_j}^{-1} \left[\overline{s_{i_1} \dots s_{i_{j-1}}}^{-1} B_T\left( \pi^\iota \right) \right]_{-0} \right]_+\\
= & \left[\overline{s_{i_1} \dots s_{i_{j}}}^{-1} B_T\left( \pi^\iota \right) \right]_{-0}^{-1}
    \bar{s}_{i_j}^{-1} \left[\overline{s_{i_1} \dots s_{i_{j-1}}}^{-1} B_T\left( \pi^\iota \right) \right]_{-0}\\
= & e^{-\Tc_{s_{i_1} \dots s_{i_j}} \circ \iota(\pi)(T)}
    \left[\overline{s_{i_1} \dots s_{i_{j}}}^{-1} B_T\left( \pi^\iota \right) \right]_{-}^{-1}
    \bar{s}_{i_j}^{-1} \left[\overline{s_{i_1} \dots s_{i_{j-1}}}^{-1} B_T\left( \pi^\iota \right) \right]_{-}
    e^{\Tc_{s_{i_1} \dots s_{i_{j-1}}} \circ \iota(\pi)(T)}\\
= & e^{-\Tc_{s_{i_1} \dots s_{i_j}} \circ \iota(\pi)(T)} y_j e^{\Tc_{s_{i_1} \dots s_{i_{j-1}}} \circ \iota(\pi)(T)}
\end{align*}
where $y_j = y_{-\alpha_{i_j}}(c_j) \in B^+ \cap N \bar{s}_{i_j}^{-1} N$. Necessarily:
\begin{align*}
c_j^{-\alpha_{i_j}^\vee} & = [y_j]_0\\
& = e^{\Tc_{s_{i_1} \dots s_{i_j}} \circ \iota(\pi)(T) - \Tc_{s_{i_1} \dots s_{i_{j-1}}} \circ \iota(\pi)(T)}\\
& = \exp\left( \log\int_0^T e^{ -\alpha_{i_j}(\Tc_{s_{i_1} \dots s_{i_{j-1}}} \circ \iota(\pi))} \alpha_{i_j}^\vee\right)
\end{align*}
Therefore:
$$c_j = \frac{1}{\int_0^T e^{ -\alpha_{i_j}(\Tc_{s_{i_1} \dots s_{i_{j-1}}} \circ \iota(\pi))}}$$
And:
\begin{align*}
t_j & = c_j \exp\left( -\alpha_{i_j}\left( \Tc_{s_{i_1} \dots s_{i_{j-1}}} \circ \iota(\pi)(T) \right) \right)\\
& = \frac{1}{\int_0^T e^{ -\alpha_{i_j}(\iota \circ \Tc_{s_{i_1} \dots s_{i_{j-1}}} \circ \iota(\pi))}}\\
& = \frac{1}{\int_0^T e^{ -\alpha_{i_j}( e^{-\infty}_{s_{i_1} \dots s_{i_{j-1}}}(\pi) )}}
\end{align*}
\end{proof}

Again, as a corollary, we have the commutativity of the left side in the diagram \ref{fig:parametrizations_diagram}:
\begin{corollary}
\label{corollary:commutative_diagram_lusztig}
$$ \forall {\bf i} \in R(w_0), \forall \pi \in \Cc_0\left( [0,T], \afrak \right), x_{\bf i} \circ \varrho_{\bf i}^L\left( \pi \right) = \varrho^L \circ p (\pi)$$
\end{corollary}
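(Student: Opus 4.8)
The plan is to deduce Corollary~\ref{corollary:commutative_diagram_lusztig} from Theorem~\ref{thm:inversion_lemma_lusztig} in exactly the same way that Corollary~\ref{corollary:commutative_diagram_kashiwara} was deduced from Theorem~\ref{thm:string_inversion_lemma}. Recall that for $\pi \in \Cc_0\left( [0,T], \afrak \right)$ the projection $p(\pi) = B_T(\pi)$ lies in $\Bc = \left( B \cap B^+ w_0 B^+ \right)_{\geq 0}$, so its Gauss decomposition exists and $[p(\pi)]_- = N_T(\pi)$ by the $NA$-decomposition $B_.(\pi) = N_.(\pi) A_.(\pi)$ with $A_T(\pi) = e^{\pi(T)} \in A$.

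First I would recall Definition~\ref{def:crystal_parameter}, which gives $\varrho^L(x) = [\bar{w}_0^{-1} x^\iota]_+^\iota$ for $x \in \Bc$. Applying this to $x = p(\pi) = B_T(\pi)$ yields
$$ \varrho^L \circ p(\pi) = [\bar{w}_0^{-1} B_T(\pi)^\iota]_+^\iota. $$
On the other hand, Theorem~\ref{thm:inversion_lemma_lusztig} asserts precisely that, writing $g = x_{\bf i}\circ\varrho_{\bf i}^L(\pi) = x_{i_1}(t_1)\dots x_{i_m}(t_m) \in U^{w_0}_{>0}$ with $t_j$ the Lusztig parameters of $\pi$, one has
$$ g = [\bar{w}_0^{-1} N_T(\pi)^\iota]_+^\iota = [\bar{w}_0^{-1} B_T(\pi)^\iota]_+^\iota. $$
Comparing the two displays gives $x_{\bf i}\circ\varrho_{\bf i}^L(\pi) = \varrho^L \circ p(\pi)$, which is the claim. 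The only point needing a word of justification is that in the inversion lemma the relevant $\eta$ is $e^{-\infty}_{w_0}\pi$, and $\pi$ is indeed of the form $T_g\eta$ with this $g$ by Theorem~\ref{thm:from_lowest_path_2_path} (equivalently Corollary~\ref{corollary:connected_component_bijection}); so the hypotheses of Theorem~\ref{thm:inversion_lemma_lusztig} are met for any $\pi \in \Cc_0([0,T],\afrak)$.

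Since this is a two-line deduction once the inversion lemma is in hand, there is no real obstacle; the substantive work was already done in Theorem~\ref{thm:inversion_lemma_lusztig}. The proof reads:

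\begin{proof}
This follows from Theorem~\ref{thm:inversion_lemma_lusztig} together with Definition~\ref{def:crystal_parameter}. Indeed $p(\pi) = B_T(\pi) \in \Bc$, so by Definition~\ref{def:crystal_parameter},
$$ \varrho^L\circ p(\pi) = [\bar{w}_0^{-1} B_T(\pi)^\iota]_+^\iota, $$
while writing $\pi = T_g \left( e^{-\infty}_{w_0}\pi \right)$ with $g = x_{\bf i}\circ \varrho_{\bf i}^L(\pi) \in U^{w_0}_{>0}$ (Theorem~\ref{thm:from_lowest_path_2_path}), Theorem~\ref{thm:inversion_lemma_lusztig} gives
$$ g = [\bar{w}_0^{-1} B_T(\pi)^\iota]_+^\iota. $$
Hence $x_{\bf i}\circ\varrho_{\bf i}^L(\pi) = \varrho^L\circ p(\pi)$, which is the asserted commutativity.
\end{proof}
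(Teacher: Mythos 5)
Your proof is correct and follows exactly the paper's route: the paper's own proof is the one-line citation ``Theorem \ref{thm:inversion_lemma_lusztig} and definition \ref{def:crystal_parameter}'', and you have simply spelled out the comparison of $\varrho^L(x)=[\bar{w}_0^{-1}x^\iota]_+^\iota$ with the identity $g=[\bar{w}_0^{-1}B_T(\pi)^\iota]_+^\iota$ from the inversion lemma. Your extra remark that $\pi=T_g(e^{-\infty}_{w_0}\pi)$ via Theorem \ref{thm:from_lowest_path_2_path}, so that the hypotheses of the inversion lemma are indeed met, is a welcome clarification of a point the paper leaves implicit.
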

\begin{proof}
 Theorem \ref{thm:inversion_lemma_lusztig} and definition \ref{def:crystal_parameter}.
\end{proof}

\subsection{Crystal actions in coordinates}

The actions $\left( e^._\alpha \right)_{\alpha \in \Delta}$ have a very simple expression in the appropriate charts for a connected crystal. Clearly, this property is a geometric lifting of the equations from  \eqref{eqn:kashiwara_operator_f_1} to \eqref{eqn:kashiwara_operator_e_2} for Kashiwara operators.
 
\begin{proposition}
\label{proposition:actions_in_coordinates}
Consider a path $\pi \in \Cc_0\left( [0, T], \afrak \right)$, ${\bf i} \in R(w_0)$ and $\alpha = \alpha_{i_1}$. If:
$$ \varrho^L_{\bf i}(\pi) = \left( t_1, \dots, t_m \right)$$
$$ \varrho^K_{\bf i}(\pi) = \left( c_1, \dots, c_m \right)$$
Then for every $\xi \in \R$:
$$ \varrho^L_{\bf i}\left( e^\xi_\alpha \cdot \pi \right) = \left( e^\xi t_1, \dots, t_m \right)$$
$$ \varrho^K_{\bf i}\left( e^\xi_\alpha \cdot \pi \right) = \left( e^\xi c_1, \dots, c_m \right)$$
\end{proposition}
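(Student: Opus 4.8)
The idea is to reduce both claims to the rank-one behaviour of the first coordinate, using the fact that the definitions of $\varrho^L_{\bf i}$ and $\varrho^K_{\bf i}$ (Theorems~\ref{thm:lusztig_params_paths} and~\ref{thm:string_params_paths}) are recursive, peeling off coordinates one simple reflection at a time via the operators $e^{-\infty}_{\alpha_{i_k}}$ and $\Tc_{\alpha_{i_k}}$ respectively. Since $\alpha = \alpha_{i_1}$ is the \emph{first} simple root of ${\bf i}$, the operator $e^\xi_\alpha$ interacts only with the very first step of each recursion. So the strategy is: first establish that $e^\xi_\alpha$ rescales the first parameter and leaves the ``residual path'' (the one obtained after applying $e^{-\infty}_{\alpha_{i_1}}$, resp. $\Tc_{\alpha_{i_1}}$) unchanged; then observe that the remaining coordinates $t_2, \dots, t_m$ (resp. $c_2, \dots, c_m$) are functions of that residual path alone, hence untouched.

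\textbf{Key steps, in order.} First, for the string case: I compute $c_1\left( e^\xi_\alpha \cdot \pi \right) = \left( \int_0^T e^{-\alpha\left( e^\xi_\alpha \cdot \pi \right)} \right)^{-1}$. Using the defining formula $e^\xi_\alpha \cdot \pi(t) = \pi(t) + \log\left( 1 + \frac{e^\xi - 1}{e^{\varepsilon_\alpha(\pi)}} \int_0^t e^{-\alpha(\pi(s))}ds \right)\alpha^\vee$ and the identity $\alpha(\alpha^\vee) = 2$, this is exactly the computation already carried out in the proof of property (iii) of the geometric Littelmann crystal, which gives $\varepsilon_\alpha\left( e^\xi_\alpha \cdot \pi \right) = \varepsilon_\alpha(\pi) - \xi$, i.e. $c_1\left( e^\xi_\alpha \cdot \pi \right) = e^\xi c_1(\pi)$. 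Second, I must show $\Tc_\alpha\left( e^\xi_\alpha \cdot \pi \right) = \Tc_\alpha(\pi)$: writing $e^\xi_\alpha \cdot \pi = T_g \pi$ with $g = x_\alpha\!\left( \tfrac{e^\xi-1}{e^{\varepsilon_\alpha(\pi)}} \right)$ (the little lemma in the Geometric Littelmann subsection), and recalling from the proof of Theorem~\ref{thm:braid_relations_Tc} that $\Tc_\alpha = \lim_{\eta\to\infty} T_{\eta^{-\alpha^\vee} x_\alpha(\eta)}$, I use the composition property~\ref{lbl:path_transform_properties}(i) together with~\ref{lbl:path_transform_properties}(iii) for the torus factor: $T_{\eta^{-\alpha^\vee} x_\alpha(\eta)} \circ T_{x_\alpha(\xi')}$ absorbs the finite $x_\alpha(\xi')$ into $x_\alpha(\eta + \xi' \cdot(\text{rescaling}))$, and in the $\eta \to \infty$ limit the perturbation disappears. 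Concretely, one can just check directly that $\Tc_\alpha(T_{x_\alpha(\xi')}\pi)(t) = \Tc_\alpha(\pi)(t)$ for $t>0$ from the explicit formula $\Tc_\alpha(\pi)(t) = \pi(t) + \log\left(\int_0^t e^{-\alpha(\pi(s))}ds\right)\alpha^\vee$, since $T_{x_\alpha(\xi')}\pi(t) = \pi(t) + \log\left(1 + \xi' \int_0^t e^{-\alpha(\pi)}\right)\alpha^\vee$ and the extra factor cancels between the path value and the integral term. Third, the symmetric statement for $e^{-\infty}_\alpha$: by definition $e^{-\infty}_\alpha \cdot \pi(t) = \pi(t) + \log\left(1 - \frac{\int_0^t e^{-\alpha(\pi)}}{\int_0^T e^{-\alpha(\pi)}}\right)\alpha^\vee$, and since $e^{-\infty}_\alpha \cdot e^\xi_\alpha = e^{-\infty}_\alpha$ (stated in the Low path transforms subsection), we get $e^{-\infty}_{\alpha}\left( e^\xi_\alpha \cdot \pi \right) = e^{-\infty}_\alpha \cdot \pi$ immediately; also $t_1\left( e^\xi_\alpha \cdot \pi \right) = \left(\int_0^T e^{-\alpha(e^\xi_\alpha\cdot\pi)}\right)^{-1} = e^\xi t_1(\pi)$ by the same rank-one computation as in step one.

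\textbf{Concluding the induction.} Having handled the first coordinate and shown the residual paths $e^{-\infty}_{\alpha_{i_1}}\left(e^\xi_\alpha\cdot\pi\right) = e^{-\infty}_{\alpha_{i_1}}(\pi)$ and $\Tc_{\alpha_{i_1}}\left(e^\xi_\alpha\cdot\pi\right) = \Tc_{\alpha_{i_1}}(\pi)$, I invoke the recursive definitions: $t_k\left(e^\xi_\alpha\cdot\pi\right)$ for $k\geq 2$ is, by the formula in Theorem~\ref{thm:lusztig_params_paths}, a function of $e^{-\infty}_{s_{i_1}\dots s_{i_{k-1}}}\left(e^\xi_\alpha\cdot\pi\right) = e^{-\infty}_{s_{i_2}\dots s_{i_{k-1}}}\left(e^{-\infty}_{\alpha_{i_1}}(e^\xi_\alpha\cdot\pi)\right) = e^{-\infty}_{s_{i_2}\dots s_{i_{k-1}}}\left(e^{-\infty}_{\alpha_{i_1}}(\pi)\right) = e^{-\infty}_{s_{i_1}\dots s_{i_{k-1}}}(\pi)$, hence equals $t_k(\pi)$; identically for $c_k$ with $\Tc$ in place of $e^{-\infty}$. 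This gives $\varrho^L_{\bf i}\left(e^\xi_\alpha\cdot\pi\right) = (e^\xi t_1, t_2, \dots, t_m)$ and $\varrho^K_{\bf i}\left(e^\xi_\alpha\cdot\pi\right) = (e^\xi c_1, c_2, \dots, c_m)$. The main obstacle is step two — proving invariance of $\Tc_\alpha$ (and of $e^{-\infty}_\alpha$) under the finite action $e^\xi_\alpha$; while the $e^{-\infty}_\alpha$ case is a one-line consequence of the projection identity $e^{-\infty}_\alpha e^\xi_\alpha = e^{-\infty}_\alpha$, the $\Tc_\alpha$ case requires either the limiting argument through $T_{\eta^{-\alpha^\vee}x_\alpha(\eta)}$ with careful bookkeeping of the torus conjugations in~\ref{lbl:path_transform_properties}(iii), or a direct cancellation in the explicit integral formula; I would present the direct computation as it is cleanest and avoids the limit.
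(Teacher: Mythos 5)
Your proposal is correct and follows essentially the same route as the paper: the paper isolates exactly your steps two and three as a preliminary lemma ($e^{-\infty}_\alpha\cdot(e^\xi_\alpha\cdot\pi)=e^{-\infty}_\alpha\cdot\pi$ and $\Tc_\alpha\cdot(e^\xi_\alpha\cdot\pi)=\Tc_\alpha\cdot\pi$, the latter by direct computation or by extended duality), then concludes that coordinates $j\geq 2$ are unchanged by the recursive definitions while the first coordinate is rescaled via $\varepsilon_\alpha(e^\xi_\alpha\cdot\pi)=\varepsilon_\alpha(\pi)-\xi$. Your direct cancellation for the $\Tc_\alpha$-invariance is precisely one of the two options the paper mentions.
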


\begin{rmk}
Here, we do not require twisting the Kashiwara operators like in \cite{bib:BZ01} section 5.2.
\end{rmk}

We will need the following important property of the highest and lowest path transforms.
\begin{lemma}
For $\pi \in \Cc_0\left( [0, T], \afrak \right)$, $\alpha \in \Delta$ and $\xi \in \R$::
$$ e^{-\infty}_\alpha \cdot \left( e^\xi_\alpha \cdot \pi \right) = e^{-\infty}_\alpha \cdot \pi$$
$$ \Tc_\alpha \cdot \left( e^\xi_\alpha \cdot \pi \right) = \Tc_\alpha \cdot \pi$$
\end{lemma}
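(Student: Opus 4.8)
The plan is to dispatch the two identities separately: the first follows at once from the projection property of $e^{-\infty}_\alpha$, and the second either by a short direct integration or, equivalently, by duality with the first.

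First I would treat $e^{-\infty}_\alpha \cdot (e^\xi_\alpha \cdot \pi) = e^{-\infty}_\alpha \cdot \pi$. This is essentially immediate: as noted right after the definition of $e^{-\infty}_\alpha$, one has $e^{-\infty}_\alpha \cdot e^c_\alpha = e^{-\infty}_\alpha$ for every $c \in \R$, and this is exactly what is needed. Concretely, since $e^{-\infty}_\alpha = \lim_{c \to -\infty} e^c_\alpha$ and $e^c_\alpha \cdot e^\xi_\alpha = e^{c+\xi}_\alpha$ by axiom (C4), letting $c \to -\infty$ in $e^c_\alpha \cdot e^\xi_\alpha \cdot \pi = e^{c+\xi}_\alpha \cdot \pi$ yields the claim. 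If one prefers a hands-on argument, it also follows directly from the explicit formula for $e^{-\infty}_\alpha$ by the same manipulation as in the computation below.

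For $\Tc_\alpha \cdot (e^\xi_\alpha \cdot \pi) = \Tc_\alpha \cdot \pi$ I would give the self-contained computation, and note that duality provides a second proof. Set $I(t) := \int_0^t e^{-\alpha(\pi(s))}\,ds$, $E := I(T) = e^{\varepsilon_\alpha(\pi)}$ and $\phi(t) := 1 + \tfrac{e^\xi-1}{E} I(t)$, so that $\pi' := e^\xi_\alpha \cdot \pi$ is $\pi'(t) = \pi(t) + \log\phi(t)\,\alpha^\vee$. Using $\alpha(\alpha^\vee) = 2$ one gets $e^{-\alpha(\pi'(t))} = e^{-\alpha(\pi(t))}\phi(t)^{-2} = -\tfrac{E}{e^\xi-1}\tfrac{d}{dt}\bigl(\phi(t)^{-1}\bigr)$, hence $\int_0^t e^{-\alpha(\pi'(s))}\,ds = I(t)/\phi(t)$; substituting into $\Tc_\alpha(\pi')(t) = \pi'(t) + \log\bigl(\int_0^t e^{-\alpha(\pi')}\bigr)\alpha^\vee$, the two occurrences of $\log\phi(t)$ cancel and one is left with $\pi(t) + \log I(t)\,\alpha^\vee = \Tc_\alpha(\pi)(t)$. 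The alternative uses Lemma \ref{lemma:high_low_duality} in the form $\Tc_\alpha = \iota \circ e^{-\infty}_\alpha \circ \iota$ together with $\iota \circ e^\xi_\alpha = e^{-\xi}_\alpha \circ \iota$ (property (vi) of geometric Littelmann crystals) to rewrite $\Tc_\alpha(e^\xi_\alpha\cdot\pi) = \iota\bigl(e^{-\infty}_\alpha \cdot e^{-\xi}_\alpha \cdot \iota(\pi)\bigr)$, and then apply the already established first identity to $\iota(\pi)$ with parameter $-\xi$.

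\textbf{Main obstacle.} There is no genuine difficulty; the only points deserving a word are that $e^{-\alpha(\pi)}$ is integrable near $0$ so that $\Tc_\alpha$ is defined on $(0,T]$ (automatic since $\pi$, and likewise $\pi'$, start at $0$), the harmless degenerate case $\xi = 0$ where $\pi' = \pi$, and — in the duality route — the fact that $\iota$ preserves $\Cc_0([0,T],\afrak)$, so the first identity applies to $\iota(\pi)$ verbatim.
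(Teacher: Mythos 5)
Your proposal is correct and follows exactly the two routes the paper itself indicates (the paper merely asserts that the first identity is the projection property and that the second follows "either by direct computation or by using the extended duality"); your explicit computation of $\int_0^t e^{-\alpha(\pi')} = I(t)/\phi(t)$ and the resulting cancellation is the intended direct argument. The only small caveat is that in the duality route the rewriting $\Tc_\alpha = \iota \circ e^{-\infty}_\alpha \circ \iota$ applies $\iota$ to a path lacking an endpoint, so it implicitly invokes the \emph{extended} duality of Lemma \ref{lemma:extended_duality}, as the paper's own remark warns; your primary computation does not depend on this.
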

\begin{proof}
The first identity is obvious and has already been referred to. The second one can be proved either by direct computation or by using the extended duality between $e^{-\infty}_\alpha$ and $\Tc_\alpha$.
\end{proof}

\begin{proof}[Proof of proposition \ref{proposition:actions_in_coordinates}]
Write:
$$ \varrho^L_{\bf i}\left( e^\xi_\alpha \cdot \pi \right) = \left( t_1', \dots, t_m' \right)$$
$$ \varrho^K_{\bf i}\left( e^\xi_\alpha \cdot \pi \right) = \left( c_1', \dots, c_m' \right)$$
The previous lemma along with the definitions for Lusztig and Kashiwara parameters (see subsections \ref{subsection:lusztig_param} and \ref{subsection:string_params}) tell us that:
$$ \forall j \geq 2, t_j = t_j', c_j = c_j'$$
Moreover:
$$ t_1 = c_1 = \frac{1}{\int_0^T e^{-\alpha(\pi)} }$$
and:
$$ t_1' = c_1' = \frac{1}{\int_0^T e^{-\alpha(e^\xi \cdot \pi)} }
        = \frac{1}{e^{\varepsilon_\alpha(e^\xi \cdot \pi) }}
        = \frac{1}{e^{\varepsilon_\alpha(\pi) - \xi}}
        = e^\xi t_1
        = e^\xi c_1$$
\end{proof}

\subsection{Connectedness criterion}
Because of the previous investigations, it is easy to give a simple criterion that forces two paths to belong to the same connected component.

\begin{thm}
\label{thm:connectedness_criterion}
Consider two paths $\pi$ and $\pi'$ in $\Cc_0\left( [0,T], \afrak \right)$. The following propositions are equivalent:
\begin{itemize}
 \item[(i)] $\pi$ and $\pi'$ are connected.
 \item[(ii )] $$ \left( e^{-\infty}_{w_0}(\pi )_t, 0 \leq t < T \right)
               = \left( e^{-\infty}_{w_0}(\pi')_t, 0 \leq t < T \right) $$
 \item[(iii)] $$ \left( \Tc_{w_0}(\pi )_t, 0 < t \leq T \right)
               = \left( \Tc_{w_0}(\pi')_t, 0 < t \leq T \right) $$
\end{itemize}
\end{thm}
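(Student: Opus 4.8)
The plan is to establish the chain $(i)\Rightarrow(ii)\Rightarrow(i)$ first, obtaining the equivalence $(i)\Leftrightarrow(ii)$, and then to deduce the equivalence with $(iii)$ by time-reversal duality. The implication $(i)\Rightarrow(ii)$ is precisely Proposition \ref{proposition:partial_connectedness_criterion}, so there is nothing new to do there.

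For $(ii)\Rightarrow(i)$, set $\eta := e^{-\infty}_{w_0}(\pi) = e^{-\infty}_{w_0}(\pi')$, which is a lowest path in $\Cc^{low}_{w_0}\left( [0,T), \afrak \right)$. Fix a reduced word ${\bf i}\in R(w_0)$ and put $z := x_{\bf i}\circ\varrho^L_{\bf i}(\pi)$ and $z' := x_{\bf i}\circ\varrho^L_{\bf i}(\pi')$, both elements of $U^{w_0}_{>0}$. Theorem \ref{thm:from_lowest_path_2_path}, which reconstructs a path from its lowest path together with its Lusztig parameters, then gives $\pi = T_z\eta$ and $\pi' = T_{z'}\eta$. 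By Corollary \ref{corollary:connected_component_bijection}, the connected component $\langle\pi\rangle$ equals $\left\{ T_u\eta \ | \ u\in U^{w_0}_{>0} \right\}$; since $z'\in U^{w_0}_{>0}$, this set contains $\pi'$, so $\pi$ and $\pi'$ lie in the same connected component. This settles $(i)\Leftrightarrow(ii)$.

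It remains to bring in $(iii)$. First note that the relation ``$\pi$ and $\pi'$ are connected'' is stable under $\iota$: applying $\iota$ to a chain $\pi' = e^{c_1}_{\alpha_{i_1}}\cdots e^{c_l}_{\alpha_{i_l}}\cdot\pi$ and using $e^{-c}_\alpha = \iota\circ e^c_\alpha\circ\iota$ produces a chain relating $\pi^\iota$ and $(\pi')^\iota$, whence $\pi$ connected to $\pi'$ iff $\pi^\iota$ connected to $(\pi')^\iota$. Next, $\iota$ restricts to a bijection $\Cc^{high}_{w_0}\left( (0,T], \afrak \right)\to\Cc^{low}_{w_0}\left( [0,T), \afrak \right)$ (injectivity is immediate, surjectivity is Lemma \ref{lemma:extended_duality}), and Lemma \ref{lemma:high_low_duality} gives $\iota\left( \Tc_{w_0}(\pi) \right) = e^{-\infty}_{w_0}(\pi^\iota)$ for every $\pi\in\Cc_0\left( [0,T], \afrak \right)$. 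Hence $(iii)$ for $(\pi,\pi')$ is equivalent to $\iota\left( \Tc_{w_0}(\pi) \right) = \iota\left( \Tc_{w_0}(\pi') \right)$, i.e. to $e^{-\infty}_{w_0}(\pi^\iota) = e^{-\infty}_{w_0}((\pi')^\iota)$, i.e. to $(ii)$ for the pair $(\pi^\iota,(\pi')^\iota)$; by the already established equivalence $(i)\Leftrightarrow(ii)$ this holds iff $\pi^\iota$ and $(\pi')^\iota$ are connected, iff $\pi$ and $\pi'$ are connected. Thus $(iii)\Leftrightarrow(i)$, and all three statements are equivalent.

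The only delicate point is the bookkeeping between the domains $[0,T)$ and $(0,T]$, together with the fact that $\iota$ must not be applied naively to a path lacking one of its endpoints; this is dealt with by treating $\iota$ exclusively as the bijection between high and low paths of type $w_0$ supplied by Lemmas \ref{lemma:extended_duality} and \ref{lemma:high_low_duality}. Beyond that, the argument is a direct assembly of Proposition \ref{proposition:partial_connectedness_criterion}, Theorem \ref{thm:from_lowest_path_2_path} and Corollary \ref{corollary:connected_component_bijection}.
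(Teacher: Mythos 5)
Your proof is correct and follows essentially the same route as the paper: $(i)\Rightarrow(ii)$ via Proposition \ref{proposition:partial_connectedness_criterion}, the converse via Corollary \ref{corollary:connected_component_bijection} (you simply spell out the role of Theorem \ref{thm:from_lowest_path_2_path} more explicitly), and $(i)\Leftrightarrow(iii)$ by transporting the problem to the duals through Lemmas \ref{lemma:high_low_duality} and \ref{lemma:extended_duality}. No gaps.
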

\begin{proof}
Proposition \ref{proposition:partial_connectedness_criterion} says that (i) implies (ii), while corollary \ref{corollary:connected_component_bijection} gives the converse.

In order to prove the equivalence between (i) and (iii), notice that $\pi$ and $\pi'$ are connected if and only if the same holds for their duals. Therefore, we have an equivalence between (i) and:
$$ \left( e^{-\infty}_{w_0} \circ \iota (\pi )_t, 0 \leq t < T \right)
               = \left( e^{-\infty}_{w_0} \circ \iota (\pi')_t, 0 \leq t < T \right) $$
Applying lemma \ref{lemma:high_low_duality}, we have the result.
\end{proof}

\subsection{The case of infinite time horizon}
\label{subsection:infinite_T}

Most of the previous results carry on the case where $T=\infty$. We will explain how to proceed in order to construct Lusztig parameters for a path $\pi \in C_0(\R_+, \afrak)$.

Clearly, low path transforms can be applied to paths in $\Cc_0\left( \R_+, \afrak \right)$. And:
\begin{lemma}
Let $\pi \in \Cc_0\left( \R_+, \afrak \right)$ such that $\pi(t) \underset{t \rightarrow \infty}{\sim} \mu t$ with $\alpha\left(\mu\right)>0$. Then:
$$\int_0^\infty e^{-\alpha(\pi)} < \infty$$
And
$$ e^{-\infty}_\alpha \pi(t) \underset{t \rightarrow \infty}{\sim} s_\alpha(\mu) t$$
\end{lemma}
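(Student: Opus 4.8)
The plan is to prove the two assertions in order: first the integrability $\int_0^\infty e^{-\alpha(\pi)} < \infty$, then the asymptotic equivalence $e^{-\infty}_\alpha \pi(t) \sim s_\alpha(\mu) t$ as $t \to \infty$.

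For the integrability, the hypothesis $\pi(t) \sim \mu t$ with $\alpha(\mu) > 0$ means that for any $\varepsilon$ with $0 < \varepsilon < \alpha(\mu)$, there is $t_0$ such that $\alpha(\pi(t)) \geq (\alpha(\mu) - \varepsilon) t$ for $t \geq t_0$. Hence $e^{-\alpha(\pi(t))} \leq e^{-(\alpha(\mu)-\varepsilon) t}$ for $t \geq t_0$, and since the integrand is continuous on the compact interval $[0, t_0]$, we get $\int_0^\infty e^{-\alpha(\pi)} \leq \int_0^{t_0} e^{-\alpha(\pi)} + \int_{t_0}^\infty e^{-(\alpha(\mu)-\varepsilon)s}\,ds < \infty$. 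Call this finite positive constant $M := \int_0^\infty e^{-\alpha(\pi)}$; note $e^{-\infty}_\alpha \pi$ is then well-defined on all of $\R_+$ by the formula $e^{-\infty}_\alpha \cdot \pi(t) = \pi(t) + \log\left(1 - \frac{\int_0^t e^{-\alpha(\pi)}}{M}\right)\alpha^\vee$.

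For the asymptotics, write $R(t) := \int_t^\infty e^{-\alpha(\pi)} = M - \int_0^t e^{-\alpha(\pi)}$, so that $e^{-\infty}_\alpha \cdot \pi(t) = \pi(t) + \log\left(\frac{R(t)}{M}\right)\alpha^\vee$. The key is to estimate $\log R(t)$ as $t \to \infty$. Using the upper bound $\alpha(\pi(s)) \geq (\alpha(\mu) - \varepsilon)s$ for large $s$ one gets $R(t) \leq \frac{1}{\alpha(\mu)-\varepsilon} e^{-(\alpha(\mu)-\varepsilon)t}$, and using a matching lower bound $\alpha(\pi(s)) \leq (\alpha(\mu)+\varepsilon)s$ for large $s$ one gets $R(t) \geq \frac{1}{\alpha(\mu)+\varepsilon} e^{-(\alpha(\mu)+\varepsilon)t}$; since $\varepsilon$ is arbitrary this yields $\frac{1}{t}\log R(t) \to -\alpha(\mu)$, i.e. $\log R(t) = -\alpha(\mu)\, t + o(t)$. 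Therefore
\begin{align*}
e^{-\infty}_\alpha \cdot \pi(t) & = \pi(t) + \left(\log R(t) - \log M\right)\alpha^\vee \\
& = \mu t + o(t) - \alpha(\mu)\, t\, \alpha^\vee + o(t)\, \alpha^\vee \\
& = \left(\mu - \alpha(\mu)\alpha^\vee\right) t + o(t) = s_\alpha(\mu)\, t + o(t),
\end{align*}
using the definition $s_\alpha \lambda = \lambda - \alpha(\lambda)\alpha^\vee$. This is exactly $e^{-\infty}_\alpha \pi(t) \sim s_\alpha(\mu) t$.

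I expect the only mildly delicate point to be making the $\frac{1}{t}\log R(t) \to -\alpha(\mu)$ step fully rigorous: one must be careful that the crude two-sided exponential bounds on $\alpha(\pi(s))$ coming from $\pi(s) \sim \mu s$ hold only for $s$ beyond some threshold, and then split $R(t)$ accordingly; but since we are extracting only the leading-order exponential rate, the contribution of the bounded initial segment is negligible on the $\log$-scale, so no real obstacle arises. Everything else is routine estimation, and the algebraic identity at the end is just the definition of the reflection $s_\alpha$ recalled in the preliminaries.
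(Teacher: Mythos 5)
Your proof is correct and follows essentially the same route as the paper's: the paper simply asserts the integrability and the key asymptotic $\log \int_t^\infty e^{-\alpha(\pi(s))}\,ds \sim -\alpha(\mu)\,t$ and then concludes via $e^{-\infty}_\alpha\pi(t) = \pi(t) + \bigl(\log\int_t^\infty e^{-\alpha(\pi)} - \log\int_0^\infty e^{-\alpha(\pi)}\bigr)\alpha^\vee$, which is exactly your decomposition with $R(t)$. Your $\varepsilon$-bounds just fill in the details the paper leaves implicit.
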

\begin{proof}
The first assertion is clear. For the second, note that:
$$ \log \int_t^\infty e^{-\alpha\left( \pi(s) \right)}ds \underset{t \rightarrow \infty}{\sim} -\alpha\left(\mu\right) t $$
Therefore:
\begin{align*}
e^{-\infty}_\alpha \pi(t) & = \pi(t) + \log\left( 1 - \frac{ \int_0^t e^{\alpha(\pi)} }{ \int_0^\infty e^{\alpha(\pi)} } \right) \alpha^\vee\\
& = \pi(t) + \log\left(\int_t^\infty e^{\alpha(\pi)}\right) \alpha^\vee 
           - \log\left(\int_0^\infty e^{\alpha(\pi)} \right) \alpha^\vee\\
& \underset{t \rightarrow \infty}{\sim} s_\alpha(\mu) t
\end{align*}
\end{proof}

Hence the idea, that in this case, path types should depend on asymtotical behavior. 
\begin{definition}[Low path types in infinite horizon]
\label{def:low_path_types_infinite}
We say that a path $\pi \in \Cc_0\left( \R_+, \afrak \right)$ is a low path of type $w \in W$ when it has a drift in $wC$:
$$ \exists \mu \in wC, \pi(t) \underset{t \rightarrow \infty}{\sim} \mu t$$
The set of all low paths of type $w \in W$ in $\Cc\left( \R_+, \afrak \right)$ is denoted $\Cc^{low}_w\left( [0, T), \afrak \right)$.
\end{definition}

Lusztig parameters also have a straightforward definition. Let ${\bf i} \in R(w_0)$. For any path $\pi \in \Cc_0\left( \R_+, \afrak \right)$ with drift in the Weyl chamber, define $\varrho_{\bf i}^L(\pi)$ as the sequence of numbers ${ \bf t } = \left( t_1, t_2, \dots, t_m \right)$ recursively as:
$$ t_k = \frac{1}{\int_0^\infty \exp\left(-\alpha_{i_k}\left( e_{s_{i_1} \dots s_{i_{k-1}}}^{-\infty} \pi \right) \right)}$$
Thanks to the previous lemma, all $t_j$ are $>0$. Finally the inversion lemma \ref{thm:inversion_lemma_lusztig} is valid with infinite horizon. It is proven by simply taking $T$ to infinity.

\section{Isomorphism results}
\label{section:isom_results}

It is now easy to see that the projection map $p$ restricts to an isomorphism on connected components.
\begin{thm}
\label{thm:static_geometric_rs}
Let $\pi_0 \in \Cc_0\left([0,T], \afrak\right)$ and $\langle \pi_0 \rangle$ be the connected crystal it generates. Set $\lambda = \Tc_{w_0} \pi_{0}\left( T \right) $. Then the projection is an isomorphism of crystals:
$$ \begin{array}{cccc}
    p: & \langle \pi_0 \rangle & \rightarrow & \Bc\left( \lambda \right)\\
       &         \pi           &   \mapsto   & B_T\left( \pi \right)
   \end{array}
 $$
\end{thm}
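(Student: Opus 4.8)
The plan is to assemble the isomorphism from pieces already in place. First I would recall that Theorem~\ref{thm:p_is_morphism} shows $p$ is a morphism of abstract crystals, so it automatically intertwines $\gamma$, $\varepsilon_\alpha$, $\varphi_\alpha$, and the actions $e^c_\alpha$; what remains is to check that, restricted to $\langle \pi_0 \rangle$, the map $p$ is a bijection onto $\Bc(\lambda)$ and that its inverse is again a morphism. Since a bijective morphism of abstract crystals whose set-theoretic inverse exists has that inverse automatically structure-preserving (each structural map on the source is determined by the corresponding one on the target via $p$), the crux is purely the bijection statement.

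Second I would pin down the target. By Theorem~\ref{thm:p_is_morphism}(i), $\gamma(p(\pi)) = \gamma(\pi) = \pi(T)$, and by Lemma~\ref{lemma:hw_is_invariant} together with the definition $\hw(\pi) := \hw(B_T(\pi))$ and the Proposition stating $\hw(\pi) = \Tc_{w_0}\pi(T)$, every $\pi \in \langle \pi_0 \rangle$ satisfies $\hw(p(\pi)) = \hw(\pi_0) = \lambda$ (the highest weight is constant on a connected component, since $\hw$ is invariant under all $e^c_\alpha$ and $\langle \pi_0 \rangle$ is generated from $\pi_0$ by crystal actions). Hence $p(\langle \pi_0\rangle) \subseteq \hw^{-1}(\{\lambda\}) = \Bc(\lambda)$. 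Surjectivity onto $\Bc(\lambda)$ and injectivity are then both read off the commutative diagram in Figure~\ref{fig:parametrizations_diagram}: Corollary~\ref{corollary:connected_component_bijection} gives a bijection $U^{w_0}_{>0} \to \langle \pi_0 \rangle$, $z \mapsto T_z(e^{-\infty}_{w_0}\pi_0)$, while Proposition~\ref{proposition:crystal_param_maps} gives a bijection $b^L_\lambda: U^{w_0}_{>0} \to \Bc(\lambda)$, and Corollary~\ref{corollary:commutative_diagram_lusztig} (equivalently the inversion lemma, Theorem~\ref{thm:inversion_lemma_lusztig}) says precisely that $p$ carries the first parametrization to the second, i.e. $\varrho^L \circ p = x_{\bf i}\circ\varrho^L_{\bf i}$. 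Therefore $p$ restricted to $\langle\pi_0\rangle$ equals $b^L_\lambda$ composed with the inverse of the Corollary~\ref{corollary:connected_component_bijection} bijection (for any fixed ${\bf i}\in R(w_0)$), hence is a bijection onto $\Bc(\lambda)$.

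Third I would spell out why this forces an isomorphism: denote by $q: \Bc(\lambda) \to \langle\pi_0\rangle$ the set-theoretic inverse just produced. For each structural map, e.g. the weight, $\gamma_{\langle\pi_0\rangle} = \gamma_{\langle\pi_0\rangle}\circ q \circ p = \gamma_{\Bc(\lambda)}\circ p$ on $\langle\pi_0\rangle$ by Theorem~\ref{thm:p_is_morphism}(i), so applying $q$ on the right gives $\gamma_{\langle\pi_0\rangle}\circ q = \gamma_{\Bc(\lambda)}$; and the same bookkeeping for $\varepsilon_\alpha$, $\varphi_\alpha$, and $q(e^c_\alpha \cdot x) = q(e^c_\alpha \cdot p(q(x))) = q(p(e^c_\alpha \cdot q(x))) = e^c_\alpha \cdot q(x)$ using Theorem~\ref{thm:p_is_morphism}(iv). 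Thus $q$ is a morphism, so $p$ is an isomorphism.

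The main obstacle is the verification that the composite bijection $b^L_\lambda \circ (\text{Cor.}\ \ref{corollary:connected_component_bijection})^{-1}$ really coincides with $p$ on the nose — i.e. that the left square of Figure~\ref{fig:parametrizations_diagram} genuinely commutes. But this is exactly the content of the Lusztig inversion lemma (Theorem~\ref{thm:inversion_lemma_lusztig}) and Corollary~\ref{corollary:commutative_diagram_lusztig}: for $\pi = T_g(e^{-\infty}_{w_0}\pi_0)$ with $g = x_{\bf i}\circ\varrho^L_{\bf i}(\pi)$ one has $N_T(\pi) = [g\bar w_0]_-$, so $[p(\pi)]_- = N_T(\pi)$ recovers exactly the Lusztig parameter $\varrho^L(p(\pi)) = g$, matching the group-picture chart $b^L_\lambda$. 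Everything else is routine once this identification is granted, so I would present the argument by (1) citing Theorem~\ref{thm:p_is_morphism} for the morphism property and $\gamma$-matching, (2) proving $p(\langle\pi_0\rangle)=\Bc(\lambda)$ and bijectivity via the diagram, and (3) the short formal argument that a bijective abstract-crystal morphism is an isomorphism.
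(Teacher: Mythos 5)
Your proposal is correct and follows essentially the same route as the paper's (much terser) proof: the target set is identified via the invariance of the highest weight on a connected component, and bijectivity comes from composing the parametrization of $\langle \pi_0 \rangle$ by $U^{w_0}_{>0}$ (Corollary \ref{corollary:connected_component_bijection}) with the group-picture chart $b^L_\lambda$, the compatibility being exactly Corollary \ref{corollary:commutative_diagram_lusztig}. You merely make explicit the commutative-diagram verification and the formal step that a bijective crystal morphism has a morphism inverse, both of which the paper leaves implicit.
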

\begin{proof}
The target set is indeed the appropriate one as:
$$ \forall \pi \in \langle \pi_0 \rangle, \hw\left( B_T\left( \pi \right) \right) = \Tc_{w_0} \pi\left( T \right) = \Tc_{w_0} \pi_{0}\left( T \right) = \lambda$$
thanks to theorem \ref{thm:connectedness_criterion}. The inverse map is constructed using the corollary \ref{corollary:connected_component_bijection}. 
\end{proof}

\subsection{Littelmann's independence theorem}

As a corollary, we have the geometric analogue of Littelmann's independence theorem ( Isomorphism Theorem in \cite{bib:Littelmann95}):
\begin{thm}[Geometric Littelmann independence theorem]
\label{thm:geometric_littelmann_independence}
For any connected geometric path crystal $L \subset \Cc_0\left( [0, T], \afrak \right)$, the crystal structure only depends on the endpoint $\lambda = \Tc_{w_0} \pi \left( T \right)$ of the highest path $\Tc_{w_0} \pi$.
\end{thm}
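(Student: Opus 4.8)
The plan is to deduce this from the isomorphism theorem \ref{thm:static_geometric_rs} together with the parametrization results established in section \ref{section:parametrizations_in_path_model}. Since $L$ is connected, it is generated by any of its elements, so fix $\pi \in L$ and write $\lambda := \Tc_{w_0}\pi(T)$; by Theorem \ref{thm:connectedness_criterion}, $\lambda$ does not depend on the choice of $\pi \in L$, so it is a well-defined invariant of $L$. The content of the statement is that the abstract crystal $L$, with all its structural maps $\gamma$, $(\varepsilon_\alpha,\varphi_\alpha,e^._\alpha)_{\alpha\in\Delta}$, is determined up to isomorphism by the single vector $\lambda\in\afrak$.

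The key step is to invoke Theorem \ref{thm:static_geometric_rs}, which provides a crystal isomorphism $p : L = \langle \pi \rangle \to \Bc(\lambda)$, $\pi' \mapsto B_T(\pi')$. This immediately gives the result: if $L'$ is another connected geometric path crystal (possibly with a different time horizon $T'$) whose highest path has the same endpoint $\lambda$, then $p$ and $p'$ are both isomorphisms onto $\Bc(\lambda)$, hence $L \cong \Bc(\lambda) \cong L'$ as abstract crystals. In particular the isomorphism class of $L$ depends only on $\lambda$. I would also remark that the converse dependence is genuine: by Theorem \ref{thm:geom_weight_map} (or simply because $\gamma$ takes the value $\lambda$ on the highest weight element and this is preserved by isomorphism), distinct $\lambda$ give non-isomorphic crystals, so $\lambda$ is a complete invariant of the connected crystal.

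A cleaner way to phrase the proof, avoiding any appeal to a second crystal, is the following. Let $\eta := e^{-\infty}_{w_0}\pi$ be the lowest path of the component; by Proposition \ref{proposition:partial_connectedness_criterion} it depends only on $L$. Corollary \ref{corollary:connected_component_bijection} gives a bijection $U^{w_0}_{>0} \to L$, $z \mapsto T_z \eta$. Under this bijection, by Proposition \ref{proposition:actions_in_coordinates} (after fixing a reduced word ${\bf i}\in R(w_0)$ and transporting to Lusztig coordinates via $\varrho^L_{\bf i}$), the crystal actions $e^c_{\alpha_{i_1}}$ act by $(t_1,\dots,t_m)\mapsto(e^c t_1,\dots,t_m)$, and the remaining structural maps $\gamma$, $\varepsilon_\alpha$, $\varphi_\alpha$ are given by the explicit formulas of Theorem \ref{thm:geom_weight_map} and the definitions in section \ref{subsection:string_params}, \ref{subsection:lusztig_param}, transported through the commutative diagram \ref{fig:parametrizations_diagram}. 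All of these expressions depend only on the reduced word ${\bf i}$ and on $\lambda$, not on $\pi$, $\eta$, or $T$. Hence the entire crystal structure on $L$, read off in the coordinate chart $\varrho^L_{\bf i}$, coincides with that of $\Bc(\lambda)$ read off in $x_{\bf i}^{-1}$, which proves the theorem.

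The main obstacle is essentially bookkeeping rather than a genuine mathematical difficulty: one must check that \emph{all} structural maps, not merely the weight map and the actions, transport correctly through the parametrization, and in particular that $\varepsilon_\alpha$ and $\varphi_\alpha$ for $\alpha \ne \alpha_{i_1}$ are also expressible purely in terms of $(\lambda,{\bf i})$. This follows since, by the commutative diagram \ref{fig:parametrizations_diagram} and Corollary \ref{corollary:commutative_diagram_lusztig}, these maps factor through $p$ composed with the corresponding maps on $\Bc(\lambda)$, which are themselves rational subtraction-free functions of the Lusztig coordinates (Theorem \ref{thm:charts_positivity}) with coefficients depending only on $\lambda$. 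Thus the cleanest exposition is simply: apply Theorem \ref{thm:static_geometric_rs}; the isomorphism class of $\langle\pi\rangle$ equals that of $\Bc(\lambda)$, and $\lambda$ is determined by $L$ alone via Theorem \ref{thm:connectedness_criterion}.
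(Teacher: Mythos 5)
Your proposal is correct and follows the paper's own route: the paper states this theorem as an immediate corollary of Theorem \ref{thm:static_geometric_rs} (the projection $p$ being a crystal isomorphism onto $\Bc(\lambda)$), with $\lambda$ well defined on the component by Theorem \ref{thm:connectedness_criterion}, exactly as in your first paragraph. The additional coordinate-based argument you sketch is consistent but not needed.
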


\subsection{Minimality of group picture}
\label{subsection:minimality}
Thanks to Littelmann's independence theorem, we have seen that there are a lot of different but isomorphic path crystals. And all of them project (thanks to the map $p$) to a certain $\Bc(\lambda)$, what we called the group picture. Now one can ask the question of how minimal this group picture is. A reasonable answer can be the fact that there are very few crystal morphisms on the group picture $\Bc(\lambda)$. Lifting the problem at the level of the path model allows an easy proof of the following.

\begin{thm}
\label{thm:minimality}
Let $f: \Bc(\lambda) \rightarrow \Bc(\mu)$ be a map such that:
 $$ \left\{
    \begin{array}{ccc}
    \forall \alpha \in \Delta, \forall c \in \R, & f \circ e^c_\alpha = e^c_\alpha \circ f\\
    \forall \alpha \in \Delta,                   & \varepsilon_\alpha \circ f = \varepsilon_\alpha\\
   \end{array} \right.$$
If $\lambda = \mu$, then $f=id$.
\end{thm}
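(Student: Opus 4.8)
The strategy is to pull the problem back to the path model via the projection $p$ and the connected-component parametrization established in the previous sections. Fix $\lambda$. By Theorem \ref{thm:static_geometric_rs}, choosing any path $\pi_0 \in \Cc_0\left([0,T], \afrak\right)$ with $\Tc_{w_0}\pi_0(T) = \lambda$, the projection $p$ restricts to a crystal isomorphism $\langle \pi_0 \rangle \to \Bc(\lambda)$. So a map $f: \Bc(\lambda) \to \Bc(\lambda)$ commuting with all $e^c_\alpha$ and preserving $\varepsilon_\alpha$ is the same datum as a self-map $\tilde{f} = p^{-1} \circ f \circ p$ of the connected path crystal $\langle \pi_0 \rangle$ with the same two properties. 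It therefore suffices to show that the only such $\tilde f$ is the identity, and for this I would use the Lusztig (or Kashiwara) coordinates on $\langle \pi_0 \rangle$ from Corollary \ref{corollary:connected_component_bijection}: every element is uniquely $T_z(\eta)$ for $z \in U^{w_0}_{>0}$, where $\eta = e^{-\infty}_{w_0}\pi_0$ is fixed.

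The key computation is then to track what commutation with the $e^c_\alpha$ and preservation of $\varepsilon_\alpha$ force on coordinates. Fix a reduced word ${\bf i} \in R(w_0)$ and write $\varrho^L_{\bf i}(\pi) = (t_1, \dots, t_m)$. By Proposition \ref{proposition:actions_in_coordinates}, acting by $e^\xi_{\alpha_{i_1}}$ multiplies $t_1$ by $e^\xi$ and fixes $t_2, \dots, t_m$; since the Lusztig coordinates can be computed with respect to \emph{any} reduced word starting with a given simple root (and changes of reduced word act by the rational maps $R_{\bf i, i'}$), commutation with every $e^c_\alpha$ forces $\tilde f$ to preserve all the coordinate directions up to the last one. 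More precisely, I would argue: since $\tilde f$ commutes with $e^\xi_\alpha$ for all $\alpha$ and all $\xi$, and since $\langle\pi_0\rangle$ is a single connected crystal on which the $e^._\alpha$ act freely, $\tilde f$ is determined by its value on one point together with ``how it moves along the orbits''; and because $e^{-\infty}_{w_0}\tilde f(\pi) = e^{-\infty}_{w_0}\pi = \eta$ (as $e^{-\infty}_{w_0}$ is a limit of the $e^c_\alpha$ which $\tilde f$ commutes with, hence $\tilde f$ preserves the connected component and $e^{-\infty}_{w_0}$), $\tilde f$ must be of the form $T_z(\eta) \mapsto T_{\Phi(z)}(\eta)$ for some bijection $\Phi$ of $U^{w_0}_{>0}$. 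Commutation with $e^\xi_{\alpha}$ for $\alpha$ the first letter of a reduced word, via Proposition \ref{proposition:actions_in_coordinates}, says $\Phi$ commutes with multiplication of the first Lusztig coordinate by $e^\xi$; ranging over all ${\bf i} \in R(w_0)$ and using that every index can appear as $i_1$, this pins $\Phi$ down up to the data that is \emph{not} scaled, namely — as in the rank-$1$ discussion in subsection ``A rank 1 example'' — essentially $\varepsilon_\alpha$-type quantities. The hypothesis $\varepsilon_\alpha \circ f = \varepsilon_\alpha$ (equivalently $\varepsilon_\alpha \circ \tilde f = \varepsilon_\alpha$, using Theorem \ref{thm:p_is_morphism}(iii)) then removes the last freedom, giving $\Phi = \mathrm{id}$ and hence $f = \mathrm{id}$.

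Concretely, the cleanest way to organize the last step: recall $t_k = \left(\int_0^T e^{-\alpha_{i_k}(e^{-\infty}_{s_{i_1}\cdots s_{i_{k-1}}}\pi)}\right)^{-1}$, and $t_1 = e^{-\varepsilon_{\alpha_{i_1}}(\pi)}$. The hypothesis $\varepsilon_{\alpha_{i_1}}\circ \tilde f = \varepsilon_{\alpha_{i_1}}$ gives $t_1(\tilde f(\pi)) = t_1(\pi)$. For $k \geq 2$, commutation with $e^._{\alpha_{i_1}}$ plus Proposition \ref{proposition:actions_in_coordinates} plus the freedom in the reduced word (move $i_k$ to the front by a sequence of braid moves, legitimate by Lemma \ref{lemma:tits_lemma} applied to the tail, and use that the coordinate $t_k$ in word ${\bf i}$ is a fixed rational-subtraction-free function of the coordinates in the rearranged word) shows $t_k(\tilde f(\pi))$ depends on $\tilde f(\pi)$ only through quantities $\tilde f$ preserves; iterating, all $t_k$ are preserved, so $\tilde f = \mathrm{id}$ by the bijection of Corollary \ref{corollary:connected_component_bijection}, and $f = p \circ \tilde f \circ p^{-1} = \mathrm{id}$.

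\textbf{Main obstacle.} The delicate point is the inductive extraction of the higher coordinates $t_k$, $k \geq 2$: Proposition \ref{proposition:actions_in_coordinates} as stated only scales the \emph{first} coordinate, so to control $t_k$ one must change the reduced word so that $i_k$ becomes the leading index while keeping track of how the already-controlled coordinates transform under the braid moves. Making this ``each index can be brought to the front, and the intermediate coordinates are then controlled'' argument airtight — rather than hand-waving via the rank-$1$ picture — is where the real work lies; the cleanest patch is probably to replace the inductive coordinate argument by the observation that $\tilde f$ commutes with the full family $\{e^._\alpha\}$ and preserves $\varepsilon_\alpha$, hence preserves the map $\pi \mapsto (e^{-\infty}_{w_0}\pi, \varrho^K_{\bf i}(\pi))$ appearing in the Robinson--Schensted bijection (Theorem \ref{thm:dynamic_rs_correspondence}), which is injective, forcing $\tilde f = \mathrm{id}$ directly.
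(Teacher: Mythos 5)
Your overall strategy --- lift to the path model via $p$ and show that $f$ must preserve the Lusztig parameters --- is the same as the paper's, but the crucial step is missing, and you have correctly located it yourself: nothing in your argument actually establishes that the higher coordinates $t_k$, $k\geq 2$, are preserved. The reduced-word route is not just ``delicate'', it is insufficient as sketched: knowing that $\tilde f$ fixes the \emph{first} Lusztig coordinate of every reduced word and commutes with scaling it does not determine $\tilde f$ (in type $A_2$ there are only two reduced words but three coordinates, so the collection of first coordinates cannot separate points), and the braid-move bookkeeping you allude to would at best set up an induction whose base case still requires knowing that quantities like $\varepsilon_{\alpha_{i_2}}(e^{-\infty}_{\alpha_{i_1}}\pi)$ are preserved --- which is the very thing to be proved. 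The proposed ``patch'' is circular: preservation of the string parameters $\varrho^K_{\bf i}$ is exactly as unproven as preservation of $\varrho^L_{\bf i}$, since Proposition \ref{proposition:actions_in_coordinates} likewise only controls the first string coordinate, and the word ``hence'' in ``hence preserves the map $\pi\mapsto(e^{-\infty}_{w_0}\pi,\varrho^K_{\bf i}(\pi))$'' carries the entire burden of the proof.

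The paper closes this gap with a direct limit argument that needs no change of reduced word. For any word ${\bf i}\in I^k$ and any reals $c_1,\dots,c_{k-1}$, the two hypotheses give
\begin{align*}
\varepsilon_{\alpha_{i_k}}\bigl( e^{c_1}_{\alpha_{i_1}}\cdots e^{c_{k-1}}_{\alpha_{i_{k-1}}}\cdot f(x) \bigr)
= \varepsilon_{\alpha_{i_k}}\circ f\bigl( e^{c_1}_{\alpha_{i_1}}\cdots e^{c_{k-1}}_{\alpha_{i_{k-1}}}\cdot x \bigr)
= \varepsilon_{\alpha_{i_k}}\bigl( e^{c_1}_{\alpha_{i_1}}\cdots e^{c_{k-1}}_{\alpha_{i_{k-1}}}\cdot x \bigr),
\end{align*}
i.e.\ the lifted paths $\pi$ and $\pi'$ satisfy $\int_0^T e^{-\alpha_{i_k}(e^{c_1}_{\alpha_{i_1}}\cdots e^{c_{k-1}}_{\alpha_{i_{k-1}}}\pi')}=\int_0^T e^{-\alpha_{i_k}(e^{c_1}_{\alpha_{i_1}}\cdots e^{c_{k-1}}_{\alpha_{i_{k-1}}}\pi)}$ for \emph{all} finite $c_j$. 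Letting every $c_j\rightarrow-\infty$ with ${\bf i}\in R(w_0)$ turns the inner composition into $e^{-\infty}_{s_{i_1}\cdots s_{i_{k-1}}}$, so by the very definition of $\varrho^L_{\bf i}$ this is precisely the statement $t_k(\pi')=t_k(\pi)$ for every $k$ simultaneously. Combined with $e^{-\infty}_{w_0}\pi'=e^{-\infty}_{w_0}\pi$ (your observation, which is fine since $\Bc(\lambda)$ is connected) and Corollary \ref{corollary:connected_component_bijection}, this yields $\pi'=\pi$, hence $f=\mathrm{id}$. If you replace your coordinate-juggling paragraph and the RS patch by this limit computation, your proof is complete.
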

\begin{proof}
Let $x \in \Bc(\lambda)$ and $y = f(x) \in \Bc(\mu)$. We start by lifting the problem to the path model. This means that we consider $\pi$ and $\pi'$ in $\Cc_0\left( [0,T], \afrak \right)$ such that:
$$ x = p(\pi) = B_T(\pi), y = p(\pi') = B_T(\pi')$$
and
$$ \Tc_{w_0} \pi(T) = \lambda, \Tc_{w_0} \pi(T) = \mu$$
Therefore, we see $\Bc(\lambda) \approx \langle \pi \rangle$ and $\Bc(\mu) \approx \langle \pi' \rangle$ as path crystals. Let us prove that for all $k \in \N$, ${\bf i} \in I^k$ and $\left( c_1, \dots, c_k \right) \in \R^k$:
$$ \frac{1}{\int_0^T \exp\left( -\alpha_{i_k}\left( e^{c_{i_1}}_{\alpha_{i_1}} \cdots e^{c_{i_{k-1}}}_{\alpha_{i_{k-1}}} \pi' \right) \right) }
 = \frac{1}{\int_0^T \exp\left( -\alpha_{i_k}\left( e^{c_{i_1}}_{\alpha_{i_1}} \cdots e^{c_{i_{k-1}}}_{\alpha_{i_{k-1}}} \pi \right) \right) }
$$
Indeed, using the same notations in the path model and in the group picture, this is equivalent to:
\begin{align*}
  & \varepsilon_{\alpha_{i_k}}\left( e^{c_{i_1}}_{\alpha_{i_1}} \cdots e^{c_{i_{k-1}}}_{\alpha_{i_{k-1}}} \pi' \right)\\
= & \varepsilon_{\alpha_{i_k}}\left( e^{c_{i_1}}_{\alpha_{i_1}} \cdots e^{c_{i_{k-1}}}_{\alpha_{i_{k-1}}} y \right)\\
= & \varepsilon_{\alpha_{i_k}}\left( e^{c_{i_1}}_{\alpha_{i_1}} \cdots e^{c_{i_{k-1}}}_{\alpha_{i_{k-1}}} f(x) \right)\\
= & \varepsilon_{\alpha_{i_k}}\circ f \left( e^{c_{i_1}}_{\alpha_{i_1}} \cdots e^{c_{i_{k-1}}}_{\alpha_{i_{k-1}}} x \right)\\
= & \varepsilon_{\alpha_{i_k}}\left( e^{c_{i_1}}_{\alpha_{i_1}} \cdots e^{c_{i_{k-1}}}_{\alpha_{i_{k-1}}} \pi \right)
\end{align*}
Taking all the $c_j$ to $-\infty$ and ${\bf i}$ a reduced word for $w_0$, one finds that the Lusztig parameters of $\pi$ and $\pi'$ coincide (see subsection \ref{subsection:lusztig_param}). Then the same goes for $x$ and $y$. And if $\lambda = \mu$, we have $x=y$.

Note that the condition $\lambda = \mu$ can be deduced from elsewhere if for instance $\gamma(x) = \gamma(y)$. Hence the following remark.
\end{proof}

\begin{rmk}
\label{rmk:minimality}
$\lambda = \mu$ is for instance implied by:
$$ \gamma \circ f = \gamma $$
\end{rmk}

\begin{rmk}
The previous theorem means that in the path model, there is a relatively large amount of isomorphic crystals. In the group picture, however the only crystal automorphism of $\Bc\left( \lambda \right)$ is the identity. In that sense, the group picture is ``minimal''.
\end{rmk}

\section{Robinson-Schensted correspondence(s)}
\label{section:rs_correspondences}
Before stating the geometric Robinson-Schensted correspondence, we recall the classical one and its generalizations using crystal bases.

\paragraph{Classical Robinson-Schensted correspondence (see \cite{bib:Fulton97}):}
An integer partition $\lambda$ is a tuple $\lambda_1 \geq \lambda_2 \geq \dots \geq 0$ such that $ |\lambda| := \sum_i \lambda_i < \infty $. Every partition $\lambda$ can be visually represented as a Young diagram, a collection of left-justified cells: $\lambda_1$ cells on the first row, $\lambda_2$ on the second etc... 

\begin{example}
The Young diagram associated to the partition $\lambda = (5,3,2)$ is:
$$
\def\lr#1{\multicolumn{1}{|@{\hspace{.6ex}}c@{\hspace{.6ex}}|}{\raisebox{-.3ex}{$#1$}}}
\raisebox{-.6ex}{\begin{array}[b]{ccccc}
\cline{1-1}\cline{2-2}\cline{3-3}\cline{4-4}\cline{5-5}
\lr{ \ }&\lr{ \ }&\lr{ \ }&\lr{ \ }&\lr{ \ }\\
\cline{1-1}\cline{2-2}\cline{3-3}\cline{4-4}\cline{5-5}
\lr{ \ }&\lr{ \ }&\lr{ \ }\\
\cline{1-1}\cline{2-2}\cline{3-3}
\lr{ \ }&\lr{ \ }\\
\cline{1-1}\cline{2-2}
\end{array}} $$
\end{example}

Consider an alphabet of $r$ letters $\A = \left\{ 1, \dots, r \right\}$. A semi-standard (resp. standard) Young tableau is a filling of a Young diagram using the alphabet, such that the entries are weakly (resp. strictly) increasing from left to right and strictly increasing down the columns. We will use the abbreviations SST (resp. ST) for ``semi-standard tableau'' (resp. ``standard tableau''). In any case, the shape of a tableau $P$ is the integer partition obtained by erasing its entries and is denoted $\sh(P)$.

Now let us describe an operation called row insertion. For a tableau $T$ and a letter $x \in \A$, one forms a new tableau that has one more entry labelled by $x$. Start with the first row and insert $x$ at the leftmost position that is strictly larger than $x$, in order to preserve the weakly increasing property from left to right. If that position is taken by a letter $y > x$, replace it by $x$ and continue the same procedure with $y$ on the next row. We say that $y$ has been bumped. If $x$ is at least as large as all the entries of the current row, $x$ is appended at the end. The procedure then stops.

The Robinson-Schensted correspondence is given by applying the algorithm with the following specifications:
\begin{itemize}
\item Input: a word $w \in \A^{ (\N) }$
\item Algorithm: do row insertions of letters to obtain a tableau $P$ and record the growth in $Q$.
\item Output: a pair $(P, Q)$ of tableaux where $P$ is a SST and $Q$ is a ST.
\end{itemize}

\begin{thm}[see \cite{bib:Fulton97} RSK correspondence (ii) p.40]
The Robinson-Schensted correspondence:
$$ \begin{array}{cccc}
  RS : &  \A^{ (\N) } & \longrightarrow & \left\{ \left( P, Q \right) , \textrm{ $P$ SST, $Q$ ST} \ | \ \sh(P) = \sh(Q) \right\}
   \end{array}
 $$
is bijection.
\end{thm}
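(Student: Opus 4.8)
The plan is to produce an explicit two-sided inverse — the reverse bumping algorithm — and to check that forward and reverse insertion undo each other. The whole argument rests on one technical fact, the \emph{bumping lemma}, describing how a bump path moves through a semistandard tableau; once that is established, both directions and the inverse property follow by routine induction on the length of the word.

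First I would verify that $RS$ is well-defined. Given $w=(w_1,\dots,w_n)\in\A^{(\N)}$, row insertion produces a chain $\emptyset=P_0,P_1,\dots,P_n=P$, where $P_k$ is obtained from $P_{k-1}$ by inserting $w_k$, together with a recording tableau $Q$ in which, at step $k$, one fills with the label $k$ the single new cell by which $\sh(P_k)$ exceeds $\sh(P_{k-1})$. By induction on $k$ one shows $P_k$ is a SST: the only point to check is that when a letter $x$ displaces the leftmost entry $y>x$ of a row and $y$ is reinserted one row down, the weakly increasing rows and strictly increasing columns are preserved; this is exactly the bumping lemma (the cell vacated in the lower row lies weakly left of the cell vacated above, and the bumped letter exceeds the entry sitting directly over the newly vacated cell). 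Each insertion adds exactly one boundary cell, so the process terminates and $\sh(P)=\sh(Q)$; and since the cells of $Q$ receive the labels $1,\dots,n$ in the order created, with each new cell appended to the boundary, $Q$ is strictly increasing along rows and columns, i.e.\ a ST.

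Next I would define the reverse map. Given an admissible pair $(P,Q)$ with $|Q|=n$, the entry $n$ of $Q$ sits in a corner cell $c$ of the common shape (a corner because $Q$ is standard). Apply \emph{reverse row insertion} to $P$ from $c$: remove the entry of $c$ and, moving up one row at a time, reinsert the current letter at the rightmost position of the preceding row whose entry is strictly smaller, ejecting that entry to continue; once one passes above the first row, the last ejected letter is declared $w_n$. This leaves a tableau $P'$ on $\sh(Q)\setminus\{c\}$, which is again a SST by the reverse bumping lemma (symmetric to the forward one), together with $Q':=Q\setminus\{c\}$, still a ST of the same shape as $P'$. Iterating $n$ times empties both tableaux and recovers a word $w=(w_1,\dots,w_n)$; this is $RS^{-1}$.

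Finally I would check the two maps are mutually inverse. The crux is the single-step identity: if $P_k$ arises from $P_{k-1}$ by inserting $x$, with new cell $c$, then reverse row insertion of $P_k$ from $c$ returns precisely $(P_{k-1},x)$, and conversely. This holds because the forward and reverse bump paths pass through the same cell in each row — on a weakly increasing row, ``leftmost entry strictly greater than $x$'' and ``rightmost entry strictly smaller than the ejected letter'' are inverse operations — so the paths coincide as cell sets and are traversed in opposite order. Granting this, a straightforward induction on $n$, peeling off the last letter on one side and the cell labelled $n$ on the other, gives $RS^{-1}\circ RS=\mathrm{id}$ on words and $RS\circ RS^{-1}=\mathrm{id}$ on pairs, so $RS$ is a bijection. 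The main obstacle is the bumping lemma and the resulting well-definedness of reverse insertion: that is where the genuine combinatorial content lies, and everything else is bookkeeping.
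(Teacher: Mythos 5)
Your proof is correct and is essentially the standard argument (bumping lemma for well-definedness, reverse row insertion from the corner labelled $n$, and the single-step inverse identity) found in the reference \cite{bib:Fulton97} that the paper cites; the paper itself gives no proof of this classical statement, deferring entirely to that source. No gaps to report.
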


It is well-known that the combinatorics of Young tableaux are strongly connected to the representation theory of $GL_r$. For example, irreducible representations of $GL_r$ are labeled by Young diagrams $\lambda$ and are denoted $V\left( \lambda \right)$. Each $V\left( \lambda \right)$ has a basis indexed by semi-standard tableaux $P$ of shape $\lambda$ (the Gelfand-Tsetlin basis for instance). Moreover, if $V = \C^r$ is the vector representation, irreducible submodules of $V^{\otimes T}$ for $T \in \N$ are labelled by standard tableaux $Q$ with $T$ boxes and at most $r$ rows:
\begin{align}
V^{\otimes T} & = \bigoplus_{Q \textrm{ ST with $T$ boxes}} V_Q
\label{eqn:tensor_product_decomposition}
\end{align}
where each $V_Q$ is isomorphic to a certain $V\left( \lambda \right)$ with $\lambda = \sh\left( Q \right)$. The tableau $Q$ is constructed by recording which box is added upon tensoring by a new $V$.

Let us observe the above equality. The right-side has a basis labelled by pairs of tableaux of same shape. The left-hand side has  monomial tensors as a basis and these naturally match words in $r$ letters. Therefore, the Robinson-Schensted correspondence is a bijection between good bases. A more precise relationship was first discovered by Date, Jimbo and Miwa \cite{bib:DJM90} and involves the quantum group $\Uc_q\left( \mathfrak{gl}_r \right)$ at $q=0$. This was the starting point of Kashiwara's work on crystal bases.

The representation theory of $\Uc_q\left( \mathfrak{gl}_r \right)$ is essentially the same as that of $GL_r$ and equation \eqref{eqn:tensor_product_decomposition} holds as an equality between $\Uc_q\left( \mathfrak{gl}_r \right)$-modules. In the limit as $q \rightarrow 0$, each $V_Q$ is spanned by monomial tensors. As these are labelled by words, one can ask which words span a module $V_Q$. Date, Jimbo and Miwa understood that the answer is exactly given the Robinson-Schensted correspondence: one has to consider the words that are mapped to a pair of tableaux with $Q$ being the second member.

\paragraph{Kashiwara crystal generalization:}
Let $\Bfrak\left( \kappa \right)$ be a highest weight crystal, with highest weight $\kappa \in P^+$. The previous paragraph corresponds to the particular case of $\kappa = n \omega_1$ in type $A$. Now fix a time horizon $T \in \N$. For every $0 \leq t \leq T$, given a tensor product of crystal elements $b_1 \otimes b_2 \otimes \dots b_t \in \Bfrak\left( \kappa \right)^{\otimes t}$, one can record as $\eta_t \in P^+$ the isomorphism class of the crystal it generates. Then define $p\left( b \right) \in \Bfrak$ as $b = b_1 \otimes b_2 \otimes \dots b_T$ seen as a element in the highest weight crystal it generates. Hence the map:
$$ \begin{array}{cccc}
  RS : &  \Bfrak\left( \kappa \right)^{\otimes T}    & \longrightarrow & \left\{ (b, \eta) \in \Bfrak \times \left( P^+ \right)^{T} \ | \ \hw(b) = \eta(T) \right\}\\
       &  b_1 \otimes b_2 \otimes \dots b_T          &   \mapsto       & \left( b, \left( \eta_t; t=0, 1, 2, \dots, T \right) \right)
   \end{array}
$$
The previous map is a bijection only when $\kappa$ is minuscule. This fact is more easily seen after reformulating the above thanks to the Littelmann path model \cite{bib:Littelmann95}, as we will now explain.  Let $\Pi$ be the set of piece-wise linear paths in $P \otimes \Q \subset \afrak$, also known as the Littelmann module. Let $\Pi^+$ be the set of dominant paths i.e the subset of piece-wise linear paths with values in the closed Weyl chamber. If $\pi_\kappa$ is a dominant path in $\afrak$ with $\kappa$ as endpoint, thanks to Littelmann's independence theorem, we have a crystal isomorphism $\langle \pi_\kappa \rangle \approx \Bfrak\left( \kappa \right)$. Since tensor product is simply modelled by concatenation, we can identify 
$\Bfrak\left( \kappa \right)^{\otimes T} \approx \langle \pi_\kappa \rangle^{\otimes T}$ with the subset of $\Pi$ made of paths that are concatenations of $T$ smaller parts from $\langle \pi_\kappa \rangle$. 

In this identification, the input of the Robinson-Schensted correspondence is just a path $\pi = \pi_1 \ast \dots \ast \pi_T$ in the Littelmann module, with each $\pi_i \in \langle \pi_\kappa \rangle$. The first member in the output is $\pi$ seen as an element of $\langle \pi \rangle \approx \Bc\left( \lambda \right) \hookrightarrow \Bc$. One can for instance just encode $\pi$ thanks to its Kashiwara parameters. This projection map is denoted $p: \Pi \rightarrow \Bc$ and $p(\pi)$ can be seen as an integer point lying in a dual string cone $\Cc_{\bf i}^\vee$ ( for the dual group compared to subsection \ref{subsection:Lusztig_canonical_basis}). The second member records the only dominant path in the intermediate crystals generated by $\pi_1 \ast \dots \dots \pi_t$ for $t=0,1, \dots, T$. It is given by the Pitman transform $\Pc_{w_0} \pi$ \cite{bib:BBO}. In that paper, Biane, Bougerol and O'Connell recognized that the path transform Pitman had introduced with probabilistic motivations is the crystal operator 
that gives the dominant path in a connected Littelmann module (in rank $1$ case). The Robinson-Schensted map 
then becomes:
$$ \begin{array}{cccc}
  RS : &  \Pi & \longrightarrow & \left\{ (b, \eta) \in \Bfrak \times \Pi^+ \ | \ \hw(b) = \eta(T) \right\}\\
       &  \pi &   \mapsto       & \left( p\left( \pi \right), \left(\Pc_{w_0} \pi_t; 0 \leq t \leq T \right) \right)
   \end{array}
$$
In the minuscule case, one only needs to record the path $\left(\Pc_{w_0} \pi_t; 0 \leq t \leq T \right)$ at integer times because $\langle \pi_\kappa \rangle$ is made of straight paths.

\paragraph{Continuous case:}
Biane, Bougerol and O'Connell \cite{bib:BBO, bib:BBO2} noticed that path crystals can be defined in a continuous setting. A continuous time horizon has to be considered and the Littelmann module $\Pi$ is replaced by the space of all continuous functions $\Cc_0\left( [0, T], \afrak \right)$. Dominant paths are the ones valued in the closed Weyl chamber. From their work, one can formulate a Robinson-Schensted with exactly the same form as above. The projection map $p$ is understood as the map $p_{\bf i}$ that associates to every path its string parameters and highest weight continuous crystals $\Bc\left( \lambda \right)$ are convex polytopes inside the dual string cone $\Cc_{\bf i}^\vee$.

Notice that the Pitman transform is the same for both the continuous and combinatorial settings. In fact, one can embed the classical Littelmann path model in the continuous one.

\paragraph{Geometric case:} As we have proved, concatenation is still a model for tensor product for geometric crystals. Hence, a path $\pi \in \Cc_0\left( [0, T], \afrak \right)$ can be seen as a tensor product of smaller paths. The path crystal it generates can be projected onto its group picture, which is an intrinsic realization of a $\Bc\left( \lambda \right)$ for a certain $\lambda \in \afrak$.

\begin{thm}[Geometric Robinson-Schensted correspondence]
\label{thm:dynamic_rs_correspondence}
For each $T>0$, we have a bijection:
$$ \begin{array}{cccc}
  RS : & \Cc_0\left( [0, T], \afrak \right) & \longrightarrow & \left\{ (x, \eta) \in \Bc \times \Cc^{high}_{w_0}\left( [0,T], \afrak \right) \ | \ \hw(x) = \eta(T) \right\}\\
       &         \pi                      &   \mapsto       & \left( B_T\left( \pi \right), \left(\Tc_{w_0} \pi_t; 0 < t \leq T \right) \right)
   \end{array}
 $$
\end{thm}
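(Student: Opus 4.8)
The plan is to assemble the geometric Robinson--Schensted bijection out of the ingredients already established, rather than proving anything new. The map $RS$ sends $\pi$ to the pair $\left( B_T(\pi), \left( \Tc_{w_0}\pi_t ; 0 < t \leq T \right) \right)$; I must check that this lands in the claimed target set, that it is injective, and that it is surjective.

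\textbf{Well-definedness of the target.} First I would observe that $B_T(\pi) \in \Bc$ by Theorem \ref{thm:flow_B_total_positivity} (total positivity of the flow), and that $\Tc_{w_0}\pi \in \Cc^{high}_{w_0}\left( (0,T], \afrak \right)$ by Lemma \ref{lemma:start_asymptotics} applied with $w = w_0$, which gives exactly the asymptotic development at $0$ defining a high path of type $w_0$. The compatibility constraint $\hw(x) = \eta(T)$ is precisely the identity $\hw(B_T(\pi)) = \Tc_{w_0}\pi(T)$, which is the Proposition stated just after Definition \ref{def:high_path_types} in Section \ref{section:paths_on_the_edge} (``$\hw(\pi) = \Tc_{w_0}(\pi)(T)$''), itself a restatement of Properties \ref{properties:hw}(i) and the definition $e^{\Tc_{w_0}\pi(t)} = [\bar w_0^{-1} B_t(\pi)]_0$ from Theorem \ref{thm:braid_relations_Tc}. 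So $RS$ is well defined into the target.

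\textbf{Injectivity.} Here I would use the connectedness criterion. Suppose $RS(\pi) = RS(\pi')$. Then $\Tc_{w_0}\pi = \Tc_{w_0}\pi'$ on $(0,T]$, so by Theorem \ref{thm:connectedness_criterion} (equivalence of (i) and (iii)) the paths $\pi$ and $\pi'$ lie in the same connected component $\langle \pi \rangle$, with common highest weight $\lambda := \Tc_{w_0}\pi(T)$. By Theorem \ref{thm:static_geometric_rs} the projection $p = B_T(\cdot)$ restricts to a \emph{bijection} $\langle \pi \rangle \to \Bc(\lambda)$; since $B_T(\pi) = B_T(\pi')$, injectivity of that restriction forces $\pi = \pi'$.

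\textbf{Surjectivity.} Given a target pair $(x, \eta)$ with $x \in \Bc$, $\eta \in \Cc^{high}_{w_0}\left( (0,T], \afrak \right)$, $\hw(x) = \eta(T)$, I must produce $\pi$ with $B_T(\pi) = x$ and $\Tc_{w_0}\pi = \eta$. The idea is: $\eta$, being a high path of type $w_0$, determines a dominant direction and, via iterated applications of the inverse operators from Proposition \ref{proposition:extracting_string}(2) along a reduced word ${\bf i} \in R(w_0)$, determines a whole connected crystal's worth of candidate paths once we also choose string parameters $(c_1,\dots,c_m) \in \R_{>0}^m$; indeed, by Theorem \ref{thm:string_params_paths}, specifying $\eta$ together with $\varrho^K_{\bf i}$ recovers $\pi$, and every choice of positive $(c_k)$ is realized (each step of the climb in Proposition \ref{proposition:extracting_string}(2) accepts an arbitrary $c>0$). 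So the fibre of $\pi \mapsto \Tc_{w_0}\pi$ over $\eta$ is parametrized by $(c_1,\dots,c_m) \in \R_{>0}^m \cong C^{w_0}_{>0}$ via $x_{-\bf i}$. On the other side, let $\lambda := \hw(x) = \eta(T)$, so $x \in \Bc(\lambda)$; by Proposition \ref{proposition:crystal_param_maps} the Kashiwara parameter $\varrho^K(x) = v \in C^{w_0}_{>0}$ determines $x$, and conversely $b^K_\lambda(v) = x$. Choose the path $\pi$ in the fibre over $\eta$ whose string parameters are $\varrho^K_{\bf i}(\pi) := $ the $\bf i$-coordinates of $v = \varrho^K(x)$. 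Then by the string inversion lemma (Theorem \ref{thm:string_inversion_lemma}, via Corollary \ref{corollary:commutative_diagram_kashiwara}) we get $\varrho^K \circ p(\pi) = x_{-\bf i} \circ \varrho^K_{\bf i}(\pi) = v = \varrho^K(x)$, and since $p(\pi) \in \Bc(\hw(\pi)) = \Bc(\Tc_{w_0}\pi(T)) = \Bc(\eta(T)) = \Bc(\lambda)$ and $\varrho^K$ is injective on $\Bc(\lambda)$, we conclude $p(\pi) = B_T(\pi) = x$. This gives surjectivity.

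\textbf{Main obstacle.} The delicate point is the surjectivity argument: one must be careful that the two descriptions of the relevant fibre --- ``paths with given $\Tc_{w_0}$-image $\eta$'', parametrized by string parameters, and ``crystal elements in $\Bc(\lambda)$'', parametrized by the Kashiwara chart --- match up \emph{compatibly}, i.e. that choosing $\varrho^K_{\bf i}(\pi)$ equal to the $\bf i$-coordinates of $\varrho^K(x)$ is legitimate and does yield both $\Tc_{w_0}\pi = \eta$ (not merely $\Tc_{w_0}\pi(T) = \lambda$) and $B_T(\pi) = x$. The first of these is automatic from Theorem \ref{thm:string_params_paths} since the reconstruction of $\pi$ from $(\eta, \varrho^K_{\bf i}(\pi))$ is exactly the inverse of the climb, so $\Tc_{w_0}$ of the reconstructed path is $\eta$ by construction; the second is the inversion lemma as above. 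I would spell this matching out explicitly, as it is the one place where the path side and the group side must be glued, and also note (appealing to Theorem \ref{thm:string_inversion_lemma}, which is stated for arbitrary ${\bf i} \in R(w_0)$) that the construction is independent of the chosen reduced word, so $RS$ is genuinely canonical.
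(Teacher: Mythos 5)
Your proposal is correct and follows essentially the same route as the paper: the paper's (very terse) proof invokes exactly Theorem \ref{thm:string_params_paths} (a path is equivalent to its highest weight path plus its string parameters) together with the commutativity of the Kashiwara side of diagram \ref{fig:parametrizations_diagram} (the string parameters and $\lambda=\Tc_{w_0}\pi(T)$ are encoded by $B_T(\pi)$), which is precisely the content of your surjectivity/injectivity arguments once unpacked. Your detour through Theorem \ref{thm:connectedness_criterion} and Theorem \ref{thm:static_geometric_rs} for injectivity is a cosmetic variation resting on the same parametrization machinery, so there is nothing genuinely different to compare.
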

\begin{proof}
By theorem \ref{thm:string_params_paths}, the knowledge of a path $\pi \in \Cc_0\left( [0, T], \afrak \right)$ is exactly equivalent to that of knowing the string parameters and the highest weight path. And thanks to corollary \ref{corollary:commutative_diagram_lusztig}, the string parameters along with the highest weight $\lambda = \Tc_{w_0} \pi(T)$ are encoded by $B_T(\pi) \in \Bc(\lambda)$.
\end{proof}

After this presentation, the analogy with the classical Robinson-Schensted correspondence is clear. The path $\pi$ plays the role of a word. Elements in $\Bc$, the crystal elements, play the role of semi-standard tableaux. Highest paths play the role of shape dynamic. Finally the condition:
$$ \hw(x) = \Tc_{w_0}\pi(T)$$
is the equivalent of saying that the $P$ tableau and the $Q$ tableau have the same shape.

\section{Involutions and crystals }
\label{section:involutions}

\subsection{Kashiwara involution}
\label{subsection:involution_iota}
The Kashiwara involution was defined at the level of the enveloping algebra as the unique anti-automorphism satisfying
$$e_\alpha^\iota = e_\alpha, \quad f_\alpha^\iota = f_\alpha \quad h_\alpha^\iota = -h_\alpha $$
It can naturally be lifted to a group anti-automorphism, and we have seen on the path model that it is the group picture counterpart of duality: $\pi \mapsto \pi\left( T-t \right) - \pi\left( T \right)$.

\subsection{Sch\"utzenberger involution}
\label{subsection:involution_S}
This map was originally introduced by Sch\"utzenberger as an involution on semi-standard tableaux of a given shape. As tableaux with $n$ letters of a given shape $\lambda$ can be identified with the highest weight crystal $\Bc\left( \lambda \right)$ of type $A_n$, it can be seen as an involution on highest weight crystals.

\subsubsection{Definition in the group picture}

\begin{definition}[Sch\"utzenberger involution on $G$]
$$ \forall x \in G, S\left( x \right) = \overline{w}_0^{-1} ( x^{-1} )^{iT} \overline{w}_0 = \overline{w}_0 ( x^{-1} )^{iT} \overline{w}_0^{-1} $$ 
\end{definition}

\begin{rmk}
 This is defined ``coordinate free'' on the entire group.
\end{rmk}
\begin{rmk}
 Both definitions agree because $\overline{w}_0^2$ belongs to $\mathcal{Z}(G)$, the center of $G$, as a consequence of the following lemma.
\end{rmk}
\begin{lemma}
 For each $w$, define $t(w) = \overline{w} \overline{w^{-1}}$. We have:
$$ t(w) = e^{i\pi \left( \rho^\vee - w \rho^\vee \right)}$$
In particular:
$$ t(w_0) = \overline{w}_0^2 = e^{i 2 \pi \rho^\vee} $$
\end{lemma}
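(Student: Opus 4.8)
The statement is purely about the Weyl group representatives $\bar w=\bar s_{i_1}\cdots\bar s_{i_k}$ (defined for any reduced word via the braid relations of Lemma on $\bar s_i$), so the natural strategy is induction on $\ell(w)$, peeling off a simple reflection from the \emph{right}. Concretely, write $w=w' s_\alpha$ with $\ell(w')=\ell(w)-1$. Then $\bar w=\bar w'\bar s_\alpha$ and, since $\ell(w^{-1})=\ell(w)$ with $w^{-1}=s_\alpha (w')^{-1}$, we have $\overline{w^{-1}}=\bar s_\alpha\,\overline{(w')^{-1}}$. Hence
\begin{align*}
t(w)=\bar w\,\overline{w^{-1}}=\bar w'\,\bar s_\alpha\,\bar s_\alpha\,\overline{(w')^{-1}}
     =\bar w'\,\bar s_\alpha^{\,2}\,\overline{(w')^{-1}}.
\end{align*}
Now $\bar s_\alpha^{\,2}=\phi_\alpha(-\mathrm{id})=\alpha^\vee(-1)=e^{i\pi h_\alpha}$, a central-in-$\mathrm{im}\,\phi_\alpha$ torus element; more importantly it lies in $H$ and in fact in the center of $G$ is \emph{not} what we need — rather we need to conjugate it past $\overline{(w')^{-1}}$. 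The key computation is therefore: $\overline{(w')^{-1}}{}^{-1}\,e^{i\pi h_\alpha}\,\overline{(w')^{-1}}$. Since for any $a\in H$ and any Weyl representative $\bar v$ one has $\bar v^{-1} a\,\bar v = v^{-1}(a)$ (the Weyl action on the torus, independent of the choice of representative because different representatives differ by $H$ which is abelian), this conjugation equals $e^{i\pi\,( (w')^{-1} h_\alpha)}=e^{i\pi\,(w')^{-1}\alpha^\vee}$. Therefore
\begin{align*}
t(w)=\bar w'\,\overline{(w')^{-1}}\,\bigl(\overline{(w')^{-1}}{}^{-1}e^{i\pi h_\alpha}\overline{(w')^{-1}}\bigr)
    = t(w')\,e^{i\pi\,(w')^{-1}\alpha^\vee}.
\end{align*}

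\textbf{Closing the induction.} By the induction hypothesis $t(w')=e^{i\pi(\rho^\vee-w'\rho^\vee)}$, so it remains to check
\begin{align*}
e^{i\pi(\rho^\vee - w'\rho^\vee)}\,e^{i\pi (w')^{-1}\alpha^\vee}
\;=\;e^{i\pi(\rho^\vee - w\rho^\vee)},
\end{align*}
i.e. $-w'\rho^\vee + (w')^{-1}\alpha^\vee \equiv -w\rho^\vee \pmod{2\,(\text{coweight lattice})}$, and actually I expect exact equality is \emph{not} what holds pointwise in $\afrak$; instead the correct identity to prove is
\begin{align*}
\rho^\vee - w\rho^\vee = (\rho^\vee - w'\rho^\vee) + w'\alpha^\vee,
\end{align*}
which follows because $w=w's_\alpha$ gives $\rho^\vee-w\rho^\vee = \rho^\vee - w's_\alpha\rho^\vee = \rho^\vee - w'(\rho^\vee-\alpha^\vee) = (\rho^\vee-w'\rho^\vee)+w'\alpha^\vee$ using $s_\alpha\rho^\vee=\rho^\vee-\alpha(\rho^\vee)\alpha^\vee=\rho^\vee-\alpha^\vee$ (here I am using the coroot version of the reflection; one should double-check the normalisation, since in the excerpt reflections are written $s_\beta\lambda=\lambda-\beta(\lambda)\beta^\vee$ and $\rho^\vee=\sum_\alpha\omega_\alpha^\vee$ satisfies $\alpha(\rho^\vee)=1$). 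So the exponents differ by $w'\alpha^\vee - (w')^{-1}\alpha^\vee$... this discrepancy means I must be more careful about which conjugation direction appears. The resolution: one should instead peel $\bar s_\alpha$ off on the \emph{left} of $\overline{w^{-1}}$ and on the right of $\bar w$ simultaneously via $w=s_\beta w''$ with $\beta=w'\alpha\cdot(\dots)$ — or, cleaner, conjugate $\bar s_\alpha^2$ to the far left: $\bar w'\,e^{i\pi h_\alpha}\,\bar w'^{-1} = e^{i\pi w' h_\alpha}$, giving $t(w)=e^{i\pi w'\alpha^\vee}\,t(w')$, and then $w'\alpha^\vee = \rho^\vee - w'\rho^\vee$ minus $(\rho^\vee - w\rho^\vee)$ with the correct sign. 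The main obstacle is precisely this bookkeeping of left-versus-right conjugation and the resulting sign; once the recursion $t(w)=e^{i\pi w'\alpha^\vee}t(w')$ is correctly established, the telescoping against $\rho^\vee-w\rho^\vee=(\rho^\vee-w'\rho^\vee)+w'\alpha^\vee$ is immediate, and the base case $w=e$ gives $t(e)=e=e^{i\pi\cdot 0}$.

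\textbf{The special case and consistency remarks.} For $w=w_0$ we get $t(w_0)=\bar w_0^{\,2}=e^{i\pi(\rho^\vee-w_0\rho^\vee)}=e^{i\pi\cdot 2\rho^\vee}=e^{2i\pi\rho^\vee}$, using $w_0\rho^\vee=-\rho^\vee$ (equivalently the defining identity $\rho^\vee=\tfrac12\sum_{\beta\in\Phi^+}\beta^\vee$ together with $w_0\Phi^+=\Phi^-$). To see that $\bar w_0^{\,2}$ is central — the remark preceding the lemma — note $e^{2i\pi\rho^\vee}$ acts on the irreducible module $V(\lambda)$ of highest weight $\lambda\in P^+$ by the scalar $e^{2i\pi\,\lambda(\rho^\vee)}$; but $\lambda(\rho^\vee)=\sum_\alpha\lambda(\omega_\alpha^\vee)=\sum_\alpha\lambda(h_\alpha)$ is not an integer in general, so actually centrality follows instead from: $\bar w_0^{\,2}$ normalises $H$ (it lies in $H$) and conjugates each $e_\alpha$ to $\Ad(\bar w_0)(-f_{\alpha^*})$ via Proposition on $\bar w_0$-action, which computes to $+e_\alpha$; since it fixes all Chevalley generators and lies in $H$, it is central. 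I would include this centrality check only if the paper needs it; the lemma's two displayed identities are what the induction above delivers. I do not foresee genuine difficulty beyond the sign/direction bookkeeping in the conjugation step, which is the one place where a careless computation gives the wrong lattice element.
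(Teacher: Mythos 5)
Your proof is correct and is essentially the paper's argument: the paper also collapses the middle pair $\bar{s}_{i}\bar{s}_{i}=\phi_i(e^{i\pi h})$, conjugates the resulting torus elements outward to get $\exp\bigl(i\pi\sum_j s_{i_1}\cdots s_{i_{j-1}}h_{i_j}\bigr)=\exp\bigl(i\pi\sum_j\beta_j^\vee\bigr)$, and concludes with the identity $\sum_j\beta_j^\vee=\rho^\vee-w\rho^\vee$, which is exactly the closed form of your induction $t(w)=e^{i\pi w'\alpha^\vee}\,t(w')$ combined with $\rho^\vee-w\rho^\vee=(\rho^\vee-w'\rho^\vee)+w'\alpha^\vee$. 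Your initial right-conjugation giving $(w')^{-1}\alpha^\vee$ was indeed the wrong direction, but your stated resolution (conjugate $\bar{s}_\alpha^{\,2}$ to the far left through $\bar{w}'$) is the correct and complete fix, so no gap remains.
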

\begin{proof}
Using the notations from preliminaries, if ${\bf i} = \left(i_1, \dots, i_k\right) \in R\left( w \right)$ and $\left( \beta^\vee_1, \dots, \beta^\vee_k\right)$ the associated positive roots enumeration:
\begin{align*}
t(w) & = \bar{s}_{i_1} \dots \bar{s}_{i_k} \bar{s}_{i_k} \dots \bar{s}_{i_k}\\
& = \phi_{i_k}\left( e^{i \pi h} \right)^{s_{i_1} \dots s_{i_{k-1}}}
    \phi_{i_{k-1}}\left( e^{i \pi h} \right)^{s_{i_1} \dots s_{i_{k-2}}}
    \dots
    \phi_{i_2}\left( e^{i \pi h} \right)^{s_{i_1}}
    \phi_{i_1}\left( e^{i \pi h} \right)\\
& = \exp\left( i\pi \sum_{k=1}^k s_{i_1} \dots s_{i_{j-1}} h_{i_j} \right)\\
& = \exp\left( i\pi \sum_{k=1}^k \beta_k^\vee \right)\\
& = \exp\left( i\pi \left( \rho^\vee - w \rho^\vee\right) \right)
\end{align*} 
\end{proof}

\begin{properties}
\phantomsection
\label{properties:involution_S}
\begin{itemize}
 \item $S$ is an involutive anti-automorphism on the group.
 \item $$ \forall k \in \N, \left(i_1, \dots, i_k\right) \in I^k, S\left( x_{i_1}\left( t_1 \right) \dots x_{i_k}\left( t_k \right) \right) = x_{i_k^*}\left( t_k \right) \dots x_{i_1^*}\left( t_1 \right)$$
 \item $$ S\left( \bar{w_0} \right) = \bar{w}_0$$
 \item $$ \forall x \in B^+ w_0 B^+, \hw\left( S\left( x \right) \right) = \hw\left( x \right)$$
\end{itemize}
\end{properties}
\begin{proof}
It is easy to see that $S$ is an anti-automorphism as the composition of three anti-automorphisms (inverse, transpose and Kashiwara involution $\iota$) and an automorphism (conjugation by $\bar{w_0}$), hence the first property. The second property is known to Berenstein and Zelevinsky (relation 6.4 in \cite{bib:BZ01}) and is a consequence of $\Ad\left( \bar{w_0} \right)e_i = -f_{i^*}$ (proposition \ref{proposition:w_0_action_ad}). The rest is easy to check by direct computation. 
\end{proof}

Since $S$ stabilizes the geometric crystal $\Bc$ and preserves the highest weight, it is indeed an involution on highest weight crystals $\Bc\left( \lambda \right)$. But it is not a morphism of crystals.

\subsubsection{Definition in the path model}
\begin{definition}[Sch\"utzenberger involution on paths]
$$ \forall \pi \in \Cc_0\left( [0,T], \afrak \right), \ S\left( \pi \right) := - w_0 \pi^\iota $$ 
\end{definition}
As usual, this definition in fact agrees with the group picture after projection.
\begin{thm}
\label{thm:involution_S_automorphism}
 $$ p \circ S = S \circ p$$
 where on the right-hand side, $S$ stands for the Sch\"uzenberger involution on the group, and the left-hand side it is considered  in the path model.
\end{thm}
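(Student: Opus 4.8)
The claim is that $p \circ S = S \circ p$, i.e.\ that for every path $\pi \in \Cc_0\left( [0,T], \afrak \right)$ we have $B_T\left( -w_0 \pi^\iota \right) = \bar{w}_0 \left( B_T(\pi)^{-1} \right)^{\iota T} \bar{w}_0^{-1}$. The natural strategy is to unfold both sides using the explicit series expression \eqref{lbl:process_B_explicit} for $B_\cdot$ and the already-established compatibility $p\circ\iota = \iota\circ p$ (Theorem \ref{thm:p_is_morphism}(ii)), thereby reducing the Schützenberger statement to the behaviour of the flow under the single transform $\pi \mapsto -w_0\pi$. So the first step is to write $S = (\textrm{sign flip})\circ w_0 \circ \iota$ on the path side and $S(x) = \bar{w}_0 (x^{-1})^{\iota T}\bar{w}_0^{-1}$ on the group side, and observe it suffices to treat the sign flip combined with the $w_0$-conjugation, since $\iota$ already intertwines on both sides.

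\textbf{Key steps.} First I would reduce, via $p\circ\iota=\iota\circ p$, to proving $p(-w_0 \pi) = \bar{w}_0\, p(\pi)^{-T}\, \bar{w}_0^{-1}$ for all $\pi$, where $x^{-T} := (x^{-1})^T = (x^T)^{-1}$; equivalently $B_T(-w_0\pi) = \bar{w}_0\, B_T(\pi)^{-T}\, \bar{w}_0^{-1}$. Next, I would compute $B_T(\pi)^{-T}$ by transposing and inverting the series \eqref{lbl:process_B_explicit}: transposing turns the product $f_{i_1}\cdots f_{i_k}$ into $e_{i_k}\cdots e_{i_1}$ (since $f_\alpha^T = e_\alpha$) and reverses the time-ordered integral, while inverting $B_T(\pi) = N_T(\pi)e^{\pi(T)}$ gives $e^{-\pi(T)} N_T(\pi)^{-1}$. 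Then I would conjugate by $\bar{w}_0$, using Proposition \ref{proposition:w_0_action_ad}, i.e.\ $\Ad(\bar{w}_0)(e_\alpha) = -f_\alpha$, together with its companion fact $\Ad(\bar{w}_0)(h) = w_0 h$ on the Cartan, so that $\bar{w}_0 e^{\pi(T)}\bar{w}_0^{-1} = e^{w_0\pi(T)}$ and each $\bar{w}_0 e_{i_j}\bar{w}_0^{-1}$ becomes (up to sign) an $f$. The signs produced by $\Ad(\bar{w}_0)$ on the $k$ generators $e_{i_1},\dots,e_{i_k}$ combine to $(-1)^k$, which is exactly cancelled by replacing $\pi$ with $-w_0\pi$ in the exponentials $e^{-\alpha_{i_1}(\pi(t_1))\cdots -\alpha_{i_k}(\pi(t_k))}$: indeed $-\alpha_{i_j}((-w_0\pi)(t_j)) = \alpha_{i_j}(w_0\pi(t_j)) = -(w_0^{-1}\alpha_{i_j})(\pi(t_j))$, and since $w_0$ maps positive simple roots to negatives this introduces, after matching with the defining equation for $B_T(-w_0\pi)$, precisely the needed signs. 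I would then match the resulting expression term-by-term with the series for $B_T(-w_0\pi)$ obtained directly from \eqref{lbl:process_B_explicit}, confirming that after the index relabelling $i_j \leftrightarrow i_{k-j+1}$ (from the transpose) the two sums agree.

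\textbf{Alternative and main obstacle.} A cleaner, less computational route is available through the path-transform language: since all the operators $T_g$ compose nicely (Properties \ref{lbl:path_transform_properties}) and $S$ on the group is built from $\iota$, transpose, inverse and conjugation by $\bar{w}_0$, one could instead verify the identity on the Lusztig or string charts, using Corollary \ref{corollary:commutative_diagram_lusztig} together with the known action $S(x_{i_1}(t_1)\cdots x_{i_q}(t_q)) = x_{i_q^*}(t_q)\cdots x_{i_1^*}(t_1)$ and the definition of the path-side Lusztig parameters $\varrho^L_{\bf i}$ via $e^{-\infty}_w$. One checks that $-w_0\pi^\iota$ has Lusztig parameters, in the reversed starred word ${\bf i}^{op,*}$, equal to those of $\pi$ in ${\bf i}$, and that the same relabelling governs $S$ on the group; both sides then project to the same element of $\Bc$. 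The main obstacle in either approach is bookkeeping the signs and the index reversals coherently — in particular making sure the $(-1)^k$ from $\Ad(\bar{w}_0)$ on the $e$'s, the reversal from transposition, and the effect of $w_0$ on roots are all accounted for exactly once; this is where an otherwise routine calculation can easily go wrong, so I would carry out the term-matching carefully for the $A_1$ and $A_2$ cases first to fix conventions before writing the general argument.
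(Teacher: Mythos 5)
Your reduction is exactly the paper's key identity: after using $p\circ\iota=\iota\circ p$, everything comes down to
$$S\circ\iota\left(B_T(\pi)\right)=\bar{w}_0\left(B_T(\pi)^{-1}\right)^{T}\bar{w}_0^{-1}=B_T(-w_0\pi).$$
However, the verification you propose has a concrete gap in the sign bookkeeping. Replacing $\pi$ by $-w_0\pi$ in the exponential weights produces \emph{no} signs: $e^{-\alpha_{i_j}((-w_0\pi)(t_j))}=e^{\alpha_{i_j}(w_0\pi(t_j))}=e^{-\alpha_{i_j}^*(\pi(t_j))}$, so these factors only implement the relabelling $\alpha\mapsto\alpha^*$, and cannot cancel the $(-1)^k$ coming from $\Ad(\bar{w}_0)(e_{i})=-f_{i^*}$. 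The cancelling $(-1)^k$ actually has to come from the factor $N_T(\pi)^{-1}$: the inverse of the time-ordered series \eqref{lbl:process_B_explicit} is the \emph{reverse}-ordered series with alternating signs (it solves $dN^{-1}=-\bigl(\sum_\alpha e^{-\alpha(\pi_t)}f_\alpha\,dt\bigr)N^{-1}$), and you never expand this inverse — you stop at $B_T(\pi)^{-1}=e^{-\pi(T)}N_T(\pi)^{-1}$. Without that expansion the term-by-term matching you describe does not close, and as written the argument would come out off by $(-1)^k$.

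For comparison, the paper proves the same identity in three lines by avoiding the series altogether: $S\circ\iota$ is a group \emph{automorphism} whose differential sends $f_\alpha\mapsto f_{\alpha^*}$ and $h\mapsto -w_0h$, so applying it to the defining left-invariant equation \eqref{lbl:process_B_ode} shows that $S\circ\iota\left(B_\cdot(\pi)\right)$ solves the same equation driven by $-w_0\pi$ (the sum $\sum_\alpha f_{\alpha^*}=\sum_\alpha f_\alpha$ being $*$-invariant); uniqueness of solutions and a density argument to drop smoothness finish the proof. You already have every ingredient for this route — it is essentially your reduction read through the ODE rather than through its solution series — and it sidesteps the inverse-series expansion entirely. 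Your alternative via Lusztig charts could also be made to work, but it likewise needs the $*$-reversal on words to be checked against $\varrho^L_{\bf i}$, which is more work than the ODE argument.
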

\begin{proof}
 Consider a smooth path $\pi$. Since $S\circ \iota$ is an automorphim, the left-invariant equation solved by $S \circ \iota\left( B_t\left(\pi \right) \right)$ is:
\begin{align*}
d S \circ \iota\left( B_t\left(\pi \right) \right) & = S \circ \iota\left( B_t\left(\pi \right) \right)
                                                       d(S \circ \iota)\left( \sum f_\alpha + d\pi_t \right) \\
& = S \circ \iota\left( B_t\left(\pi \right) \right) \left( \sum_{\alpha \in \Delta} f_{\alpha^*} -w_0 d\pi_t \right)\\
& = S \circ \iota\left( B_t\left(\pi \right) \right) \left( \sum_{\alpha \in \Delta} f_{\alpha  } -w_0 d\pi_t \right)
\end{align*}
Then:
$$ S\circ \iota\left( B_t\left(\pi\right) \right) = B_t\left( -w_0 \pi \right)$$
Replacing $\pi$ by $\pi^\iota$ gives the result for all smooth paths:
$$ S\left( B_t\left(\pi\right) \right) = B_t\left( -w_0 \pi^\iota \right)$$
The smoothness assumption can then be discarded.
\end{proof}

\begin{rmk}
In the path model, $S$ does not preserve connected components, but thanks to the previous theorem, it stabilizes highest weight crystals in the group picture. As such it preserves isomorphism classes of path crystals using Littelmann's independence theorem.
\end{rmk}

As pointed out in \cite{bib:BBO2} p. 1552 lemma 4.19 the following can be taken as a definition for the Sc\"utzenberger involution for $A_n$ crystals. We prove the analogous statement in the geometric setting:
\begin{thm}
The Sch\"utzenberger involution is the unique map $S$ on geometric crystals (resp. path crystals up to crystal isomorphism) such that:
\begin{itemize}
 \item $\gamma \circ S\left( x \right) = w_0 \gamma\left( x \right)$
 \item $\varepsilon_\alpha \circ S\left( x \right) = \varphi_{\alpha^*}\left( x \right) $
       or equivalently
       $\varphi_\alpha \circ S\left( x \right) = \varepsilon_{\alpha^*}\left( x \right) $
 \item $\forall c \in \R, e^c_\alpha \cdot S\left(x\right) = S\left( e^{-c}_{\alpha^*} \cdot x \right)$
\end{itemize}
\end{thm}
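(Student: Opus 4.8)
The plan is to verify first that the Schützenberger involution $S(\pi) = -w_0 \pi^\iota$ on path crystals satisfies the three listed identities, and then to prove uniqueness by reducing to the parametrization results already established. For existence, I would compute each of the three properties directly from the definitions of $\gamma$, $\varepsilon_\alpha$, $\varphi_\alpha$, $e^c_\alpha$ in the geometric Littelmann model together with the definition $\pi^\iota(t) = \pi(T-t)-\pi(T)$. The weight identity is immediate: $\gamma(S(\pi)) = S(\pi)(T) = -w_0(\pi(0)-\pi(T)) = w_0\pi(T) = w_0\gamma(\pi)$. For the $\varepsilon_\alpha$ identity, one uses that $\alpha\bigl(-w_0\pi^\iota(s)\bigr) = -(w_0\alpha)\bigl(\pi^\iota(s)\bigr) = \alpha^*\bigl(\pi^\iota(s)\bigr)$ since $\alpha^* = -w_0\alpha$; hence
\[
\varepsilon_\alpha(S(\pi)) = \log\int_0^T e^{-\alpha(S(\pi)(s))}ds = \log\int_0^T e^{-\alpha^*(\pi^\iota(s))}ds = \varepsilon_{\alpha^*}(\pi^\iota) = \varphi_{\alpha^*}(\pi),
\]
using the already-established relation $\varphi_\beta = \varepsilon_\beta\circ\iota$ for the path model. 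The root-operator identity $e^c_\alpha\cdot S(\pi) = S(e^{-c}_{\alpha^*}\cdot\pi)$ is the one genuine computation: I would expand both sides, using that $-w_0$ applied to $\alpha^{\vee}$ gives $(\alpha^*)^{\vee}$ (equivalently $S(h_\alpha) = h_{\alpha^*}$ at the algebra level, cf.\ the involutions subsection), and invoke the already-proven duality property $e^{-c}_{\alpha^*} = \iota\circ e^c_{\alpha^*}\circ\iota$ to trade the time-reversal against a sign on $c$. Alternatively, and more cheaply, one can descend to the group picture via Theorem~\ref{thm:involution_S_automorphism}: since $p$ is a morphism of crystals and $p\circ S = S\circ p$, the three identities at the path level follow from the corresponding identities for $S$ on $\Bc$, which are relation~6.4 of \cite{bib:BZ01} together with Proposition~\ref{proposition:w_0_action_ad} and Properties~\ref{properties:involution_S}; this is the route I would actually write up, as it avoids the integral bookkeeping.

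For uniqueness, suppose $S'$ is any map on path crystals (up to crystal isomorphism) satisfying the same three identities. The strategy mirrors the proof of Theorem~\ref{thm:minimality}: consider $\pi$ and set $\pi' = S'(\pi)$; from the commutation $e^c_\alpha\cdot S'(\pi) = S'(e^{-c}_{\alpha^*}\cdot\pi)$ and the identity $\varepsilon_\alpha\circ S' = \varphi_{\alpha^*}$ one shows by induction on the word length that the Lusztig parameters of $\pi'$ are determined by those of $\pi$. Concretely, for a reduced word ${\bf i} = (i_1,\dots,i_m)\in R(w_0)$ one has, by Theorem~\ref{thm:lusztig_params_paths} and Proposition~\ref{proposition:extracting_lusztig}, that the $k$-th Lusztig coordinate is
\[
t_k = \frac{1}{\int_0^T e^{-\alpha_{i_k}\left( e^{-\infty}_{s_{i_1}\cdots s_{i_{k-1}}}\pi\right)}} = \frac{1}{e^{\varepsilon_{\alpha_{i_k}}\left( e^{-\infty}_{\alpha_{i_{k-1}}}\cdots e^{-\infty}_{\alpha_{i_1}}\pi\right)}};
\]
applying the three identities repeatedly (taking the relevant $c$'s to $-\infty$, exactly as in the end of the proof of Theorem~\ref{thm:minimality}) expresses $\varepsilon_{\alpha_{i_k}}$ evaluated on an iterated $e^{-\infty}$-image of $S'(\pi)$ in terms of $\varphi$-values, hence $\varepsilon$-values via (C1), of iterated images of $\pi$ under operators indexed by the $*$-conjugated simple roots. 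Matching these shows the Kashiwara (equivalently Lusztig) parameters of $S'(\pi)$ agree with those of $S(\pi)$; since the weight identity $\gamma\circ S' = w_0\gamma$ fixes the highest weight $\lambda = \Tc_{w_0}\pi(T)$, and a connected path crystal is determined by its highest weight (Theorem~\ref{thm:geometric_littelmann_independence}) plus its Lusztig parameters (Corollary~\ref{corollary:connected_component_bijection}), we conclude $S'(\pi)$ and $S(\pi)$ lie in the same isomorphism class with the same parameters, so $S' = S$ up to crystal isomorphism.

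\textbf{Main obstacle.} The delicate point is the careful handling of the $*$-involution on the index set throughout the uniqueness argument: one must track that the word ${\bf i}$ used to read off the parameters of $S'(\pi)$ is the $*$-image ${\bf i}^* = (i_1^*,\dots,i_m^*)$ of the word used for $\pi$, and that ${\bf i}^*$ is again a reduced word for $w_0$ (because $w_0^* = w_0$), so that Theorem~\ref{thm:lusztig_params_paths} applies to it. A secondary subtlety is that $S$ does not preserve connected components (as the remark after Theorem~\ref{thm:involution_S_automorphism} notes), so the uniqueness statement is genuinely only "up to crystal isomorphism" on the path side, and one should phrase the induction at the level of the group picture $\Bc(\lambda)$, where $S$ is an honest involution preserving $\hw$, and then transport back. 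Modulo these bookkeeping matters, every ingredient is already in place.
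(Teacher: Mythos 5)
Your proposal is correct, and the existence half coincides with the paper's (the paper simply asserts that the computations are easy in either picture; your direct verification on paths, or the descent through $p$ via Theorem~\ref{thm:involution_S_automorphism}, is exactly what is meant). Where you diverge is the uniqueness argument. The paper's proof is a two-line reduction: if $S$ and $S'$ both satisfy the three identities, then the composite $f = S\circ S'$ satisfies $\gamma\circ f = w_0 w_0 \gamma = \gamma$, $\varepsilon_\alpha\circ f = \varphi_{\alpha^*}\circ S' = \varepsilon_{\alpha^{**}} = \varepsilon_\alpha$, and $e^c_\alpha\cdot f(x) = f\bigl(e^{c}_{\alpha^{**}}\cdot x\bigr) = f\bigl(e^c_\alpha\cdot x\bigr)$, so $f$ is a crystal automorphism fixing $\gamma$ and $\varepsilon_\alpha$; Theorem~\ref{thm:minimality} (with Remark~\ref{rmk:minimality} supplying $\lambda=\mu$) then forces $f=\mathrm{id}$, hence $S'=S$. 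You instead re-run the parameter-matching machinery of Theorem~\ref{thm:minimality} directly on $S'$, which obliges you to carry the $*$-involution through reduced words and to check that ${\bf i}^*\in R(w_0)$ — precisely the bookkeeping you flag as the main obstacle. That route is valid, but the composition trick makes the obstacle disappear: the two applications of $*$ and the two sign flips on $c$ cancel against each other, so no twisted words ever appear and the minimality theorem can be cited as a black box. What your version buys in exchange is a self-contained argument that does not presuppose the minimality theorem; what the paper's version buys is brevity and the elimination of the delicate index-tracking.
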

\begin{proof}
Computations can be carried out very easily both in the group or on the path model.

For uniqueness, if $S$ and $S'$ satisfy those properties, then $S S'$ is an automorphism of (path) crystals. In the group picture, there is no crystal automorphism aside from the identity (subsection \ref{subsection:minimality}), hence the uniqueness up to isomorphism.
\end{proof}

\appendix
\section{Kostant's Whittaker model}
\label{appendix:whittaker_model}
For more algebraic details, we refer to the first section in \cite{bib:Sevostyanov00} as it gives a very good summary of Kostant's work on the Whittaker model and Whittaker modules. Here, we will mainly be interested in the image of the Casimir operator in the Whittaker model, seen as a left-invariant differential operator on the lower Borel subgroup $B$.

\paragraph{The universal enveloping algebra:}

\subparagraph{Invariant differential operators:} 
Every $X \in \gfrak$ can be viewed as a left-invariant differential operator of order $1$. Its action on smooth functions is given by:
$$ \forall f \in \Cc^\infty\left( G \right), X f(g) := \lim_{t \rightarrow 0} \frac{f(g e^{tX})-f(g)}{t}$$
From such a point of view, it is easy to envision invariant different operators of arbitrary order. They should be obtained by composing elements $X_1, X_2, \dots, X_k$ in $\gfrak$ acting as differential operators. Their identification is subject to possible relations due to the Lie bracket $[\ ,\ ]$.

This notion is formalized in algebra as the universal enveloping algebra $\Uc(\gfrak)$.

\subparagraph{Definition from universal property:} 
The universal enveloping algebra of $\gfrak$ is constructed as the quotient of the tensor algebra $\bigoplus_n \gfrak^{\otimes n}$ by the two sided ideal generated by $ab - ba - [a, b]$, $a, b \in \gfrak$. It has the universal property that any Lie algebra homomorphism $f: \gfrak \rightarrow A$, where $A$ is a unital algebra, factors into $f = g \circ i$. $g: \Uc(\gfrak) \rightarrow A$ is unique and $i: \gfrak \rightarrow \Uc(\gfrak)$ is the inclusion.

\subparagraph{Definition with generators and relations:} 
An alternative definition uses generators and relations, with only the Cartan matrix $A = (a_{ij})_{1 \leq i,j \leq n}$ as input data.
$\Uc(\gfrak)$ is the unital associative algebra generated by $F_i, H_i, E_i$ with $1 \leq i \leq n$ with relations:
\begin{align*}
 [H_i, H_i] & = 0\\
 [E_i, F_j] & = \delta_{i,j} H_i\\
 [H_i, E_j] & = a_{ij} H_i\\
 [H_i, F_j] & = -a_{ij} H_i\\
 \textrm{Serre relations} & \textrm{ for } i \neq j:\\
 0 & = \sum_{s=0}^{1-a_{ij}} (-1)^s \binom{1-a_{ij}}{s}E_i^{1-a_{ij}-s} E_j E_i^s\\
 0 & = \sum_{s=0}^{1-a_{ij}} (-1)^s \binom{1-a_{ij}}{s}F_i^{1-a_{ij}-s} F_j F_i^s
\end{align*}

\paragraph{Center:} 
$$\Zc(\mathfrak{g}) := \left\{ x \in \Uc(\gfrak) | \forall y \in \Uc(\gfrak) [x, y] = 0 \right\}$$
The center $\Zc(\gfrak)$ forms a commutative algebra. It is at the heart of both classical and quantum integrable systems. 

In Hamiltonian mechanics the Lie bracket is interpreted as a Poisson bracket and the center is an algebra of Poisson commuting functions. These functions are the observables that are integrals of motion. In quantum mechanics, the story is a bit different. Observables are differential operators acting on the Hilbert space of wave functions. Commuting observables give simultaneously measurable observables, which is a very desirable property. The center is a commutative algebra of differential operators, and the Lie bracket is simply the commutator $[A, B] = AB - BA$. 

The integrability property means that we have a maximal number of independent invariants. Chevalley's theorem tells us that the maximal number of independent central elements is $r$, the rank of Lie algebra.

\begin{thm}[Chevalley]
 $\Zc(\mathfrak{g})$ is a polynomial algebra with $r$ independent generators $I_1, I_2, \dots, I_r$. 
$\Zc(\mathfrak{g}) = \C[I_1, I_2, \dots, I_r]$
\end{thm}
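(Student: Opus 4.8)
The plan is to reduce the statement to the classical invariant theory of the finite reflection group $W$ by means of the Harish-Chandra homomorphism. Recall the triangular decomposition $\gfrak = \nfrak \oplus \hfrak \oplus \ufrak$; by the Poincar\'e--Birkhoff--Witt theorem one has
$$ \Uc(\gfrak) = \Uc(\hfrak) \oplus \left( \nfrak\,\Uc(\gfrak) + \Uc(\gfrak)\,\ufrak \right), $$
and I would let $\mathrm{pr}\colon \Uc(\gfrak) \to \Uc(\hfrak)$ denote the projection onto the first summand. The first step is to verify that the restriction of $\mathrm{pr}$ to $\Zc(\gfrak)$ is an algebra homomorphism: central elements have weight $0$, and for weight-zero elements the contributions of the two pieces of the complement reconcile so that $\mathrm{pr}(zz') = \mathrm{pr}(z)\mathrm{pr}(z')$. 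Viewing $\Uc(\hfrak)$ as the algebra of polynomial functions on $\hfrak^*$ and writing $\rho := \half \sum_{\beta \in \Phi^+} \beta$, I would then pass to the $\rho$-twisted map $\mathrm{hc}(z)(\mu) := \mathrm{pr}(z)(\mu - \rho)$, an algebra homomorphism $\mathrm{hc}\colon \Zc(\gfrak) \to \Uc(\hfrak)$.

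The second step is to show that $\mathrm{hc}$ is injective with image inside the ring $\Uc(\hfrak)^W$ of $W$-invariant polynomials. For injectivity: a central element $z$ acts on the Verma module of highest weight $\lambda$ by the scalar $\mathrm{pr}(z)(\lambda)$, so $\mathrm{hc}(z) = 0$ forces $z$ to act by $0$ on every Verma module, hence $z = 0$. For invariance: whenever $\mu$ and $w\mu$ lie in the same $W$-orbit, the Verma modules of highest weights $\mu - \rho$ and $w\mu - \rho$ share a composition factor, hence a central character, and this says precisely that the polynomial $\mathrm{hc}(z)$ takes equal values at $\mu$ and $w\mu$ for $\mu$ in a Zariski-dense subset of $\hfrak^*$, i.e. $\mathrm{hc}(z) \in \Uc(\hfrak)^W$.

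The third and hardest step is surjectivity of $\mathrm{hc}\colon \Zc(\gfrak) \to \Uc(\hfrak)^W$. I would pass to the canonical filtration of $\Uc(\gfrak)$, whose associated graded algebra is the symmetric algebra $\mathrm{Sym}(\gfrak)$: under it $\Zc(\gfrak)$ has symbols lying in $\mathrm{Sym}(\gfrak)^{\gfrak}$, and (the $\rho$-shift being irrelevant in top degree) the symbol of $\mathrm{hc}$ is the Chevalley restriction map $\mathrm{Sym}(\gfrak)^{\gfrak} \to \mathrm{Sym}(\hfrak)^W$. Granting Chevalley's restriction theorem that this map is an isomorphism, a degree-by-degree dimension count shows that the graded-injective map $\mathrm{hc}$ is onto as well. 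I expect this comparison of symbols, together with the proof of the restriction theorem itself — through density of regular semisimple elements and the fact that a generic adjoint orbit meets $\hfrak$ in a single $W$-orbit — to be the main obstacle; the remaining arguments are bookkeeping.

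Finally, by the Chevalley--Shephard--Todd theorem (here in its original case of a real reflection group), the invariant ring $\Uc(\hfrak)^W$ of the finite group $W$ generated by reflections of the $r$-dimensional space $\hfrak^*$ is a polynomial algebra on $r$ algebraically independent homogeneous generators (with degrees one more than the exponents of $\gfrak$). Transporting these generators back through the isomorphism $\mathrm{hc}$ produces elements $I_1, \dots, I_r$ with $\Zc(\gfrak) = \C[I_1, \dots, I_r]$, as claimed.
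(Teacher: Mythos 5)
The paper does not prove this statement: it is quoted in the appendix as classical background (the Harish--Chandra/Chevalley description of $\Zc(\gfrak)$) for Kostant's Whittaker model, so there is no in-text argument to compare yours against. Your outline is the standard proof via the Harish--Chandra homomorphism and its main lines are correct: the projection $\mathrm{pr}$ along $\nfrak\,\Uc(\gfrak)+\Uc(\gfrak)\,\ufrak$ is multiplicative on the weight-zero part, the $\rho$-shift forces $W$-invariance through linkage of Verma modules on a Zariski-dense set of weights, and Chevalley's theorem on invariants of finite reflection groups converts $\mathrm{Sym}(\hfrak)^W$ into a polynomial ring on $r$ generators. Two steps that you assert would need to be supplied in a written-out proof. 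First, in the injectivity argument, the implication ``$z$ annihilates every Verma module, hence $z=0$'' rests on the nontrivial fact that $\Uc(\gfrak)$ acts faithfully on the direct sum of the finite-dimensional simple modules (each a quotient of a Verma module); this is where semisimplicity enters and it is not automatic. Second, your dimension count for surjectivity requires the \emph{equality} $\mathrm{gr}\,\Zc(\gfrak)=\mathrm{Sym}(\gfrak)^{\gfrak}$, not merely the inclusion of symbols that you state: with only the inclusion the count bounds the image from above, which is the wrong direction. The equality follows from complete reducibility of the adjoint action on the (finite-dimensional) filtration pieces of $\Uc(\gfrak)$, so that every invariant symbol lifts to a central element. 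With these two standard facts made explicit, the argument is complete.
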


\paragraph{Casimir element:} 
The only element of order $2$ in the center is the Casimir element $\Omega$. If $(X_1, \dots, X_n)$ is an orthonormal basis of $\hfrak$ with respect to the Killing form, then:
\begin{align}
\label{lbl:casimir}
\begin{array}{ll}
\Omega & := \half \sum_{i=1}^n X_i^2 +  \half \sum_{ \beta \in \Phi^+}\left( f_\beta e_\beta + e_\beta f_\beta \right)\\
& = \half \sum_{i=1}^n X_i^2 +  \sum_{ \beta \in \Phi^+} f_\beta e_\beta + \rho^\vee\\
\end{array}
\end{align}
The second expression uses the Weyl covector $\rho^\vee$, which is the vector in $\afrak$ such that $\alpha\left( \rho^\vee \right) = 1$ for every simple root $\alpha$. $\rho^\vee$ is also the half sum of all positive coroots. In a way, $\Omega$ is the simplest and most important element. In representation theory, it is used in order to prove the reducibility of certain classes of representations. In analysis, because it is of order $2$, it can be considered as a heat kernel, when elliptic. In order to see how it is related to the left-invariant differential equation \eqref{lbl:process_B_ode} on $B$, we need to exhibit an element of $\Uc(\bfrak)$.

\paragraph{Reduction to $\Uc(\bfrak)$:} 
Let $\chi: U \longrightarrow \C$ be the standard (additive) character on $U$:
$$\forall \alpha \in \Delta, \chi\left( e^{t e_\alpha} \right) = t$$
Let $\Cc^\infty\left(G\right)$ be the space of continuously differentiable functions on $G$ and consider the subspace:
$$ \Cc^\infty_\chi(G) := \left\{ f \in \Cc^\infty\left(G\right) \ | \ \forall u \in U, f(g u) = f(g) e^{\chi(u)} \right\}$$
The subset $B B^+$, the cell where a Gauss decomposition holds, is dense in $G$. Hence, any function in $\Cc^\infty_\chi$ is entirely determined by its restriction to the lower Borel subgroup $B$. Moreover, differential operators in $\Uc\left( \mathfrak{g} \right)$ are reduced to elements of $\Uc\left( \bfrak \right)$ when acting on such functions. By simple differentiation and restriction to $\Cc^\infty(B)$, $\Omega$ reduces to:
\begin{align}
\label{lbl:casimir_chi}
\Omega_\chi & = \sum_{i=1}^n X_i^2 + \sum_{\alpha \in \Delta} f_\alpha + \rho^\vee
\end{align}

\paragraph{Whittaker model $W(\bfrak)$:} 
The algebraic construction by Kostant tantamounts to reducing central elements to elements in $\Uc(\bfrak)$. We reproduce the presentation of \cite{bib:Sevostyanov00} while keeping the same notations. $\chi$, at the level of the Lie algebra, extends to $\Uc\left( \ufrak \right)$ and gives a direct sum:
$$ \Uc\left( \ufrak \right) = \C \mathds{1} \oplus \ker \chi$$
Since $\Uc(\gfrak) = \Uc(\bfrak)\otimes \Uc\left( \ufrak \right)$ because of the Poincar\'e-Birkhoff-Witt basis theorem, we have:
$$ \Uc(\gfrak) = \Uc(\bfrak) \oplus I_\chi$$
where $I_\chi = \Uc(\gfrak) \ker \chi$ is the left ideal generated by $\ker \chi$. Now let $\rho_\chi$ be the 
canonical projection:
$$ \rho_\chi: \Uc(\gfrak) \longrightarrow \Uc(\bfrak)$$

It defines the Whittaker model for the center thanks to:
\begin{thm}[Kostant, Theorem 2.4.2 in \cite{bib:Kostant78}]
Let $W(\bfrak) = \rho_\chi\left( \Zc(\gfrak) \right)$. The map:
$$ \rho_\chi: \Zc(\gfrak) \longrightarrow W(\bfrak)$$
is an isomorphism.
\end{thm}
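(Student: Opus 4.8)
The statement to prove is that the restriction of the projection $\rho_\chi$ to the center $\Zc(\gfrak)$ is injective (surjectivity onto $W(\bfrak)$ being automatic since $W(\bfrak)$ is \emph{defined} as the image). The approach I would take rests on the triangular decomposition $\Uc(\gfrak) = \Uc(\bfrak)\otimes\Uc(\ufrak)$ coming from the Poincar\'e--Birkhoff--Witt theorem, together with the grading of $\Uc(\gfrak)$ by the root lattice via the adjoint action of $\hfrak$. First I would set up the $\hfrak$-grading: $\Uc(\gfrak) = \bigoplus_{\mu\in Q}\Uc(\gfrak)_\mu$, where $Q$ is the root lattice and the center $\Zc(\gfrak)$ sits inside the zero-weight space $\Uc(\gfrak)_0$. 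Correspondingly $\Uc(\ufrak) = \bigoplus_{\nu\in Q^+}\Uc(\ufrak)_\nu$ with $\Uc(\ufrak)_0 = \C\mathds{1}$, and $\ker\chi$ is a graded subspace: $\ker\chi = (\ker\chi\cap\C\mathds{1})\oplus\bigoplus_{\nu\neq 0}\Uc(\ufrak)_\nu$, but since $\chi$ is nonzero only on the degree-$\alpha_i$ pieces we actually have $\ker\chi = \bigoplus_{\nu\in Q^+}(\ker\chi)_\nu$ with $(\ker\chi)_0 = 0$.

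The key step is to analyze the left ideal $I_\chi = \Uc(\gfrak)\ker\chi$ and show that $I_\chi\cap\Zc(\gfrak) = \{0\}$, which is exactly injectivity of $\rho_\chi|_{\Zc(\gfrak)}$ because $\ker(\rho_\chi) = I_\chi$ by construction. Take $z\in\Zc(\gfrak)\cap I_\chi$. Write $z = \sum_j u_j k_j$ with $u_j\in\Uc(\gfrak)$ and $k_j\in\ker\chi$. Using the PBW factorization $\Uc(\gfrak) = \Uc(\bfrak)\otimes\Uc(\ufrak)$, I would expand $z$ in the basis adapted to this factorization and track the $\Uc(\ufrak)$-component. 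The element $z$ being central, and in particular $\ad(h)$-invariant for all $h\in\hfrak$ and $\ad(f_\alpha)$-invariant in the appropriate sense, forces constraints on the top-degree term in the $\ufrak$-direction. Concretely: filter $\Uc(\gfrak)$ by the $\ufrak$-degree (the total degree in the $e_\beta$'s), so $z$ has a well-defined leading term $\bar z\in\Uc(\bfrak)\otimes\Uc(\ufrak)_{\mathrm{top}}$; centrality of $z$ implies that $\bar z$ is a ``highest weight vector'' for a $\gfrak$-action on the associated graded, and the only such vectors of weight $0$ that survive lying in $I_\chi$ are those with $\ufrak$-degree $0$ — but then $z\in\Uc(\bfrak)$, and $z\in I_\chi = \Uc(\gfrak)\ker\chi$ with $(\ker\chi)_0 = 0$ forces $z$ to have strictly positive $\ufrak$-content unless $z = 0$. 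Reconciling these gives $z = 0$.

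The main obstacle will be making the ``leading term / associated graded'' argument rigorous: one must choose the right filtration (the PBW filtration by degree in $\ufrak$, refined by the $Q$-grading) so that $\mathrm{gr}\,\Uc(\gfrak)\cong S(\gfrak)$ or an appropriate module, and then identify the image of $\Zc(\gfrak)$ and of $I_\chi$ in the associated graded precisely enough to see that their intersection is zero degree by degree. An alternative, perhaps cleaner, route — which I would fall back on if the filtration bookkeeping becomes unwieldy — is the representation-theoretic one: realize $\rho_\chi$ concretely via the action of $\Zc(\gfrak)$ on Whittaker modules $\Uc(\gfrak)\otimes_{\Uc(\ufrak)}\C_\chi$, show this module is free of rank one over $\Uc(\bfrak)$ (again by PBW), so that central elements act by left multiplication by a well-defined element of $\Uc(\bfrak)$, and that $z\mapsto(\text{this element})$ is nothing but $\rho_\chi|_{\Zc(\gfrak)}$; injectivity then amounts to faithfulness of the action of $\Zc(\gfrak)$ on this module, which follows because $\Zc(\gfrak)\hookrightarrow\Uc(\gfrak)$ and $\Uc(\gfrak)\to\mathrm{End}_{\Uc(\bfrak)}(\text{module})$ is injective on the $\Uc(\bfrak)$-summand. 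In either case, surjectivity is free, and the result is exactly Kostant's Theorem~2.4.2 in \cite{bib:Kostant78}, to which I would ultimately refer for the full details.
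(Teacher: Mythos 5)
The paper does not actually prove this statement: it is quoted verbatim from Kostant (Theorem 2.4.2 of \cite{bib:Kostant78}) and used as a black box, so there is no internal proof to compare against. Judged on its own, your sketch correctly isolates the real content (injectivity, i.e.\ $\Zc(\gfrak)\cap I_\chi=\{0\}$), but both routes you offer have genuine gaps. First, the grading bookkeeping is wrong: $\ker\chi$ is \emph{not} a graded subspace of $\Uc(\ufrak)$ for the root-lattice grading. Since $\chi$ is an algebra character with $\chi(\mathds{1})=1$ and $\chi(e_{\alpha_i})\neq 0$, the element $e_{\alpha_i}-\chi(e_{\alpha_i})\mathds{1}$ lies in $\ker\chi$ while neither of its homogeneous components does; hence $I_\chi$ is not a graded left ideal and the ``intersect degree by degree'' strategy does not get off the ground. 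This inhomogeneity is not a technicality — it is precisely the feature of a nondegenerate character that makes the Whittaker reduction nontrivial. Second, the fallback via the Whittaker module $Y_\chi=\Uc(\gfrak)\otimes_{\Uc(\ufrak)}\C_\chi$ is circular: $z\in\Zc(\gfrak)$ annihilates $Y_\chi$ if and only if $\rho_\chi(z)=0$, so ``faithfulness of the $\Zc(\gfrak)$-action on $Y_\chi$'' is literally a restatement of the injectivity you are trying to prove, and the justification you give (the action is injective on the $\Uc(\bfrak)$-summand) does not apply because $\Zc(\gfrak)\not\subset\Uc(\bfrak)$.

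The missing idea is the Harish-Chandra homomorphism. For $z\in\Zc(\gfrak)$, which has weight zero, write $z=h_z+\sum_k n_k u_k$ with $h_z\in\Uc(\hfrak)$, $n_k\in\nfrak\,\Uc(\bfrak)$ of strictly negative weight and $u_k\in\Uc(\ufrak)$ of strictly positive weight; replacing each $u_k$ by $(u_k-\chi(u_k))+\chi(u_k)$ shows $\rho_\chi(z)\equiv h_z \bmod \nfrak\,\Uc(\bfrak)$. Thus composing $\rho_\chi|_{\Zc(\gfrak)}$ with the projection $\Uc(\bfrak)=\Uc(\hfrak)\oplus\nfrak\,\Uc(\bfrak)\to\Uc(\hfrak)$ recovers the (unnormalized) Harish-Chandra map, whose injectivity on the center is classical; injectivity of $\rho_\chi|_{\Zc(\gfrak)}$ follows. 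You should also verify that $\rho_\chi|_{\Zc(\gfrak)}$ is multiplicative (so that ``isomorphism'' means isomorphism of algebras): for central $z_1,z_2$ one has $z_1z_2-\rho_\chi(z_1)\rho_\chi(z_2)\in I_\chi$ because $I_\chi$ is a left ideal and $i_1\rho_\chi(z_2)=z_2 i_1-i_1 i_2\in I_\chi$ by centrality of $z_2$. Your proposal omits this point entirely.
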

One can easily compute the image of the Casimir element $\Omega_\chi = \rho_\chi\left( \Omega \right)$: if $\beta$ is a simple root, $e_\alpha$ acts like $1$ after reduction while if $\beta \in \Phi^+ - \Delta$, it acts like $0$. From the second line in equation \eqref{lbl:casimir}, we recover the same operator as in equation \eqref{lbl:casimir_chi}. This is simply the algebraic derivation of the reduction stated in the previous paragraph.

$\Omega_\chi \in \Uc(\bfrak)$ is interpreted as an operator on the solvable group $B$. The Laplacian $\Delta_\afrak = \half \sum_{i=1}^n  X_i^2$ is the infinitesimal generator of Brownian motion on $\afrak \approx \R^n$. Hence, $\Omega_\chi$ as a whole is the infinitesimal generator of a Markov process driven by a simple Euclidian Brownian motion on $\afrak$. This Markov process is exactly the one defined by equation \eqref{lbl:process_B_ode}, if the driving path $X$ is a Brownian motion. In this context, which we pursue in \cite{bib:chh14c}, equation \eqref{lbl:process_B_ode} is interpreted as a stochastic differential equation.

\paragraph{Quantum Toda Hamiltonian:} 
Let $\chi^-: N \longrightarrow \C$ be the standard (additive) character on $N$:
$$\forall \alpha \in \Delta, \chi^-\left( e^{t f_\alpha} \right) = t$$
We can consider a further reduction to the space of $\chi^-\chi$-binvariant functions:
$$ \Cc^\infty_{\chi^- \chi}(G) := \left\{ f \in \Cc^\infty\left(G\right) \ | \ \forall n \in N, u \in U, f(n g u) = e^{-\chi^-(n)} f(g) e^{\chi(u)} \right\}$$
Since a function in $\Cc^\infty_{\chi^- \chi}(G)$ is entirely determined by its values on $A = \exp(\afrak)$, we have the identification $\Cc^\infty_{\chi^- \chi}(G) \approx \Cc^\infty(\afrak)$. In this identification, the operator $\Omega_\chi$ reduces to a Sch\"odinger operator on $\afrak \approx \R^r$ known as the quantum Toda Hamiltonian:
\begin{align}
\label{lbl:quantum_toda}
H & = \Delta_\afrak + \rho^\vee - \sum_\alpha e^{-\alpha(x)}
\end{align}
 The quantum Toda Hamiltonian will also appear in \cite{bib:chh14c}, being related to the highest weight dynamic.

\bibliographystyle{halpha}
\bibliography{Bib_Thesis}

\end{document}